\newtheorem{theorem}{Theorem}
\newtheorem{lemma}{Lemma}
\title{An adaptive transfer learning perspective on classification in non-stationary environments}
\author{Henry W J Reeve\\University of Bristol\\henry.reeve@bristol.ac.uk}
\begin{document}

\maketitle

\begin{abstract} 
We consider a semi-supervised classification problem with non-stationary label-shift in which we observe a labelled data set followed by a sequence of unlabelled covariate vectors in which the marginal probabilities of the class labels may change over time. Our objective is to predict the corresponding class-label for each covariate vector, without ever observing the ground-truth labels, beyond the initial labelled data set. Previous work has demonstrated the potential of sophisticated variants of online gradient descent to perform competitively with the optimal dynamic strategy (Bai et al. 2022). In this work we explore an alternative approach grounded in statistical methods for adaptive transfer learning. We demonstrate the merits of this alternative methodology by establishing a high-probability regret bound on the test error at any given individual test-time, which adapt automatically to the unknown dynamics of the marginal label probabilities. Further more, we give bounds on the average dynamic regret which match the average guarantees of the online learning perspective for any given time interval.
\end{abstract}

\newcommand{\sourceIndicator}{{\mathrm{sc}}}
\newcommand{\targetIndicator}{{\mathrm{tg}}}

\newcommand{\sourceDistribution}{{P_\sourceIndicator}}
\newcommand{\targetDistribution}{{P_\targetIndicator}}

\section{Introduction}\label{sec:introduction}

A longstanding paradigm in theoretical statistics has been to focus on settings in which the practitioner has access to a sample consisting of observations drawn from a distribution $P$, and the practitioner's objective is to estimate a parameter of the very same distribution $P$, where ``parameter'' here is to be construed broadly to include any property of the distribution, from a real-valued parameter such as a population mean, through to an infinite-dimensional parameter such as a Bayes optimal classifier. Whilst such settings, often provide a natural model, in many real-world problems we wish to leverage data from source distribution $\sourceDistribution$ which is related, but distinct from the target distribution $\targetDistribution$ corresponding to the parameter of interest. Transfer learning refers to the wide array of methods developed for transferring knowledge from these related source data to a target domain \citep{zhuang2020comprehensive}. Applications range from medical settings, where a practitioner may seek to combine information from new and historical data to estimate the efficacy of a new treatment or vaccine \citep{aloui2023transfer}, through to natural language processing where practitioners frequently utilise vast corpora of unlabelled text data to pre-train language models, before fine-tuning for a specific task of interest \citep{devlin-etal-2019-bert}. 

In light of the impressive empirical successes of transfer learning, the topic has recently attracted a great deal of interest from the statistical and theoretical machine learning communities. There has been a strong focus on the canonical problem of binary classification where both the source $\sourceDistribution$ and the target $\targetDistribution$ are distributions over a space $\metricSpace\times \{0,1\}$ consisting tuples $(x,y)$, where $x \in \metricSpace$ is a covariate-vector and $y \in \{0,1\}$ is a binary label. The  objective is to identify an optimal rule $\classifier:\metricSpace \rightarrow \{0,1\}$ for which 
minimises the probability of an error $\classifier(X^\targetIndicator) \neq Y^\targetIndicator$ for $(X^\targetIndicator,Y^\targetIndicator)\sim \targetDistribution$, whilst the large majority of the data is distributed according to $\targetDistribution$.

One important line of work on binary classification considers the covariate-shift setting \citep{sugiyama2005generalization,gretton2009covariate,candela2009dataset, sugiyama2012machine,sugiyama2012density,kpotufe2018marginal,kpotufe2021marginal} in which conditional probabilities of the labels given the feature vector coincide between the source and target distributions, i.e. $\Prob(Y^{\sourceIndicator}|X^{\sourceIndicator}=x)=\Prob(Y^{\targetIndicator}|X^{\targetIndicator}=x)$ for $(X^{\sourceIndicator},Y^{\sourceIndicator})\sim \sourceDistribution$ and $(X^{\targetIndicator},Y^{\targetIndicator})\sim \targetDistribution$. Crucially, the Bayes optimal classification rule $\classifier$ is a function of this conditional probability $x\mapsto \Prob(Y^{\targetIndicator}|X^{\targetIndicator}=x)$. As such, the covariate shift setting allows for consistent classification with respect to the target $\targetDistribution$ given only data from the source $\sourceDistribution$, provided that the marginal distribution over the target covariates $X^{\targetIndicator}$ is dominated by the corresponding source marginal over $X^{\sourceIndicator}$. The minimax optimal rate for the covariate shift regime depends upon the local scaling behaviour of the ratio between the source and target marginals, characterised by the transfer exponent \citep{kpotufe2021marginal}, in conjunction with other distributional properties and the sample sizes from both $\sourceDistribution$ and $\targetDistribution$. More broadly, consistent classification with respect to a target distribution $\targetDistribution$ using only data from a distinct source distribution $\sourceDistribution$ is also possible provided that the conditional probability of a label for the two distributions are sufficiently similar that the Bayes optimal decision boundaries coincide \citep{menon2018learning,cai2019transfer,cannings2020}. Consistent classification with source data alone is also possible in certain label-noise settings provided that conditional probability of a label for the two distributions are related by an identifiable affine transformation
\citep{blanchard2017domain,
reeve2019classification,reeve2019fastLabelNoise,scott2018generalized,scott2019learning}.

Learning a classification rule who's performance approaches the Bayes error inevitably necessitates distributional assumptions. An alternative approach is the Vapnik-Chervonenkis paradigm which instead places constraints on the family of potential classifiers
\citep{vapnik1974theory}.  Interestingly, if we insist that $\classifier$ be chosen from a low capacity class, then even with the covariate-shift assumption, source data alone is no longer sufficient for effective learning \citep{ben2010impossibility}. Indeed, for certain families of classifier, learning an optimal model is essentially equivalent to learning the entire distribution \citep{hopkins2023pac}. As such, transfer learning within the Vapnik-Chervonenkis paradigm has focused on the role of the discrepancy measures between $\sourceDistribution$ and $\targetDistribution$ which typically depend upon the collection of candidate models \citep{ben2010theory,ben2010impossibility,mansour2009domain,mohri2012new,yang2013theory,
cortes2019adaptation}. A generalised notion of transfer exponent extends to Vapnik-Chervonenkis paradigm by bounding the excess error on the target distribution in terms of the excess error on the source distribution \citep{hanneke2019value,hanneke2023limits}.

A very natural distributional relationship between the source $\sourceDistribution$ and target $\targetDistribution$ distributions is the label-shift setting, where we assume that whilst the marginal probabilities of the class labels may differ $\Prob(Y^{\sourceIndicator}=y)\neq \Prob(Y^{\targetIndicator}=y)$, the class-conditional distributions for the two distributions
$\Prob(X^{\sourceIndicator}=x|Y^{\sourceIndicator}=y)=\Prob(X^{\targetIndicator}=x|Y^{\targetIndicator}=y)$ coincide, for $(X^\sourceIndicator,Y^\sourceIndicator)\sim \sourceDistribution$, $(X^\targetIndicator,Y^\targetIndicator)\sim \targetDistribution$ and  $(x,y) \in \metricSpace\times \{0,1\}$ (see Assumption \ref{assumption:labelShift} below for more details) \citep{saerens2002adjusting,zhang2013domain,du2014semi,nguyen2016continuous,lipton2018detecting,azizzadenesheli2019regularized,garg2020unified,maity2020minimax,wu2021online,bai2022adapting}. The label-shift assumption is motivated by the multitude of settings in which a covariate-vector corresponding to an observation may be viewed as a causal consequence of the class membership represented by the label  \citep{garg2020unified}. A central finding is that the label-shift regime allows for consistent classification with respect to the target distribution $\targetDistribution$ in a semi-supervised setting where the practitioner is given a combination of labelled tuples from the source distribution $(X_i^\sourceIndicator,Y_i^\sourceIndicator)\sim \sourceDistribution$, in conjunction with unlabelled covariates from the target distribution $X_i^\targetIndicator$, where $(X_i^\targetIndicator,Y_i^\targetIndicator)\sim \targetDistribution$, but $Y_i^\targetIndicator$ is unobserved. Recent work \citep{maity2020minimax} has identified the minimax optimal rates for non-parametric classification with label-shift, revealing the dependency upon distributional properties such as smoothness in the aforementioned semi-supervised setting, as well as demonstrating the impact of additional labelled data from the target distribution.

Transfer learning approaches to non-parametric classification have shown the utility of a (typically large) labelled sample from the source distribution $(X_i^\sourceIndicator,Y_i^\sourceIndicator)\sim \sourceDistribution$, in conjunction with smaller labelled sample from the target distribution $(X_i^\targetIndicator,Y_i^\targetIndicator)\sim \targetDistribution$ \citep{cai2019transfer,maity2024linear}. In particular, a relatively small amount of additional labelled data from the target distribution allows one to efficiently learn a classifier in settings where there is more complex relationship between the conditional probabilities of the label for the two distributions which may vary over the covariate domain \citep{reeve2021adaptive}. Statistical approaches to transfer learning have also consider problems beyond the classification setting including high-dimensional linear and generalised regression \citep{li2023estimation,tian2023transfer,sun2023robust}, graphical models \citep{li2023transfer}, non-parameteric regression \citep{liu2023augmented,cai2024transfer}, functional mean estimation \citep{10.1214/24-AOS2362} and contextual multi-armed bandits \citep{suk2021self,cai2024transferBandit}.

A key theme in statistical approaches to supervised transfer learning is adaptivity \citep{kpotufe2021marginal,cai2019transfer,reeve2021adaptive}, particularly with respect to the strength of the relationship between $\sourceDistribution$ and $\targetDistribution$. Indeed, let's suppose we have both a labelled sample from the source and the target distribution. If the source distribution and the target distribution are sufficiently close, in an appropriate sense, then the optimal approach may be to simply run a standard supervised learning algorithm on the combined data set which pools observations from the two samples. On the other hand, if the source and target distributions are completely unrelated then the additional data from the source distribution is without value for the task at hand, and optimal approach may well be to apply a standard supervised learning algorithm on the target data alone. In practice though, we typically won't have prior knowledge of the strength of the relationship between the source and target distributions. This necessitates careful adaptive approaches which typically build upon the groundbreaking approaches to adaptive estimation of Efroimovich, Pinsker, Goldenshluger and Lepski \citep{efroimovich1984learning,klemela2001sharp,lepski1997optimal,lepski2022theory,lepskii1992asymptotically,goldenshluger2013general}.

Recently, there has been an effort to extend the statistical insights of transfer learning to non-stationary sequential settings \citep{maity2022predictor,wu2021online,bai2022adapting}. In such settings, the observations arrive sequentially, with each successive observation sampled from time-dependent distribution. The challenge is to effectively utilise historical data to perform contemporary predictions, despite the distributional changes that occur over time. Of particular importance, is a non-stationary semi-supervised classification setting in which one has access to a fixed labelled data set from a source distribution, and then for subsequent rounds the learner observes a covariate-vector $\unlabelledCovariate_\ell$ to which they are required predict the corresponding class label $\response_\ell$, where at each stage the tuple $(\unlabelledCovariate_\ell,\response_\ell)\sim P_\ell$ is drawn from a distribution on $\metricSpace \times \{0,1\}$. Hence, the learner must make sequential predictions without ever observing the labels $\response_\ell$, beyond the fixed labelled data set, and the distribution $P_\ell$ may change over time. Clearly, distributional assumptions are necessary for any substantive performance guarantees, and a natural assumption is that of label-shift \citep{wu2021online,bai2022adapting}. Wu et al. explored an approach to this non-stationary label-shift setting \citep{wu2021online} grounded in the theory of online learning \citep{cesa2006prediction}. Notably, without further assumptions on the dynamics of the sequence of distributions, beyond the label-shift assumption, it is demonstrated that since it is possible to leverage the initial sample to obtain a noisy estimate of the contemporary risk, one can effectively apply online gradient descent to provide predictions which perform favourably by comparison with the optimal fixed classification rule. However, the optimal fixed classification rule could still be radically sub-optimal compared to a more dynamic strategy, in a non-stationary setting. This observation motivated Bai et al. \citep{bai2022adapting} to introduce a methodology for classification with non-stationary label-shift which provably performs well in comparison with the optimal dynamic policy. The approach of Bai et al. leverages a sophisticated variant of online gradient descent where the learning rate is tuned adaptively via an innovative meta algorithm. Dynamic regret bounds are established which guarantee adaptation to the overall amount of non-stationarity, quantified by a total-variation metric which measures the cumulative changes in the class-label probabilities.

In this work we shall also consider a classification setting with non-stationary label-shift. However, rather than employ techniques from the online learning literature \citep{wu2021online,bai2022adapting} we shall instead take the view that transfer learning in settings where the strength of the distributional relationship is unknown is, in essence, a problem of adaptive statistical estimation. As such, we shall employ a methodology which draws inspiration from the Lepski-technique for adaptive estimation \citep{lepski1997optimal} and demonstrate the merits of such an approach. Our contributions are as follows:

\begin{itemize}
    \item We introduce a methodology for classification with non-stationary label-shift (Section \ref{sec:methodologySec}). Our method combines an approach to estimating a transformed density-ratio which doesn't require that the class-conditional distributions are absolutely continuous (Section \ref{sec:estRegressionFunction}). We then introduce a Legendre-polynomial based procedure for estimating the current marginal probabilities of the class labels (Section \ref{sec:appendixLegendrePolys}). Notably our procedure is fully adaptive and does not require prior knowledge the distributional properties which characterise the degree of non-stationarity.  
    \item We provide a high-probability bound on the classification error for a an individual test-time (Theorem \ref{thm:mainClassificationSingleTimeStep}). This bound exhibits a favourable dependency of the algorithm upon a range of distributional parameters. This includes both parameters which characterise the recent dynamics of the non-stationary label probabilities in terms of the smoothness level over the recent past (Assumption \ref{assumption:smoothlyVaryingLabelProbabilities}), and more familiar distributional parameters which characterise the behaviour of the transformed density in terms of smoothness (Assumption \ref{assumption:smoothRegressionFunctions}) and amount of mass in the vicinity of the optimal decision boundary (Assumption \ref{assumption:tsybakovMarginAssumption}).
    \item We then leverage these insights to provide high-probability bounds on the average dynamic regret. For our first average regret bound (Theorem \ref{thm:highProbBoundForTemporalSmoothnessWithJumps}) we leverage a flexible assumption which allows for sporadic jumps, between which the label probabilities vary smoothly with time (Assumption \ref{assumption:smoothlyVaryingWithJumpsLabelProbabilities}). We also give a high-probability bound on the average dynamic regret which avoids making explicit assumptions on how the label-probabilities vary over time (Corollary \ref{corr:totalVariationLabelProbBound}). Instead, we bound the average dynamic regret in terms of the of cumulative changes quantified through a total-variation metric \eqref{eq:defTotalVariationLabelProbabilities}. The algorithms for these bounds coincide, and bounds are attained without prior knowledge of quantities such as the total-variation of the label-probabilities.
\end{itemize}

Overall, we demonstrate the merits of the adaptive-estimation perspective for sequential non-stationary problems. On the one hand, this approach achieves average regret bounds on the dynamic regret in terms of the total variation of the label-probability changes (Corollary \ref{corr:totalVariationLabelProbBound}), in the spirit of \citep{bai2022adapting}. Moreover, the adaptive methodology also achieves bounds which hold with high-probability for any single test-time (Theorem \ref{thm:mainClassificationSingleTimeStep}), which are not implied by bounds on the overall average.

In the next section we shall formally introduce the statistical setting for the remainder of the paper.









\section{Statistical setting}\label{sec:statisticalSetting}

We shall begin by considering a transfer learning setting for binary classification in which the learner has access to a fixed data set. Later we shall extend our approach to the sequential setting (Section \ref{sec:averageRegretBounds}). Our data consists of three samples $\nullSample$, $\positiveSample$ and $\unlabelledSample$. The sample $\nullSample$ consists $2\numNull$ observations $\nullCovariate_0,\ldots,\nullCovariate_{2\numNull-1}$ drawn independently from a (Borel) distribution $\nullDistribution$ on a metric space $\metricSpace$ with metric $\metric$. The sample $\positiveSample$ consists of $2\numPositive$ observations $\positiveCovariate_0,\ldots,\positiveCovariate_{2\numPositive-1}$ drawn independently from a distribution $\positiveDistribution$ on  $\metricSpace$. It is convenient to assume the sample sizes $2\numNull$, $2\numPositive$ are even to facilitate the discussion of our methodology, which involves sample splitting. We also have a sample $\unlabelledSample$ consisting of $\numUnlabelled$ observations $\unlabelledCovariate_0,\ldots,\unlabelledCovariate_{\numUnlabelled-1}$ which are independent but not necessarily identically distributed. Indeed, we shall assume the existence of a partially observed sequence of independent, but not necessarily identically distributed, random pairs $((\unlabelledCovariate_\ell,\response_\ell))_{\ell =0}^\infty$, where for each $\ell \in \N_0:=\N\cup\{0\}$ we have a covariate $\unlabelledCovariate_{\ell}$, which takes values in $\metricSpace$, and a binary label $\response_\ell$, which takes values in $\{0,1\}$. The learner's objective will be to accurately predict the binary label $\response_{\testTime}$ corresponding a test point $\unlabelledCovariate_{\testTime}$. 
Later we shall consider a dynamic analogue of this setting in which we wish to classify test points over a series of rounds (Section \ref{sec:averageRegretBounds}).

To state our original goal precisely, we let $\setOfDataDependentClassifiers$ denote the set of all data--dependent classifiers of the form $\dataDependentClassifier\equiv \dataDependentClassifier_{\testTime}:\metricSpace^{2\numNull+2\numPositive+\numUnlabelled+1}\rightarrow \{0,1\}$. Our objective is to choose $\dataDependentClassifier \in \setOfDataDependentClassifiers$ to minimise the test error
\begin{align*}
\testError_{\testTime}(\dataDependentClassifier) &:= \Prob\bigl\{\dataDependentClassifier(\unlabelledCovariate_{\testTime}) \neq \response_{\testTime}~|~\nullSample,\positiveSample,\unlabelledSample \bigr\}, 
\end{align*}
where we will often suppress the dependence of both $\dataDependentClassifier$ and $\testError$ on the data $\nullSample$, $\positiveSample$, $\unlabelledSample$ for the sake of notational convenience. We shall also apply the notation $\testError_{\testTime}(\classifier)$ to (data independent) classifiers $\classifier:\metricSpace \rightarrow \{0,1\}$ by viewing such maps as elements of $\setOfDataDependentClassifiers$ which only depend upon their final argument.

Whilst we consider a non-stationary setting, we will require some persistent structure on the distribution of $(\unlabelledCovariate_\ell,\response_\ell)$ for $\ell \in \N_0$. We shall adopt a generalisation of the popular \emph{label-shift} assumption \cite{saerens2002adjusting,zhang2013domain,du2014semi,nguyen2016continuous,lipton2018detecting,azizzadenesheli2019regularized,garg2020unified,maity2020minimax,wu2021online,bai2022adapting}.

\begin{assumption}[Label-shift]\label{assumption:labelShift} There exists a pair of Borel probability distributions 
$\classConditionalDistribution{0}$ and $\classConditionalDistribution{1}$ on $\metricSpace$ such that for all $\ell \in \N_0$ and $y \in\{0,1\}$, the conditional distribution of $\unlabelledCovariate_\ell$ given $\response_\ell=y$ is given by $\classConditionalDistribution{y}$.
\end{assumption}

\Cref{assumption:labelShift} ensures that that the class-conditional distributions $\classConditionalDistribution{0}$ and $\classConditionalDistribution{1}$ are stationary. Hence, the joint distribution of each tuple $(\unlabelledCovariate_\ell,\response_\ell)$ is governed by $\classConditionalDistribution{1}$, $\classConditionalDistribution{0}$ in conjunction with the non-stationary label-probability
\begin{align}\label{eq:defLabelProbability}
\labelProbability[\ell]:= \Prob( \response_\ell=1).
\end{align}
In order to predict $\response_{\testTime}$ we shall require that $\labelProbability[\testTime]$ is in some sense ``predictable'' from $(\labelProbability[\ell])_{\ell =0}^{\testTime-1}$. To make this notion precise we fix $\maximalHolderExponentWeightFunction \in \N$ and for each  $\holderExponentWeightFunction \in [0,\maximalHolderExponentWeightFunction]$, $\holderConstantWeightFunction \in [0,\infty)$ we let $\holderFunctionClass(\holderExponentWeightFunction,\holderConstantWeightFunction)$ denote the H\"{o}lder class consisting of all  ${\pOfAlpha}:=\max\{\lceil \holderExponentWeightFunction \rceil-1,0\}$-times differentiable functions $g: [0,1] \rightarrow \R$ such that the $(\pOfAlpha)$-th derivative $g^{({\pOfAlpha})}$ satisfies 
\begin{align*}
|g^{({\pOfAlpha})}(u_0)-g^{({\pOfAlpha})}(u_1) | \leq \holderConstantWeightFunction\, \left| u_0-u_1\right|^{\holderExponentWeightFunction-{\pOfAlpha}},
\end{align*}
for all $u_0$, $u_1 \in [0,1]$. Note that $g^{(0)}$ refers to the function $g$ itself. In particular, $\holderFunctionClass(0,\holderConstantWeightFunction)$ denotes the class of all functions $g:[0,1]\rightarrow \R$ such that $\sup g-\inf g \leq \holderConstantWeightFunction$.

\begin{assumption}[Temporal smoothness]\label{assumption:smoothlyVaryingLabelProbabilities} There exists $\windowSize \in [\testTime]$, $\holderExponentWeightFunction \in [0,\maximalHolderExponentWeightFunction]$, $\holderConstantWeightFunction \in [0,\infty)$ and $\weightFunction \in \holderFunctionClass(\holderExponentWeightFunction,\holderConstantWeightFunction)$ such that for all $\ell \in \{\testTime-\windowSize,\ldots,\testTime\}$, 
\begin{align*}
\labelProbability=\weightFunction\bigg( \frac{\numUnlabelled-\ell}{\windowSize}\bigg).
\end{align*}
\end{assumption}

We shall also assume that the relationship between the class-conditional distributions varies smoothly over the metric space $\metricSpace$. To state our assumption we let $\classConditionalDistribution{w}:=(1-w) \classConditionalDistribution{0}+w \classConditionalDistribution{1}$ for $w \in (0,1)$ and let $\regressionFunction:\metricSpace \rightarrow [0,1]$ denote the Radon–Nikodym derivative \cite[Section 5.5]{dudley2018real}
\begin{align*}
\regressionFunction(x):= \frac{1}{2}\frac{d\positiveDistribution}{d\classConditionalDistribution{1/2}}(x),
\end{align*}
for all $x \in \metricSpace$, the existence of which is guaranteed by the Radon-Nikodym theorem. Similarly, we define $\averageRegressionFunction(A):= \positiveDistribution(A)/\{2\classConditionalDistribution{1/2}(A)\}$ for Borel sets $A \subseteq \metricSpace$. Given $x \in \X$ and $r \in [0,\infty)$ we let $\openMetricBall{x}{r}:=\{\tilde{x} \in \metricSpace ~:~ \metric(x,\tilde{x}) < r\}$ and $\closedMetricBall{x}{r}:=\{\tilde{x} \in \metricSpace ~:~ \metric(x,\tilde{x}) \leq r\}$ denote, respectively, the open closed metric balls centered at $x$ and of radius $r$. We shall make the following assumption on $\regressionFunction$.

\begin{assumption}[Spatial smoothness]\label{assumption:smoothRegressionFunctions} There exists $\smoothnessExponentRegressionFunction \in (0,1]$, $\smoothnessConstantRegressionFunction \in [0,\infty)$ such that for all $x\in \metricSpace$ and all $r>0$ we have, 
\begin{align*}
\bigl|\regressionFunction(x)-\averageRegressionFunction\bigl\{\closedMetricBall{x}{r}\bigr\} \bigr| \leq \smoothnessConstantRegressionFunction\,\classConditionalDistribution{1/2} \bigl\{\openMetricBall{x}{r}\bigr\}^{\smoothnessExponentRegressionFunction}.
\end{align*}
\end{assumption}

Assumption \ref{assumption:smoothRegressionFunctions} is an analogue of the measure-theoretic notion smoothness of Chaudhuri and Dasgupta \cite{chaudhuri2014rates} and is relatively mild. For example, suppose that $\X \subseteq \R^d$ and the Lebesgue absolutely continuous component of $\nullDistribution+\positiveDistribution$ has a density bounded from below on its support $\mathrm{supp}(\nullDistribution+\positiveDistribution)$, which is sufficiently well behaved (regular in the sense of \cite[(2.1)]{audibert2004classification}). Suppose further that $\regressionFunction$ is $\smoothnessExponentRegressionFunction_H$-H\"{o}lder continuous with respect to the Euclidean norm. Then Assumption \ref{assumption:smoothRegressionFunctions} holds with $\smoothnessExponentRegressionFunction= \smoothnessExponentRegressionFunction_H/d$. More broadly though, Assumption \ref{assumption:smoothRegressionFunctions} does not require that $\mathrm{supp}(\nullDistribution+\positiveDistribution)$ is bounded. Furthermore, Assumption \ref{assumption:smoothRegressionFunctions} does not require that the individual class-conditional densities have a density bounded below, in contrast to \cite{maity2020minimax}.

Finally, we shall utilise a margin assumption in the spirit of \cite{mammen1999}. Let's write $\marginalDistribution[\testTime]$ for the distribution of $X_\testTime$, so that 
\begin{align}\label{eq:marginalDistributionAsMixture}
\marginalDistribution[\testTime]=\classConditionalDistribution{{\labelProbability[\testTime]}}=(1-\labelProbability[\testTime])\nullDistribution+\labelProbability[\testTime] \positiveDistribution.
\end{align}

\begin{assumption}[Tysbakov margin]\label{assumption:tsybakovMarginAssumption} There exists a function $ \tsybakovMarginFunction:(0,\infty) \rightarrow (0,1]$ which is non-decreasing with $\tsybakovMarginFunction(1)=1$, and for all $\zeta \in (0,1]$ we have 
\begin{align*}
\classConditionalDistribution{1/2}\bigl( \bigl\{x \in \metricSpace : 0<|\regressionFunction(x)- 1-\labelProbability[\testTime]| < \zeta \bigr\}\bigr) \leq \tsybakovMarginFunction(\zeta)/\zeta.
\end{align*}
\end{assumption}

Assumptions \ref{assumption:tsybakovMarginAssumption} is best understood in the context of the following lemma. 

\begin{lemma}\label{lemma:classificationErrorSimpleFunction} Suppose $\dataDependentClassifier:\metricSpace^{2\numNull+2\numPositive+\testTime}\rightarrow \{0,1\}$ is a classifier. Then,
\begin{align*}
\testError_{\testTime}(\dataDependentClassifier) = \labelProbability[\testTime]+2\int \dataDependentClassifier\, (1-\labelProbability[\testTime]-\regressionFunction)\,d\classConditionalDistribution{1/2}. 
\end{align*}
Hence, letting $ \bayesClassifier[\testTime]:\X \rightarrow \{0,1\}$ be the classifier defined by $\bayesClassifier[\testTime](x):=\one{\left\lbrace \regressionFunction(x)> 1 -\labelProbability[\testTime] \right\rbrace}$ for $x \in \metricSpace$ we have
\begin{align*}
\testError_\testTime(\dataDependentClassifier)=\testError_\testTime(\bayesClassifier[\testTime])+2\int_{\{ \dataDependentClassifier\neq \bayesClassifier[\testTime]\}}|1-\labelProbability[\testTime]-\regressionFunction|\,d\classConditionalDistribution{1/2}.
\end{align*}
\end{lemma}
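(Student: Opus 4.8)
The plan is to reduce both identities to a standard Bayes-risk computation carried out in the label-shift parametrisation. First I would condition on the data $(\nullSample,\positiveSample,\unlabelledSample)$: by the independence built into the model the test pair $(\unlabelledCovariate_\testTime,\response_\testTime)$ is independent of this data, so the realised classifier $h:\metricSpace\rightarrow\{0,1\}$ obtained by fixing the data arguments of $\dataDependentClassifier$ is, given the data, a deterministic function, and it suffices to evaluate $\Prob\{h(\unlabelledCovariate_\testTime)\neq\response_\testTime\}$ for an arbitrary fixed measurable $h$. I would then split on the value of the label, $\Prob\{h(\unlabelledCovariate_\testTime)\neq\response_\testTime\}=\Prob\{\response_\testTime=1,\ h(\unlabelledCovariate_\testTime)=0\}+\Prob\{\response_\testTime=0,\ h(\unlabelledCovariate_\testTime)=1\}$, and invoke Assumption \ref{assumption:labelShift}: conditionally on $\response_\testTime=1$ the covariate is distributed as $\classConditionalDistribution{1}=\positiveDistribution$ and conditionally on $\response_\testTime=0$ as $\classConditionalDistribution{0}=\nullDistribution$ (using the standing identification $\classConditionalDistribution{0}=\nullDistribution$, $\classConditionalDistribution{1}=\positiveDistribution$, cf.\ \eqref{eq:marginalDistributionAsMixture}). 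This rewrites the probability as $\labelProbability[\testTime]\int(1-h)\,d\positiveDistribution+(1-\labelProbability[\testTime])\int h\,d\nullDistribution$.

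The next step is a change of measure to the reference distribution $\classConditionalDistribution{1/2}=\tfrac12(\nullDistribution+\positiveDistribution)$, which dominates both $\positiveDistribution$ and $\nullDistribution$, so the Radon--Nikodym derivatives exist and satisfy $d\positiveDistribution=2\regressionFunction\,d\classConditionalDistribution{1/2}$ (the definition of $\regressionFunction$) and hence $d\nullDistribution=2(1-\regressionFunction)\,d\classConditionalDistribution{1/2}$. Substituting these, using $2\int\regressionFunction\,d\classConditionalDistribution{1/2}=\positiveDistribution(\metricSpace)=1$ to extract the leading $\labelProbability[\testTime]$, and collecting the terms multiplying $h$ via the elementary identity $(1-\labelProbability[\testTime])(1-\regressionFunction)-\labelProbability[\testTime]\regressionFunction=1-\labelProbability[\testTime]-\regressionFunction$, yields the first display once I substitute $h=\dataDependentClassifier$ back in.

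For the second display I would apply the first to $\dataDependentClassifier$ and to $\bayesClassifier[\testTime]$ and subtract, obtaining $\testError_\testTime(\dataDependentClassifier)-\testError_\testTime(\bayesClassifier[\testTime])=2\int(\dataDependentClassifier-\bayesClassifier[\testTime])(1-\labelProbability[\testTime]-\regressionFunction)\,d\classConditionalDistribution{1/2}$, and then argue pointwise on $\{\dataDependentClassifier\neq\bayesClassifier[\testTime]\}$: where $\dataDependentClassifier=1$ and $\bayesClassifier[\testTime]=0$ the definition of $\bayesClassifier[\testTime]$ gives $\regressionFunction\leq1-\labelProbability[\testTime]$, i.e.\ $1-\labelProbability[\testTime]-\regressionFunction\geq0$, while where $\dataDependentClassifier=0$ and $\bayesClassifier[\testTime]=1$ it gives $1-\labelProbability[\testTime]-\regressionFunction<0$, so in both cases $(\dataDependentClassifier-\bayesClassifier[\testTime])(1-\labelProbability[\testTime]-\regressionFunction)=|1-\labelProbability[\testTime]-\regressionFunction|$, the integrand vanishing off that set. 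I do not expect a genuine obstacle here; the only points needing a moment's care are the measure-theoretic reduction of the first step (checking that conditioning on the data leaves the law of $(\unlabelledCovariate_\testTime,\response_\testTime)$ untouched, which is exactly where the independence hypotheses are used) and the sign bookkeeping in the last step, which is easy to get backwards.
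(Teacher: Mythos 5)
Your proposal is correct and follows essentially the same route as the paper's proof: condition on the data, split the error by label value using the label-shift structure, change measure to $\classConditionalDistribution{1/2}$ via $d\positiveDistribution=2\regressionFunction\,d\classConditionalDistribution{1/2}$ and $d\nullDistribution=2(1-\regressionFunction)\,d\classConditionalDistribution{1/2}$, then subtract the two applications of the first identity and resolve the sign on $\{\dataDependentClassifier\neq\bayesClassifier[\testTime]\}$. No gaps; the sign bookkeeping in your last step is also right.
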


Hence, $\bayesClassifier[\testTime]$ is the Bayes optimal classifier at time $\testTime$ and Assumption \ref{assumption:tsybakovMarginAssumption} bounds the amount of mass in the vicinity of the Bayes optimal decision boundary. Lemma \ref{lemma:classificationErrorSimpleFunction} will also serve as the starting point for our methodology. 

An important special case of Assumption \ref{assumption:tsybakovMarginAssumption} is the polynomial setting where we may take a function of the form $\tsybakovPolynomialMarginFunction: (0,\infty) \rightarrow [0,1]$ by $\tsybakovPolynomialMarginFunction(z):=  1 \wedge \left(\tsybakovMarginConstant z^{\tsybakovMarginExponent}\right)$, in place of $\tsybakovMarginFunction$. Notice that Assumption \ref{assumption:tsybakovMarginAssumption} always holds with  $\tsybakovMarginFunction=\tsybakovPolynomialMarginFunction[1,1]$. However, we can often choose $\tsybakovMarginFunction$ so that $\lim_{\zeta \searrow 0} \tsybakovMarginFunction(\zeta)/\zeta = 0$. For example, if $\metricSpace=\R^d$, $\nullDistribution$ and $\positiveDistribution$ have bounded Lebesgue density, and $\regressionFunction$ is smooth with a uniform lower bound on $\|\nabla \regressionFunction\|$ within a bounded neighbourhood of the $(d-1)$-dimensional manifold $\{x \in \R^d,\:\,\regressionFunction(x) = 1-\labelProbability[\testTime]\}$, then for a sufficiently large constant $\tsybakovMarginConstant\geq 1$, Assumption \ref{assumption:tsybakovMarginAssumption} will hold with $\tsybakovMarginFunction =\tsybakovPolynomialMarginFunction[2,\tsybakovMarginConstant]$. As we shall see, in Theorem \ref{thm:mainClassificationSingleTimeStep}, such situations will allow for sharper bounds on the excess classification error.

We close this section with some additional notation. Given $z \in \R$ we let 
\begin{align*}
\logBar(z):= \begin{cases} \log(z) & \text{ if } z>e\\
1 & \text{ if } z \leq 1,
\end{cases}
\end{align*}
where logarithms are base $e$. Given $\sampleSizeVariable \in \N$ and $\tilde{\delta} \in (0,1)$ we define 
\begin{align*}
\epsilonByNDeltaBase{\sampleSizeVariable}{\tilde{\delta}}&:=\logBar( 1/\tilde{\delta})/\sampleSizeVariable,
\end{align*}
and let $\epsilonByNDeltaLogarithm{\sampleSizeVariable}{\tilde{\delta}}:=\epsilonByNDeltaBase{\sampleSizeVariable}{\delta/\sampleSizeVariable}$ and $\epsilonByNDeltaIteratedLogarithm{\sampleSizeVariable}{\tilde{\delta}}:=\epsilonByNDeltaBase{\sampleSizeVariable}{\tilde{\delta}/\logBar(\sampleSizeVariable)} $. Given $k \in \N$ we let $[k]:=\{1,\ldots,k\}$.

\section{Methodology}\label{sec:methodologySec}

Our starting point is Lemma \ref{lemma:classificationErrorSimpleFunction} which verifies that the Bayes optimal classifier $\bayesClassifier[\testTime]:\metricSpace \rightarrow  \{0,1\}$ is given by \[\bayesClassifier[\testTime](x):=\one{\left\lbrace \regressionFunction(x)+\labelProbability[\testTime]> 1 \right\rbrace},\]for $x \in \metricSpace$. This motivates a plug-in strategy whereby we:\medskip

\noindent (1) Obtain an estimate $\estimatedRegressionFunction$ for  $\regressionFunction$;\medskip

\noindent (2) Obtain an estimate $\estimatedLabelProbability$ of $\labelProbability[\testTime]$;\medskip

\noindent (3) Define $\dataDependentClassifier\equiv \dataDependentClassifier_{\testTime}$ by  $\dataDependentClassifier(x) := \one\left\lbrace \estimatedRegressionFunction(x) > 1- \estimatedLabelProbability\right\rbrace$.\medskip

We shall address (1) and (2) in subsections \ref{sec:estRegressionFunction} and \ref{sec:estLabelProbability}, respectively.

\sloppy

\subsection{Estimating the transformed density ratio}\label{sec:estRegressionFunction}

In this section we describe our methodology for estimating the transformed density ratio $\regressionFunction$. Estimating the density-ratio is an important topic in itself \citep{sugiyama2012density}. However, by estimating the transformed density-ration $\regressionFunction$, rather than the density-ratio itself we require less stringent assumptions on the class-conditional distributions. In particular, the transformed density ratio $\regressionFunction$ may be well-behaved even when the density ratio itself does not exist.

Our starting point is to utilise local confidence intervals for the class-conditional distributions with employ local Dvoretzky--Kiefer--Wolfowitz--Massart--type concentration inequalities \cite{reeve2024short}.

Given $q \in [0,1]$ and ${\tilde{\varepsilon}} >0$ we define
\begin{align*}
\empiricalUncertaintyFlat(q,{\tilde{\varepsilon}})&:=\frac{8 \bigl(  \sqrt{ {\tilde{\varepsilon}} \sigma^2(q) + {\tilde{\varepsilon}}^2}+ \{1-2q\}{\tilde{\varepsilon}}\bigr)}{3(1+2{\tilde{\varepsilon}}) },
\end{align*}
Given $\sampleSizeVariable \in \N$ and $\delta \in (0,1)$ we define $\empiricalUncertainty(q,\sampleSizeVariable,\delta):=\empiricalUncertaintyFlat\left(q,\epsilonByNDeltaIteratedLogarithm{\sampleSizeVariable}{\delta}\right) \vee \left({1}/{\sampleSizeVariable}\right)$. Next, for each $y \in \{0,1\}$ we let $\empiricalClassConditionalDistribution{y}$ be the estimate of $\classConditionalDistribution{y}$ defined by
\begin{align*}
\empiricalClassConditionalDistribution{y}(A):= \frac{1}{\numClassConditional{y}}\sum_{i=0}^{\numClassConditional{y}-1}\one_{\{X^y_i \in A\}}
\end{align*}
for Borel sets $A \subseteq \metricSpace$. Further more, for $\confidenceSign \in \{-1,1\}$ and $\delta \in (0,1)$ we let 
\begin{align*}
\empiricalClassConditionalDistributionConfidenceBound{y}{\confidenceSign}{\delta}(A):=\begin{cases}\empiricalClassConditionalDistribution{y}(A)-\empiricalUncertainty(1-\empiricalClassConditionalDistribution{y}(A),\numClassConditional{y},\delta) & \text{ if }\confidenceSign=-1\\
\empiricalClassConditionalDistribution{y}(A)+\empiricalUncertainty(\empiricalClassConditionalDistribution{y}(A),\numClassConditional{y},\delta) & \text{ if }\confidenceSign=1.
\end{cases}
\end{align*}
Furthermore, for $\confidenceSign \in \{-1,1\}$ we let
\begin{align*}
\empiricalRegressionFunctionConfidenceBound{\confidenceSign}{\delta}(A):=\biggl(\biggl(\frac{\empiricalClassConditionalDistributionConfidenceBound{1}{\confidenceSign}{\delta}(A)}{\empiricalClassConditionalDistributionConfidenceBound{0}{-\confidenceSign}{\delta}(A)+\empiricalClassConditionalDistributionConfidenceBound{1}{\confidenceSign}{\delta}(A)}\biggr)\vee 0 \biggr)\wedge 1,
\end{align*} whenever $\empiricalClassConditionalDistributionConfidenceBound{0}{-\confidenceSign}{\delta}(A)+\empiricalClassConditionalDistributionConfidenceBound{1}{\confidenceSign}{\delta}(A)>0$ and let $\empiricalRegressionFunctionConfidenceBound{\confidenceSign}{\delta}(A):=(\confidenceSign+1)/2$ otherwise. We also define,
\begin{align*}
\empiricalRegressionFunctionConfidenceBoundMid{\delta}(A)&:= \frac{1}{2}\left\lbrace \empiricalRegressionFunctionConfidenceBound{-1}{\delta}(A)+\empiricalRegressionFunctionConfidenceBound{1}{\delta}(A)\right\rbrace\\
\regressionFunctionIntervalWidth{\delta}(A)&:= \empiricalRegressionFunctionConfidenceBound{1}{\delta}(A)-\empiricalRegressionFunctionConfidenceBound{-1}{\delta}(A)\\
\regressionFunctionInterval{\delta}(A)&:=\left[ \empiricalRegressionFunctionConfidenceBoundMid{\delta}(A)-\regressionFunctionIntervalWidth{\delta}(A),\empiricalRegressionFunctionConfidenceBoundMid{\delta}(A)+\regressionFunctionIntervalWidth{\delta}(A)\right].
\end{align*}
For each $x \in \metricSpace$ we let
\begin{align*}
\setOfDistances{x}:=\bigcup_{y \in \{0,1\}}\bigl\{\metric(x,X_{\ell}^y) : \ell \in \N_0 \cap [0, \numClassConditional{y}-1]\bigr\}.
\end{align*}
Next, we define $\lepskiRadius{x}{\delta}$ to be the maximal value of $r \in \setOfDistances{x}$ such that 
\begin{align*}
\bigcap_{\tilde{r} \in \setOfDistances{x}\,:\,\tilde{r}\leq r}\regressionFunctionInterval{\delta}\bigl\{\closedMetricBall{x}{\tilde{r}}\bigr\}\neq \emptyset.
\end{align*}
Finally, we define $\estimatedRegressionFunction(x):=\empiricalRegressionFunctionConfidenceBoundMid{\delta}\bigl\{\closedMetricBall{x}{\lepskiRadius{x}{\delta}}\bigr\}$ as our estimate of $\regressionFunction(x)$.

In Lemma \ref{lemma:pointwiseRegressionFunctionHighProbBound} we shall show that, provided Assumption \ref{assumption:smoothRegressionFunctions} holds, for each $x \in \metricSpace$ the following bound holds with high probability,
\begin{align*}
\left| \estimatedRegressionFunction(x)-\regressionFunction(x) \right|\leq 14\left(\left\lbrace 
\smoothnessConstantRegressionFunction^{\frac{1}{\smoothnessExponentRegressionFunction}}\epsilonByNDeltaIteratedLogarithm{\nMin}{\delta} \right\rbrace^{\frac{\smoothnessExponentRegressionFunction}{2\smoothnessExponentRegressionFunction+1}}\vee\sqrt{\epsilonByNDeltaIteratedLogarithm{\nMin}{\delta}}\right),
\end{align*}
where $\nMin:=\numNull \wedge \numPositive$. Moreover, this point-wise bound will be shown to entail a corresponding high-probability simultaneously bound for a set of large measure (Proposition \ref{prop:regressionFunctionHighProbBound}).

\subsection{Estimating the label probability}\label{sec:estLabelProbability}

In this section we introduce our estimator for the label probabilities $\labelProbability$. Our approach will leverage the label-shift assumption (Assumption \ref{assumption:labelShift}). We begin by fixing $\delta \in (0,1)$ and $\maximalHolderExponentWeightFunction \in (0,\infty)$ along with a function $f:\X^{\numClassConditional{0}+\numClassConditional{1}+1} \rightarrow [0,1]$. In what follows we shall view $f$ as a real-valued function on $\X$ by conditioning on the the values of $\nullCovariate_0,\ldots,\nullCovariate_{\numNull-1}$, $\positiveCovariate_0,\ldots,\positiveCovariate_{\numPositive-1}$, which we take to be the first $\numNull+\numPositive$ arguments of $f$, thus leaving only the final argument unspecified so that 
\begin{align*}
f(x)\equiv f(\nullCovariate_0,\ldots,\nullCovariate_{\numNull-1},\positiveCovariate_0,\ldots,\positiveCovariate_{\numPositive-1},x).
\end{align*}
Moreover, given a finite Borel measure $\tilde{\mu}$ on $\metricSpace$ we let
\begin{align*}
\tilde{\mu}(f)&:=\int f d\tilde{\mu}\equiv \int f(\nullCovariate_0,\ldots,\nullCovariate_{\numNull-1},\positiveCovariate_0,\ldots,\positiveCovariate_{\numPositive-1},x) d\tilde{\mu}(x).
\end{align*}
In order to estimate $\labelProbability[\testTime]$ we apply \eqref{eq:marginalDistributionAsMixture} to deduce that, provided $ \positiveDistribution(f)\neq \nullDistribution(f)$, we have
\begin{align}\label{eq:justificationOfPluginRuleLabelProbability}
\labelProbability[\testTime]&=\frac{\marginalDistribution[\testTime](f)-  \nullDistribution(f)}{\positiveDistribution(f)-\nullDistribution(f)}.
\end{align}
For $y \in \{0,1\}$, we can estimate $\classConditionalDistribution{y}(f)$ by $\estimatorClassConditionalSecondSample{y}(f):={\numClassConditional{y}}^{-1}\sum_{i=\numClassConditional{y}}^{2\numClassConditional{y}-1}f(X^y_i)$. Hence, the primary challenge is to obtain a suitable estimate of $\marginalDistribution[\testTime](f)$. The primary challenge is to adapt to the level of temporal smoothness encoded via Assumption \ref{assumption:smoothlyVaryingLabelProbabilities}. We shall introduce a family of non-parametric estimators $\estimatorMarginalDistribution[q]{\testTime}(f)$ of the integral $\marginalDistribution[\testTime](f):=\int fd\marginalDistribution[\testTime]$ where $q  \in [\testTime]$. We introduce a family of shifted Legendre polynomials $(\legendrePolynomialShiftedOrthonormal)_{k \in \N_0}$ defined by $\legendrePolynomialShiftedOrthonormal[0]\equiv 1$ and
\begin{align*}
\legendrePolynomialShiftedOrthonormal(z):=\frac{\sqrt{2k+1}}{k!}\frac{d^{k}}{dz^{k}}\{z(z-1)\}^k,
\end{align*}
for $k \in \N$. Given $q \in [\testTime]$ and $p \in \N_0$ we let $ \weightMatrixByPAndQ$ denote the $q\times (p+1)$ matrix with entries $ \weightMatrixByPAndQNoArg_{i,j}(q,p):= \legendrePolynomialShiftedOrthonormal[j-1](i/q)$ for $(i,j) \in [q]\times [p+1]$. We then choose $\pOfQ$ to be the maximal value of $p \in \{0,\ldots, \maximalHolderExponentWeightFunction -1\}$ such that $\weightMatrixByPAndQ$ has rank $p+1$. We then let $\weightMatrixByQNoArg \equiv \weightMatrixByQ := \weightMatrixByPAndQNoArg(q,\pOfQ)$ denote the corresponding $q \times (\pOfQ+1)$ matrix. Given $i \in \{0,\ldots,q\}$, we let $\weightMatrixByQNoArg _{i,:}=(\weightMatrixByQNoArg _{i,j})_{j=1}^{\pOfQ+1}$ is the length $\pOfQ+1$ row vector with entries $\weightMatrixByQNoArg _{i,j}:=\legendrePolynomialShiftedOrthonormal[j-1](i/q)$ for $j=1,\ldots,\pOfQ+1$. Next, for each $i \in [q]$ we let
\begin{align*}
\estimatorMarginalDistributionWeights:=\weightMatrixByQNoArg _{0,:}\bigl( \weightMatrixByQNoArg^\top \weightMatrixByQNoArg \bigr)^+{{\weightMatrixByQNoArg_{i,:}}}^\top.
\end{align*}\sloppy
We then define $\estimatorMarginalDistribution[q]{\testTime}(f):=\sum_{i=1}^q \estimatorMarginalDistributionWeights f(X_{t-i})$ and let 
\begin{align*}
\estimatorMarginalDistributionRootVarTerm[q]{\testTime}{\delta}:= \|\estimatorMarginalDistributionWeightsNoArg\|_2\sqrt{2\log\bigg(\frac{\pi^2q^2}{\delta}\bigg)}.
\end{align*}
where $\|\estimatorMarginalDistributionWeightsNoArg\|^2_2:=\sum_{i=1}^q\estimatorMarginalDistributionWeights^2$. Next, for $\testTime\geq 8\maximalHolderExponentWeightFunction^2(\maximalHolderExponentWeightFunction+1)^2$, let $\lepskiQ$ be the maximal value of $q \in  \{8\maximalHolderExponentWeightFunction^2(\maximalHolderExponentWeightFunction+1)^2,\ldots,\testTime\}$ such that 
\begin{align*}
\left|\estimatorMarginalDistribution[q]{\testTime}(f)-\estimatorMarginalDistribution[q_\flat]{\testTime}(f)\right|\leq 2 \left\lbrace \estimatorMarginalDistributionRootVarTerm[q]{\testTime}{\delta}+\estimatorMarginalDistributionRootVarTerm[q_\flat]{\testTime}{\delta}\right\rbrace,
\end{align*}
for all $q_\flat \in \{8\maximalHolderExponentWeightFunction^2(\maximalHolderExponentWeightFunction+1)^2,\ldots,q-1\}$. For  $\testTime <8\maximalHolderExponentWeightFunction^2(\maximalHolderExponentWeightFunction+1)^2$ we let $\lepskiQ :=\testTime$. Our final estimator $\estimatorMarginalDistributionLepski(f)$ for $\marginalDistribution[\testTime](f)$ is then given by $\estimatorMarginalDistributionLepski(f):=\estimatorMarginalDistribution[\lepskiQ]{\testTime}(f)$. Proposition \ref{prop:marginalDistributionEstimatorBound} gives a high probability bound on the distance between $\estimatorMarginalDistribution[\lepskiQ]{\testTime}(f)$  and $\marginalDistribution[\testTime](f)$.

Our final estimator $\estimatorLabelProbabilityLepski$ for $\labelProbability[\testTime]$ is defined by
\begin{align*}
\estimatorLabelProbabilityLepski&:=\left(\left(\frac{\estimatorMarginalDistributionLepski(f)-  \estimatorClassConditionalSecondSample{0}(f)}{\estimatorClassConditionalSecondSample{1}(f)-\estimatorClassConditionalSecondSample{0}(f)}\right) \vee 0 \right) \wedge 1,
\end{align*}
provided $\estimatorClassConditionalSecondSample{0}(f) \neq \estimatorClassConditionalSecondSample{1}(f) $, and $\estimatorLabelProbabilityLepski:=1/2$ otherwise. This corresponds to a plug-in estimator motivated by \eqref{eq:justificationOfPluginRuleLabelProbability}. Corollary \ref{corr:labelProbabilityEstimatorBound} gives a high-probability bound on the distance between $\estimatorLabelProbabilityLepski$ and $\labelProbability[\testTime]$.


It remains to select a suitable function $f:\X^{\numNull+\numPositive+1} \rightarrow [0,1]$ and finalise our classifier, to apply the estimator $\estimatorLabelProbabilityLepski$. The denominator in the construction of $\estimatedLabelProbability$ suggests that we should choose $f:\metricSpace^{\numNull+\numPositive+1}\rightarrow [0,1]$ to maximise the absolute value of
\begin{align*}
(\positiveDistribution-\nullDistribution)(f) =\int fd\positiveDistribution-\int f\nullDistribution&=2\int f (2\regressionFunction-1)d\classConditionalDistribution{1/2},
\end{align*}
with a view to ensuring that denominator $\estimatorClassConditionalSecondSample{1}(f)-\estimatorClassConditionalSecondSample{0}(f)$ in the construction of $\estimatorLabelProbabilityLepski$ is not too close to zero. The bound given in Corollary \ref{corr:labelProbabilityEstimatorBound} will make this intuition precise. The maximum absolute value of $(\positiveDistribution-\nullDistribution)(f)$ would be attained by taking the function $f_{\regressionFunction}:\metricSpace \rightarrow [0,1]$ defined by $f_{\regressionFunction}(x):=\one\{2\regressionFunction(x)\geq 1\}$ for $x \in \metricSpace$, viewed as a function on $\metricSpace^{\numNull+\numPositive+1}$ which is constant in all but the final argument. Indeed, we have
\begin{align*}
(\positiveDistribution-\nullDistribution)(f_\regressionFunction) &= \totalVariation(\nullDistribution,\positiveDistribution):= \sup_{A\subseteq \metricSpace} \left| \positiveDistribution(A) - \nullDistribution(A)\right|,
\end{align*}
where the supremum is taken over all Borel measurable subsets $A \subseteq \metricSpace$. Since $\regressionFunction$ is not known a priori we shall leverage $\hat{f}_\delta$ defined by $\hat{f}_\delta(x):=\one\{2\estimatedRegressionFunction(x)\geq 1\}$, where $\estimatedRegressionFunction$ is the estimate for $\regressionFunction$ presented in Section \ref{sec:estRegressionFunction}.  Note that since $\estimatedRegressionFunction(x)$ is measurable with respect to $\nullCovariate_0,\ldots,\nullCovariate_{\numNull-1}$, $\positiveCovariate_0,\ldots,\positiveCovariate_{\numPositive-1}$, for each $x \in \metricSpace$, the same is true of $\hat{f}_\delta(x)$. 

Finally, we let $\estimatedLabelProbability:=\estimatorLabelProbabilityLepski[\hat{f}_\delta]$ and let $ \hat{\dataDependentClassifier}_{\testTime,\delta}$ be the classifier defined by  $\hat{\dataDependentClassifier}_{\testTime,\delta}(x) := \one\left\lbrace \estimatedRegressionFunction(x)+ \estimatedLabelProbability>1\right\rbrace$.

\section{A bound on the simple regret}\label{sec:regretBoundSingleTestTime}

In this section we shall present high-probability bound for the excess classification error bound for the classifier $\hat{\dataDependentClassifier}_{\testTime,\delta}$, described in Section \ref{sec:methodologySec}. To state our results, we require the following notation. Define the quantities $\highProbBoundDeltaLabelledData  \equiv \highProbBoundDeltaLabelledDataNoArg(\nMin,\smoothnessExponentRegressionFunction,\smoothnessConstantRegressionFunction, \totalVariation(\nullDistribution,\positiveDistribution))$ and $\highProbBoundSingleTimeStepDeltaUnlabelledData\equiv \highProbBoundSingleTimeStepDeltaUnlabelledDataNoArg(\windowSize,\holderExponentWeightFunction,\holderConstantWeightFunction, \totalVariation(\nullDistribution,\positiveDistribution))$ by
\begin{align*}
\highProbBoundDeltaLabelledData &:= \frac{\epsilonByNDeltaLogarithm{\nMin}{\delta}^{\frac{1}{2}}}{\totalVariation(\nullDistribution,\positiveDistribution)} \vee \left\lbrace 
\smoothnessConstantRegressionFunction^{\frac{1}{\smoothnessExponentRegressionFunction}}\epsilonByNDeltaIteratedLogarithm{\nMin}{\delta} \right\rbrace^{\frac{\smoothnessExponentRegressionFunction}{2\smoothnessExponentRegressionFunction+1}} \hspace{7mm}\text{and}\hspace{7mm}
\highProbBoundSingleTimeStepDeltaUnlabelledData:= \frac{\epsilonByNDeltaLogarithm{\windowSize }{\delta}^{\frac{1}{2}} }{\totalVariation(\nullDistribution,\positiveDistribution)}\vee \left\lbrace \frac{\holderConstantWeightFunction^{\frac{1}{\holderExponentWeightFunction}}\epsilonByNDeltaLogarithm{\windowSize}{\delta}}{\totalVariation(\nullDistribution,\positiveDistribution)^2}\right\rbrace^{\frac{\holderExponentWeightFunction}{2\holderExponentWeightFunction+1}},
\end{align*}
for $\holderExponentWeightFunction>0$ and extending continuously to $\holderExponentWeightFunction=0$ by $\highProbBoundSingleTimeStepDeltaUnlabelledData:=\bigl\{\epsilonByNDeltaLogarithm{\windowSize }{\delta}^{\frac{1}{2}} /\totalVariation(\nullDistribution,\positiveDistribution) \bigr\} \vee \holderConstantWeightFunction$.

\begin{theorem}\label{thm:mainClassificationSingleTimeStep}
Suppose Assumption \ref{assumption:labelShift} holds, Assumption \ref{assumption:smoothlyVaryingLabelProbabilities} hold with  $\windowSize \in [\numUnlabelled]$, $\holderExponentWeightFunction \in [0,\maximalHolderExponentWeightFunction]$, $\holderConstantWeightFunction \in [0,\infty)$, Assumption \ref{assumption:smoothRegressionFunctions} holds with $\smoothnessExponentRegressionFunction \in (0,1]$, $\smoothnessConstantRegressionFunction \in [0,\infty)$ and Assumption \ref{assumption:tsybakovMarginAssumption} with $\tsybakovMarginFunction:(0,\infty) \rightarrow (0,1]$. There exists a constant $\constantForMainClassificationSingleTimeStep\geq 1$, depending only on $\maximalHolderExponentWeightFunction$ such that for all $\delta \in (0,\totalVariation(\nullDistribution,\positiveDistribution)/\constantForMainClassificationSingleTimeStep)$ and $\nMin=\min\{\numNull,\numPositive\}$ if $ \smoothnessConstantRegressionFunction\epsilonByNDeltaIteratedLogarithm{\nMin}{\delta}^\smoothnessExponentRegressionFunction \leq \left\lbrace  \totalVariation(\nullDistribution,\positiveDistribution)/\constantForMainClassificationSingleTimeStep\right\rbrace^{2\smoothnessExponentRegressionFunction+1}$ then with probability at least $1-\delta$,
\begin{align*}
{\testError_{\testTime}(\hat{\dataDependentClassifier}_{\testTime,\delta})-\testError_\testTime(\bayesClassifier[\testTime])}\leq {2\tsybakovMarginFunction\bigl(\constantForMainClassificationSingleTimeStep \bigl\{\highProbBoundDeltaLabelledData\vee \highProbBoundSingleTimeStepDeltaUnlabelledData\bigr\}\bigr) +\constantForMainClassificationSingleTimeStep\delta}.
\end{align*}
\end{theorem}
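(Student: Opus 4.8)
The plan is to combine the pointwise/uniform control on the transformed density‑ratio estimator with the adaptive bound on the label‑probability estimator, and then feed the resulting sup‑norm‑type error into the margin‑based identity of Lemma~\ref{lemma:classificationErrorSimpleFunction}. First I would invoke Proposition~\ref{prop:regressionFunctionHighProbBound} to obtain, on an event of probability at least $1-O(\delta)$, a simultaneous bound $|\estimatedRegressionFunction(x)-\regressionFunction(x)| \lesssim \highProbBoundDeltaLabelledData$ valid for all $x$ outside a set of small $\classConditionalDistribution{1/2}$‑measure (the exceptional set contributes only an additive $O(\delta)$ term, which is why the conclusion carries the $\constantForMainClassificationSingleTimeStep\delta$ slack). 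Next I would analyse the label‑probability estimator $\estimatedLabelProbability=\estimatorLabelProbabilityLepski[\hat f_\delta]$: since $\hat f_\delta(x)=\one\{2\estimatedRegressionFunction(x)\ge 1\}$ is close to $f_\regressionFunction$ on the good event, the denominator $(\positiveDistribution-\nullDistribution)(\hat f_\delta)$ is, up to the $\epsilonByNDeltaIteratedLogarithm{\nMin}{\delta}$‑scale error and the margin behaviour near $2\regressionFunction=1$, within a constant factor of $\totalVariation(\nullDistribution,\positiveDistribution)$; the hypothesis $\smoothnessConstantRegressionFunction\epsilonByNDeltaIteratedLogarithm{\nMin}{\delta}^{\smoothnessExponentRegressionFunction}\le\{\totalVariation(\nullDistribution,\positiveDistribution)/\constantForMainClassificationSingleTimeStep\}^{2\smoothnessExponentRegressionFunction+1}$ and the restriction $\delta<\totalVariation(\nullDistribution,\positiveDistribution)/\constantForMainClassificationSingleTimeStep$ are exactly what guarantee this denominator stays bounded away from zero, so that Corollary~\ref{corr:labelProbabilityEstimatorBound} applies and yields $|\estimatedLabelProbability-\labelProbability[\testTime]|\lesssim \highProbBoundDeltaLabelledData\vee\highProbBoundSingleTimeStepDeltaUnlabelledData$ on the good event.

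With these two ingredients in hand I would set $\xi := \constantForMainClassificationSingleTimeStep\{\highProbBoundDeltaLabelledData\vee\highProbBoundSingleTimeStepDeltaUnlabelledData\}$ and observe that the plug‑in classifier $\hat{\dataDependentClassifier}_{\testTime,\delta}$ can only disagree with the Bayes classifier $\bayesClassifier[\testTime]$ at a point $x$ (outside the exceptional set) if $|\regressionFunction(x)+\labelProbability[\testTime]-1| < |\regressionFunction(x)-\estimatedRegressionFunction(x)| + |\labelProbability[\testTime]-\estimatedLabelProbability| \le \xi$; that is, $\{\hat{\dataDependentClassifier}_{\testTime,\delta}\ne\bayesClassifier[\testTime]\}$ is contained, up to the exceptional set, in the band $\{0<|1-\labelProbability[\testTime]-\regressionFunction|<\xi\}$. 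Then the second display of Lemma~\ref{lemma:classificationErrorSimpleFunction} gives
\begin{align*}
\testError_\testTime(\hat{\dataDependentClassifier}_{\testTime,\delta})-\testError_\testTime(\bayesClassifier[\testTime]) = 2\int_{\{\hat{\dataDependentClassifier}_{\testTime,\delta}\ne\bayesClassifier[\testTime]\}}|1-\labelProbability[\testTime]-\regressionFunction|\,d\classConditionalDistribution{1/2} \le 2\xi\cdot\classConditionalDistribution{1/2}\bigl(\{0<|1-\labelProbability[\testTime]-\regressionFunction|<\xi\}\bigr) + O(\delta),
\end{align*}
and Assumption~\ref{assumption:tsybakovMarginAssumption} bounds the measure factor by $\tsybakovMarginFunction(\xi)/\xi$, so the right‑hand side is at most $2\tsybakovMarginFunction(\xi)+O(\delta)$, which is the claimed bound after absorbing constants into $\constantForMainClassificationSingleTimeStep$.

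The main obstacle I expect is the second step: controlling $|\estimatedLabelProbability-\labelProbability[\testTime]|$ requires simultaneously handling (i) the fact that $\hat f_\delta$ is data‑dependent through $\estimatedRegressionFunction$, so the fresh‑sample estimators $\estimatorClassConditionalSecondSample{y}(\hat f_\delta)$ and the Legendre‑polynomial estimator $\estimatorMarginalDistributionLepski(\hat f_\delta)$ must be analysed conditionally on the first halves of the labelled samples (this is precisely why the sample‑splitting was set up), (ii) showing the denominator does not vanish, which needs the margin structure near the threshold $2\regressionFunction=1$ together with the smallness condition on $\epsilonByNDeltaIteratedLogarithm{\nMin}{\delta}$, and (iii) verifying that the Lepski‑type choice $\lepskiQ$ adapts to the temporal Hölder exponent $\holderExponentWeightFunction$ so that the bias–variance trade‑off produces exactly the exponent $\holderExponentWeightFunction/(2\holderExponentWeightFunction+1)$ appearing in $\highProbBoundSingleTimeStepDeltaUnlabelledData$. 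Everything downstream — the reduction via Lemma~\ref{lemma:classificationErrorSimpleFunction} and the margin bound — is then a short deterministic computation on the good event, with a union bound over the $O(1)$ failure events each of probability $O(\delta)$ accounting for the final $\constantForMainClassificationSingleTimeStep\delta$ term.
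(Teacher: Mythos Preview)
Your proposal is correct and follows essentially the same route as the paper: Proposition~\ref{prop:regressionFunctionHighProbBound} for $\estimatedRegressionFunction$, Corollary~\ref{corr:labelProbabilityEstimatorBound} for $\estimatedLabelProbability$ (after showing the denominator is comparable to $\totalVariation(\nullDistribution,\positiveDistribution)$), then Lemma~\ref{lemma:classificationErrorSimpleFunction} plus Assumption~\ref{assumption:tsybakovMarginAssumption}. The paper simply packages the last two steps into the modular Proposition~\ref{prop:keyTechnicalLemmaForProofOfMainClassificationSingleTimeStep} and the verification of its hypotheses into Lemma~\ref{lemma:LabelledDataLemmaForApplyingKeyTechnicalLemmaForProofOfMainClassificationSingleTimeStep}.

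One minor clarification on your obstacle (ii): the denominator control is more elementary than you anticipate and does \emph{not} use the Tsybakov margin assumption. In the paper's Lemma~\ref{lemma:keyTechnicalLemmaForProofOfMainClassificationSingleTimeStepFirstPartTotalVariation}, the difference $\totalVariation(\nullDistribution,\positiveDistribution)-(\positiveDistribution-\nullDistribution)(\hat f_\delta)$ is bounded by $2\int_{\{|2\regressionFunction-1|<2\Delta_a\}}|2\regressionFunction-1|\,d\classConditionalDistribution{1/2}+2\Delta_b \le 4\Delta_a+2\Delta_b$, using only that the integrand is at most $2\Delta_a$ and the $\classConditionalDistribution{1/2}$-measure of any set is at most $1$. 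So the smallness hypotheses on $\epsilonByNDeltaIteratedLogarithm{\nMin}{\delta}$ and $\delta$ alone force $(\positiveDistribution-\nullDistribution)(\hat f_\delta)\ge \totalVariation(\nullDistribution,\positiveDistribution)/2$, with no margin input required at this stage.
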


To interpret Theorem \ref{thm:mainClassificationSingleTimeStep} let's consider the setting where $\tsybakovMarginFunction=\tsybakovPolynomialMarginFunction: (0,\infty) \rightarrow [0,1]$, by $\tsybakovPolynomialMarginFunction(z):=  1 \wedge \left(\tsybakovMarginConstant z^{\tsybakovMarginExponent}\right)$ for $z \in [0,\infty)$, where $\tsybakovMarginExponent \in [1,2]$ and $\smoothnessConstantRegressionFunction$, $\holderConstantWeightFunction$, $\tsybakovMarginConstant$ and $\totalVariation(\nullDistribution,\positiveDistribution)$ are viewed as a constants and take $\delta = 1/\nMin$. Then Theorem \ref{thm:mainClassificationSingleTimeStep} gives the bound
\begin{align*}
\E\left\lbrace \testError_{\testTime}(\hat{\dataDependentClassifier}_{\testTime,\delta})\right\rbrace \leq \testError_\testTime(\bayesClassifier[\testTime])+\tilde{O}\left( {\nMin}^{-\frac{\smoothnessExponentRegressionFunction\tsybakovMarginExponent}{2\smoothnessExponentRegressionFunction+1}}+ {\windowSize}^{-\frac{\holderExponentWeightFunction\tsybakovMarginExponent}{2\holderExponentWeightFunction+1}}\right),
\end{align*}
where $\tilde{O}$ suppresses dependence on poly-logarithmic factors. One interesting setting is when Assumption \ref{assumption:smoothlyVaryingLabelProbabilities} holds with $\holderConstantWeightFunction=0$, so that the label probabilities are constant over $\windowSize$ time steps. Then, if we again view $\smoothnessConstantRegressionFunction$, $\tsybakovMarginConstant$ and $\totalVariation(\nullDistribution,\positiveDistribution)$ are viewed as a constants and take $\delta = 1/\nMin$, Theorem \ref{thm:mainClassificationSingleTimeStep} yields a bound of the form  
\begin{align*}
\E\left\lbrace \testError_{\testTime}(\hat{\dataDependentClassifier}_{\testTime,\delta})\right\rbrace \leq \testError_\testTime(\bayesClassifier[\testTime])+\tilde{O}\left( {\nMin}^{-\frac{\smoothnessExponentRegressionFunction\tsybakovMarginExponent}{2\smoothnessExponentRegressionFunction+1}}+ {\windowSize}^{-\frac{\tsybakovMarginExponent}{2}}\right).
\end{align*}
This matches the minimax optimal rate of \cite[Theorem 12]{maity2020minimax} without prior knowledge of the number $\windowSize$ of informative covariate observations.

Our approach to proving Theorem \ref{thm:mainClassificationSingleTimeStep} is described in Section \ref{sec:mainIdeasForProofSection}.

\section{Bounds on the average dynamic regret}\label{sec:averageRegretBounds}

\newcommand{\timeInterval}{\mathbb{T}}

\newcommand{\timeHorizon}{\mathrm{T}}
\newcommand{\numJumps}{\mathrm{J}}

\newcommand{\labelProbabilityFunctionWithJumps}[1][j]{h_{#1}}

\newcommand{\testTimeWithinSequence}{t}

\newcommand{\unlabelledDataWithinSequence}{\unlabelledSample(\testTimeWithinSequence)}

\newcommand{\policy}{\bm{\varphi}}
\newcommand{\empiricalPolicy}[1][\delta]{\bm{\hat{\varphi}}_{#1}^{\diamond}}

\newcommand{\policySingleTimeStep}{\varphi}
\newcommand{\empiricalPolicySingleTimeStep}[1][\testTimeWithinSequence,\delta_{\testTimeWithinSequence}]{\hat{\varphi}_{#1}^{\diamond}}

\newcommand{\comparisonClassifierForRegretDef}{\mathring{\policySingleTimeStep}_{\testTimeWithinSequence}}

\newcommand{\regretOverTimeInterval}{\mathcal{R}}

We shall now consider a sequential framework in which our task is to devise a policy for classifying examples over a series rounds. As before, we shall assume that we have access to a sample $\nullSample$ consisting of $2\numNull$ independent examples with distribution $\nullDistribution$ and a sample $\positiveSample$ consisting of $2\numPositive$ independent examples with distribution $\positiveDistribution$. Also as before, we assume the existence of the sequence of independent random pairs $((\unlabelledCovariate_\ell,\response_\ell))_{\ell =0}^\infty$ in $\metricSpace \times \{0,1\}$. At each time step $\testTimeWithinSequence \in \N$ we have access to an unlabelled sample $\unlabelledDataWithinSequence:=(\unlabelledCovariate_\ell)_{\ell=0}^{\testTimeWithinSequence}$ consisting of the previous $\testTimeWithinSequence$ covariate vectors. Our goal is to leverage the currently visible data $\nullSample$, $\positiveSample$, $\unlabelledDataWithinSequence$, to provide a rule for predicting the $\response_{\testTime}$ from $\unlabelledCovariate_{\testTime}$. As such we seek a policy $\policy=(\policySingleTimeStep_\testTimeWithinSequence)_{\testTimeWithinSequence \in \N}$ where for each $\testTimeWithinSequence \in \N$ we would like to obtain $\policySingleTimeStep \in \setOfDataDependentClassifiers[\testTimeWithinSequence]$ to minimise
{\small{\begin{align*}
\testError_{\testTimeWithinSequence}(\policySingleTimeStep_{\testTimeWithinSequence}) &:= \Prob\bigl\{\policySingleTimeStep_{\testTimeWithinSequence}(\nullSample,\positiveSample,\unlabelledDataWithinSequence,\unlabelledCovariate_{\testTimeWithinSequence}) \neq \response_{\testTimeWithinSequence}~|~\nullSample,\positiveSample,\unlabelledDataWithinSequence \bigr\}, 
\end{align*}}}

\noindent In order to quantify the performance of our policy $\policy$, we consider time intervals $\timeInterval=\{\min\timeInterval,\ldots,\max\timeInterval\}$ and let
\begin{align*}
\regretOverTimeInterval_{\timeInterval}(\policy)&:=\frac{1}{|\timeInterval|}\sum_{\testTimeWithinSequence \in \timeInterval} \biggl\{  \testError_{\testTimeWithinSequence}(\policySingleTimeStep_{\testTimeWithinSequence})-  \inf_{\comparisonClassifierForRegretDef}\testError_{\testTimeWithinSequence}(\comparisonClassifierForRegretDef)\biggr\},  
\end{align*}
where the infimum is taken over all maps $\comparisonClassifierForRegretDef \in \setOfDataDependentClassifiers[\testTimeWithinSequence]$. 

We shall consider the policy $\empiricalPolicy=(\empiricalPolicySingleTimeStep)_{\testTimeWithinSequence \in \N}$ as follows. First, we leverage the first half of the labelled samples $\nullSample$, $\positiveSample$ to construct the estimate $\estimatedRegressionFunction$ of the regression function as in Section \ref{sec:estRegressionFunction}. We then take $\hat{f}_\delta$ to be the function defined by $\hat{f}_\delta(x):=\one\{2\estimatedRegressionFunction(x)\geq 1\}$ for $x \in \metricSpace$. We then use the second half of our labelled samples $\nullSample$, $\positiveSample$ to construct estimates $\estimatorClassConditionalSecondSample{y}(\hat{f}_\delta)$ for $\classConditionalDistribution{y}(\hat{f}_\delta)$ for $y \in \{0,1\}$, as described in Section \ref{sec:estLabelProbability}. Next, for each $\testTimeWithinSequence \in \N$, we take $\delta_{\testTimeWithinSequence}:=(\pi^2\delta)/(6\testTimeWithinSequence^2)$ and leverage our unlabelled data $\unlabelledDataWithinSequence$ to construct an estimate $\hat{\marginalDistribution}_{\testTimeWithinSequence,\delta_t}(\hat{f}_\delta)$ for $\marginalDistribution[\testTimeWithinSequence](\hat{f}_\delta)$ constructed as in Section \ref{sec:estLabelProbability} but with $\unlabelledDataWithinSequence$ in place of $\unlabelledSample$ and $\delta_t$ in place of $\delta$. We then combine our estimates  $\estimatorClassConditionalSecondSample{y}(\hat{f}_\delta)$ for $y \in \{0,1\}$, and $\hat{\marginalDistribution}_{\testTimeWithinSequence,\delta_t}(\hat{f}_\delta)$ as in Section \ref{sec:estLabelProbability} to obtain the corresponding estimate $\estimatorLabelProbabilityLepskiUnspecified{\testTimeWithinSequence}{\delta_t}{\hat{f}_\delta}$ for $\labelProbability[\testTimeWithinSequence]$. Finally, we let $\empiricalPolicySingleTimeStep \in \setOfDataDependentClassifiers$ denote the data-dependent classifier defined for $x \in \metricSpace$ by  
\begin{align*}\empiricalPolicySingleTimeStep(x) := \one\left\lbrace \estimatedRegressionFunction(x)+ \estimatorLabelProbabilityLepskiUnspecified{\testTimeWithinSequence}{\delta_t}{\hat{f}_\delta}>1\right\rbrace.
\end{align*}

In order to demonstrate the efficacy of the policy $\empiricalPolicy$ we shall consider two distinct regimes: First, a regime in which the sequence label probabilities $(\labelProbability[\testTimeWithinSequence])_{\testTimeWithinSequence \in \N}$ varies smoothly, except for a small number of jumps. Second a regime in which we bound the overall variation in the sequence of label probabilities. As before, we shall also adopt the label-shift assumption (Assumption \ref{assumption:labelShift}) as well as the smoothness assumption on $\regressionFunction$ (Assumption \ref{assumption:smoothRegressionFunctions}).

\begin{assumption}[Temporal smoothness with jumps]\label{assumption:smoothlyVaryingWithJumpsLabelProbabilities} Let $\timeInterval \subseteq \N$ be a time interval. Suppose $\timeHorizon =|\timeInterval|$, $\numJumps \in [\timeHorizon]$, $\holderExponentWeightFunction \in [0,\maximalHolderExponentWeightFunction]$ and $\holderConstantWeightFunction \in [0,\infty)$. Suppose also that there exists a strictly increasing sequence $(s_j)_{j=0}^{\numJumps} \in \N_0^{\numJumps+1}$ with $s_0=\min \timeInterval-1$, $s_{\numJumps}=\max \timeInterval$ and for each $j \in [\numJumps]$ there exists $\labelProbabilityFunctionWithJumps \in \holderFunctionClass(\holderExponentWeightFunction,\holderConstantWeightFunction)$ and 
\begin{align*}
\labelProbability=\labelProbabilityFunctionWithJumps\bigg(\frac{\ell-s_{j-1}-1}{s_{j}-s_{j-1}-1}\bigg),
\end{align*}
such that for all $\ell \in \{s_{j-1}+1,\ldots,s_{j}\}$.
\end{assumption}

We shall also use the following sequential analogue of Assumption \ref{assumption:tsybakovMarginAssumption}.

\begin{assumption}[Sequential Tysbakov margin]\label{assumption:tsybakovMarginAssumptionSequential} Let $\timeInterval \subseteq \N$ be a time interval. Suppose $\tsybakovMarginExponent \in [1/2,\infty)$ and $\tsybakovMarginConstant \in [1,\infty)$, are such that for all $\testTimeWithinSequence \in \timeInterval$ and $\zeta \in (0,1]$, 
\begin{align*}
\classConditionalDistribution{1/2}\bigl( \bigl\{x \in \metricSpace : 0<|\regressionFunction(x)- 1-\labelProbability[\testTimeWithinSequence]| < \zeta \bigr\}\bigr) \leq \tsybakovMarginConstant\, \zeta^{\tsybakovMarginExponent-1}.
\end{align*}
\end{assumption}

Note that Assumption \ref{assumption:tsybakovMarginAssumptionSequential} always holds with $\tsybakovMarginExponent=1$ and $\tsybakovMarginConstant=2$. However, we would often expect Assumption \ref{assumption:tsybakovMarginAssumptionSequential} to hold with $\tsybakovMarginExponent=2$ (see Section \ref{sec:statisticalSetting}). Given $q \in (0,\infty)$ and $r \in [1,\infty)$ we let
\begin{alignat*}{2}
\averagePowerTransform{r}{q}:&=\left\lbrace r^{q-1}\left(1+\int_1^r  z^{-q} dz\right)\right\rbrace^{\frac{1}{q}}= \begin{cases} 
\log(r) &\text{ if }q=1\\ \left(
 \frac{1-qr^{q-1}}{1-q}\right) ^{\frac{1}{q}} &\text{ if } q\neq 1.\end{cases}
\end{alignat*}
Note that for $q<1$ we have $\sup_{r \in [1,\infty)}\averagePowerTransform{r}{q}= (1-q)^{-\frac{1}{q}}$. To state our bound we also define
\begin{align*}
\highProbabilityUpperBoundUnlabelledForAverageRegretBound(r,\holderConstantWeightFunction):=\sqrt{\frac{\averagePowerTransform{r}{\frac{\tsybakovMarginExponent}{2}}\epsilonByNDeltaBase{r}{\frac{\delta}{\maxTimeInterval}}}{ \totalVariation(\nullDistribution,\positiveDistribution)^2}}\vee\left\lbrace\frac{\holderConstantWeightFunction^{\frac{1}{\holderExponentWeightFunction}}\epsilonByNDeltaBase{r}{\frac{\delta}{\maxTimeInterval}}}{ \totalVariation(\nullDistribution,\positiveDistribution)^2}\right\rbrace^{\frac{\holderExponentWeightFunction}{2\holderExponentWeightFunction+1}}\hspace{-5mm},
\end{align*}
for $\holderExponentWeightFunction>0$ and extend continuously to $\holderExponentWeightFunction=0$ by $\highProbabilityUpperBoundUnlabelledForAverageRegretBoundZeroHolder(r,\holderConstantWeightFunction):=\holderConstantWeightFunction \vee \sqrt{{\averagePowerTransform{r}{{\tsybakovMarginExponent}/{2}}\epsilonByNDeltaBase{r}{{\delta}/{\maxTimeInterval}}}}/{ \totalVariation(\nullDistribution,\positiveDistribution)}$. Note that in the piece-wise stationary case we have $\holderConstantWeightFunction=0$ so that \[\highProbabilityUpperBoundUnlabelledForAverageRegretBound(r,0)=\sqrt{{\averagePowerTransform{r}{{\tsybakovMarginExponent}/{2}}\epsilonByNDeltaBase{r}{{\delta}/{\maxTimeInterval}}}}/{ \totalVariation(\nullDistribution,\positiveDistribution)}.\]

Our bound for the setting with smoothly varying labels with jumps is as follows.

\begin{theorem}\label{thm:highProbBoundForTemporalSmoothnessWithJumps} Let $\timeInterval \subseteq \N$ be a time interval with $\max \timeInterval=\maxTimeInterval \in \N$. Suppose Assumption \ref{assumption:labelShift} holds, Assumption \ref{assumption:smoothRegressionFunctions} holds with $\smoothnessExponentRegressionFunction \in (0,1]$, $\smoothnessConstantRegressionFunction \in [0,\infty)$, Assumption \ref{assumption:smoothlyVaryingWithJumpsLabelProbabilities} holds with $\timeHorizon =|\timeInterval|$, $\numJumps \in [\timeHorizon]$, $\holderExponentWeightFunction \in [0,\maximalHolderExponentWeightFunction]$ and $\holderConstantWeightFunction \in [0,\infty)$, and Assumption \ref{assumption:tsybakovMarginAssumptionSequential} holds with $\tsybakovMarginExponent \in [1,2+1/\holderExponentWeightFunction]$, $\tsybakovMarginConstant \in [1,\infty)$. There exists a constant $\constantForSmoothnessWithJumpsAverageRegretBound \geq 1$, depending only on $\maximalHolderExponentWeightFunction$ and $\tsybakovMarginExponent$ such that for all $\delta \in (0,\totalVariation(\nullDistribution,\positiveDistribution)/\constantForSmoothnessWithJumpsAverageRegretBound]$ and $\nMin=\min\{\numNull,\numPositive\}$ with $ \smoothnessConstantRegressionFunction\epsilonByNDeltaIteratedLogarithm{\nMin}{\delta}^\smoothnessExponentRegressionFunction \leq \left\lbrace  \totalVariation(\nullDistribution,\positiveDistribution)/\constantForSmoothnessWithJumpsAverageRegretBound \right\rbrace^{2\smoothnessExponentRegressionFunction+1}$  then with probability at least $1-\delta$,
\begin{align*}
\regretOverTimeInterval_{\timeInterval}(\policy)\leq \tsybakovMarginConstant\,\constantForSmoothnessWithJumpsAverageRegretBound \bigl\{\highProbBoundDeltaLabelledData\vee \highProbabilityUpperBoundUnlabelledForAverageRegretBound(\timeHorizon/\numJumps,\holderConstantWeightFunction)\bigr\}^{\tsybakovMarginExponent} +\constantForSmoothnessWithJumpsAverageRegretBound\delta.
\end{align*}
\end{theorem}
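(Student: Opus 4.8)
The plan is to bound the average dynamic regret by an average of single‑round excess classification errors, to apply the reasoning behind Theorem~\ref{thm:mainClassificationSingleTimeStep} at each round with a round‑dependent confidence level and a round‑dependent effective window, and finally to aggregate the per‑round bounds over the jump‑delimited segments by a concavity argument engineered around the quantity $\averagePowerTransform{\cdot}{\cdot}$. First I would reduce to single rounds: by Lemma~\ref{lemma:classificationErrorSimpleFunction} the excess error of any data‑dependent classifier is a non‑negative integral against $\classConditionalDistribution{1/2}$, so $\inf_{\comparisonClassifierForRegretDef}\testError_{\testTimeWithinSequence}(\comparisonClassifierForRegretDef)=\testError_{\testTimeWithinSequence}(\bayesClassifier[\testTimeWithinSequence])$ and hence $\regretOverTimeInterval_{\timeInterval}(\policy)=|\timeInterval|^{-1}\sum_{\testTimeWithinSequence\in\timeInterval}\{\testError_{\testTimeWithinSequence}(\empiricalPolicySingleTimeStep)-\testError_{\testTimeWithinSequence}(\bayesClassifier[\testTimeWithinSequence])\}$. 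Let $s_0<\dots<s_{\numJumps}$ be the break‑points of Assumption~\ref{assumption:smoothlyVaryingWithJumpsLabelProbabilities} and $L_j:=s_j-s_{j-1}$ the segment lengths. For a round $\testTimeWithinSequence$ with $s_{j-1}<\testTimeWithinSequence\le s_j$ set $\windowSize_\testTimeWithinSequence:=\testTimeWithinSequence-s_{j-1}-1$; substituting $\ell=\testTimeWithinSequence-u\windowSize_\testTimeWithinSequence$ shows that $\labelProbability[\ell]=\weightFunction((\testTimeWithinSequence-\ell)/\windowSize_\testTimeWithinSequence)$ for all $\ell\in\{\testTimeWithinSequence-\windowSize_\testTimeWithinSequence,\dots,\testTimeWithinSequence\}$, where $\weightFunction(u):=\labelProbabilityFunctionWithJumps(\windowSize_\testTimeWithinSequence(1-u)/(s_j-s_{j-1}-1))$. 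Since $\windowSize_\testTimeWithinSequence\le s_j-s_{j-1}-1=L_j-1$ the inner affine map has modulus of slope at most $1$ and carries $[0,1]$ into $[0,1]$, so $\weightFunction\in\holderFunctionClass(\holderExponentWeightFunction,\holderConstantWeightFunction)$ and Assumption~\ref{assumption:smoothlyVaryingLabelProbabilities} holds at round $\testTimeWithinSequence$ with window $\windowSize_\testTimeWithinSequence$ and the same H\"older parameters. Rounds in segments of length at most $2$, and the first $q_0:=8\maximalHolderExponentWeightFunction^2(\maximalHolderExponentWeightFunction+1)^2$ rounds of each segment (for which $\windowSize_\testTimeWithinSequence<q_0$), are set aside and their excess error bounded trivially by $1$; there are at most $q_0\numJumps$ of them.

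Next I would invoke the single‑round bound with an adaptive confidence parameter. The labelled ingredients $\estimatedRegressionFunction,\estimatorClassConditionalSecondSample{0}(\hat{f}_\delta),\estimatorClassConditionalSecondSample{1}(\hat{f}_\delta)$ are built once with parameter $\delta$; under the hypothesis relating $\smoothnessConstantRegressionFunction,\smoothnessExponentRegressionFunction$ to $\totalVariation(\nullDistribution,\positiveDistribution)$, Proposition~\ref{prop:regressionFunctionHighProbBound} yields an event $\mathcal E_{\mathrm{lab}}$ with $\Prob(\mathcal E_{\mathrm{lab}}^{c})=O(\delta)$ on which $\estimatedRegressionFunction$ is accurate on a set of $\classConditionalDistribution{1/2}$‑measure $1-O(\delta)$ and $\hat{f}_\delta$ is a near‑optimal test function. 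Conditionally on the labelled data, for each round $\testTimeWithinSequence$ the marginal estimator is built with $\delta_\testTimeWithinSequence=\pi^2\delta/(6\testTimeWithinSequence^2)$ and, on an event of conditional probability $1-O(\delta_\testTimeWithinSequence)$, the argument of Theorem~\ref{thm:mainClassificationSingleTimeStep} gives $\testError_{\testTimeWithinSequence}(\empiricalPolicySingleTimeStep)-\testError_{\testTimeWithinSequence}(\bayesClassifier[\testTimeWithinSequence])\le 2\tsybakovMarginFunction(\constantForMainClassificationSingleTimeStep\{\highProbBoundDeltaLabelledData\vee\beta_\testTimeWithinSequence\})+\constantForMainClassificationSingleTimeStep\delta$, where $\beta_\testTimeWithinSequence$ is the quantity $\highProbBoundSingleTimeStepDeltaUnlabelledData$ with window $\windowSize$ set to $\windowSize_\testTimeWithinSequence$ and $\delta$ to $\delta_\testTimeWithinSequence$. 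Because $\windowSize_\testTimeWithinSequence\le\testTimeWithinSequence\le\maxTimeInterval$ we have $\logBar(\windowSize_\testTimeWithinSequence/\delta_\testTimeWithinSequence)\le 3\logBar(\maxTimeInterval/\delta)$, hence $\beta_\testTimeWithinSequence\le C\{\windowSize_\testTimeWithinSequence^{-1}\logBar(\maxTimeInterval/\delta)\}^{1/2}/\totalVariation(\nullDistribution,\positiveDistribution)\vee C\{\holderConstantWeightFunction^{1/\holderExponentWeightFunction}\logBar(\maxTimeInterval/\delta)/(\totalVariation(\nullDistribution,\positiveDistribution)^2\windowSize_\testTimeWithinSequence)\}^{\holderExponentWeightFunction/(2\holderExponentWeightFunction+1)}$. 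Since $\sum_{\testTimeWithinSequence\in\timeInterval}\delta_\testTimeWithinSequence\le\delta$, a union bound over $\timeInterval$ together with $\mathcal E_{\mathrm{lab}}$ produces an event of probability $1-O(\delta)$ on which the displayed per‑round bound holds for every non‑trivial round simultaneously.

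Finally I would aggregate over segments. Use Assumption~\ref{assumption:tsybakovMarginAssumptionSequential} in the form $\tsybakovMarginFunction(\zeta)\le\tsybakovMarginConstant\zeta^{\tsybakovMarginExponent}$, so $\tsybakovMarginFunction(\constantForMainClassificationSingleTimeStep\{a\vee b\})\le\tsybakovMarginConstant\constantForMainClassificationSingleTimeStep^{\tsybakovMarginExponent}(a^{\tsybakovMarginExponent}+b^{\tsybakovMarginExponent})$. Averaging, the $\highProbBoundDeltaLabelledData^{\tsybakovMarginExponent}$ term and the $\delta$ term reproduce the stated contributions directly, while the trivial rounds contribute at most $q_0\numJumps/\timeHorizon$, which is $O(\highProbabilityUpperBoundUnlabelledForAverageRegretBound(\timeHorizon/\numJumps,\holderConstantWeightFunction)^{\tsybakovMarginExponent})$ since $\totalVariation(\nullDistribution,\positiveDistribution)\le 1$, $\averagePowerTransform{r}{q}\ge 1$ and $\epsilonByNDeltaBase{r}{\delta/\maxTimeInterval}\ge 1/r$ force $\highProbabilityUpperBoundUnlabelledForAverageRegretBound(\timeHorizon/\numJumps,\holderConstantWeightFunction)^{\tsybakovMarginExponent}\ge\numJumps/\timeHorizon$ (the $\averagePowerTransform{\cdot}{\cdot}$ factor being what rescues this when $\tsybakovMarginExponent>2$). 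It remains to control $\timeHorizon^{-1}\sum_{\testTimeWithinSequence}\beta_\testTimeWithinSequence^{\tsybakovMarginExponent}$. Within segment $j$ the effective window $\windowSize_\testTimeWithinSequence$ runs over $\{q_0,\dots,L_j-1\}$, so after raising to the power $\tsybakovMarginExponent$ and summing the two terms become, up to constants, $(\logBar(\maxTimeInterval/\delta)/\totalVariation(\nullDistribution,\positiveDistribution)^2)^{\tsybakovMarginExponent/2}\sum_{m=1}^{L_j}m^{-\tsybakovMarginExponent/2}$ and $(\holderConstantWeightFunction^{1/\holderExponentWeightFunction}\logBar(\maxTimeInterval/\delta)/\totalVariation(\nullDistribution,\positiveDistribution)^2)^{\holderExponentWeightFunction\tsybakovMarginExponent/(2\holderExponentWeightFunction+1)}\sum_{m=1}^{L_j}m^{-\holderExponentWeightFunction\tsybakovMarginExponent/(2\holderExponentWeightFunction+1)}$. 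The comparison $\sum_{m=1}^{L}m^{-q}\le 1+\int_1^L z^{-q}\mathrm dz=L^{1-q}\averagePowerTransform{L}{q}^{q}$ turns each segment‑$j$ summand into $L_j\,\highProbabilityUpperBoundUnlabelledForAverageRegretBound(L_j,\holderConstantWeightFunction)^{\tsybakovMarginExponent}$, where for the second term the exponent $\holderExponentWeightFunction\tsybakovMarginExponent/(2\holderExponentWeightFunction+1)$ is $\le 1$ precisely because $\tsybakovMarginExponent\le 2+1/\holderExponentWeightFunction$, so the attendant $\averagePowerTransform{\cdot}{\cdot}$ factor is a constant depending only on $\maximalHolderExponentWeightFunction$ and $\tsybakovMarginExponent$. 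The maps $L\mapsto L\cdot(\text{first term of }\highProbabilityUpperBoundUnlabelledForAverageRegretBound(L,\holderConstantWeightFunction))^{\tsybakovMarginExponent}$, equal to a constant times $1+\int_1^L z^{-\tsybakovMarginExponent/2}\mathrm dz$, and $L\mapsto L\cdot(\text{second term})^{\tsybakovMarginExponent}$, equal to a constant times $L^{1-\holderExponentWeightFunction\tsybakovMarginExponent/(2\holderExponentWeightFunction+1)}$, are both concave on $[1,\infty)$; since $\sum_j L_j=\timeHorizon$ over $\numJumps$ segments, Jensen's inequality gives $\timeHorizon^{-1}\sum_j L_j\,\highProbabilityUpperBoundUnlabelledForAverageRegretBound(L_j,\holderConstantWeightFunction)^{\tsybakovMarginExponent}\le 2\,\highProbabilityUpperBoundUnlabelledForAverageRegretBound(\timeHorizon/\numJumps,\holderConstantWeightFunction)^{\tsybakovMarginExponent}$, and collecting terms with $a^{\tsybakovMarginExponent}+b^{\tsybakovMarginExponent}\le 2(a\vee b)^{\tsybakovMarginExponent}$ yields the claim.

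The routine parts are the reduction and the per‑round invocation of Theorem~\ref{thm:mainClassificationSingleTimeStep}; the crux is the aggregation, and in particular the observation that $\averagePowerTransform{\cdot}{\cdot}$ is defined so that $L\mapsto L(\averagePowerTransform{L}{\tsybakovMarginExponent/2}/L)^{\tsybakovMarginExponent/2}$ is concave and increasing, together with the fact that the ceiling $\tsybakovMarginExponent\le 2+1/\holderExponentWeightFunction$ is exactly what keeps the aggregated H\"older‑bias term concave; these two facts are what allow Jensen to convert the data‑dependent segment lengths $L_j$ into the single quantity $\timeHorizon/\numJumps$. The remaining labour — carrying the poly‑logarithmic factors through the per‑round union bound and verifying that the $q_0$ set‑aside rounds per segment are negligible — is bookkeeping.
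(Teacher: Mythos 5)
Your overall route is the paper's: reduce to per-round excess risks via Lemma \ref{lemma:classificationErrorSimpleFunction}, transfer Assumption \ref{assumption:smoothlyVaryingWithJumpsLabelProbabilities} into a per-round instance of Assumption \ref{assumption:smoothlyVaryingLabelProbabilities} by rescaling the segment's H\"older function, invoke the single-test-time machinery (Proposition \ref{prop:keyTechnicalLemmaForProofOfMainClassificationSingleTimeStep} with Lemma \ref{lemma:LabelledDataLemmaForApplyingKeyTechnicalLemmaForProofOfMainClassificationSingleTimeStep}) at level $\delta_{\testTimeWithinSequence}\propto\delta/\testTimeWithinSequence^2$, union bound, aggregate within segments, and then apply Jensen across segments using the concavity built into $\averagePowerTransform{\cdot}{\cdot}$; this is precisely Proposition \ref{prop:technicalResultForProofOfAvgRegretBounds} plus the paper's Jensen step. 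The one substantive deviation is your treatment of the H\"older constant, and it opens a genuine gap. You correctly note that the affine reparametrisation has slope $\windowSize_{\testTimeWithinSequence}/L_j\le 1$, but you then discard the gain and work with the unrescaled constant $\holderConstantWeightFunction$. Your per-round bias term therefore scales like $\{\holderConstantWeightFunction^{1/\holderExponentWeightFunction}\logBar(\maxTimeInterval/\delta)/(\totalVariation(\nullDistribution,\positiveDistribution)^2\windowSize_{\testTimeWithinSequence})\}^{\holderExponentWeightFunction/(2\holderExponentWeightFunction+1)}$ with the \emph{partial} window in the denominator, and your within-segment aggregation must control $\sum_{m\le L_j}m^{-q}$ with $q:=\holderExponentWeightFunction\tsybakovMarginExponent/(2\holderExponentWeightFunction+1)$.

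Your claim that the resulting factor $\averagePowerTransform{L_j}{q}^{q}$ is a constant depending only on $\maximalHolderExponentWeightFunction$ and $\tsybakovMarginExponent$ fails exactly where the theorem is pushed hardest. At the permitted endpoint $\tsybakovMarginExponent=2+1/\holderExponentWeightFunction$ one has $q=1$ and $\averagePowerTransform{L_j}{1}=\log L_j$, an unabsorbable logarithmic loss relative to the stated bound; and for $\tsybakovMarginExponent\in(2,2+1/\holderExponentWeightFunction)$ the available bound $\averagePowerTransform{L_j}{q}^{q}\le(1-q)^{-1}$ diverges as $\holderExponentWeightFunction\to 1/(\tsybakovMarginExponent-2)$, so your constant depends on $\holderExponentWeightFunction$, which the theorem does not allow (for $\tsybakovMarginExponent\le 2$ your argument is fine, since then $q\le 2\maximalHolderExponentWeightFunction/(2\maximalHolderExponentWeightFunction+1)<1$ uniformly). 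The paper's proof avoids this entirely by keeping the rescaled constant $\holderConstantWeightFunction(\windowSize(\testTimeWithinSequence)/r_j)^{\holderExponentWeightFunction}$ when applying the per-round bound: the bias contribution then equals $\{\holderConstantWeightFunction^{1/\holderExponentWeightFunction}\logBar(\maxTimeInterval/\delta)/(\totalVariation(\nullDistribution,\positiveDistribution)^2 r_j)\}^{\holderExponentWeightFunction/(2\holderExponentWeightFunction+1)}$, constant across the segment, so no harmonic-type sum ever appears, and the hypothesis $\tsybakovMarginExponent\le 2+1/\holderExponentWeightFunction$ is used only to make $r\mapsto r^{1-q}$ concave in the Jensen step. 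Reinstating the sharper H\"older constant you already derived repairs your argument; the remaining ingredients (the $\delta_{\testTimeWithinSequence}$ union bound, the set-aside rounds absorbed via $\numJumps/\timeHorizon\le\highProbabilityUpperBoundUnlabelledForAverageRegretBound(\timeHorizon/\numJumps,\holderConstantWeightFunction)^{\tsybakovMarginExponent}$, and the concavity of $L\mapsto 1+\int_1^L z^{-\tsybakovMarginExponent/2}\,dz$) match the paper's proof.
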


To interpret Theorem \ref{thm:highProbBoundForTemporalSmoothnessWithJumps} let's view $\tsybakovMarginConstant$ and $\totalVariation(\nullDistribution,\positiveDistribution)$ as a constant and take $\holderExponentWeightFunction \in [1,2]$ and $\delta = 1/\nMin$. Then Theorem \ref{thm:highProbBoundForTemporalSmoothnessWithJumps} yields a bound of the form 
\begin{align*}
\E\left\lbrace \regretOverTimeInterval_{\timeInterval}(\policy) \right\rbrace \leq \tilde{O}\left(\highProbBoundDeltaLabelledData^{\tsybakovMarginExponent}+\left\lbrace \holderConstantWeightFunction+\sqrt{\frac{\numJumps}{\timeHorizon}}\right\rbrace^{\frac{\tsybakovMarginExponent}{2\holderExponentWeightFunction+1}}\left\lbrace \frac{\numJumps}{\timeHorizon}\right\rbrace^{\frac{\tsybakovMarginExponent\holderExponentWeightFunction}{2\holderExponentWeightFunction+1}}\right).
\end{align*}

As an alternative to making assumptions on the behaviour of the sequence of label-probabilities $(\labelProbability)_{\ell \in \N_0}$ we can instead introduce a metric on the overall level of change over a given time interval. Given $\holderExponentWeightFunction \in [1,\infty)$ we let
\begin{align}\label{eq:defTotalVariationLabelProbabilities}
\totalVariationLabelProbabilities:=\left(\sum_{\ell=\min \timeInterval}^{\max \timeInterval-1}|\labelProbability[\ell]-\labelProbability[\ell+1]|^{\frac{1}{\holderExponentWeightFunction}}\right)^{\holderExponentWeightFunction},
\end{align}
where we suppress the dependence upon $\timeHorizon$ for notational convenience. When $\holderExponentWeightFunction=1$, the metric $\totalVariationLabelProbabilities$ corresponds precisely to the total variation measure explored by Bai et al. \cite{bai2022adapting}. For larger values of $\holderExponentWeightFunction$, the metric $\totalVariationLabelProbabilities$ places an increasing penalty on sequences with numerous small changes, rather than a small number of jumps. Note that for $\holderExponentWeightFunction>1$, the quantity $\totalVariationLabelProbabilities$ does not correspond to a norm on the sequence $(\labelProbability[\ell]-\labelProbability[\ell+1])_{\ell}$ since the function $(z_\ell)_{\ell} \mapsto \left(\sum_{\ell}|z_\ell|^{\frac{1}{\holderExponentWeightFunction}}\right)^{\holderExponentWeightFunction}$ is not sub-additive.

\begin{corollary}\label{corr:totalVariationLabelProbBound} Let $\timeInterval \subseteq \N$ be a time interval with $\max \timeInterval=\maxTimeInterval \in \N$. Suppose Assumption \ref{assumption:labelShift} holds, Assumption \ref{assumption:smoothRegressionFunctions} holds with $\smoothnessExponentRegressionFunction \in (0,1]$, $\smoothnessConstantRegressionFunction \in [0,\infty)$ and Assumption \ref{assumption:tsybakovMarginAssumptionSequential} holds with $\tsybakovMarginExponent \in [1,\infty)$ and $\tsybakovMarginConstant \in [1,\infty)$. There exists a constant $\constantForSmoothnessWithJumpsAverageRegretBound \geq 1$, depending only on  $\tsybakovMarginExponent$, such that for all $\delta \in (0,\totalVariation(\nullDistribution,\positiveDistribution)/\constantForTotalVariationLabelProbabilitiesBound]$ and $\nMin=\min\{\numNull,\numPositive\}$ with $ \smoothnessConstantRegressionFunction\epsilonByNDeltaIteratedLogarithm{\nMin}{\delta}^\smoothnessExponentRegressionFunction \leq \left\lbrace  \totalVariation(\nullDistribution,\positiveDistribution)/\constantForTotalVariationLabelProbabilitiesBound \right\rbrace^{2\smoothnessExponentRegressionFunction+1}$  then with probability at least $1-\delta$,
\begin{align*}
\regretOverTimeInterval_{\timeInterval}(\policy)\leq \tsybakovMarginConstant\,\constantForTotalVariationLabelProbabilitiesBound \bigl\{\highProbBoundDeltaLabelledData\vee \highProbabilityUpperBoundUnlabelledForAverageRegretBound(\timeHorizon,\totalVariationLabelProbabilities)\bigr\}^{\tsybakovMarginExponent} +\constantForTotalVariationLabelProbabilitiesBound \delta.
\end{align*}
\end{corollary}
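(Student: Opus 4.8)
The plan is to run the argument of Theorem~\ref{thm:highProbBoundForTemporalSmoothnessWithJumps} in reverse, so to speak: instead of passing from a segment‑wise H\"older assumption to a bias bound, I bound the bias of the Legendre estimator over a look‑back window directly by the \emph{local} total variation of $(\labelProbability[\ell])_\ell$, obtain a pointwise‑in‑time excess‑error bound, and then convert the average of these bounds into the global metric $\totalVariationLabelProbabilities$ by a deterministic counting argument. All estimates are taken on the event, of probability at least $1-\delta$ after rescaling the confidence parameters (legitimate since $\delta_{\testTimeWithinSequence}=\pi^2\delta/(6\testTimeWithinSequence^2)$ is a fixed multiple of $\delta$ with $\sum_{\testTimeWithinSequence\ge1}\delta_{\testTimeWithinSequence}<\infty$), on which: $\estimatedRegressionFunction$ obeys the uniform bound of Proposition~\ref{prop:regressionFunctionHighProbBound}/Lemma~\ref{lemma:pointwiseRegressionFunctionHighProbBound} off a set of $\classConditionalDistribution{1/2}$‑measure $O(\delta)$; the denominator $\estimatorClassConditionalSecondSample{1}(\hat{f}_\delta)-\estimatorClassConditionalSecondSample{0}(\hat{f}_\delta)$ exceeds $\totalVariation(\nullDistribution,\positiveDistribution)/\constantForTotalVariationLabelProbabilitiesBound$ (Corollary~\ref{corr:labelProbabilityEstimatorBound}, using the assumed lower bound on $\nMin$); and the Lepski--Legendre estimator $\hat{\marginalDistribution}_{\testTimeWithinSequence,\delta_{\testTimeWithinSequence}}(\hat{f}_\delta)$ satisfies the oracle inequality of Proposition~\ref{prop:marginalDistributionEstimatorBound} simultaneously for every $\testTimeWithinSequence\in\timeInterval$ (union bound over $\testTimeWithinSequence$).

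For the per‑round bound, the key observation is that, for $f=\hat{f}_\delta$, the one‑step extrapolation bias of the degree‑$\pOfQ$ estimator over a window of length $q$ is $\bigl|\sum_{i=1}^{q}\estimatorMarginalDistributionWeights\,\marginalDistribution[\testTimeWithinSequence-i](f)-\marginalDistribution[\testTimeWithinSequence](f)\bigr|$; since $\marginalDistribution[\ell](f)=\nullDistribution(f)+\labelProbability[\ell]\{\positiveDistribution(f)-\nullDistribution(f)\}$ is affine in $\labelProbability[\ell]$ and the Legendre weights reproduce constants, this equals $|\positiveDistribution(f)-\nullDistribution(f)|\cdot\bigl|\sum_i\estimatorMarginalDistributionWeights(\labelProbability[\testTimeWithinSequence-i]-\labelProbability[\testTimeWithinSequence])\bigr|\le \totalVariation(\nullDistribution,\positiveDistribution)\,\|\estimatorMarginalDistributionWeightsNoArg\|_1\,\min\{1,\sum_{\ell=\testTimeWithinSequence-q}^{\testTimeWithinSequence-1}|\labelProbability[\ell]-\labelProbability[\ell+1]|\}$, where $\|\estimatorMarginalDistributionWeightsNoArg\|_1\le\sqrt{q}\,\|\estimatorMarginalDistributionWeightsNoArg\|_2$ is bounded by a constant depending only on $\maximalHolderExponentWeightFunction$, because $\|\estimatorMarginalDistributionWeightsNoArg\|_2^2=\weightMatrixByQNoArg_{0,:}(\weightMatrixByQNoArg^{\top}\weightMatrixByQNoArg)^{+}\weightMatrixByQNoArg_{0,:}^{\top}\asymp(\pOfQ+1)^2/q$ once $q\ge 8\maximalHolderExponentWeightFunction^2(\maximalHolderExponentWeightFunction+1)^2$ (the shifted‑Legendre Gram matrix is then well conditioned and $\sum_{j\le\pOfQ}\legendrePolynomialShiftedOrthonormal[j](0)^2=(\pOfQ+1)^2$), so that $\estimatorMarginalDistributionRootVarTerm[q]{\testTimeWithinSequence}{\delta_{\testTimeWithinSequence}}\asymp_{\maximalHolderExponentWeightFunction}\sqrt{\logBar(\maxTimeInterval/\delta)/q}$. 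Feeding the bias and variance into Proposition~\ref{prop:marginalDistributionEstimatorBound} and dividing by the denominator gives, with $c:=\sqrt{\logBar(\maxTimeInterval/\delta)}/\totalVariation(\nullDistribution,\positiveDistribution)$,
\begin{align*}
\bigl|\estimatorLabelProbabilityLepskiUnspecified{\testTimeWithinSequence}{\delta_{\testTimeWithinSequence}}{\hat{f}_\delta}-\labelProbability[\testTimeWithinSequence]\bigr|\ \le\ \constantForTotalVariationLabelProbabilitiesBound\Bigl(\highProbBoundDeltaLabelledData+\min_{q}\Bigl\{\textstyle\sum_{\ell=\testTimeWithinSequence-q}^{\testTimeWithinSequence-1}|\labelProbability[\ell]-\labelProbability[\ell+1]|+\tfrac{c}{\sqrt{q}}\Bigr\}\Bigr)=:\eta_{\testTimeWithinSequence},
\end{align*}
with $\highProbBoundDeltaLabelledData$ absorbing the error of $\estimatorClassConditionalSecondSample{y}(\hat{f}_\delta)$, the discrepancy between $\hat{f}_\delta$ and $\one\{2\regressionFunction\ge1\}$, and the uniform error of $\estimatedRegressionFunction$. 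Plugging $\empiricalPolicySingleTimeStep(x)=\one\{\estimatedRegressionFunction(x)+\estimatorLabelProbabilityLepskiUnspecified{\testTimeWithinSequence}{\delta_{\testTimeWithinSequence}}{\hat{f}_\delta}>1\}$ and $\bayesClassifier[\testTimeWithinSequence](x)=\one\{\regressionFunction(x)+\labelProbability[\testTimeWithinSequence]>1\}$ into Lemma~\ref{lemma:classificationErrorSimpleFunction}, noting $\inf_{\comparisonClassifierForRegretDef}\testError_{\testTimeWithinSequence}(\comparisonClassifierForRegretDef)=\testError_{\testTimeWithinSequence}(\bayesClassifier[\testTimeWithinSequence])$, bounding $|1-\labelProbability[\testTimeWithinSequence]-\regressionFunction|\le\eta_{\testTimeWithinSequence}$ on $\{\empiricalPolicySingleTimeStep\neq\bayesClassifier[\testTimeWithinSequence]\}$ off the exceptional set, and invoking Assumption~\ref{assumption:tsybakovMarginAssumptionSequential}, yields $\testError_{\testTimeWithinSequence}(\empiricalPolicySingleTimeStep)-\inf_{\comparisonClassifierForRegretDef}\testError_{\testTimeWithinSequence}(\comparisonClassifierForRegretDef)\le 2\tsybakovMarginConstant\,\eta_{\testTimeWithinSequence}^{\tsybakovMarginExponent}+\constantForTotalVariationLabelProbabilitiesBound\delta$.

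It remains, after averaging over $\testTimeWithinSequence\in\timeInterval$ and using $(a+b)^{\tsybakovMarginExponent}\le 2^{\tsybakovMarginExponent-1}(a^{\tsybakovMarginExponent}+b^{\tsybakovMarginExponent})$, to prove the purely deterministic bound $\frac1{\timeHorizon}\sum_{\testTimeWithinSequence\in\timeInterval}\bigl(\min_q\{\sum_{\ell=\testTimeWithinSequence-q}^{\testTimeWithinSequence-1}|\Delta_\ell|+c/\sqrt{q}\}\bigr)^{\tsybakovMarginExponent}\le C_{\tsybakovMarginExponent,\maximalHolderExponentWeightFunction}\,\highProbabilityUpperBoundUnlabelledForAverageRegretBound(\timeHorizon,\totalVariationLabelProbabilities)^{\tsybakovMarginExponent}$, where $\Delta_\ell:=\labelProbability[\ell]-\labelProbability[\ell+1]$. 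For each $\testTimeWithinSequence$ let $q_{\testTimeWithinSequence}$ be the largest admissible window with $\sqrt{q_{\testTimeWithinSequence}}\sum_{\ell=\testTimeWithinSequence-q_{\testTimeWithinSequence}}^{\testTimeWithinSequence-1}|\Delta_\ell|\le c$ (the map $q\mapsto\sqrt{q}\sum_{\ell=\testTimeWithinSequence-q}^{\testTimeWithinSequence-1}|\Delta_\ell|$ being non‑decreasing), so that $\min_q\{\cdots\}\le 2c/\sqrt{q_{\testTimeWithinSequence}}$, with the boundary cases $q_{\testTimeWithinSequence}=8\maximalHolderExponentWeightFunction^2(\maximalHolderExponentWeightFunction+1)^2$ and $q_{\testTimeWithinSequence}=\testTimeWithinSequence$ treated separately (and, after the cap at $1$ inherited from $\eta_{\testTimeWithinSequence}\le1$, rendering the high‑variation regime vacuous). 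Maximality forces $\sum_{\ell=\testTimeWithinSequence-q_{\testTimeWithinSequence}-1}^{\testTimeWithinSequence-1}|\Delta_\ell|>c/\sqrt{2q_{\testTimeWithinSequence}}$; combining this with the sub‑additivity inequality $\sum|\Delta_\ell|\le\bigl(\sum|\Delta_\ell|^{1/\holderExponentWeightFunction}\bigr)^{\holderExponentWeightFunction}$ (valid as $\holderExponentWeightFunction\ge1$) and summing over all $\testTimeWithinSequence$ with $q_{\testTimeWithinSequence}\le\lambda$ — each $|\Delta_\ell|^{1/\holderExponentWeightFunction}$ being charged to at most $\lambda+2$ rounds — gives the level‑set estimate
\begin{align*}
\bigl|\{\testTimeWithinSequence\in\timeInterval:q_{\testTimeWithinSequence}\le\lambda\}\bigr|\ \le\ \min\Bigl\{\timeHorizon,\ C\,\lambda^{1+\frac1{2\holderExponentWeightFunction}}\,\totalVariationLabelProbabilities^{1/\holderExponentWeightFunction}\,c^{-1/\holderExponentWeightFunction}\Bigr\}.
\end{align*}
Inserting this into the telescoping identity $\sum_{\testTimeWithinSequence}q_{\testTimeWithinSequence}^{-\tsybakovMarginExponent/2}=\sum_\lambda\bigl(\lambda^{-\tsybakovMarginExponent/2}-(\lambda+1)^{-\tsybakovMarginExponent/2}\bigr)\,|\{q_{\testTimeWithinSequence}\le\lambda\}|$ and splitting the sum at the value of $\lambda$ where the two competing bounds balance produces, from the $\timeHorizon$‑regime, exactly the transform $\averagePowerTransform{\timeHorizon}{\tsybakovMarginExponent/2}$, and, from the variation‑regime, the term $\{\totalVariationLabelProbabilities^{1/\holderExponentWeightFunction}\,\epsilonByNDeltaBase{\timeHorizon}{\delta/\maxTimeInterval}/\totalVariation(\nullDistribution,\positiveDistribution)^2\}^{\holderExponentWeightFunction/(2\holderExponentWeightFunction+1)}$ — i.e. together $\highProbabilityUpperBoundUnlabelledForAverageRegretBound(\timeHorizon,\totalVariationLabelProbabilities)^{\tsybakovMarginExponent}$; indeed $\averagePowerTransform{r}{q}$ is defined precisely so that $r^{-1}\sum_{\testTimeWithinSequence\le r}q_{\testTimeWithinSequence}^{-q}$ has this worst case. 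Collecting the constants (which depend only on $\maximalHolderExponentWeightFunction$ and $\tsybakovMarginExponent$) completes the argument.

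The main obstacle is the deterministic averaging step: one must argue that the adaptively selected windows $q_{\testTimeWithinSequence}$ cannot all be small unless the variation budget $\totalVariationLabelProbabilities$ is correspondingly large, and then do the bookkeeping so that the two level‑set estimates combine into \emph{exactly} $\highProbabilityUpperBoundUnlabelledForAverageRegretBound(\timeHorizon,\totalVariationLabelProbabilities)$ rather than a merely comparable quantity — in particular tracking the $\holderExponentWeightFunction$‑dependent exponents, the logarithmic factors concealed inside $\epsilonByNDeltaBase{\cdot}{\cdot}$ (and the passage from $\logBar(1/\delta_{\testTimeWithinSequence})$ to $\logBar(\maxTimeInterval/\delta)$), and the saturated range $\tsybakovMarginExponent>2+1/\holderExponentWeightFunction$ where the smallest admissible window dominates every sum. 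A secondary technical point is checking that the oracle inequality of Proposition~\ref{prop:marginalDistributionEstimatorBound}, invoked with the time‑varying confidence $\delta_{\testTimeWithinSequence}$, costs only a constant factor in the variance term relative to a single fixed confidence, so that the uniform‑in‑$\testTimeWithinSequence$ union bound is affordable.
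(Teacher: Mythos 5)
Your proposal takes a genuinely different route from the paper's. The paper never revisits the Lepski--Legendre analysis: it deduces the corollary from Theorem \ref{thm:highProbBoundForTemporalSmoothnessWithJumps} by greedily splitting $\timeInterval$ into blocks whose internal variation $\sum_\ell|\labelProbability[\ell]-\labelProbability[\ell+1]|^{1/\holderExponentWeightFunction}$ stays below a threshold $\lambda^{1/\holderExponentWeightFunction}$ fixed in advance at the bias--variance balance point; Minkowski's inequality shows each block satisfies Assumption \ref{assumption:smoothlyVaryingWithJumpsLabelProbabilities} with H\"older exponent $0$ and constant $\lambda$, the number of blocks is at most $2(\totalVariationLabelProbabilities/\lambda)^{1/\holderExponentWeightFunction}$, and monotonicity of $r\mapsto\highProbabilityUpperBoundUnlabelledForAverageRegretBoundZeroHolder(r)$ together with the choice of $\lambda$ converts the Theorem \ref{thm:highProbBoundForTemporalSmoothnessWithJumps} bound into $\sqrt{2}\,\highProbabilityUpperBoundUnlabelledForAverageRegretBound(\timeHorizon,\totalVariationLabelProbabilities)$; the averaging over time is done once and for all by the Jensen/concavity step inside Proposition \ref{prop:technicalResultForProofOfAvgRegretBounds}. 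You instead prove a per-round oracle bound $\eta_{\testTimeWithinSequence}$ involving $\min_q\{\sum_{\ell=\testTimeWithinSequence-q}^{\testTimeWithinSequence-1}|\labelProbability[\ell]-\labelProbability[\ell+1]|+c/\sqrt{q}\}$ and then average it yourself by a level-set counting argument. The per-round half of your plan is sound, and in fact needs no new Lepski analysis: Assumption \ref{assumption:smoothlyVaryingLabelProbabilities} with exponent $0$ and constant equal to the local variation holds simultaneously for every window, so Proposition \ref{prop:marginalDistributionEstimatorBound} (via Proposition \ref{prop:keyTechnicalLemmaForProofOfMainClassificationSingleTimeStep}) applied with the oracle window already yields your $\eta_{\testTimeWithinSequence}$, just as the paper applies it with the window running back to the last segment boundary.

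The genuine gap is the step you yourself single out: the deterministic conversion of $\timeHorizon^{-1}\sum_{\testTimeWithinSequence\in\timeInterval}\eta_{\testTimeWithinSequence}^{\tsybakovMarginExponent}$ into a constant multiple of $\highProbabilityUpperBoundUnlabelledForAverageRegretBound(\timeHorizon,\totalVariationLabelProbabilities)^{\tsybakovMarginExponent}$ is asserted and sketched, not proved, and once the per-round bound is in hand this conversion \emph{is} the corollary. Your level-set estimate $N(\lambda)\leq \min\{\timeHorizon, C\lambda^{1+1/(2\holderExponentWeightFunction)}\totalVariationLabelProbabilities^{1/\holderExponentWeightFunction}c^{-1/\holderExponentWeightFunction}\}$ and the subsequent balancing do recover both terms of the target (including the appearance of $\averagePowerTransform{\timeHorizon}{\tsybakovMarginExponent/2}$ from the low-variation regime) when $\tsybakovMarginExponent<2+1/\holderExponentWeightFunction$, but the corollary is claimed for every $\tsybakovMarginExponent\in[1,\infty)$: for $\tsybakovMarginExponent\geq 2+1/\holderExponentWeightFunction$ the Abel sum is dominated by the rounds whose selected window sits at the smallest admissible value (the rounds immediately after large changes), and it is not clear from your sketch --- nor repaired by the cap at $1$ you invoke --- that this contribution is absorbed into $\highProbabilityUpperBoundUnlabelledForAverageRegretBound(\timeHorizon,\totalVariationLabelProbabilities)^{\tsybakovMarginExponent}$; you explicitly list this saturated range, together with the logarithmic and constant bookkeeping, as an unresolved obstacle. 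The paper avoids the delicacy entirely, because its reduction is to the exponent-$0$ case of Theorem \ref{thm:highProbBoundForTemporalSmoothnessWithJumps}, where the restriction $\tsybakovMarginExponent\leq 2+1/\holderExponentWeightFunction$ is vacuous and no round-by-round window selection has to be averaged. As written, then, your argument is an incomplete programme: either supply the counting lemma in full generality (all $\tsybakovMarginExponent$, exact form of $\highProbabilityUpperBoundUnlabelledForAverageRegretBound$), or replace the per-round oracle windows by a single variation-budget segmentation of $\timeInterval$ and invoke Theorem \ref{thm:highProbBoundForTemporalSmoothnessWithJumps} directly, which is precisely the paper's proof.
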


To interpret Corollary \ref{corr:totalVariationLabelProbBound} let's again view $\tsybakovMarginConstant$ and $\totalVariation(\nullDistribution,\positiveDistribution)$ as a constant and take $\holderExponentWeightFunction \in [1,2]$ and $\delta = 1/\nMin$. Then Corollary \ref{corr:totalVariationLabelProbBound} yields a bound of the form 
\begin{align*}
\E\left\lbrace \regretOverTimeInterval_{\timeInterval}(\policy) \right\rbrace \leq \tilde{O}\left(\highProbBoundDeltaLabelledData^{\tsybakovMarginExponent}+\left\lbrace \totalVariationLabelProbabilities+\timeHorizon^{-\frac{1}{2}}\right\rbrace^{\frac{\tsybakovMarginExponent}{2\holderExponentWeightFunction+1}}\,\timeHorizon^{-\frac{\tsybakovMarginExponent\holderExponentWeightFunction}{2\holderExponentWeightFunction+1}}\right).
\end{align*}

This recovers the bound of \cite{bai2022adapting} for the important special case of $\holderExponentWeightFunction=1$ and $\tsybakovMarginExponent=1$. 

\section{The proof structure for the regret bounds}\label{sec:mainIdeasForProofSection}

The goal of this section is to give a high-level overview of the proof structure for the high-probability regret bounds in Theorems \ref{thm:mainClassificationSingleTimeStep} and \ref{thm:highProbBoundForTemporalSmoothnessWithJumps} and Corollary \ref{corr:totalVariationLabelProbBound}.

\subsection{The transformed density ratio estimator}

We shall leverage our regulartiy assumption on the transformed regression function $\regressionFunction$ (Assumption \ref{assumption:smoothRegressionFunctions}) to provide a performance bound on $\estimatedRegressionFunction$ as an estimator of $\regressionFunction$.
Recall that $\nMin=\numNull\wedge\numPositive$, and let $\goodSetRegressionFunctionEstimatedWell$ denote the random subset of $\metricSpace$ consisting of all $x \in \metricSpace$ such that 
\begin{align*}
\left| \estimatedRegressionFunction(x)-\regressionFunction(x) \right|\leq 14\left(\left\lbrace 
\smoothnessConstantRegressionFunction^{\frac{1}{\smoothnessExponentRegressionFunction}}\epsilonByNDeltaIteratedLogarithm{\nMin}{\delta} \right\rbrace^{\frac{\smoothnessExponentRegressionFunction}{2\smoothnessExponentRegressionFunction+1}}\vee\sqrt{\epsilonByNDeltaIteratedLogarithm{\nMin}{\delta}}\right).
\end{align*}

\begin{prop}\label{prop:regressionFunctionHighProbBound} Suppose Assumption \ref{assumption:smoothRegressionFunctions} holds with $\smoothnessExponentRegressionFunction \in (0,1]$, we have $\smoothnessConstantRegressionFunction \in [0,\infty)$ and $\nMin \geq 2^4$. Given  $\delta \in (0,1)$, $\Prob\left( \classConditionalDistribution{1/2}\left\lbrace \metricSpace \setminus \goodSetRegressionFunctionEstimatedWell\right\rbrace > 2^5\delta \right) \leq \delta/3$.
\end{prop}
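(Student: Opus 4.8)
The plan is to first establish a \emph{pointwise} high-probability bound on $|\estimatedRegressionFunction(x)-\regressionFunction(x)|$ — this is the content of Lemma~\ref{lemma:pointwiseRegressionFunctionHighProbBound} referenced in Section~\ref{sec:estRegressionFunction} — and then to upgrade it to a statement about the $\classConditionalDistribution{1/2}$-measure of the ``bad set'' $\metricSpace\setminus\goodSetRegressionFunctionEstimatedWell$ via Fubini/Markov. For the pointwise step, I would first argue that with probability at least $1-c\delta$ (over the labelled samples), the local confidence intervals are \emph{valid simultaneously over all balls} $\closedMetricBall{x}{r}$, i.e. $\regressionFunction\{\closedMetricBall{x}{r}\}$-type quantities lie between $\empiricalRegressionFunctionConfidenceBound{-1}{\delta}$ and $\empiricalRegressionFunctionConfidenceBound{1}{\delta}$; this is where the local DKWM-type inequalities of \cite{reeve2024short} enter, and the union over the (at most $\numClassConditional{0}+\numClassConditional{1}$) distinct radii in $\setOfDistances{x}$ is absorbed into the iterated-logarithm correction $\epsilonByNDeltaIteratedLogarithm{\nMin}{\delta}$. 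Conditional on this good event, I would show (i) that the true Lepski-type radius is not too small — specifically that any $r$ with $\smoothnessConstantRegressionFunction\,\classConditionalDistribution{1/2}\{\openMetricBall{x}{r}\}^{\smoothnessExponentRegressionFunction}$ comparable to $\sqrt{\epsilonByNDeltaIteratedLogarithm{\nMin}{\delta}}$ satisfies the intersection condition, by combining Assumption~\ref{assumption:smoothRegressionFunctions} (bias control: $|\regressionFunction(x)-\averageRegressionFunction\{\closedMetricBall{x}{r}\}|$ is small) with the validity of the confidence intervals (variance control); and (ii) that at the selected radius $\lepskiRadius{x}{\delta}$ the estimate inherits both the bias bound at that radius and, through the chain of overlapping intervals down to the smallest admissible radius, a variance bound, giving the claimed $14(\cdots)$ constant after bookkeeping.

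Having the pointwise bound ``for each fixed $x$, $\Prob(x\notin\goodSetRegressionFunctionEstimatedWell)\leq c'\delta$'' for an appropriate constant $c'$, the measure bound is routine: by Tonelli's theorem,
\[
\E\bigl[\classConditionalDistribution{1/2}\{\metricSpace\setminus\goodSetRegressionFunctionEstimatedWell\}\bigr]
=\int \Prob(x\notin\goodSetRegressionFunctionEstimatedWell)\,d\classConditionalDistribution{1/2}(x)\leq c'\delta,
\]
and then Markov's inequality gives $\Prob(\classConditionalDistribution{1/2}\{\metricSpace\setminus\goodSetRegressionFunctionEstimatedWell\}>2^5\delta)\leq c'\delta/2^5\leq\delta/3$, provided the constant $c'$ coming out of the pointwise analysis is at most $2^5/3$; the numerology ($2^5$, the hypothesis $\nMin\geq 2^4$, the factor $14$) is chosen precisely so that this arithmetic closes, which is why I would track constants carefully rather than leave them implicit. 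The measurability of $x\mapsto\one\{x\in\goodSetRegressionFunctionEstimatedWell\}$ needs a brief remark, since $\estimatedRegressionFunction(x)$ depends on $x$ through the radius selection, but $\lepskiRadius{x}{\delta}$ and $\empiricalRegressionFunctionConfidenceBoundMid{\delta}\{\closedMetricBall{x}{\cdot}\}$ are Borel in $x$ on the fixed sample, so this is not a real difficulty.

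The main obstacle is the pointwise bound, and within it the variance-control half of the Lepski argument: one must show that the selected radius $\lepskiRadius{x}{\delta}$, which could in principle be very large, still yields a controlled estimate. The standard Lepski device handles this — the intervals $\regressionFunctionInterval{\delta}\{\closedMetricBall{x}{\tilde r}\}$ for all $\tilde r\leq\lepskiRadius{x}{\delta}$ have a common point, so the estimate at the selected radius is within $O(\regressionFunctionIntervalWidth{\delta}\{\closedMetricBall{x}{r^\star}\})$ of the estimate at a near-optimal radius $r^\star$ where bias and standard deviation are balanced — but making this quantitative requires understanding how the transformed-ratio interval width $\regressionFunctionIntervalWidth{\delta}(A)$ scales with $\classConditionalDistribution{1/2}(A)$ and the sample sizes, which is slightly delicate because $\empiricalRegressionFunctionConfidenceBound{\confidenceSign}{\delta}$ is a nonlinear (ratio) functional of the empirical measures. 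I would control this by noting that on the good event the denominators $\empiricalClassConditionalDistributionConfidenceBound{0}{-\confidenceSign}{\delta}(A)+\empiricalClassConditionalDistributionConfidenceBound{1}{\confidenceSign}{\delta}(A)$ are bounded below (up to the $1/\sampleSizeVariable$ flooring in $\empiricalUncertainty$), so the ratio is Lipschitz in its arguments with a controlled constant, reducing everything back to the additive confidence widths $\empiricalUncertainty(\cdot,\numClassConditional{y},\delta)$, which obey the desired $\sqrt{\epsilonByNDeltaIteratedLogarithm{\nMin}{\delta}/\classConditionalDistribution{1/2}(A)}$-type scaling.
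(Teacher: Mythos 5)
Your overall architecture is the paper's: a pointwise high-probability bound for each fixed $x$ (the paper's Lemma \ref{lemma:pointwiseRegressionFunctionHighProbBound}, built from the Lepski-type argument in Lemmas \ref{lemma:regressionFunctionConfidenceIntervalEvent} and \ref{lemma:boundOnWidthOfRegressionFunctionConfidenceIntervals}), upgraded to a bound on $\classConditionalDistribution{1/2}\{\metricSpace\setminus\goodSetRegressionFunctionEstimatedWell\}$ via Tonelli/Fubini and Markov. However, your final step contains a genuine quantitative gap. If the pointwise bound is only $\Prob(x\notin\goodSetRegressionFunctionEstimatedWell)\leq c'\delta$ for an absolute constant $c'$, then Tonelli gives $\E\bigl[\classConditionalDistribution{1/2}\{\metricSpace\setminus\goodSetRegressionFunctionEstimatedWell\}\bigr]\leq c'\delta$, and Markov at threshold $2^5\delta$ yields $\Prob\bigl(\classConditionalDistribution{1/2}\{\metricSpace\setminus\goodSetRegressionFunctionEstimatedWell\}>2^5\delta\bigr)\leq c'\delta/(2^5\delta)=c'/2^5$, a constant independent of $\delta$ --- not $\delta/3$. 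Your displayed conclusion ``$\leq c'\delta/2^5\leq\delta/3$'' divides by $2^5$ instead of by $2^5\delta$, and no choice of the absolute constant $c'$ (your ``$c'\leq 2^5/3$'') can repair this: to obtain a failure probability of order $\delta$ at a threshold of order $\delta$, the pointwise failure probability must be of order $\delta^2$.

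This is exactly what the paper arranges, and it explains the numerology you flagged. The confidence widths are built from the iterated-logarithm quantity $\epsilonByNDeltaIteratedLogarithm{\nMin}{\delta}$, i.e.\ $\delta$ is replaced by $\delta/\logBar(\nMin)$ inside the logarithm; by Lemma \ref{lemma:uniformConcentrationMetricBallsSinglePoint} (via Corollary 3 of the cited local DKWM paper) this makes each per-point, all-radii confidence event fail with probability at most $4\lceil\log_{4/3}(\numClassConditional{y})\rceil\{\delta/\log(\numClassConditional{y})\}^2$, which is of order $\delta^2$ up to logarithmic factors. Summing over $y\in\{0,1\}$ and using $\nMin\geq 2^4$ gives the pointwise bound $\Prob(x\notin\goodSetRegressionFunctionEstimatedWell)\leq 21\delta^2/2$ of Lemma \ref{lemma:pointwiseRegressionFunctionHighProbBound}, after which Markov at threshold $2^5\delta$ gives $(21/2)\delta^2/(2^5\delta)=(21/64)\delta\leq\delta/3$. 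So the hypothesis $\nMin\geq 2^4$ and the threshold $2^5\delta$ are tied to this $\delta^2$ scaling, not to making a linear-in-$\delta$ pointwise constant small; relatedly, your remark that the union over radii is ``absorbed into the iterated-logarithm correction'' misattributes its role --- the uniformity over balls is already supplied by the cited local concentration result, while the $\delta/\logBar(\nMin)$ correction is what purchases the $\delta^2$ per-point failure probability that the Fubini--Markov step needs. Strengthen your pointwise target from $c\delta$ to $O(\delta^2)$ and the rest of your plan goes through.
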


In order to prove Proposition \ref{prop:regressionFunctionHighProbBound} we first establish the validity of local confidence intervals for the class-conditional distributions in Section \ref{sec:confidenceClassConditional}. These bounds build upon local Dvoretzky--Kiefer--Wolfowitz--Massart--type concentration inequalities from \cite{reeve2024short}. Next we deduce Proposition \ref{prop:regressionFunctionHighProbBound} via a corresponding pointwise result (Lemma \ref{lemma:pointwiseRegressionFunctionHighProbBound}) in Section \ref{sec:estTransformedDensityRatio}.

\subsection{The label probability estimator}

Next, we turn to our performance bounds for estimating the label probabilities. First we consider our estimator $\estimatorMarginalDistributionLepski(f)$ of the integral $\marginalDistribution[\testTime](f)$ for an arbitrary function of the form $f:\metricSpace^{\numNull+\numPositive+1}\rightarrow [0,1]$. To state our results we introduce the following notation,
\begin{align*}
\holderConstantSpecifiedFunction&:=\holderConstantWeightFunction\big| \classConditionalDistribution{1}(f)-\classConditionalDistribution{0}(f)\big|/\{\pOfAlpha!\,\sqrt{2\holderExponentWeightFunction+1}\}\\ 
\estimatorMarginalDistributionErrorBound&:= 2^{7/2}\maximalHolderExponentWeightFunction^2(\maximalHolderExponentWeightFunction+1)^2 \, \biggl(\sqrt{\epsilonByNDeltaBase{\windowSize}{{\delta}/{\{\pi\windowSize\}^2}}}  \vee \left\lbrace \holderConstantSpecifiedFunction^{\frac{1}{2\holderExponentWeightFunction+1}}\epsilonByNDeltaBase{\windowSize}{{\delta}/{\{\pi\windowSize\}^2}}^{\frac{\holderExponentWeightFunction}{2\holderExponentWeightFunction+1}} \right\rbrace \biggr).
\end{align*}
Our bound on the estimation error of $\estimatorMarginalDistributionLepski(f)$ given the label-shift assumption (Assumption \ref{assumption:labelShift}) and our assumption on the label-probability dynamics (Assumption \ref{assumption:smoothlyVaryingLabelProbabilities}) is as follows.

\begin{prop}\label{prop:marginalDistributionEstimatorBound}  Suppose that Assumptions \ref{assumption:labelShift} and \ref{assumption:smoothlyVaryingLabelProbabilities} hold with  $\windowSize \in [\numUnlabelled]$, $\holderExponentWeightFunction \in [0,\maximalHolderExponentWeightFunction]$, $\holderConstantWeightFunction \in [0,\infty)$. Then, given a function $f:\X^{\numNull+\numPositive+1} \rightarrow [0,1]$ and $\delta \in (0,1)$ we have
\begin{align*}
\Prob\left\lbrace \left| \estimatorMarginalDistributionLepski(f)-\marginalDistribution[\testTime](f)\right|> \estimatorMarginalDistributionErrorBound \big| \nullSample, \positiveSample \right\rbrace \leq \frac{\delta}{3}.
\end{align*}
\end{prop}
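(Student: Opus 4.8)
The plan is to control $|\estimatorMarginalDistributionLepski(f)-\marginalDistribution[\testTime](f)|$ by combining a bias-variance decomposition for each fixed $q$ with a Lepski-type argument that selects $\lepskiQ$ adaptively. First I would establish, for each fixed $q \in \{8\maximalHolderExponentWeightFunction^2(\maximalHolderExponentWeightFunction+1)^2,\ldots,\testTime\}$, a \emph{variance bound}: conditionally on $\nullSample,\positiveSample$ (so that $f$ is a fixed $[0,1]$-valued function), the random variable $\estimatorMarginalDistribution[q]{\testTime}(f)=\sum_{i=1}^q \estimatorMarginalDistributionWeights f(X_{t-i})$ is a weighted sum of independent $[0,1]$-valued random variables, so Hoeffding's inequality gives
\begin{align*}
\Prob\left\lbrace \left| \estimatorMarginalDistribution[q]{\testTime}(f)-\E[\estimatorMarginalDistribution[q]{\testTime}(f)\mid \nullSample,\positiveSample]\right|> \estimatorMarginalDistributionRootVarTerm[q]{\testTime}{\delta}\,\Big|\,\nullSample,\positiveSample\right\rbrace\leq \frac{\delta}{\pi^2 q^2}.
\end{align*}
Summing over $q$ using $\sum_q q^{-2}\leq \pi^2/6$ shows that, off an event of probability at most $\delta/6$, the concentration holds simultaneously for all $q$ in the range. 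Then I would compute the conditional expectation $\E[\estimatorMarginalDistribution[q]{\testTime}(f)\mid \nullSample,\positiveSample]=\sum_{i=1}^q \estimatorMarginalDistributionWeights\,\classConditionalDistribution{\labelProbability[\testTime-i]}(f)$ and, using the label-shift identity $\classConditionalDistribution{\labelProbability[\ell]}(f)=\classConditionalDistribution{0}(f)+\labelProbability[\ell](\classConditionalDistribution{1}(f)-\classConditionalDistribution{0}(f))$, reduce the bias to a weighted sum of the scalars $\labelProbability[\testTime-i]=\weightFunction(i/\windowSize)$ (valid provided $q\leq \windowSize$, which Assumption \ref{assumption:smoothlyVaryingLabelProbabilities} supplies for $q$ up to $\windowSize$; for $q>\windowSize$ we are outside the smooth window but the Lepski selector will not overshoot far).

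The heart of the argument is the \emph{bias bound}: the weights $\estimatorMarginalDistributionWeightsNoArg=(\estimatorMarginalDistributionWeights)_{i=1}^q$ were built precisely so that $\sum_{i=1}^q \estimatorMarginalDistributionWeights\, P(i/q)=P(0)$ for every polynomial $P$ of degree $\le \pOfQ$, because $\weightMatrixByQNoArg_{0,:}(\weightMatrixByQNoArg^\top\weightMatrixByQNoArg)^+\weightMatrixByQNoArg^\top$ projects onto the column space of $\weightMatrixByQNoArg$ which contains $\weightMatrixByQNoArg_{0,:}$ whenever $\weightMatrixByQNoArg$ has full column rank $\pOfQ+1$. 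Here I would Taylor-expand $\weightFunction$ around $0$ to order $\pOfAlpha=\max\{\lceil\holderExponentWeightFunction\rceil-1,0\}$ and use the H\"older bound on $\weightFunction^{(\pOfAlpha)}$ together with the defining relation $\pOfQ\ge \pOfAlpha$ (which follows from $\weightMatrixByPAndQNoArg(q,\pOfAlpha)$ having rank $\pOfAlpha+1$ for $q\ge \maximalHolderExponentWeightFunction$ — a Vandermonde-type fact about evaluations of the shifted Legendre basis at the distinct nodes $i/q$) to bound the remainder. This yields a bias of order $\holderConstantWeightFunction\,|\classConditionalDistribution{1}(f)-\classConditionalDistribution{0}(f)|\,\|\estimatorMarginalDistributionWeightsNoArg\|_1\,(q/\windowSize)^{\holderExponentWeightFunction}$ or, after also controlling $\|\estimatorMarginalDistributionWeightsNoArg\|_1$ and $\|\estimatorMarginalDistributionWeightsNoArg\|_2$ in terms of $\pOfQ$ and $q$ via standard bounds on the shifted Legendre polynomials on $[0,1]$, a bias comparable to $\maximalHolderExponentWeightFunction^2(\maximalHolderExponentWeightFunction+1)^2\,\holderConstantSpecifiedFunction\,\sqrt{2\holderExponentWeightFunction+1}\,(q/\windowSize)^{\holderExponentWeightFunction}$ and a stochastic term comparable to $\maximalHolderExponentWeightFunction^2(\maximalHolderExponentWeightFunction+1)^2\sqrt{\epsilonByNDeltaBase{q}{\delta/(\pi q)^2}}$. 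I anticipate that pinning down these norm bounds on the oblique-projection weights is the main technical obstacle, since $(\weightMatrixByQNoArg^\top\weightMatrixByQNoArg)^+$ need not be well-conditioned for small $q$ — this is exactly why the construction restricts to $q\ge 8\maximalHolderExponentWeightFunction^2(\maximalHolderExponentWeightFunction+1)^2$, and I would exploit near-orthonormality of the shifted Legendre polynomials under the discrete inner product $\langle P,Q\rangle_q=q^{-1}\sum_{i=1}^q P(i/q)Q(i/q)$ (which approximates the $L^2[0,1]$ inner product with error controlled for $q$ large relative to $\pOfQ^2$) to show $\weightMatrixByQNoArg^\top\weightMatrixByQNoArg\approx q\,I_{\pOfQ+1}$, hence $\|\estimatorMarginalDistributionWeightsNoArg\|_2\lesssim \pOfQ/\sqrt{q}\lesssim \maximalHolderExponentWeightFunction/\sqrt q$.

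Finally I would run the \emph{Lepski step}. Let $q^\star$ denote (essentially) the oracle balance point where the deterministic bias $\asymp (q/\windowSize)^{\holderExponentWeightFunction}$ and the stochastic width $\estimatorMarginalDistributionRootVarTerm[q]{\testTime}{\delta}$ are of the same order; one checks $q^\star$ lies in the admissible range and $\estimatorMarginalDistributionRootVarTerm[q^\star]{\testTime}{\delta}+(\text{bias at }q^\star)\lesssim \estimatorMarginalDistributionErrorBound$. On the good event (probability $\ge 1-\delta/6 \ge 1-\delta/3$ after absorbing constants) simultaneous concentration holds, so for every pair $q_\flat\le q\le q^\star$ the triangle inequality gives $|\estimatorMarginalDistribution[q]{\testTime}(f)-\estimatorMarginalDistribution[q_\flat]{\testTime}(f)|\le 2\{\estimatorMarginalDistributionRootVarTerm[q]{\testTime}{\delta}+\estimatorMarginalDistributionRootVarTerm[q_\flat]{\testTime}{\delta}\}$ (using that bias is increasing and dominated by the stochastic term for $q\le q^\star$), so the Lepski selector does not stop before $q^\star$, i.e.\ $\lepskiQ\ge q^\star$. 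Conversely, by the stopping rule at the selected $\lepskiQ$ compared against $q_\flat=q^\star$ we get $|\estimatorMarginalDistribution[\lepskiQ]{\testTime}(f)-\estimatorMarginalDistribution[q^\star]{\testTime}(f)|\le 2\{\estimatorMarginalDistributionRootVarTerm[\lepskiQ]{\testTime}{\delta}+\estimatorMarginalDistributionRootVarTerm[q^\star]{\testTime}{\delta}\}\lesssim \estimatorMarginalDistributionRootVarTerm[q^\star]{\testTime}{\delta}$ because $\estimatorMarginalDistributionRootVarTerm[q]{\testTime}{\delta}$ is decreasing in $q$; combining with the (bias+variance) control of $|\estimatorMarginalDistribution[q^\star]{\testTime}{\delta}-\marginalDistribution[\testTime](f)|$ and $|\estimatorMarginalDistribution[\lepskiQ]{\testTime}(f)-\E[\cdot]|\le \estimatorMarginalDistributionRootVarTerm[\lepskiQ]{\testTime}{\delta}$ plus the monotone bias at $\lepskiQ$ being $\lesssim$ bias at $q^\star$ (here one needs $\lepskiQ$ not to overshoot $\windowSize$ too badly, handled by noting the selector stops once the bias at $q$ dominates, which happens by $q\asymp q^\star\le \windowSize$), yields $|\estimatorMarginalDistributionLepski(f)-\marginalDistribution[\testTime](f)|\lesssim \estimatorMarginalDistributionErrorBound$. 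Tracking the explicit constants $2^{7/2}\maximalHolderExponentWeightFunction^2(\maximalHolderExponentWeightFunction+1)^2$ through the Legendre norm estimates and the factor-of-$2$ slack in the selection rule gives the stated bound, and the small-horizon case $\testTime<8\maximalHolderExponentWeightFunction^2(\maximalHolderExponentWeightFunction+1)^2$ is handled separately by the trivial choice $\lepskiQ=\testTime$ together with the crude bound $|\estimatorMarginalDistribution[\testTime]{\testTime}(f)-\marginalDistribution[\testTime](f)|\le 1$.
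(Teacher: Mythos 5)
Your proposal follows essentially the same route as the paper's proof: per-$q$ concentration for the weighted sums (Hoeffding/Azuma with a union bound over $q$), a bias bound via exact reproduction of polynomials of degree at most $\pOfQ$ by the Legendre weights together with a Taylor--H\"{o}lder remainder, control of $\|\estimatorMarginalDistributionWeightsNoArg\|_2$ through the near-identity $\weightMatrixByQNoArg^\top\weightMatrixByQNoArg\approx q\,\mathrm{I}$ for $q\geq 8\maximalHolderExponentWeightFunction^2(\maximalHolderExponentWeightFunction+1)^2$, and a Lepski step comparing the selected $\lepskiQ$ with an oracle balance point $q^\star$ at which bias and stochastic width are matched.

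Two details are looser than they should be, though neither touches the core argument. First, $\estimatorMarginalDistributionRootVarTerm[q]{\testTime}{\delta}$ is not literally decreasing in $q$, since the exact value of $\|\estimatorMarginalDistributionWeightsNoArg\|_2$ need not be monotone; what you need, and what the paper uses, is the upper bound $\|\estimatorMarginalDistributionWeightsNoArg\|_2\leq \maximalHolderExponentWeightFunction\sqrt{2/q}$ combined with the monotone decrease of $q\mapsto \log\{(\pi q)^2/\delta\}/q$. For the same reason your additional clause about the ``bias at $\lepskiQ$'' is both unnecessary and unavailable when $\lepskiQ>\windowSize$: the selection-rule comparison of $\lepskiQ$ against $q_\flat=q^\star$ together with the bias-plus-variance control at $q^\star$ already closes the bound, exactly as in the paper. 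Second, your treatment of the small-horizon case $\testTime<8\maximalHolderExponentWeightFunction^2(\maximalHolderExponentWeightFunction+1)^2$ via the crude bound $|\estimatorMarginalDistribution[\testTime]{\testTime}(f)-\marginalDistribution[\testTime](f)|\leq 1$ is false as stated: the extrapolation weights are signed, only $\sum_{i}\estimatorMarginalDistributionWeights=1$ is guaranteed, so $\estimatorMarginalDistribution[\testTime]{\testTime}(f)$ is not a convex combination of values of $f$ and can leave $[0,1]$, with no uniform control of $\|\estimatorMarginalDistributionWeightsNoArg\|_1$ available below the threshold $8\maximalHolderExponentWeightFunction^2(\maximalHolderExponentWeightFunction+1)^2$. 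The paper avoids analysing this regime separately by reducing at the outset to the case $\estimatorMarginalDistributionErrorBound<1$ (declaring the bound trivial otherwise); since $\estimatorMarginalDistributionErrorBound\geq 2^{7/2}\maximalHolderExponentWeightFunction^2(\maximalHolderExponentWeightFunction+1)^2\sqrt{2/\windowSize}$, the inequality $\estimatorMarginalDistributionErrorBound<1$ already forces $\windowSize$, and hence $\testTime$, to exceed $8\maximalHolderExponentWeightFunction^2(\maximalHolderExponentWeightFunction+1)^2$, which is how the corner case disappears.
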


Proposition \ref{prop:marginalDistributionEstimatorBound} yields the following corollary on estimating the label probabilities.

\begin{corollary}\label{corr:labelProbabilityEstimatorBound}  Suppose that we are in the setting of Proposition \ref{prop:marginalDistributionEstimatorBound} for some $f:\X^{\numNull+\numPositive+1} \rightarrow [0,1]$. For all $\delta \in (0,1)$ we have
\begin{align*}
\Prob\biggl\{ \frac{|\estimatorLabelProbabilityLepski-\labelProbability[\testTime]| |(\positiveDistribution-\nullDistribution)(f)|}{\estimatorMarginalDistributionErrorBound+3\underset{y \in \{0,1\}}{\max} | (\estimatorClassConditionalSecondSample{y}-\classConditionalDistribution{y})(f) | }>1\bigg| \nullSample,\positiveSample \biggr\} \leq \frac{\delta}{3}.
\end{align*}
\end{corollary}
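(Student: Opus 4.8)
The plan is to work on the high-probability event of Proposition \ref{prop:marginalDistributionEstimatorBound}, namely $\mathcal{E}:=\{|\estimatorMarginalDistributionLepski(f)-\marginalDistribution[\testTime](f)|\leq \estimatorMarginalDistributionErrorBound\}$, which has conditional probability at least $1-\delta/3$ given $\nullSample,\positiveSample$, and to show that on this event the quantity $|\estimatorLabelProbabilityLepski-\labelProbability[\testTime]|\,|(\positiveDistribution-\nullDistribution)(f)|$ is at most $\estimatorMarginalDistributionErrorBound+3\max_{y}|(\estimatorClassConditionalSecondSample{y}-\classConditionalDistribution{y})(f)|$. The identity \eqref{eq:justificationOfPluginRuleLabelProbability} expresses $\labelProbability[\testTime]$ as the ratio $(\marginalDistribution[\testTime](f)-\nullDistribution(f))/(\positiveDistribution(f)-\nullDistribution(f))$, and $\estimatorLabelProbabilityLepski$ is (up to clipping to $[0,1]$) the analogous ratio with all three population quantities replaced by their estimates $\estimatorMarginalDistributionLepski(f)$, $\estimatorClassConditionalSecondSample{0}(f)$, $\estimatorClassConditionalSecondSample{1}(f)$. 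Since $\labelProbability[\testTime]\in[0,1]$ already, the clipping can only bring the estimator closer, so it suffices to bound the error of the unclipped ratio.

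First I would treat the degenerate case: if $\estimatorClassConditionalSecondSample{0}(f)=\estimatorClassConditionalSecondSample{1}(f)$, then $\estimatorLabelProbabilityLepski=1/2$; here one must show that $|(\positiveDistribution-\nullDistribution)(f)|/2$ is controlled by $3\max_y|(\estimatorClassConditionalSecondSample{y}-\classConditionalDistribution{y})(f)|$, which follows since $|(\positiveDistribution-\nullDistribution)(f)|=|(\classConditionalDistribution{1}-\estimatorClassConditionalSecondSample{1})(f)-(\classConditionalDistribution{0}-\estimatorClassConditionalSecondSample{0})(f)|\leq 2\max_y|(\estimatorClassConditionalSecondSample{y}-\classConditionalDistribution{y})(f)|$ using $\estimatorClassConditionalSecondSample{1}(f)=\estimatorClassConditionalSecondSample{0}(f)$, and $|\estimatorLabelProbabilityLepski-\labelProbability[\testTime]|\leq 1$. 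For the main case, write $D:=(\positiveDistribution-\nullDistribution)(f)$ and $\hat{D}:=(\estimatorClassConditionalSecondSample{1}-\estimatorClassConditionalSecondSample{0})(f)$, and use the algebraic identity for the difference of two fractions. The cleanest route is to multiply through: on $\mathcal{E}$, before clipping,
\begin{align*}
(\estimatorLabelProbabilityLepski-\labelProbability[\testTime])\hat D &= \bigl(\estimatorMarginalDistributionLepski(f)-\estimatorClassConditionalSecondSample{0}(f)\bigr) - \labelProbability[\testTime]\hat D\\
&= \bigl(\estimatorMarginalDistributionLepski(f)-\marginalDistribution[\testTime](f)\bigr) - \bigl(\estimatorClassConditionalSecondSample{0}(f)-\nullDistribution(f)\bigr)+\bigl(\marginalDistribution[\testTime](f)-\nullDistribution(f)-\labelProbability[\testTime]\hat D\bigr),
\end{align*}
and then substitute $\marginalDistribution[\testTime](f)-\nullDistribution(f)=\labelProbability[\testTime]D$ from \eqref{eq:justificationOfPluginRuleLabelProbability} (valid since $D\neq 0$, which holds because $\totalVariation(\nullDistribution,\positiveDistribution)>0$ is implicit; strictly, if $D=0$ then $\labelProbability[\testTime]$ is not identified and the claimed bound is vacuous as the left side is $0$) so that the last bracket becomes $\labelProbability[\testTime](D-\hat D)=\labelProbability[\testTime]\bigl((\classConditionalDistribution{1}-\estimatorClassConditionalSecondSample{1})(f)-(\classConditionalDistribution{0}-\estimatorClassConditionalSecondSample{0})(f)\bigr)$.

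Taking absolute values and using $\labelProbability[\testTime]\in[0,1]$ together with the triangle inequality, $|(\estimatorLabelProbabilityLepski-\labelProbability[\testTime])\hat D|\leq \estimatorMarginalDistributionErrorBound + 3\max_{y\in\{0,1\}}|(\estimatorClassConditionalSecondSample{y}-\classConditionalDistribution{y})(f)|$ on $\mathcal{E}$. The final step is to replace $\hat D$ by $D$ in this inequality: since $|\hat D - D|\leq 2\max_y|(\estimatorClassConditionalSecondSample{y}-\classConditionalDistribution{y})(f)|$, one has $|(\estimatorLabelProbabilityLepski-\labelProbability[\testTime])D|\leq |(\estimatorLabelProbabilityLepski-\labelProbability[\testTime])\hat D| + |\estimatorLabelProbabilityLepski-\labelProbability[\testTime]||\hat D - D|$, and using $|\estimatorLabelProbabilityLepski-\labelProbability[\testTime]|\leq 1$ the extra term is absorbed into the $3\max_y$ term (this is precisely why the constant is $3$ rather than $1$). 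The main obstacle is purely bookkeeping: organising the fraction-difference identity so that exactly the three error terms $\estimatorMarginalDistributionLepski(f)-\marginalDistribution[\testTime](f)$ and $(\estimatorClassConditionalSecondSample{y}-\classConditionalDistribution{y})(f)$ for $y\in\{0,1\}$ appear with the stated constants, while correctly handling the clipping and the denominator switch from $\hat D$ to $D$; there is no probabilistic content beyond invoking Proposition \ref{prop:marginalDistributionEstimatorBound}.
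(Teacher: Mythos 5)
Your overall plan (condition on the event of Proposition \ref{prop:marginalDistributionEstimatorBound}, dispose of the degenerate cases, then run an algebraic decomposition of the plug-in ratio) is the same as the paper's, but the final accounting has a genuine gap. Write $M:=\max_{y}|(\estimatorClassConditionalSecondSample{y}-\classConditionalDistribution{y})(f)|$, $B:=\estimatorMarginalDistributionErrorBound$, $D:=(\positiveDistribution-\nullDistribution)(f)$ and $\hat D:=(\estimatorClassConditionalSecondSample{1}-\estimatorClassConditionalSecondSample{0})(f)$. Your intermediate bound $|(\estimatorLabelProbabilityLepski-\labelProbability[\testTime])\hat D|\leq B+3M$ already spends the entire constant $3$: one $M$ comes from the $\estimatorClassConditionalSecondSample{0}$ error and $2M$ from the term $\labelProbability[\testTime]|D-\hat D|$. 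When you then switch denominators via $|(\estimatorLabelProbabilityLepski-\labelProbability[\testTime])D|\leq|(\estimatorLabelProbabilityLepski-\labelProbability[\testTime])\hat D|+|\estimatorLabelProbabilityLepski-\labelProbability[\testTime]|\,|D-\hat D|$ and bound the second term by $2M$, you arrive at $B+5M$, not $B+3M$; the claim that this extra term ``is absorbed into the $3\max_y$ term'' is incorrect bookkeeping, so as written your argument only establishes the corollary with $5$ in place of $3$, which is not the stated bound.

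The repair is to avoid taking absolute values before the denominator switch. If $\hat{\pi}_u$ denotes the unclipped ratio, then $(\hat{\pi}_u-\labelProbability[\testTime])D=\{\estimatorMarginalDistributionLepski(f)-\marginalDistribution[\testTime](f)\}-\{\estimatorClassConditionalSecondSample{0}(f)-\nullDistribution(f)\}+\hat{\pi}_u(D-\hat D)$, so the coefficient of $(D-\hat D)$ is a single $\hat{\pi}_u$ rather than $\labelProbability[\testTime]$ plus a second correction; this yields $B+3M$ whenever $\hat{\pi}_u\in[0,1]$, and the out-of-range and sign cases must then be treated separately — which is precisely the case analysis carried out in the proof of Lemma \ref{lemma:ratioDifference}. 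The paper's proof does exactly this: it first reduces without loss of generality to $M\leq|D|/3$ (otherwise the ratio in the statement is trivially at most $1$ since $|\estimatorLabelProbabilityLepski-\labelProbability[\testTime]|\leq 1$), ensuring $\hat D\neq 0$, and then invokes the second bound \eqref{eq:secondBoundFromRatioDifference} of Lemma \ref{lemma:ratioDifference}, whose fraction-difference identity keeps the true denominator $D$ throughout and bounds the coefficient of $(\hat D-D)$ by $1$ via the clipping. Your handling of the degenerate cases ($D=0$, $\hat D=0$) and of the conditioning on the event of Proposition \ref{prop:marginalDistributionEstimatorBound} is fine; only the constant-tracking in the main case needs to be redone along one of these two lines.
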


The proof of both Proposition \ref{prop:marginalDistributionEstimatorBound} and Corollary \ref{corr:labelProbabilityEstimatorBound} is given in Section \ref{sec:appendixWithLemmasOnEstimatingLabelProbs}.

Corollary \ref{corr:labelProbabilityEstimatorBound} reveals hat the label probabilities can be estimated given Assumptions \ref{assumption:labelShift} and \ref{assumption:smoothlyVaryingLabelProbabilities}, along with a suitable function $f:\metricSpace^{\numNull+\numPositive+1} \rightarrow [0,1]$. In particular, the bound in Corollary \ref{corr:labelProbabilityEstimatorBound} demonstrates that accurate estimation is possible provided $|(\positiveDistribution-\nullDistribution)(f)|$ is not too small. This justifies the choice of  $\hat{f}_\delta$ defined by $\hat{f}_\delta(x):=\one\{2\estimatedRegressionFunction(x)\geq 1\}$ which corresponds to a plug-in estimate of the function attaining the total variation distance between $\nullDistribution$ and $\positiveDistribution$.

\subsection{A modular regret bound}

Next, we provide a general error bound given with respect to an arbitrary estimate of the transformed density ratio. This modular approach allows us to prove Theorems \ref{thm:mainClassificationSingleTimeStep} and \ref{thm:highProbBoundForTemporalSmoothnessWithJumps} in unified way.

Our general setting is as follows. Suppose we have a function $\hat{\regressionFunction}:\metricSpace^{\numNull+\numPositive+1} \rightarrow [0,1]$, where for each $x \in \metricSpace$, we view $\hat{\regressionFunction}(x)\equiv \hat{\regressionFunction}(\positiveCovariate_0,\ldots,\positiveCovariate_{\numPositive-1},x)$ as an estimate of $\regressionFunction(x)$. Let $\hat{f} \in \setOfDataDependentClassifiers$ be the data-dependent classifier given by $\hat{f}(x):=\one\left\lbrace \hat{\regressionFunction}(x)\geq 1\right\rbrace$. Given $\tilde{\delta} \in (0,1)$ we let $\estimatorLabelProbabilityLepskiUnspecified{\testTime}{\tilde{\delta}}{\hat{f}}$ denote the procedure for estimating $\labelProbability[\testTime]$ described in Section \ref{sec:estLabelProbability}, with $\tilde{\delta}$ in place of $\delta$, and $\hat{f}$ in place of $f$. Finally, let $\tilde{\dataDependentClassifier}_{\testTime,\tilde{\delta}}^{\hat{\regressionFunction}} \in \setOfDataDependentClassifiers$ denote the data-dependent classifier defined for $x \in \metricSpace$ by  
\begin{align*}\tilde{\dataDependentClassifier}_{\testTime,\tilde{\delta}}^{\hat{\regressionFunction}}(x) := \one\left\lbrace \hat{\regressionFunction}(x)+ \estimatorLabelProbabilityLepskiUnspecified{\testTime}{\tilde{\delta}}{\hat{f}}>1\right\rbrace.
\end{align*}

The following result (Proposition \ref{prop:keyTechnicalLemmaForProofOfMainClassificationSingleTimeStep}) gives a bound on the excess classification error provided $\hat{\regressionFunction}$ and $\estimatorClassConditionalSecondSample{y}$ are sufficiently close to $\regressionFunction$ and  $\classConditionalDistribution{y}(\hat{f})$, respectively.

\begin{prop}\label{prop:keyTechnicalLemmaForProofOfMainClassificationSingleTimeStep}
Suppose that Assumption \ref{assumption:labelShift} holds, Assumption \ref{assumption:smoothlyVaryingLabelProbabilities} hold with  $\windowSize \in [\numUnlabelled]$, $\holderExponentWeightFunction \in [0,\maximalHolderExponentWeightFunction]$, $\holderConstantWeightFunction \in [0,\infty)$ and Assumption \ref{assumption:tsybakovMarginAssumption} with $\tsybakovMarginFunction:(0,\infty) \rightarrow (0,1]$. Suppose $\Delta_a,\, \Delta_b,\, \Delta_c >0$ such that $2\Delta_a+\Delta_b\leq \totalVariation(\nullDistribution,\positiveDistribution)/4$ and let $\eventForCentralTechnicalLemma \equiv \eventForCentralTechnicalLemma(\Delta_a,\Delta_b,\Delta_c)$ denote the event
\begin{align*}
\eventForCentralTechnicalLemma:=&\left\lbrace\classConditionalDistribution{1/2}\left(\left\lbrace x \in \X : |\hat{\regressionFunction}(x)-\regressionFunction(x)|>\Delta_a \right\rbrace\right) \leq \Delta_b \right\rbrace \cap \left\lbrace \max_{y \in \{0,1\}} | (\estimatorClassConditionalSecondSample{y}-\classConditionalDistribution{y})(\hat{f}) | \leq \Delta_c \right\rbrace.
\end{align*}
Given $\tilde{\delta} \in (0,1)$ let 
\[\Delta_{\testTime}(\tilde{\delta}):= \highProbBoundSingleTimeStepDeltaUnlabelledData[\tilde{\delta}] \vee \left(\frac{\Delta_a}{16}\right) \vee \left(\frac{\Delta_c}{2\totalVariation(\nullDistribution,\positiveDistribution)}\right),\] and $
\constantForMainClassificationSingleTimeStep :=  4 \left(57\maximalHolderExponentWeightFunction^2(\maximalHolderExponentWeightFunction+1)^2+7\right)$. Then, we have
\begin{align*}
\Prob\left\lbrace\frac{ \testError_{\testTime}(\tilde{\dataDependentClassifier}_{\testTime,\tilde{\delta}}^{\hat{\regressionFunction}})-\testError_\testTime(\bayesClassifier[\testTime])}{\tsybakovMarginFunction\left(\constantForMainClassificationSingleTimeStep\Delta_{\testTime}(\tilde{\delta})\right) +\Delta_b }>2\,\Bigg|\, \eventForCentralTechnicalLemma \right\rbrace \leq \frac{\tilde{\delta}}{3}.  
\end{align*}
\end{prop}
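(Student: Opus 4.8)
The plan is to reduce the excess classification error to a statement about how well $\hat{\regressionFunction}+\estimatorLabelProbabilityLepskiUnspecified{\testTime}{\tilde{\delta}}{\hat{f}}$ approximates $\regressionFunction+\labelProbability[\testTime]$ on a set of large $\classConditionalDistribution{1/2}$-measure, and then invoke the margin assumption. The starting point is \Cref{lemma:classificationErrorSimpleFunction}, which tells us that
\[
\testError_\testTime(\tilde{\dataDependentClassifier}_{\testTime,\tilde{\delta}}^{\hat{\regressionFunction}})-\testError_\testTime(\bayesClassifier[\testTime])=2\int_{\{\tilde{\dataDependentClassifier}_{\testTime,\tilde{\delta}}^{\hat{\regressionFunction}}\neq \bayesClassifier[\testTime]\}}|1-\labelProbability[\testTime]-\regressionFunction|\,d\classConditionalDistribution{1/2}.
\]
On $\{\tilde{\dataDependentClassifier}_{\testTime,\tilde{\delta}}^{\hat{\regressionFunction}}\neq \bayesClassifier[\testTime]\}$ the quantities $\regressionFunction(x)+\labelProbability[\testTime]$ and $\hat{\regressionFunction}(x)+\estimatorLabelProbabilityLepskiUnspecified{\testTime}{\tilde{\delta}}{\hat{f}}$ lie on opposite sides of $1$, so $|1-\labelProbability[\testTime]-\regressionFunction(x)|\le |\hat{\regressionFunction}(x)-\regressionFunction(x)| + |\estimatorLabelProbabilityLepskiUnspecified{\testTime}{\tilde{\delta}}{\hat{f}}-\labelProbability[\testTime]|$. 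Thus, writing $\zeta$ for (a constant times) an upper bound on $|\hat{\regressionFunction}(x)-\regressionFunction(x)| + |\estimatorLabelProbabilityLepskiUnspecified{\testTime}{\tilde{\delta}}{\hat{f}}-\labelProbability[\testTime]|$ valid off a bad set, the integral is at most $2\zeta$ times the $\classConditionalDistribution{1/2}$-mass of $\{0<|1-\labelProbability[\testTime]-\regressionFunction|<\zeta\}$ plus a contribution from the bad set, and \Cref{assumption:tsybakovMarginAssumption} bounds the former by $2\tsybakovMarginFunction(\zeta)$.

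Next I would quantify the two error terms. On the event $\eventForCentralTechnicalLemma$, the set $\{|\hat{\regressionFunction}-\regressionFunction|>\Delta_a\}$ has $\classConditionalDistribution{1/2}$-mass at most $\Delta_b$; this is the bad set contributing directly $\Delta_b$ (after noting $|1-\labelProbability[\testTime]-\regressionFunction|\le 1$, giving an additive $2\Delta_b$ term, which matches the shape of the claimed bound). For the label-probability error, I would apply \Cref{corr:labelProbabilityEstimatorBound} with $f=\hat{f}$ and $\tilde{\delta}$ in place of $\delta$: conditionally on $\nullSample,\positiveSample$ (hence on $\hat{f}$ and on $\eventForCentralTechnicalLemma$, which is $(\nullSample,\positiveSample)$-measurable), with conditional probability at least $1-\tilde\delta/3$ we have
\[
|\estimatorLabelProbabilityLepskiUnspecified{\testTime}{\tilde{\delta}}{\hat{f}}-\labelProbability[\testTime]|\le \frac{\estimatorMarginalDistributionErrorBound[\tilde{\delta}][\hat{f}]+3\max_{y}|(\estimatorClassConditionalSecondSample{y}-\classConditionalDistribution{y})(\hat{f})|}{|(\positiveDistribution-\nullDistribution)(\hat{f})|}.
\]
On $\eventForCentralTechnicalLemma$ the numerator's second part is at most $3\Delta_c$. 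For the denominator, $|(\positiveDistribution-\nullDistribution)(\hat{f})|$ should be comparable to $\totalVariation(\nullDistribution,\positiveDistribution)$: since $\hat{f}=\one\{\hat{\regressionFunction}\ge 1/2\}$ and $f_\regressionFunction=\one\{\regressionFunction\ge 1/2\}$ attains $\totalVariation(\nullDistribution,\positiveDistribution)=2\int f_\regressionFunction(2\regressionFunction-1)d\classConditionalDistribution{1/2}$, the symmetric difference $\{\hat{f}\neq f_\regressionFunction\}$ is contained in $\{|2\regressionFunction-1|\le 2\Delta_a\}\cup\{|\hat{\regressionFunction}-\regressionFunction|>\Delta_a\}$, on which $|2\regressionFunction-1|\le 2\Delta_a$ or the set has small mass $\Delta_b$; using $(\positiveDistribution-\nullDistribution)(\hat f)=2\int \hat f(2\regressionFunction-1)d\classConditionalDistribution{1/2}$ one gets $|(\positiveDistribution-\nullDistribution)(\hat{f})|\ge \totalVariation(\nullDistribution,\positiveDistribution)-4\Delta_a-2\Delta_b\ge \totalVariation(\nullDistribution,\positiveDistribution)/2$ by the hypothesis $2\Delta_a+\Delta_b\le\totalVariation(\nullDistribution,\positiveDistribution)/4$. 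Combining, $|\estimatorLabelProbabilityLepskiUnspecified{\testTime}{\tilde{\delta}}{\hat{f}}-\labelProbability[\testTime]|$ is at most a constant times $\bigl(\estimatorMarginalDistributionErrorBound[\tilde{\delta}][\hat{f}]+\Delta_c\bigr)/\totalVariation(\nullDistribution,\positiveDistribution)$; and $\estimatorMarginalDistributionErrorBound[\tilde{\delta}][\hat{f}]/\totalVariation(\nullDistribution,\positiveDistribution)$, after substituting $\holderConstantSpecifiedFunction\le \holderConstantWeightFunction/(\pOfAlpha!\sqrt{2\holderExponentWeightFunction+1})$ bounded via $|\classConditionalDistribution{1}(\hat f)-\classConditionalDistribution{0}(\hat f)|=|(\positiveDistribution-\nullDistribution)(\hat f)|\le \totalVariation(\nullDistribution,\positiveDistribution)$, is controlled by $\highProbBoundSingleTimeStepDeltaUnlabelledData[\tilde{\delta}]$ up to the constant $2^{7/2}\maximalHolderExponentWeightFunction^2(\maximalHolderExponentWeightFunction+1)^2$.

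Assembling: on $\eventForCentralTechnicalLemma$, with conditional probability at least $1-\tilde\delta/3$, for $x$ outside the $\Delta_b$-mass bad set we have $|\hat{\regressionFunction}(x)-\regressionFunction(x)|+|\estimatorLabelProbabilityLepskiUnspecified{\testTime}{\tilde{\delta}}{\hat{f}}-\labelProbability[\testTime]|\le \Delta_a + C'\bigl(\highProbBoundSingleTimeStepDeltaUnlabelledData[\tilde{\delta}]+\Delta_c/\totalVariation(\nullDistribution,\positiveDistribution)\bigr)\le \constantForMainClassificationSingleTimeStep\Delta_{\testTime}(\tilde{\delta})$, by the definitions of $\Delta_{\testTime}(\tilde{\delta})$ and $\constantForMainClassificationSingleTimeStep$ (the factors $16$ and $2$ in $\Delta_{\testTime}(\tilde{\delta})$, and the $57\maximalHolderExponentWeightFunction^2(\maximalHolderExponentWeightFunction+1)^2+7$ in the constant, are precisely what make this bookkeeping close). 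Plugging $\zeta=\constantForMainClassificationSingleTimeStep\Delta_{\testTime}(\tilde{\delta})$ into the margin bound from the first paragraph and adding the $2\Delta_b$ bad-set term yields $\testError_{\testTime}(\tilde{\dataDependentClassifier}_{\testTime,\tilde{\delta}}^{\hat{\regressionFunction}})-\testError_\testTime(\bayesClassifier[\testTime])\le 2\tsybakovMarginFunction(\constantForMainClassificationSingleTimeStep\Delta_{\testTime}(\tilde{\delta}))+2\Delta_b$, so the displayed ratio exceeds $2$ only on the complement of the good label-probability event, which has conditional probability at most $\tilde\delta/3$. The main obstacle is the denominator lower bound $|(\positiveDistribution-\nullDistribution)(\hat f)|\ge\totalVariation(\nullDistribution,\positiveDistribution)/2$ and tracking all multiplicative constants so that they collapse into the stated $\constantForMainClassificationSingleTimeStep$; the probabilistic content is entirely inherited from \Cref{corr:labelProbabilityEstimatorBound}, everything else being deterministic given $\eventForCentralTechnicalLemma$.
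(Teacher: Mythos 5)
Your proposal is correct and follows essentially the same route as the paper: the probabilistic content comes from Corollary \ref{corr:labelProbabilityEstimatorBound} applied conditionally on $(\nullSample,\positiveSample)$, the lower bound $(\positiveDistribution-\nullDistribution)(\hat{f})\geq \totalVariation(\nullDistribution,\positiveDistribution)-4\Delta_a-2\Delta_b\geq \totalVariation(\nullDistribution,\positiveDistribution)/2$ that you re-derive inline is exactly the paper's Lemma \ref{lemma:keyTechnicalLemmaForProofOfMainClassificationSingleTimeStepFirstPartTotalVariation}, and the remaining step (margin assumption at threshold $\constantForMainClassificationSingleTimeStep\Delta_{\testTime}(\tilde{\delta})$ plus the $\Delta_b$ bad-set contribution via Lemma \ref{lemma:classificationErrorSimpleFunction}) matches the paper's argument, with your ``on the disagreement set the margin is small'' phrasing being the contrapositive of the paper's ``classifiers agree where the margin is large.''
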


The proof of this modular bound hinges upon Corollary \ref{corr:labelProbabilityEstimatorBound} and is given in Section \ref{sec:proofOfErrorBoundsOneStep}.

\subsection{Completing the error bounds}

Theorem \ref{thm:mainClassificationSingleTimeStep} follows as a relatively straightforward consequence of Proposition \ref{prop:regressionFunctionHighProbBound} and Proposition \ref{prop:keyTechnicalLemmaForProofOfMainClassificationSingleTimeStep}, and is provided in Section \ref{sec:proofOfErrorBoundsOneStep}. The proof of Theorem \ref{thm:highProbBoundForTemporalSmoothnessWithJumps} which bounds the average regret in a setting with smoothly varying label probabilities between sporadic jumps is given in Section \ref{sec:proofOfAverageRegretBounds} and also leverages  Propositions \ref{prop:regressionFunctionHighProbBound} and \ref{prop:keyTechnicalLemmaForProofOfMainClassificationSingleTimeStep}, along with a convexity argument. Corollary \ref{corr:totalVariationLabelProbBound} is then deduced from Theorem \ref{thm:highProbBoundForTemporalSmoothnessWithJumps} by demonstrating that bounds on the quantity $\totalVariationLabelProbabilities$ imply that the the time interval $\timeInterval$ may be decomposed into a relatively small number of sub-intervals over which the label-probabilities are contained in a small region.

\section{Confidence intervals for the class-conditional distributions}\label{sec:confidenceClassConditional}

Recall the definitions of $\empiricalUncertaintyFlat$, $\epsilonByNDeltaIteratedLogarithm{\sampleSizeVariable}{\delta}$ and $\empiricalUncertainty$ from Section \ref{sec:estRegressionFunction}. Furthermore, given $p \in [0,1]$ and $\tilde{\varepsilon} >0$ we define
\begin{align*}
\populationUncertaintyFlat(p, \tilde{\varepsilon})&:=\frac{8 \bigl( \sqrt{9 \varepsilon \sigma^2(p) +  \tilde{\varepsilon}^2}+\{1-2p\} \tilde{\varepsilon}\bigr)}{9+2  \tilde{\varepsilon}}.
\end{align*} For $p \in [0,1]$, $\sampleSizeVariable \in \N$, $\delta \in (0,1]$ with $\log(\sampleSizeVariable)\geq e\delta$,
\begin{align*}
\populationUncertainty(p,\sampleSizeVariable,\delta)&:=\populationUncertaintyFlat(p,\epsilonByNDeltaIteratedLogarithm{\sampleSizeVariable}{\delta}) \vee (1/\sampleSizeVariable),\\
\populationCI(p,\sampleSizeVariable,\delta)&:=\bigl[ p-\populationUncertainty(1-p,\sampleSizeVariable,\delta), p+\populationUncertainty(p,\sampleSizeVariable,\delta)\bigr],\\
\empiricalCI(q,\sampleSizeVariable,\delta)&:=\bigl[ q-\empiricalUncertainty(1-q,\sampleSizeVariable,\delta), q+\empiricalUncertainty(q,\sampleSizeVariable,\delta)\bigr].
\end{align*}

Given $x \in \metricSpace$ we let $\classOfMetricBalls(x):=\bigl\{ \closedMetricBall{x}{r} \bigr\}_{r\geq 0}$ and for each $y \in \{0,1\}$ we define events
\begin{align*}
\eventCI{y}(x,\delta)&:= \bigcap_{B \in \classOfMetricBalls(x)}\biggl\{ \empiricalClassConditionalDistribution{y}(B) \in \populationCI\bigl( \classConditionalDistribution{y}(B),\numClassConditional{y},\delta\bigr) \biggr\},\\
\empiricalEventCI{y}(x,\delta)&:= \bigcap_{B \in \classOfMetricBalls(x)}\biggl\{ \classConditionalDistribution{y}(B) \in \empiricalCI\bigl( \empiricalClassConditionalDistribution{y}(B),\numClassConditional{y},\delta\bigr) \biggr\}.
\end{align*}

\begin{lemma}\label{lemma:technicalLemmaForPopulationCIEventImpliesEmpiricalCIEventLemma} Suppose that $\delta \in (0,1)$, $\sampleSizeVariable \in \N$ and $q, p \in [0,1]$ satisfy $\log(\sampleSizeVariable)\geq e\delta$ and $q -p \leq \populationUncertainty(p,\sampleSizeVariable,\delta)$. Then $q-p \leq \empiricalUncertainty(1-q,\sampleSizeVariable,\delta)$.
\end{lemma}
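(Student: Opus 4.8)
The plan is to reduce the statement to a one–dimensional inequality that can be handled by a concavity argument together with an explicit scalar computation. Throughout, write $\tilde{\varepsilon}:=\epsilonByNDeltaIteratedLogarithm{\sampleSizeVariable}{\delta}>0$ and recall $\sigma^2(u)=u(1-u)$, so that $\populationUncertainty(p,\sampleSizeVariable,\delta)=\populationUncertaintyFlat(p,\tilde{\varepsilon})\vee(1/\sampleSizeVariable)$ and $\empiricalUncertainty(1-q,\sampleSizeVariable,\delta)=\empiricalUncertaintyFlat(1-q,\tilde{\varepsilon})\vee(1/\sampleSizeVariable)$ with the \emph{same} $\tilde{\varepsilon}$. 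First I would clear away the clamp and the trivial cases: if $q\le p$ there is nothing to prove, since $\empiricalUncertaintyFlat\ge 0$ (because $\sqrt{\tilde{\varepsilon}\sigma^2(r)+\tilde{\varepsilon}^2}\ge\tilde{\varepsilon}\ge|1-2r|\tilde{\varepsilon}$ for $r\in[0,1]$); if $0<q-p\le 1/\sampleSizeVariable$ then $q-p\le\empiricalUncertainty(1-q,\sampleSizeVariable,\delta)$ immediately; so one may assume $0<q-p\le\populationUncertaintyFlat(p,\tilde{\varepsilon})$, and it suffices to prove $q-p\le\empiricalUncertaintyFlat(1-q,\tilde{\varepsilon})$.

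The key observation is a concavity in the ``displacement'' variable. Setting $\Delta:=q-p\in(0,1-p]$ and $g(t):=\empiricalUncertaintyFlat(1-p-t,\tilde{\varepsilon})-t$ for $t\in[0,1-p]$, one notes that $g(t)$ equals $\tfrac{8}{3(1+2\tilde{\varepsilon})}\sqrt{\tilde{\varepsilon}(p+t)(1-p-t)+\tilde{\varepsilon}^2}$ plus an affine function of $t$; since $t\mapsto(p+t)(1-p-t)$ is concave and $s\mapsto\sqrt{\tilde{\varepsilon}s+\tilde{\varepsilon}^2}$ is concave and non-decreasing, $g$ is concave on $[0,1-p]$. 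Writing $\Delta^{\dagger}:=\min\{1-p,\ \populationUncertaintyFlat(p,\tilde{\varepsilon})\}$, so that $0<\Delta\le\Delta^{\dagger}$, concavity gives $g(\Delta)\ge\min\{g(0),g(\Delta^{\dagger})\}$, and since $g(0)=\empiricalUncertaintyFlat(1-p,\tilde{\varepsilon})\ge 0$ it remains only to establish $g(\Delta^{\dagger})\ge 0$.

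Finally I would verify $g(\Delta^{\dagger})\ge 0$ by distinguishing which term achieves the minimum defining $\Delta^{\dagger}$. If $\Delta^{\dagger}=1-p$, that is, $1-p\le\populationUncertaintyFlat(p,\tilde{\varepsilon})$, the desired inequality reads $\empiricalUncertaintyFlat(0,\tilde{\varepsilon})=\tfrac{16\tilde{\varepsilon}}{3(1+2\tilde{\varepsilon})}\ge 1-p$; when $\tilde{\varepsilon}\ge 3/10$ this holds since then $\empiricalUncertaintyFlat(0,\tilde{\varepsilon})\ge 1$, and when $\tilde{\varepsilon}<3/10$ the hypothesis rearranges to $(9-14\tilde{\varepsilon})(1-p)+8\tilde{\varepsilon}\le 8\sqrt{9\tilde{\varepsilon}p(1-p)+\tilde{\varepsilon}^2}$, whose left-hand side is positive (as $9-14\tilde{\varepsilon}>0$), so squaring yields $(1-p)(81+324\tilde{\varepsilon}+196\tilde{\varepsilon}^2)\le\tilde{\varepsilon}(432+224\tilde{\varepsilon})$, and the claim follows from the elementary polynomial inequality $3(1+2\tilde{\varepsilon})(432+224\tilde{\varepsilon})\le 16(81+324\tilde{\varepsilon}+196\tilde{\varepsilon}^2)$. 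If instead $\Delta^{\dagger}=\populationUncertaintyFlat(p,\tilde{\varepsilon})=:\bar{\Delta}<1-p$, the desired inequality reads $\empiricalUncertaintyFlat(1-p-\bar{\Delta},\tilde{\varepsilon})\ge\bar{\Delta}$; here I would substitute the defining identity $(9+2\tilde{\varepsilon})\bar{\Delta}=8(\sqrt{9\tilde{\varepsilon}\sigma^2(p)+\tilde{\varepsilon}^2}+(1-2p)\tilde{\varepsilon})$ into $(p+\bar{\Delta})(1-p-\bar{\Delta})$ and into $2(p+\bar{\Delta})-1$, isolate the square root, and square (handling separately the regime where the remaining linear part is negative, where the inequality is immediate), thereby reducing the claim to a polynomial inequality in $(p,\tilde{\varepsilon})\in[0,1]\times(0,\infty)$ that holds.

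The first two steps are soft; the work, and the main obstacle, is the branch $\Delta^{\dagger}=\populationUncertaintyFlat(p,\tilde{\varepsilon})$ of the last step. Its entire content is that the constants appearing in $\populationUncertaintyFlat$ and $\empiricalUncertaintyFlat$ — especially the factor $9$ under the square root in $\populationUncertaintyFlat$ against the $3(1+2\tilde{\varepsilon})$ denominator of $\empiricalUncertaintyFlat$ — are calibrated precisely so that the induced polynomial inequality is true, and making this explicit (while carefully tracking the sign of $1-2p$ when squaring) is the one genuinely computational part of the argument.
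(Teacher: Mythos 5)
Your reduction is sound as far as it goes: removing the $1/\sampleSizeVariable$ clamp and the case $q\le p$ matches the paper's first step, the concavity of $t\mapsto \empiricalUncertaintyFlat(1-p-t,\tilde{\varepsilon})-t$ is correct, and your endpoint computation in the branch $\Delta^{\dagger}=1-p$ checks out (the rearrangement to $(9-14\tilde{\varepsilon})(1-p)+8\tilde{\varepsilon}$, the positivity needed before squaring, and the final comparison $3(1+2\tilde{\varepsilon})(432+224\tilde{\varepsilon})\le 16(81+324\tilde{\varepsilon}+196\tilde{\varepsilon}^2)$ are all valid). The paper organizes this differently — it passes directly from $q-p\le\populationUncertaintyFlat(p,\tilde{\varepsilon})$ to $q-p\le\empiricalUncertaintyFlat(1-q,\tilde{\varepsilon})$ ``by rearranging'', without your concavity device — but your framing is a legitimate alternative.

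The genuine gap is the branch $\Delta^{\dagger}=\populationUncertaintyFlat(p,\tilde{\varepsilon})<1-p$, which is the generic and extremal case. There you only describe a plan (substitute the defining identity for $\bar{\Delta}$, isolate the remaining square root, square again with sign case analysis) and then assert that the outcome is ``a polynomial inequality in $(p,\tilde{\varepsilon})$ that holds.'' That unproved assertion is precisely the content of the lemma: it is the claim that the factor $9$ under the population square root and the denominators $9+2\tilde{\varepsilon}$ versus $3(1+2\tilde{\varepsilon})$ are mutually calibrated, i.e.\ it is the lemma evaluated at its tightest instance $q=p+\populationUncertaintyFlat(p,\tilde{\varepsilon})$. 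It cannot be taken on faith: the inequality is numerically near-sharp (for example at $p=0.01$, $\tilde{\varepsilon}=0.001$ one gets $\populationUncertaintyFlat(p,\tilde{\varepsilon})\approx 0.00931$ against $\empiricalUncertaintyFlat(1-q,\tilde{\varepsilon})\approx 0.00932$, a margin of roughly $0.2\%$), so crude bounds will not close it, and the double-squaring after substituting $\bar{\Delta}$ leaves terms linear in $\sqrt{9\tilde{\varepsilon}\sigma^2(p)+\tilde{\varepsilon}^2}$ whose sign handling is exactly where an error could make the reduced inequality false. Until you write down that final polynomial inequality explicitly (in both sign regimes, for all $p\in[0,1]$ and $\tilde{\varepsilon}>0$) and verify it, the hardest case of the lemma is assumed rather than proved — as your own closing paragraph in effect concedes.
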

\begin{proof} 
Since $q -p \leq \populationUncertainty(p,\sampleSizeVariable,\modDelta)$ we either have  $q-p \leq 1/\sampleSizeVariable$ or $q-p> 1/\sampleSizeVariable$. In the former case the consequence $q-p \leq 1/\sampleSizeVariable \leq \empiricalUncertainty(1-q,\sampleSizeVariable,\delta)$ is immediate. Henceforth, we shall focus on the latter case. Since  $1/\sampleSizeVariable <q -p \leq \populationUncertainty(p,\sampleSizeVariable,\modDelta)=\max\{1/\sampleSizeVariable,\populationUncertaintyFlat(p,\epsilonByNDeltaIteratedLogarithm{\sampleSizeVariable}{\delta})\}$ we have $q-p \leq \populationUncertaintyFlat(p,\epsilonByNDeltaIteratedLogarithm{\sampleSizeVariable}{\delta})$. By rearranging it follows that $q -p\leq\empiricalUncertaintyFlat(1-q,\epsilonByNDeltaIteratedLogarithm{\sampleSizeVariable}{\delta})= \empiricalUncertainty(1-q,\sampleSizeVariable,\delta)$.    
\end{proof}

\begin{lemma}\label{lemma:populationCIEventImpliesEmpiricalCIEvent} Given $\delta \in (0,1)$, $y \in \{0,1\}$ and $\numClassConditional{y} \in \N$ satisfy $\log(\numClassConditional{y})\geq e\delta$. Then $\eventCI{y}(x,\delta) \subseteq \empiricalEventCI{y}(x,\delta)$ for all $x \in \metricSpace$.
\end{lemma}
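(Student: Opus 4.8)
The plan is to fix $x \in \metricSpace$ and $y \in \{0,1\}$ and argue ball-by-ball, reducing the set inclusion of events to a pointwise implication that is essentially Lemma \ref{lemma:technicalLemmaForPopulationCIEventImpliesEmpiricalCIEvent} applied twice (once for each of the two one-sided bounds defining the confidence interval). So suppose we are on the event $\eventCI{y}(x,\delta)$, and let $B = \closedMetricBall{x}{r} \in \classOfMetricBalls(x)$ be arbitrary; write $q := \empiricalClassConditionalDistribution{y}(B) \in [0,1]$ and $p := \classConditionalDistribution{y}(B) \in [0,1]$. Being on $\eventCI{y}(x,\delta)$ means precisely that $q \in \populationCI(p,\numClassConditional{y},\delta) = [\,p - \populationUncertainty(1-p,\numClassConditional{y},\delta),\ p + \populationUncertainty(p,\numClassConditional{y},\delta)\,]$, and the goal is to show $p \in \empiricalCI(q,\numClassConditional{y},\delta) = [\,q - \empiricalUncertainty(1-q,\numClassConditional{y},\delta),\ q + \empiricalUncertainty(q,\numClassConditional{y},\delta)\,]$.

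\emph{Upper half.} From $q \le p + \populationUncertainty(p,\numClassConditional{y},\delta)$ we get $q - p \le \populationUncertainty(p,\numClassConditional{y},\delta)$, and since $\log(\numClassConditional{y}) \ge e\delta$ by hypothesis, Lemma \ref{lemma:technicalLemmaForPopulationCIEventImpliesEmpiricalCIEvent} (with sample size $\numClassConditional{y}$, and the roles of $q,p$ exactly as stated there) gives $q - p \le \empiricalUncertainty(1-q,\numClassConditional{y},\delta)$, i.e.\ $p \ge q - \empiricalUncertainty(1-q,\numClassConditional{y},\delta)$. \emph{Lower half.} From $q \ge p - \populationUncertainty(1-p,\numClassConditional{y},\delta)$ we get $(1-q) - (1-p) = p - q \le \populationUncertainty(1-p,\numClassConditional{y},\delta)$; now apply Lemma \ref{lemma:technicalLemmaForPopulationCIEventImpliesEmpiricalCIEvent} with the substitution $q \leftarrow 1-q$ and $p \leftarrow 1-p$ (so that the hypothesis reads $(1-q)-(1-p) \le \populationUncertainty(1-p,\numClassConditional{y},\delta)$ and the conclusion reads $(1-q)-(1-p) \le \empiricalUncertainty(1-(1-q),\numClassConditional{y},\delta) = \empiricalUncertainty(q,\numClassConditional{y},\delta)$). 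That gives $p - q \le \empiricalUncertainty(q,\numClassConditional{y},\delta)$, i.e.\ $p \le q + \empiricalUncertainty(q,\numClassConditional{y},\delta)$. Combining the two halves yields $\classConditionalDistribution{y}(B) \in \empiricalCI(\empiricalClassConditionalDistribution{y}(B),\numClassConditional{y},\delta)$; since $B \in \classOfMetricBalls(x)$ was arbitrary, intersecting over all such $B$ shows we are on $\empiricalEventCI{y}(x,\delta)$, as required.

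The only genuinely delicate point is bookkeeping the two symmetric applications of Lemma \ref{lemma:technicalLemmaForPopulationCIEventImpliesEmpiricalCIEvent}: the lemma is stated asymmetrically (it turns a one-sided upper control into a one-sided lower control, trading $\populationUncertainty(p,\cdot)$ for $\empiricalUncertainty(1-q,\cdot)$), so the lower half of the confidence interval must be handled by the $p \mapsto 1-p$, $q \mapsto 1-q$ reflection rather than by a direct application. One should double-check that this reflection is legitimate — it is, because $\populationUncertainty$ and $\empiricalUncertainty$ only ever appear evaluated at probabilities in $[0,1]$, and $1-p,\,1-q \in [0,1]$ whenever $p,q \in [0,1]$, while the hypothesis $\log(\numClassConditional{y}) \ge e\delta$ is symmetric in this substitution. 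Everything else is a routine unwrapping of the definitions of $\populationCI$, $\empiricalCI$ and the events $\eventCI{y}(x,\delta)$, $\empiricalEventCI{y}(x,\delta)$.
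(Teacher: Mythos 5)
Your proposal is correct and matches the paper's own argument: fix a ball $B$, apply Lemma \ref{lemma:technicalLemmaForPopulationCIEventImpliesEmpiricalCIEventLemma} once with $(p,q)=(\classConditionalDistribution{y}(B),\empiricalClassConditionalDistribution{y}(B))$ for the upper deviation and once with the reflected pair $(1-\classConditionalDistribution{y}(B),1-\empiricalClassConditionalDistribution{y}(B))$ for the lower deviation, then intersect over all $B \in \classOfMetricBalls(x)$. The bookkeeping of the reflection is exactly how the paper handles the asymmetry of the technical lemma, so there is nothing to add.
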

\begin{proof} Suppose that $\eventCI{y}(x,\delta)$ holds for some $x \in \metricSpace$ and $y \in \{0,1\}$, and take $B \in \classOfMetricBalls(x)$ so that $\empiricalClassConditionalDistribution{y}(B) \in \populationCI( \classConditionalDistribution{y}(B),\numClassConditional{y},\delta)$. Hence, we have
\begin{align*}
 \empiricalClassConditionalDistribution{y}(B) -\classConditionalDistribution{y}(B)\leq \populationUncertainty(\classConditionalDistribution{y}(B),\numClassConditional{y},\delta),
\end{align*}
so that applying Lemma \ref{lemma:technicalLemmaForPopulationCIEventImpliesEmpiricalCIEventLemma} with $p=\classConditionalDistribution{y}(B)$ and $q=\empiricalClassConditionalDistribution{y}(B)$ yields
\begin{align*}
 \empiricalClassConditionalDistribution{y}(B) -\classConditionalDistribution{y}(B)\leq \empiricalUncertainty(1-\empiricalClassConditionalDistribution{y}(B),\numClassConditional{y},\delta).
\end{align*}
Similarly, by $\empiricalClassConditionalDistribution{y}(B) \in \populationCI( \classConditionalDistribution{y}(B),\numClassConditional{y},\delta)$ we also have 
\begin{align*}
\bigl\{1- \empiricalClassConditionalDistribution{y}(B)\bigr\}-\bigl\{1-\classConditionalDistribution{y}(B)\bigr\} \leq \populationUncertainty(1-\classConditionalDistribution{y}(B),\numClassConditional{y},\delta),
\end{align*}
so that applying Lemma \ref{lemma:technicalLemmaForPopulationCIEventImpliesEmpiricalCIEventLemma} with $p=1-\classConditionalDistribution{y}(B)$ and $q=1-\empiricalClassConditionalDistribution{y}(B)$ yields $\classConditionalDistribution{y}(B)- \empiricalClassConditionalDistribution{y}(B)\leq \empiricalUncertainty(\empiricalClassConditionalDistribution{y}(B),\numClassConditional{y},\delta)$. Thus, we have  $\classConditionalDistribution{y}(B) \in \empiricalCI( \empiricalClassConditionalDistribution{y}(B),\numClassConditional{y},\delta)$. Since this holds for all $B \in \classOfMetricBalls(x)$ we have  $\eventCI{y}(x,\delta) \subseteq \empiricalEventCI{y}(x,\delta)$.
\end{proof}

\begin{lemma}\label{lemma:uniformConcentrationMetricBallsSinglePoint} Suppose $y \in \{0,1\}$, $\numClassConditional{y} \in \N$, $\delta \in (0,1]$ satisfy $\log(\numClassConditional{y})\geq e\delta$ and let  $\modDeltaClassConditional :=  4 \lceil \log_{4/3}(\numClassConditional{y}) \rceil \{\delta/\log(\numClassConditional{y})\}^2$. Then, for each $x \in \metricSpace$, we have $\Prob\bigl\{\eventCI{y}(x,\delta)\bigr\} \geq 1-\modDeltaClassConditional$.
\end{lemma}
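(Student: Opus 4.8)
The plan is to recognise $\eventCI{y}(x,\delta)$ as a one-dimensional empirical-CDF event and then apply the local Dvoretzky--Kiefer--Wolfowitz--Massart--type concentration inequality of \cite{reeve2024short}. First I would push the law $\classConditionalDistribution{y}$ forward under the map $z\mapsto\metric(x,z)$ to obtain a Borel probability measure $\nu$ on $[0,\infty)$, and write $G(r):=\nu([0,r])=\classConditionalDistribution{y}(\closedMetricBall{x}{r})$ together with its empirical counterpart $\hat G(r):=\empiricalClassConditionalDistribution{y}(\closedMetricBall{x}{r})$ built from the $\numClassConditional{y}$ i.i.d.\ values $\metric(x,X^y_0),\dots,\metric(x,X^y_{\numClassConditional{y}-1})$. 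Since the closed balls $\closedMetricBall{x}{r}$ form a chain ordered by inclusion, the family $\classOfMetricBalls(x)$ is, under this transformation, nothing other than the family of initial segments $[0,r]$; in particular $\hat G$ is a non-decreasing step function taking at most $\numClassConditional{y}+1$ values, so there is no measurability obstruction and every supremum below is attained. Under this identification, $\eventCI{y}(x,\delta)$ states exactly that $\hat G(r)$ lies in the value-dependent band $[\,G(r)-\populationUncertainty(1-G(r),\numClassConditional{y},\delta),\;G(r)+\populationUncertainty(G(r),\numClassConditional{y},\delta)\,]$ simultaneously for every $r\ge 0$.

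Next I would split this two-sided band into the upper constraint $\hat G(r)-G(r)\le\populationUncertainty(G(r),\numClassConditional{y},\delta)$ and the lower constraint $G(r)-\hat G(r)\le\populationUncertainty(1-G(r),\numClassConditional{y},\delta)$. Passing from each closed ball to its complement $\{z\in\metricSpace:\metric(x,z)>r\}$ replaces $G$ by $1-G$ and $\hat G$ by $1-\hat G$ and converts the lower constraint into an upper constraint for the (again nested, now decreasing) family of complements; so by symmetry it suffices to bound the probability of an upper deviation over a nested family by $\tfrac12\modDeltaClassConditional$ and union over the two directions. For the upper deviation I would run a peeling argument over the value of $G$: with $K:=\lceil\log_{4/3}(\numClassConditional{y})\rceil$, partition $(0,1]$ into the geometric bands $J_k:=\bigl((3/4)^{k},(3/4)^{k-1}\bigr]$ for $k=1,\dots,K$ together with the residual band $(0,(3/4)^{K}]\subseteq(0,1/\numClassConditional{y}]$ (thresholds with $G(r)=0$ are trivial, since then $\hat G(r)=0$ almost surely). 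On each band $J_k$, monotonicity of both $G$ and $\hat G$ along the chain reduces $\sup\{\hat G(r)-G(r):G(r)\in J_k\}$ to a single one-sided Bennett/Bernstein-type tail evaluated essentially at the band's upper endpoint; this is precisely the estimate supplied by the local inequality of \cite{reeve2024short}, calibrated at confidence level $\delta/\logBar(\numClassConditional{y})$ --- whence the iterated logarithm $\epsilonByNDeltaIteratedLogarithm{\numClassConditional{y}}{\delta}=\logBar(\logBar(\numClassConditional{y})/\delta)/\numClassConditional{y}$ enters $\populationUncertainty$, and the factor $9$ inside the radical of $\populationUncertaintyFlat$ absorbs the constant-factor inflation of the variance $\sigma^2$ incurred by replacing $G(r)$ on the band by its endpoint. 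This yields a per-band failure probability of at most $2\{\delta/\logBar(\numClassConditional{y})\}^2$; the residual band is absorbed by the $\vee(1/\numClassConditional{y})$ flooring in $\populationUncertainty$ via a crude Chernoff bound for $\mathrm{Bin}(\numClassConditional{y},p)$ with $p\le 1/\numClassConditional{y}$. Summing over the $K$ bands and doubling for the two directions gives $1-\Prob\{\eventCI{y}(x,\delta)\}\le 4\lceil\log_{4/3}(\numClassConditional{y})\rceil\{\delta/\log\numClassConditional{y}\}^2=\modDeltaClassConditional$.

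The main obstacle, and the step where the imported inequality of \cite{reeve2024short} does the real work, is matching the precise Bernstein-type functional form of $\populationUncertaintyFlat$ --- the $\sqrt{9\tilde\varepsilon\,\sigma^2(\cdot)}$ variance term, the sign-flipping linear correction $\{1-2\,\cdot\,\}\tilde\varepsilon$, and the $9+2\tilde\varepsilon$ normalisation --- to a genuine \emph{one-sided} tail bound that holds uniformly over a whole geometric band rather than at a single deterministic threshold; once this is available, the peeling, the union bound and the flooring argument are routine bookkeeping. Finally, the hypothesis $\log(\numClassConditional{y})\ge e\delta$ is used exactly to ensure $\delta/\logBar(\numClassConditional{y})\le 1/e$, so that $\logBar(\logBar(\numClassConditional{y})/\delta)=\log(\logBar(\numClassConditional{y})/\delta)\ge 1$ and each per-band confidence parameter is a legitimate probability.
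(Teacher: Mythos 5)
Your proposal lands on the same external ingredient as the paper, but it organises the work quite differently, and the difference matters. The paper's proof of this lemma is essentially two lines: it verifies the calibration identity $\epsilonByNDeltaIteratedLogarithm{\numClassConditional{y}}{\delta}=\log(\log(\numClassConditional{y})/\delta)/\numClassConditional{y}=\log\bigl(4\lceil\log_{4/3}(\numClassConditional{y})\rceil/\modDeltaClassConditional\bigr)/(2\numClassConditional{y})$, and then applies \cite[Corollary 3]{reeve2024short} twice, with a union bound. That is, the cited corollary is already the \emph{uniform-over-the-chain-of-balls} statement (this is what ``local DKWM--type'' means here), so no peeling, no band decomposition and no residual-band/flooring argument is carried out in this paper at all; the only content of the proof is that the parametrisation via $\epsilonByNDeltaIteratedLogarithm{\cdot}{\cdot}$ and the target failure probability $\modDeltaClassConditional$ match the corollary's parametrisation. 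Your write-up instead treats the import as an essentially pointwise (or single-band) Bernstein-type bound and rebuilds the uniformity over $\classOfMetricBalls(x)$ by geometric peeling with ratio $3/4$ — in effect reconstructing the interior of the cited corollary, which does have the expository virtue of explaining where $\lceil\log_{4/3}(\numClassConditional{y})\rceil$ and the square in $\{\delta/\log(\numClassConditional{y})\}^2$ come from.

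The caution is that the quantitative claims your reconstruction needs are exactly the ones you do not prove: that each band contributes failure probability $2\{\delta/\log(\numClassConditional{y})\}^2$, i.e.\ an exponent of the form $2\numClassConditional{y}\tilde\varepsilon$ rather than $c\,\numClassConditional{y}\tilde\varepsilon$ with some smaller $c$; that the supremum over a band is controlled at the band's upper endpoint even though the linear correction $\{1-2p\}\tilde\varepsilon$ in $\populationUncertaintyFlat$ moves in the opposite direction to the variance term as $p$ varies across the band; and that the factor $9$ exactly absorbs the factor-$4/3$ inflation from recentering at a band endpoint. You park all of these in the citation, which is legitimate only if \cite[Corollary 3]{reeve2024short} is stated in the band-uniform, one-sided form you assume; the paper's own usage indicates it is stated in the fully uniform form, in which case the cleaner (and safer) version of your argument is simply the paper's: check the calibration identity above and apply the corollary twice with a union bound, dispensing with the peeling entirely. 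If you insist on the peeling route, those three constant-level claims are the real content and would need to be established, not asserted.
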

\begin{proof} The definition of $\modDeltaClassConditional$ entails
\begin{align*}
\epsilonByNDeltaIteratedLogarithm{\numClassConditional{y}}{\delta}&= \log(\log(\numClassConditional{y})/\delta)/\numClassConditional{y} = \log\bigl( 4 \lceil \log_{4/3}(\numClassConditional{y}) \rceil/\modDeltaClassConditional  \bigr)/(2\numClassConditional{y}).
\end{align*}
The result now follows from two applications of \cite[Corollary 3]{reeve2024short} combined with a union bound.
\end{proof}

\begin{lemma}\label{lemma:empiricalCIWidthBound}  Suppose $y \in \{0,1\}$, $\delta \in (0,1]$, $x \in \metricSpace$ and $\numClassConditional{y} \in \N$ satisfy $\log(\numClassConditional{y})\geq e\delta$. Then, for all $B \in \classOfMetricBalls(x)$ with $p:=\min\{ \classConditionalDistribution{y}(B),1-\classConditionalDistribution{y}(B)\}$ we have
\begin{align*}
\Lebesgue\bigl\{ \empiricalCI\bigl( \empiricalClassConditionalDistribution{y}(B),\numClassConditional{y},\delta\bigr)\bigr\} &\leq \frac{16}{3}\sqrt{p\epsilonByNDeltaIteratedLogarithm{\numClassConditional{y}}{\delta}}+20\epsilonByNDeltaIteratedLogarithm{\numClassConditional{y}}{\delta}.
\end{align*}
\end{lemma}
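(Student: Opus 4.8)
The plan is to compute the length of $\empiricalCI(\empiricalClassConditionalDistribution{y}(B),\numClassConditional{y},\delta)$ directly from the definitions, to exploit an algebraic cancellation in $\empiricalUncertaintyFlat$ that gives a clean bound in terms of the empirical mass $\empiricalClassConditionalDistribution{y}(B)$, and then to pass to the population mass $\classConditionalDistribution{y}(B)$ on the concentration event $\eventCI{y}(x,\delta)$. Write $q:=\empiricalClassConditionalDistribution{y}(B)$ and $\tilde\varepsilon:=\epsilonByNDeltaIteratedLogarithm{\numClassConditional{y}}{\delta}$. By definition $\Lebesgue\{\empiricalCI(q,\numClassConditional{y},\delta)\}=\empiricalUncertainty(q,\numClassConditional{y},\delta)+\empiricalUncertainty(1-q,\numClassConditional{y},\delta)$, and since $\empiricalUncertainty(\cdot,\numClassConditional{y},\delta)=\empiricalUncertaintyFlat(\cdot,\tilde\varepsilon)\vee(1/\numClassConditional{y})$, $a\vee b\le a+b$, and $1/\numClassConditional{y}\le\tilde\varepsilon$ (as $\logBar\ge 1$), it suffices to control $\empiricalUncertaintyFlat(q,\tilde\varepsilon)+\empiricalUncertaintyFlat(1-q,\tilde\varepsilon)$ up to an additive $2\tilde\varepsilon$. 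Feeding in $q$ and $1-q$ cancels the linear terms $\{1-2q\}\tilde\varepsilon$ and $-\{1-2q\}\tilde\varepsilon$, and $\sigma^2(1-q)=\sigma^2(q)$, so $\empiricalUncertaintyFlat(q,\tilde\varepsilon)+\empiricalUncertaintyFlat(1-q,\tilde\varepsilon)=\frac{16}{3(1+2\tilde\varepsilon)}\sqrt{\tilde\varepsilon\,\sigma^2(q)+\tilde\varepsilon^2}\le\frac{16}{3}\bigl(\sqrt{\tilde\varepsilon\,\min\{q,1-q\}}+\tilde\varepsilon\bigr)$, using $1+2\tilde\varepsilon\ge 1$, $\sigma^2(q)=q(1-q)\le\min\{q,1-q\}$, and $\sqrt{a+b}\le\sqrt a+\sqrt b$. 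Hence $\Lebesgue\{\empiricalCI(q,\numClassConditional{y},\delta)\}\le\frac{16}{3}\sqrt{\tilde\varepsilon\,\min\{q,1-q\}}+\frac{22}{3}\tilde\varepsilon$, which is already the claimed bound with $p$ replaced by the empirical $\min\{q,1-q\}$, with $\frac{22}{3}<20$ to spare.

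To replace $\min\{q,1-q\}$ by $p=\min\{\classConditionalDistribution{y}(B),1-\classConditionalDistribution{y}(B)\}$ I would restrict to $\eventCI{y}(x,\delta)$, which (by Lemma \ref{lemma:uniformConcentrationMetricBallsSinglePoint}, using $\log(\numClassConditional{y})\ge e\delta$) holds for all $B\in\classOfMetricBalls(x)$ simultaneously with high probability; there $q\in\populationCI(\classConditionalDistribution{y}(B),\numClassConditional{y},\delta)$, so $|q-\classConditionalDistribution{y}(B)|\le\populationUncertainty(\classConditionalDistribution{y}(B),\numClassConditional{y},\delta)\vee\populationUncertainty(1-\classConditionalDistribution{y}(B),\numClassConditional{y},\delta)$. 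Reading off $\populationUncertaintyFlat$ (bounding $9+2\tilde\varepsilon\ge 9$, $\sigma^2(\classConditionalDistribution{y}(B))\le p$, $\{1-2(\cdot)\}\tilde\varepsilon\le\tilde\varepsilon$, $\sqrt{a+b}\le\sqrt a+\sqrt b$, then adding the $1/\numClassConditional{y}\le\tilde\varepsilon$ floor) gives $|q-\classConditionalDistribution{y}(B)|\le\frac{8}{3}\sqrt{\tilde\varepsilon p}+\frac{25}{9}\tilde\varepsilon$; since $s\mapsto\min\{s,1-s\}$ is $1$-Lipschitz, $\min\{q,1-q\}\le p+\frac{8}{3}\sqrt{\tilde\varepsilon p}+\frac{25}{9}\tilde\varepsilon$, and completing the square, $\tilde\varepsilon\min\{q,1-q\}\le\bigl(\sqrt{\tilde\varepsilon p}+\frac{4}{3}\tilde\varepsilon\bigr)^2+\tilde\varepsilon^2$, i.e. $\sqrt{\tilde\varepsilon\,\min\{q,1-q\}}\le\sqrt{\tilde\varepsilon p}+\frac{7}{3}\tilde\varepsilon$. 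Substituting into the bound of the previous paragraph yields $\Lebesgue\{\empiricalCI(q,\numClassConditional{y},\delta)\}\le\frac{16}{3}\sqrt{\tilde\varepsilon p}+\bigl(\frac{16}{3}\cdot\frac{7}{3}+\frac{22}{3}\bigr)\tilde\varepsilon=\frac{16}{3}\sqrt{\tilde\varepsilon p}+\frac{178}{9}\tilde\varepsilon\le\frac{16}{3}\sqrt{\tilde\varepsilon p}+20\tilde\varepsilon$, as asserted.

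The cancellation in the first step and the $1$-Lipschitz property of $s\mapsto\min\{s,1-s\}$ keep the argument structurally simple and avoid any case split over which side of $\tfrac12$ the two masses fall. The main obstacle is arithmetic rather than conceptual: the residue $\frac{178}{9}\tilde\varepsilon$ produced by the transfer has to fit under the $20\tilde\varepsilon$ in the statement, which it does with a margin of only $\frac{2}{9}\tilde\varepsilon$, so the constants must be propagated exactly through each step — in particular one cannot afford to be lossy in $\sqrt{a+b}\le\sqrt a+\sqrt b$ or in absorbing the $\vee(1/\numClassConditional{y})$ floor — whereas the denominators $3(1+2\tilde\varepsilon)$ and $9+2\tilde\varepsilon$ and the linear $\{1-2q\}\tilde\varepsilon$ terms only ever help.
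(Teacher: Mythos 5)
Your proposal is correct and follows essentially the same route as the paper's own proof: both work on the event $\eventCI{y}(x,\delta)$ (which the paper's proof also assumes implicitly, since the stated bound cannot hold unconditionally), exploit the cancellation of the $\{1-2q\}\tilde\varepsilon$ terms so that the interval length reduces to $\tfrac{16}{3(1+2\tilde\varepsilon)}\sqrt{\tilde\varepsilon\sigma^2(q)+\tilde\varepsilon^2}$ plus the $1/\numClassConditional{y}$ floors, bound $\sigma^2$ by the min-mass, and then transfer from the empirical to the population min-mass $p$ via $\populationUncertaintyFlat$ and a completed square. The only differences are cosmetic (order of steps and slightly different intermediate constants, e.g.\ your $178/9$ versus the paper's $\approx 130/9$), both landing under the stated $20\epsilonByNDeltaIteratedLogarithm{\numClassConditional{y}}{\delta}$.
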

\begin{proof} Let's suppose that the event $\eventCI{y}(x,\delta)$ holds, take $B \in \classOfMetricBalls(x)$ and let $p:=\min\{ \classConditionalDistribution{y}(B),1-\classConditionalDistribution{y}(B)\}$. On the event $\eventCI{y}(x,\delta)$ we then have
\begin{align*}
q&:=\min\{ \empiricalClassConditionalDistribution{y}(B),1-\empiricalClassConditionalDistribution{y}(B)\} \leq p+\populationUncertaintyFlat(p,\epsilonByNDeltaIteratedLogarithm{\numClassConditional{y}}{\delta})\vee \biggl(\frac{1}{\numClassConditional{y}}\biggr) \\ & \leq p + 8\sqrt{p  \epsilonByNDeltaIteratedLogarithm{\numClassConditional{y}}{\delta}}/3+16\epsilonByNDeltaIteratedLogarithm{\numClassConditional{y}}{\delta}/9 = \biggl\{\sqrt{p}+4\sqrt{  \epsilonByNDeltaIteratedLogarithm{\numClassConditional{y}}{\delta}}/3\biggr\}^2.
\end{align*}
Hence, we have 
\begin{align*}
\Lebesgue\bigl\{ \empiricalCI\bigl( \empiricalClassConditionalDistribution{y}(B),\numClassConditional{y},\delta\bigr)\bigr\} &\leq \frac{16 \bigl(  \sqrt{ \epsilonByNDeltaIteratedLogarithm{\numClassConditional{y}}{\delta}\sigma^2(q) + \epsilonByNDeltaIteratedLogarithm{\numClassConditional{y}}{\delta}^2}}{3\{1+2\epsilonByNDeltaIteratedLogarithm{\numClassConditional{y}}{\delta}\} }+\frac{2}{\numClassConditional{y}}\\
& \leq \frac{16}{3}\left\lbrace \sqrt{q  \epsilonByNDeltaIteratedLogarithm{\numClassConditional{y}}{\delta} }+ \epsilonByNDeltaIteratedLogarithm{\numClassConditional{y}}{\delta} \right\rbrace+\frac{2}{\numClassConditional{y}}
 \leq \frac{16}{3} \sqrt{ p \epsilonByNDeltaIteratedLogarithm{\numClassConditional{y}}{\delta} }+20 \epsilonByNDeltaIteratedLogarithm{\numClassConditional{y}}{\delta}.
\end{align*}
\end{proof}

\section{Estimating the transformed density ratio}\label{sec:estTransformedDensityRatio}

Our primary goal in this section is to establish the following pointwise result (Lemma \ref{lemma:pointwiseRegressionFunctionHighProbBound}). Proposition \ref{prop:regressionFunctionHighProbBound} then follows straightforwardly.

\begin{lemma}\label{lemma:pointwiseRegressionFunctionHighProbBound} Suppose that Assumption \ref{assumption:smoothRegressionFunctions} holds with $\smoothnessExponentRegressionFunction \in (0,1]$, $\smoothnessConstantRegressionFunction \in [0,\infty)$. Given $x \in \metricSpace$, $\nMin \in \N$ with $\nMin \geq 2^4$ and $\delta \in (0,1]$ we have $\Prob\left\lbrace x \notin \goodSetRegressionFunctionEstimatedWell\right\rbrace \leq 21 \delta^2/2$.
\end{lemma}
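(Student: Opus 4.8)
The plan is to reduce everything to one favourable event and then argue deterministically. Fix $x \in \metricSpace$, put $n := \nMin$ and $\varepsilon := \epsilonByNDeltaIteratedLogarithm{n}{\delta}$, and let $\kappa$ denote the bracketed quantity on the right-hand side of the asserted bound. Since $\estimatedRegressionFunction(x), \regressionFunction(x) \in [0,1]$, the inequality $|\estimatedRegressionFunction(x)-\regressionFunction(x)| \le 14\kappa$ is vacuous unless $\kappa < 1/14$, so assume this; in particular $\varepsilon < 1/196$. Let $E := \eventCI{0}(x,\delta) \cap \eventCI{1}(x,\delta)$. Because $\numClassConditional{y} \ge n \ge 2^{4}$ we have $\log\numClassConditional{y} \ge \log 16 > e \ge e\delta$, so Lemma~\ref{lemma:uniformConcentrationMetricBallsSinglePoint} applies to each class and a union bound gives $\Prob(E^{c}) \le \sum_{y\in\{0,1\}} 4\lceil\log_{4/3}(\numClassConditional{y})\rceil\{\delta/\log\numClassConditional{y}\}^{2}$; a direct numerical estimate of $m\mapsto \lceil\log_{4/3}m\rceil/(\log m)^{2}$ on $[2^{4},\infty)$ bounds the right-hand side by $21\delta^{2}/2$. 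Hence it suffices to show that $E$ forces $x\in\goodSetRegressionFunctionEstimatedWell$, i.e. $|\estimatedRegressionFunction(x)-\regressionFunction(x)| \le 14\kappa$.

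Work on $E$. For a closed ball $B = \closedMetricBall{x}{r}$ write $I(B) := [\empiricalRegressionFunctionConfidenceBound{-1}{\delta}(B),\empiricalRegressionFunctionConfidenceBound{1}{\delta}(B)]$ and $W(B) := \regressionFunctionIntervalWidth{\delta}(B) = |I(B)|$, and note that $\regressionFunctionInterval{\delta}(B)$ is exactly $I(B)$ dilated by a factor two about its midpoint $\empiricalRegressionFunctionConfidenceBoundMid{\delta}(B)$, so $z \in \regressionFunctionInterval{\delta}(B)$ iff $\mathrm{dist}(z,I(B)) \le W(B)/2$. Three deterministic ingredients are needed. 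First, by Lemma~\ref{lemma:populationCIEventImpliesEmpiricalCIEvent} the event $\empiricalEventCI{y}(x,\delta)$ holds for both $y$, i.e. $\classConditionalDistribution{y}(B) \in [\empiricalClassConditionalDistributionConfidenceBound{y}{-1}{\delta}(B),\empiricalClassConditionalDistributionConfidenceBound{y}{1}{\delta}(B)]$ for all such $B$; since $t\mapsto t/(s+t)$ is non-decreasing in $t\ge 0$ and non-increasing in $s\ge 0$, substituting these into the definition of $\empiricalRegressionFunctionConfidenceBound{\pm 1}{\delta}(B)$ yields $\averageRegressionFunction(B) = \classConditionalDistribution{1}(B)/\{\classConditionalDistribution{0}(B)+\classConditionalDistribution{1}(B)\} \in I(B)$ (degenerate non-positive denominators, where $I(B)$ already contains $[0,1]$, being immediate); consequently $W(B)\ge 0$ and $|\empiricalRegressionFunctionConfidenceBoundMid{\delta}(B)-\averageRegressionFunction(B)| \le W(B)/2$. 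Second, Lemma~\ref{lemma:empiricalCIWidthBound} bounds the lengths of $\empiricalCI(\empiricalClassConditionalDistribution{y}(B),\numClassConditional{y},\delta)$ by $\tfrac{16}{3}\sqrt{\classConditionalDistribution{y}(B)\varepsilon}+20\varepsilon$ (using $\epsilonByNDeltaIteratedLogarithm{\numClassConditional{y}}{\delta} \le \varepsilon$), and propagating these through the ratio, once more via the monotone structure, produces an absolute constant $c_{1}$ with $W(B) \le c_{1}\bigl(\sqrt{\varepsilon/\classConditionalDistribution{1/2}(B)}\wedge 1\bigr)$. Third, Assumption~\ref{assumption:smoothRegressionFunctions} gives the bias bound $|\regressionFunction(x)-\averageRegressionFunction(\closedMetricBall{x}{r})| \le \smoothnessConstantRegressionFunction\,\classConditionalDistribution{1/2}(\closedMetricBall{x}{r})^{\smoothnessExponentRegressionFunction} =: b(r)$, which is non-decreasing in $r$.

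Now the Lepski argument. Write $B_{r}=\closedMetricBall{x}{r}$, $W(r)=W(B_{r})$, $\hat r = \lepskiRadius{x}{\delta}$, and let $r^{\star}$ be the largest $r\in\setOfDistances{x}$ such that $b(\tilde r)\le W(\tilde r)/2$ for every $\tilde r\in\setOfDistances{x}$ with $\tilde r\le r$ (the case in which no such $r$ exists corresponds to a degenerate configuration near $x$ and is dispatched separately). For every $\tilde r\le r^{\star}$ one has $\mathrm{dist}(\regressionFunction(x),I(B_{\tilde r}))\le|\regressionFunction(x)-\averageRegressionFunction(B_{\tilde r})| \le b(\tilde r)\le W(\tilde r)/2$, so $\regressionFunction(x)\in\regressionFunctionInterval{\delta}(B_{\tilde r})$; therefore $\regressionFunction(x)\in\bigcap_{\tilde r\le r^{\star}}\regressionFunctionInterval{\delta}(B_{\tilde r})$, which is non-empty, so $\hat r\ge r^{\star}$ and hence $\classConditionalDistribution{1/2}(B_{\hat r})\ge\classConditionalDistribution{1/2}(B_{r^{\star}})$. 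By definition of $\hat r$ there is $p^{\star}\in\bigcap_{\tilde r\le\hat r}\regressionFunctionInterval{\delta}(B_{\tilde r})$; combining $p^{\star}\in\regressionFunctionInterval{\delta}(B_{\hat r})$, whence $|\estimatedRegressionFunction(x)-p^{\star}| \le W(\hat r)$, with $p^{\star}\in\regressionFunctionInterval{\delta}(B_{r^{\star}})$ and the facts above, whence $|p^{\star}-\regressionFunction(x)| \le \tfrac32 W(r^{\star})+b(r^{\star})$, yields $|\estimatedRegressionFunction(x)-\regressionFunction(x)| \le W(\hat r)+\tfrac32 W(r^{\star})+b(r^{\star}) \le W(\hat r)+2W(r^{\star})$, using $b(r^{\star})\le W(r^{\star})/2$. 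The remaining, and most delicate, step is to verify that $\classConditionalDistribution{1/2}(B_{r^{\star}})$ is of the order of the balance scale $(\varepsilon/\smoothnessConstantRegressionFunction^{2})^{1/(2\smoothnessExponentRegressionFunction+1)}\wedge 1$, so that $W(\hat r), W(r^{\star}) \le c_{1}\sqrt{\varepsilon/\classConditionalDistribution{1/2}(B_{r^{\star}})} \lesssim \kappa$. This is where the real work lies: the confidence width $W(r)$ is heteroscedastic and non-monotone in $r$ (it can be small at small radii when $\averageRegressionFunction(B_{r})$ is close to $0$ or $1$), so instead of the usual ``bias below variance down to the balance scale'' one argues at the first radius at which the defining inequality for $r^{\star}$ fails, separating the regime $(\varepsilon/\smoothnessConstantRegressionFunction^{2})^{1/(2\smoothnessExponentRegressionFunction+1)} \le 1$ from the regime where it exceeds $1$ (in the latter, $b(r)\le\smoothnessConstantRegressionFunction\le\sqrt{\varepsilon}\le\kappa$ at every radius, so $r^{\star}$ may be taken essentially maximal). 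Tracking the absolute constants through this comparison and through the width bound then delivers the coefficient $14$.
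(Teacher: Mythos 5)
The probabilistic part of your argument (reducing to the event $\eventCI{0}(x,\delta)\cap\eventCI{1}(x,\delta)$, invoking Lemma \ref{lemma:uniformConcentrationMetricBallsSinglePoint} and checking numerically that $\nMin\geq 2^4$ gives the constant $21/2$) matches the paper, and your deterministic skeleton up to the inequality $|\estimatedRegressionFunction(x)-\regressionFunction(x)|\leq \regressionFunctionIntervalWidth{\delta}\bigl\{\closedMetricBall{x}{\hat r}\bigr\}+2\,\regressionFunctionIntervalWidth{\delta}\bigl\{\closedMetricBall{x}{r^{\star}}\bigr\}$ is sound. But the lemma is not proved: the step you yourself flag as ``where the real work lies'' --- showing that $\classConditionalDistribution{1/2}\bigl\{\closedMetricBall{x}{r^{\star}}\bigr\}$ reaches the balance scale $(\epsilonByNDeltaIteratedLogarithm{\nMin}{\delta}/\smoothnessConstantRegressionFunction^{2})^{1/(2\smoothnessExponentRegressionFunction+1)}\wedge 1$ --- is never carried out, and the mechanism you sketch for it does not work with the tools available. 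At the first radius $r'$ where your defining inequality fails you only learn $\regressionFunctionIntervalWidth{\delta}\bigl\{\closedMetricBall{x}{r'}\bigr\}<2\smoothnessConstantRegressionFunction\,\classConditionalDistribution{1/2}\bigl\{\openMetricBall{x}{r'}\bigr\}^{\smoothnessExponentRegressionFunction}$; to convert this into a \emph{lower} bound on the ball measure you would need a matching lower bound on the interval width of order $\sqrt{\epsilonByNDeltaIteratedLogarithm{\nMin}{\delta}/\classConditionalDistribution{1/2}(B)}$, and no such bound holds (Lemma \ref{lemma:empiricalCIWidthBound} is one-sided, and when the local class ratio is near $0$ or $1$ the width is genuinely of the much smaller order $\epsilonByNDeltaIteratedLogarithm{\nMin}{\delta}/\classConditionalDistribution{1/2}(B)$, so the failure can occur at measure level roughly $(\epsilonByNDeltaIteratedLogarithm{\nMin}{\delta}/\smoothnessConstantRegressionFunction)^{1/(\smoothnessExponentRegressionFunction+1)}$, strictly below the balance scale). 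For the same reason your treatment of the regime $\smoothnessConstantRegressionFunction\leq\sqrt{\epsilonByNDeltaIteratedLogarithm{\nMin}{\delta}}$ does not force $\hat r$, or even $r^{\star}$, to carry large measure, so the widths at those radii are not controlled; the ``no such $r$ exists'' case is likewise only asserted to be degenerate.

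This is precisely the difficulty the paper's proof is structured to avoid: it fixes an arbitrary measure level $\oracleProbability<1$, takes the oracle radius $\oracleRadius$ at which the open-ball measure reaches $\oracleProbability$, and splits according to whether two confidence intervals at radii in $[0,\oracleRadius]$ are disjoint. In the disjoint case, the fact that all intervals at radii $\leq\oracleRadius$ contain points within $2\smoothnessConstantRegressionFunction\oracleProbability^{\smoothnessExponentRegressionFunction}$ of each other forces the width at the Lepski radius to be at most $4\smoothnessConstantRegressionFunction\oracleProbability^{\smoothnessExponentRegressionFunction}$ directly, with no lower bound on widths ever needed; in the other case $\lepskiRadius{x}{\delta}$ exceeds $\oracleRadius$, so the closed-ball measure is at least $\oracleProbability$ and Lemma \ref{lemma:boundOnWidthOfRegressionFunctionConfidenceIntervals} applies. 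Optimising over $\oracleProbability$ then yields the constant in $\goodSetRegressionFunctionEstimatedWell$. To repair your proof you would either have to import this two-case comparison (replacing your $r^{\star}$ by the oracle radius at a tunable level $\oracleProbability$), or supply a genuinely new argument for the widths at $r^{\star}$ and $\hat r$; as written, the central quantitative claim, and hence the coefficient $14$, is unsubstantiated.
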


Before proving Lemma \ref{lemma:pointwiseRegressionFunctionHighProbBound} we shall first prove Lemmas \ref{lemma:regressionFunctionConfidenceIntervalEvent} and \ref{lemma:boundOnWidthOfRegressionFunctionConfidenceIntervals}.

\begin{lemma}\label{lemma:regressionFunctionConfidenceIntervalEvent} Suppose that $x \in \metricSpace$, $\delta \in (0,1)$ and $\log(\nMin)\geq e\delta$. Then on the event $ \empiricalEventCI{0}(x,\delta)\cap \empiricalEventCI{1}(x,\delta)$, for all $B \in \classOfMetricBalls(x)$, we have $\empiricalRegressionFunctionConfidenceBound{-1}{\delta}(B) \leq \averageRegressionFunction(B) \leq \empiricalRegressionFunctionConfidenceBound{1}{\delta}(B)$.
\end{lemma}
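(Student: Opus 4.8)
The plan is to unpack the definitions of $\empiricalRegressionFunctionConfidenceBound{\confidenceSign}{\delta}(B)$ and $\averageRegressionFunction(B)$ and verify the two inequalities by monotonicity of the map that sends the empirical class-conditional masses to the ratio defining the confidence bounds. First I would fix $x \in \metricSpace$ and $B \in \classOfMetricBalls(x)$ and work on the event $\empiricalEventCI{0}(x,\delta)\cap \empiricalEventCI{1}(x,\delta)$; by definition of $\empiricalEventCI{y}(x,\delta)$ this gives $\classConditionalDistribution{y}(B) \in \empiricalCI(\empiricalClassConditionalDistribution{y}(B),\numClassConditional{y},\delta)$ for $y \in \{0,1\}$, i.e.
\begin{align*}
\empiricalClassConditionalDistributionConfidenceBound{y}{-1}{\delta}(B) \leq \classConditionalDistribution{y}(B) \leq \empiricalClassConditionalDistributionConfidenceBound{y}{1}{\delta}(B),
\end{align*}
since $\empiricalClassConditionalDistributionConfidenceBound{y}{\confidenceSign}{\delta}(B)$ are precisely the endpoints of $\empiricalCI(\empiricalClassConditionalDistribution{y}(B),\numClassConditional{y},\delta)$. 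Recall also that $\averageRegressionFunction(B) = \positiveDistribution(B)/\{2\classConditionalDistribution{1/2}(B)\}$ and that $2\classConditionalDistribution{1/2} = \classConditionalDistribution{0}+\classConditionalDistribution{1}$, so $\averageRegressionFunction(B) = \classConditionalDistribution{1}(B)/\{\classConditionalDistribution{0}(B)+\classConditionalDistribution{1}(B)\}$ (with the appropriate convention when the denominator vanishes; note $\positiveDistribution = \classConditionalDistribution{1}$ up to the normalisation already baked into $\regressionFunction$, which is where I would need to be slightly careful about the factor of $2$ and check it cancels).

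The key step is the elementary monotonicity fact: for non-negative reals, the function $(a,b)\mapsto b/(a+b)$ is non-increasing in $a$ and non-decreasing in $b$. Applying this with the upper bound on $\classConditionalDistribution{0}(B)$ paired with the lower bound on $\classConditionalDistribution{1}(B)$ gives
\begin{align*}
\averageRegressionFunction(B) = \frac{\classConditionalDistribution{1}(B)}{\classConditionalDistribution{0}(B)+\classConditionalDistribution{1}(B)} \geq \frac{\empiricalClassConditionalDistributionConfidenceBound{1}{-1}{\delta}(B)}{\empiricalClassConditionalDistributionConfidenceBound{0}{1}{\delta}(B)+\empiricalClassConditionalDistributionConfidenceBound{1}{-1}{\delta}(B)},
\end{align*}
and the right-hand side is exactly $\empiricalRegressionFunctionConfidenceBound{-1}{\delta}(B)$ after the truncation to $[0,1]$ (which only helps, since $\averageRegressionFunction(B) \in [0,1]$ already). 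The reverse inequality $\averageRegressionFunction(B) \leq \empiricalRegressionFunctionConfidenceBound{1}{\delta}(B)$ follows symmetrically, pairing the lower bound on $\classConditionalDistribution{0}(B)$ with the upper bound on $\classConditionalDistribution{1}(B)$. The degenerate cases — when $\empiricalClassConditionalDistributionConfidenceBound{0}{\mp\confidenceSign}{\delta}(B)+\empiricalClassConditionalDistributionConfidenceBound{1}{\confidenceSign}{\delta}(B) \leq 0$, so that the bound is defined to be $(\confidenceSign+1)/2$ — need to be handled separately: one checks that the relevant one-sided mass bound forces $\averageRegressionFunction(B)$ to the appropriate extreme of $[0,1]$, or that the interval-validity still pins it down.

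The main obstacle I anticipate is not conceptual but bookkeeping: making sure the truncation $((\cdot)\vee 0)\wedge 1$ in the definition of $\empiricalRegressionFunctionConfidenceBound{\confidenceSign}{\delta}$, the convention in the vanishing-denominator case, and the factor-of-$2$ normalisation between $\positiveDistribution$, $\classConditionalDistribution{1}$ and $\classConditionalDistribution{1/2}$ all line up so that the untruncated ratio really does sandwich $\averageRegressionFunction(B)$ before any clipping occurs. Once the monotonicity observation is stated cleanly, each of the two inequalities is a one-line application, and since $B \in \classOfMetricBalls(x)$ was arbitrary the claim holds for all such $B$ simultaneously on the given event.
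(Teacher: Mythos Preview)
Your proposal is correct and follows essentially the same route as the paper: identify $\averageRegressionFunction(B)=\classConditionalDistribution{1}(B)/\{\classConditionalDistribution{0}(B)+\classConditionalDistribution{1}(B)\}$, use the confidence bounds $\empiricalClassConditionalDistributionConfidenceBound{y}{\pm 1}{\delta}(B)$ on each $\classConditionalDistribution{y}(B)$, apply monotonicity of $(a,b)\mapsto b/(a+b)$, and then note that the truncation to $[0,1]$ can only help since $\averageRegressionFunction(B)\in[0,1]$. The factor-of-two bookkeeping you flag is harmless (since $\positiveDistribution=\classConditionalDistribution{1}$ and $2\classConditionalDistribution{1/2}=\classConditionalDistribution{0}+\classConditionalDistribution{1}$), and the degenerate-denominator case is absorbed by the $(\confidenceSign+1)/2$ convention together with $\averageRegressionFunction(B)\in[0,1]$, exactly as the paper's final sentence indicates.
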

\begin{proof} Take $B \in \classOfMetricBalls(x)$. On the event $\empiricalEventCI{0}(x,\delta)\cap \empiricalEventCI{1}(x,\delta)$ we have 
\begin{align*}
\classConditionalDistribution{y}(B) \in \empiricalCI\bigl( \empiricalClassConditionalDistribution{y}(B),\numClassConditional{y}\bigr)=\left[ \empiricalClassConditionalDistributionConfidenceBound{y}{-1}{\delta}(B),\empiricalClassConditionalDistributionConfidenceBound{y}{1}{\delta}(B)\right],
\end{align*}
for $y \in \{0,1\}$. Rearranging, we deduce that 
\begin{align*}
\frac{\empiricalClassConditionalDistributionConfidenceBound{1}{-1}{\delta}(B)}{\empiricalClassConditionalDistributionConfidenceBound{0}{1}{\delta}(B)+\empiricalClassConditionalDistributionConfidenceBound{1}{-1}{\delta}(B)}& \leq \frac{\classConditionalDistribution{1}(B)}{\classConditionalDistribution{0}(B)+\classConditionalDistribution{1}(B)}= \averageRegressionFunction(B) \leq \frac{\empiricalClassConditionalDistributionConfidenceBound{1}{1}{\delta}(B)}{\empiricalClassConditionalDistributionConfidenceBound{0}{-1}{\delta}(B)+\empiricalClassConditionalDistributionConfidenceBound{1}{1}{\delta}(B)}.
\end{align*}
Noting also that $0 \leq \averageRegressionFunction(B) \leq 1$, the lemma follows from the definition of $\empiricalRegressionFunctionConfidenceBound{\confidenceSign}{\delta}(B)$.
\end{proof}

\begin{lemma}\label{lemma:boundOnWidthOfRegressionFunctionConfidenceIntervals}
Take $\delta \in (0,1]$ with $\log(\nMin)\geq e\delta$ and $x \in \metricSpace$. On the event  $\eventCI{0}(x,\delta)\cap \eventCI{1}(x,\delta)$, the following bound holds for all $B \in \classOfMetricBalls(x)$, 
\[ \empiricalRegressionFunctionConfidenceBound{1}{\delta}(B)-\empiricalRegressionFunctionConfidenceBound{-1}{\delta}(B) \leq 10\sqrt{\frac{\epsilonByNDeltaIteratedLogarithm{\nMin}{\delta}}{\classConditionalDistribution{1/2}(B)}}.
\]
\end{lemma}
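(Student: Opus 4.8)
The plan is to transport the interval-width bound for the class-conditional distributions (Lemma~\ref{lemma:empiricalCIWidthBound}) through the ratio defining $\empiricalRegressionFunctionConfidenceBound{\pm1}{\delta}$, after disposing of a trivial regime in which $\classConditionalDistribution{1/2}(B)$ is comparable with $\varepsilon:=\epsilonByNDeltaIteratedLogarithm{\nMin}{\delta}$. First I would reduce to the empirical confidence intervals: on $\eventCI{0}(x,\delta)\cap\eventCI{1}(x,\delta)$ Lemma~\ref{lemma:populationCIEventImpliesEmpiricalCIEvent} yields $\empiricalEventCI{0}(x,\delta)\cap\empiricalEventCI{1}(x,\delta)$ (its hypothesis $\log(\numClassConditional{y})\ge e\delta$ holds because $\numClassConditional{y}\ge\nMin$ and $\log(\nMin)\ge e\delta$). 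Hence, for each $B\in\classOfMetricBalls(x)$ and $y\in\{0,1\}$, writing $b_y:=\classConditionalDistribution{y}(B)$ and $a_y^{\pm}:=\empiricalClassConditionalDistributionConfidenceBound{y}{\pm1}{\delta}(B)$, we have $a_y^-\le b_y\le a_y^+$, and the length $w_y:=a_y^+-a_y^-$ satisfies $w_y\le\tfrac{16}{3}\sqrt{b_y\,\varepsilon}+20\varepsilon$ by Lemma~\ref{lemma:empiricalCIWidthBound} together with $\epsilonByNDeltaIteratedLogarithm{\numClassConditional{y}}{\delta}\le\varepsilon$ (since $\numClassConditional{y}\ge\nMin$).

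Next I would split on the size of $\classConditionalDistribution{1/2}(B)$. If $\classConditionalDistribution{1/2}(B)\le 100\varepsilon$ then $10\sqrt{\varepsilon/\classConditionalDistribution{1/2}(B)}\ge 1$, while $\empiricalRegressionFunctionConfidenceBound{\pm1}{\delta}(B)\in[0,1]$ always, so the claim is immediate; moreover a vanishing denominator in the definition of $\empiricalRegressionFunctionConfidenceBound{\pm1}{\delta}(B)$ forces (using $a_0^-\ge b_0-w_0$, $a_1^+\ge b_1$, the width bound, and symmetrically) $\classConditionalDistribution{1/2}(B)< 100\varepsilon$, i.e. exactly this regime. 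So I may assume $\classConditionalDistribution{1/2}(B)>100\varepsilon$ and that $a_0^-+a_1^+>0$, $a_0^++a_1^->0$. Since $a_1^+,a_0^+\ge 0$, clipping to $[0,1]$ does not increase the difference, so
\begin{align*}
\empiricalRegressionFunctionConfidenceBound{1}{\delta}(B)-\empiricalRegressionFunctionConfidenceBound{-1}{\delta}(B)\ \le\ \frac{a_1^+}{a_0^-+a_1^+}-\frac{a_1^-}{a_0^++a_1^-}\ =\ \frac{a_0^+\,w_1+a_1^-\,w_0}{(a_0^-+a_1^+)(a_0^++a_1^-)}.
\end{align*}
Recalling $\averageRegressionFunction(B)=b_1/(b_0+b_1)$ with $b_0+b_1=2\classConditionalDistribution{1/2}(B)$, I would rewrite the right-hand side as $\bigl(\tfrac{a_1^+}{a_0^-+a_1^+}-\averageRegressionFunction(B)\bigr)+\bigl(\averageRegressionFunction(B)-\tfrac{a_1^-}{a_0^++a_1^-}\bigr)$, whose summands equal $\tfrac{b_0(a_1^+-b_1)+b_1(b_0-a_0^-)}{(a_0^-+a_1^+)(b_0+b_1)}$ and $\tfrac{b_1(a_0^+-b_0)+b_0(b_1-a_1^-)}{(a_0^++a_1^-)(b_0+b_1)}$.

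The remaining task, bounding this by $10\sqrt{\varepsilon/\classConditionalDistribution{1/2}(B)}$, is where I expect the real work to lie. The useful structure is that $(a_1^+-b_1)+(b_1-a_1^-)=w_1$ and $(b_0-a_0^-)+(a_0^+-b_0)=w_0$, so the one-sided displacements appearing in the two summands cannot both be maximal; parametrising $a_y^{\pm}=b_y\pm\alpha_y^{\pm}w_y$ with $\alpha_y^++\alpha_y^-=1$ turns the sum into an explicit function of $(\alpha_0^{\pm},\alpha_1^{\pm},b_0,b_1)$ to be maximised subject to $b_0+b_1=2\classConditionalDistribution{1/2}(B)$ and $w_y\le\tfrac{16}{3}\sqrt{b_y\varepsilon}+20\varepsilon$, and after dividing by $\classConditionalDistribution{1/2}(B)$ this collapses to a scalar inequality in $u:=\sqrt{\varepsilon/\classConditionalDistribution{1/2}(B)}\in(0,1/10)$. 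The obstacle is that the crude route --- numerator $\le b_0w_1+b_1w_0$, denominators $\ge(2\classConditionalDistribution{1/2}(B)-\max\{w_0,w_1\})^2$ --- throws away roughly a factor of two, because the placements of $b_y$ inside $[a_y^-,a_y^+]$ that maximise a numerator are precisely those that enlarge the corresponding denominator, and with that slack the constant $10$ just fails near $\classConditionalDistribution{1/2}(B)\approx 100\varepsilon$; one has to retain the joint dependence (equivalently, check that at the genuine extremal configurations one of $a_0^-+a_1^+,\ a_0^++a_1^-$ is $\ge 2\classConditionalDistribution{1/2}(B)$, so a ``one-sided'' interval contributes at most $\tfrac{b_0w_1}{(2\classConditionalDistribution{1/2}(B)+w_1)\cdot 2\classConditionalDistribution{1/2}(B)}+\tfrac{b_1w_0}{(2\classConditionalDistribution{1/2}(B)+w_0)\cdot 2\classConditionalDistribution{1/2}(B)}$), after which $b_y\le 2\classConditionalDistribution{1/2}(B)$ together with $\classConditionalDistribution{1/2}(B)>100\varepsilon$ controls $w_y/\classConditionalDistribution{1/2}(B)$ and the scalar inequality goes through with constant $10$.
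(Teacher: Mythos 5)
Your reduction steps match the paper's: passing to the empirical confidence intervals via Lemma \ref{lemma:populationCIEventImpliesEmpiricalCIEvent}, importing the width bound of Lemma \ref{lemma:empiricalCIWidthBound} together with $\epsilonByNDeltaIteratedLogarithm{\numClassConditional{y}}{\delta}\le\epsilonByNDeltaIteratedLogarithm{\nMin}{\delta}$, disposing of the regime $\classConditionalDistribution{1/2}(B)\le 100\,\epsilonByNDeltaIteratedLogarithm{\nMin}{\delta}$ trivially, and the algebraic identity for the difference of the two ratios are all correct and are exactly how the paper begins. The gap is the decisive step, which you leave as a sketch and which, as described, fails. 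You are right that the crude route (numerator and denominator bounded separately) loses too much near $\classConditionalDistribution{1/2}(B)\approx100\,\epsilonByNDeltaIteratedLogarithm{\nMin}{\delta}$, but your proposed repair rests on a false claim: it is not true that at the extremal configuration one of $a_0^-+a_1^+$, $a_0^++a_1^-$ is at least $2\classConditionalDistribution{1/2}(B)$. Writing $\mu:=\classConditionalDistribution{1/2}(B)$, $\varepsilon:=\epsilonByNDeltaIteratedLogarithm{\nMin}{\delta}$, take $b_0=b_1=\mu$, $w_0=w_1=w=\tfrac{16}{3}\sqrt{\mu\varepsilon}+20\varepsilon$, and put both intervals entirely below the truth, $a_y^+=b_y$, $a_y^-=b_y-w$ (admissible under the constraints you use). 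Then
\begin{align*}
\frac{a_1^+}{a_0^-+a_1^+}-\frac{a_1^-}{a_0^++a_1^-}=\frac{\mu}{2\mu-w}-\frac{\mu-w}{2\mu-w}=\frac{w}{2\mu-w},
\end{align*}
and both denominator factors equal $2\mu-w<2\mu$; at, say, $\mu=110\varepsilon$ one has $w\approx0.69\mu$, so this value is about $0.53$, roughly double your claimed ``one-sided'' bound $\tfrac{b_0w_1}{(2\mu+w_1)\,2\mu}+\tfrac{b_1w_0}{(2\mu+w_0)\,2\mu}=\tfrac{w}{2\mu+w}\approx0.26$. So the inequality you propose to ``check at the genuine extremal configurations'' is false, and the scalar inequality you appeal to is never derived; the lemma is still true at this configuration (the value $0.53$ is below the target $10\sqrt{\varepsilon/\mu}\approx0.95$), but your argument does not establish it.

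The paper closes this step with a short monotone substitution rather than an optimisation: take the single error term $e:=\tfrac{16}{3}\sqrt{\{\nullDistribution(B)\vee\positiveDistribution(B)\}\varepsilon}+20\varepsilon$, which dominates both widths and satisfies $e\le\min\{\mu,\,10\sqrt{\mu\varepsilon}\}$ once $\mu\ge100\varepsilon$, and replace $a_y^-$ by $a_y^+-e$ simultaneously in the numerator and the denominator of your identity (note $a_0^+w_1+a_1^-w_0=a_0^+a_1^+-a_0^-a_1^-$). This gives
\begin{align*}
\frac{a_0^+a_1^+-a_0^-a_1^-}{(a_0^-+a_1^+)(a_0^++a_1^-)}\le\frac{a_0^+a_1^+-(a_0^+-e)(a_1^+-e)}{(a_0^++a_1^+-e)^2}=\frac{e}{a_0^++a_1^+-e}\le\frac{e}{2\mu-e}\le\frac{e}{\mu}\le10\sqrt{\frac{\varepsilon}{\mu}},
\end{align*}
using $\mu\le\empiricalClassConditionalDistributionConfidenceBound{1/2}{1}{\delta}(B)=\tfrac12(a_0^++a_1^+)$ on the event and $e\le\mu$. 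This is exactly the ``joint dependence'' you wanted to retain: the same $e$ appears in numerator and denominator and cancels, and the configuration exhibited above is essentially where this bound is attained, which is why no bound that treats numerator and denominator separately can recover the constant $10$.
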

\begin{proof} Suppose that the event $\eventCI{0}(x,\delta)\cap \eventCI{1}(x,\delta)$ holds and fix some $B \in \classOfMetricBalls(x)$. As such, the event $\empiricalEventCI{0}(x,\delta)\cap \empiricalEventCI{1}(x,\delta)$ also holds by Lemma \ref{lemma:populationCIEventImpliesEmpiricalCIEvent}. In particular, it follows that $\classConditionalDistribution{1/2}(B) \leq \empiricalClassConditionalDistributionConfidenceBound{1/2}{1}{\delta}(B)$. Now suppose that $\classConditionalDistribution{1/2}(B)\geq 100 \epsilonByNDeltaIteratedLogarithm{\nMin}{\delta}$ so  that
\begin{align*}
\bernsteinTypeErrorOnSingleBall&:=\frac{16}{3}\sqrt{\{\nullDistribution(B)\vee\positiveDistribution(B)\}\epsilonByNDeltaIteratedLogarithm{\nMin}{\delta}}+20\epsilonByNDeltaIteratedLogarithm{\nMin}{\delta} \\
&\leq 16\sqrt{2\classConditionalDistribution{1/2}(B)\epsilonByNDeltaIteratedLogarithm{\nMin}{\delta}}/3+20\epsilonByNDeltaIteratedLogarithm{\nMin}{\delta}\leq \min\left\lbrace \classConditionalDistribution{1/2}(B), 10 \sqrt{\classConditionalDistribution{1/2}(B) \epsilonByNDeltaIteratedLogarithm{\nMin}{\delta}}\right\rbrace.
\end{align*}
For the purpose of the proof we shall let $\empiricalClassConditionalDistributionConfidenceBound{y}{\confidenceSign}{\delta}$ denote $\empiricalClassConditionalDistributionConfidenceBound{y}{\confidenceSign}{\delta}(B)$ for $y \in \{0,1/2,1\}$ and $\confidenceSign \in \{-1,1\}$. By Lemma \ref{lemma:empiricalCIWidthBound} we have $0 \leq \empiricalClassConditionalDistributionConfidenceBound{y}{1}{\delta}-\empiricalClassConditionalDistributionConfidenceBound{y}{-1}{\delta} \leq \bernsteinTypeErrorOnSingleBall$, where we have used the fact that $\nMin=\min\{\numClassConditional{0},\numClassConditional{1}\} \in \N$, so that $\epsilonByNDeltaIteratedLogarithm{\nMin}{\delta} \geq \epsilonByNDeltaIteratedLogarithm{\numClassConditional{0}}{\delta}\vee\epsilonByNDeltaIteratedLogarithm{\numClassConditional{1}}{\delta}$. Hence, provided $\classConditionalDistribution{1/2}(B)\geq 100 \epsilonByNDeltaIteratedLogarithm{\nMin}{\delta}$ we have
\begin{align*}
\empiricalRegressionFunctionConfidenceBound{1}{\delta}(B)-\empiricalRegressionFunctionConfidenceBound{-1}{\delta}(B)
&\leq  \frac{\empiricalClassConditionalDistributionConfidenceBound{1}{1}{\delta}}{\empiricalClassConditionalDistributionConfidenceBound{0}{-1}{\delta}+\empiricalClassConditionalDistributionConfidenceBound{1}{1}{\delta}}-\frac{\empiricalClassConditionalDistributionConfidenceBound{1}{-1}{\delta}}{\empiricalClassConditionalDistributionConfidenceBound{0}{1}{\delta}+\empiricalClassConditionalDistributionConfidenceBound{1}{-1}{\delta}}\\
& = \frac{\empiricalClassConditionalDistributionConfidenceBound{1}{1}{\delta}\empiricalClassConditionalDistributionConfidenceBound{0}{1}{\delta}-\empiricalClassConditionalDistributionConfidenceBound{1}{-1}{\delta}\empiricalClassConditionalDistributionConfidenceBound{0}{-1}{\delta}}{\{\empiricalClassConditionalDistributionConfidenceBound{0}{-1}{\delta}+\empiricalClassConditionalDistributionConfidenceBound{1}{1}{\delta}\}\{\empiricalClassConditionalDistributionConfidenceBound{0}{1}{\delta}+\empiricalClassConditionalDistributionConfidenceBound{1}{-1}{\delta}\}}\\
& \leq \frac{\empiricalClassConditionalDistributionConfidenceBound{1}{1}{\delta}\empiricalClassConditionalDistributionConfidenceBound{0}{1}{\delta}-\left(\empiricalClassConditionalDistributionConfidenceBound{1}{1}{\delta}-\bernsteinTypeErrorOnSingleBall \right)\left(\empiricalClassConditionalDistributionConfidenceBound{0}{1}{\delta}-\bernsteinTypeErrorOnSingleBall \right)}{\bigl(\empiricalClassConditionalDistributionConfidenceBound{0}{1}{\delta}+\empiricalClassConditionalDistributionConfidenceBound{1}{1}{\delta}-\bernsteinTypeErrorOnSingleBall\bigr)^2}\\
& = \frac{\empiricalClassConditionalDistributionConfidenceBound{1/2}{1}{\delta} \bernsteinTypeErrorOnSingleBall }{\bigl(2\empiricalClassConditionalDistributionConfidenceBound{1/2}{1}{\delta}-\bernsteinTypeErrorOnSingleBall\bigr)^2} \leq  \frac{ \bernsteinTypeErrorOnSingleBall }{\empiricalClassConditionalDistributionConfidenceBound{1/2}{1}{\delta}} \leq 10\sqrt{\frac{\epsilonByNDeltaIteratedLogarithm{\nMin}{\delta}}{\classConditionalDistribution{1/2}(B)}}.
\end{align*}
On the other hand, if $\classConditionalDistribution{1/2}(B)< 100 \epsilonByNDeltaIteratedLogarithm{\nMin}{\delta}$ then
\begin{align*}
\empiricalRegressionFunctionConfidenceBound{1}{\delta}(B)-\empiricalRegressionFunctionConfidenceBound{-1}{\delta}(B)\leq 1 < 10\sqrt{\frac{\epsilonByNDeltaIteratedLogarithm{\nMin}{\delta}}{\classConditionalDistribution{1/2}(B)}}.
\end{align*}
\end{proof}

We now complete the proof of Lemma \ref{lemma:pointwiseRegressionFunctionHighProbBound}.

\begin{proof}[Proof of Lemma \ref{lemma:pointwiseRegressionFunctionHighProbBound}] We shall show that $x \in \goodSetRegressionFunctionEstimatedWell$ for each $x \in \metricSpace$ such that the event $\eventCI{0}(x,\delta)\cap \eventCI{1}(x,\delta)$ holds. As such, we begin by choosing $x \in \metricSpace$ such that $\eventCI{0}(x,\delta)\cap \eventCI{1}(x,\delta)$ holds. Then, by Lemma \ref{lemma:populationCIEventImpliesEmpiricalCIEvent}, the event $\empiricalEventCI{0}(x,\delta)\cap \empiricalEventCI{1}(x,\delta)$ also holds. Now choose $\oracleProbability<1$ and let
\begin{align*}
\oracleRadius:=\inf_{r>0}\left\lbrace  \classConditionalDistribution{1/2}\bigl(\openMetricBall{x}{\oracleRadius}\bigr) \geq \oracleProbability \right\rbrace. 
\end{align*}
First suppose there exists $\tilde{r}_0,\,\tilde{r}_1\in [0,\oracleRadius]$ with 
\begin{align}\label{eq:lemmaProofIntervalDisjointCondition}
\regressionFunctionInterval{\delta}\bigl\{\closedMetricBall{x}{\tilde{r}_0}\bigr\}\cap \regressionFunctionInterval{\delta}\bigl\{\closedMetricBall{x}{\tilde{r}_1}\bigr\}=\emptyset.
\end{align}
Note that the function $r\mapsto \regressionFunctionInterval{\delta}\bigl\{\closedMetricBall{x}{r}\bigr\}$ is piece wise constant with discontinuities restricted to the discrete set $\setOfDistances{x}$. Hence, there exist $r_0,\, r_1\in [0,\oracleRadius] \cap \setOfDistances{x}$ with $r_0+r_1$ minimal and
\begin{align}\label{eq:lemmaProofIntervalDisjointConditionInequality}
\sup \regressionFunctionInterval{\delta}\bigl\{\closedMetricBall{x}{r_0}\bigr\}< \inf \regressionFunctionInterval{\delta}\bigl\{\closedMetricBall{x}{r_1}\bigr\}.
\end{align}
Note also that for all $r \leq \oracleRadius$ we have $\classConditionalDistribution{1/2}\bigl(\openMetricBall{x}{r}\bigr) \leq \oracleProbability$. Hence, by Assumption \ref{assumption:smoothRegressionFunctions} for all $r \in  [0,\oracleRadius]$ we have
\begin{align}
\bigl|\regressionFunction(x)-\averageRegressionFunction\bigl\{\closedMetricBall{x}{r}\bigr\} \bigr|  \leq \smoothnessConstantRegressionFunction \oracleProbability^\smoothnessExponentRegressionFunction. \label{eq:applySmoothnessOfRegFunc}
\end{align}
Consequently, since $\{r_0,r_1\} \subseteq [0,\oracleRadius]$ from Lemma \ref{lemma:regressionFunctionConfidenceIntervalEvent} we deduce
\begin{align*}
\empiricalRegressionFunctionConfidenceBound{-1}{\delta}\bigl\{\closedMetricBall{x}{r_1}\bigr\}&\leq \averageRegressionFunction\bigl\{\closedMetricBall{x}{r_1}\bigr\} \leq \regressionFunction(x)+ \smoothnessConstantRegressionFunction \oracleProbability^\smoothnessExponentRegressionFunction\\
& \leq \averageRegressionFunction\bigl\{\closedMetricBall{x}{r_0}\bigr\}+2\smoothnessConstantRegressionFunction \oracleProbability^\smoothnessExponentRegressionFunction \leq \empiricalRegressionFunctionConfidenceBound{1}{\delta}\bigl\{\closedMetricBall{x}{r_0}\bigr\} +2\smoothnessConstantRegressionFunction \oracleProbability^\smoothnessExponentRegressionFunction.
\end{align*}
Noting that $\lepskiRadius{x_0}{\delta} =\min\{r_0,r_1\}$ and combining the above bound with 
\eqref{eq:lemmaProofIntervalDisjointConditionInequality} yields
\begin{align*}
\regressionFunctionIntervalWidth{\delta}\left(\closedMetricBall{x}{\lepskiRadius{x_0}{\delta}}\right) & \leq \max\left\lbrace  \regressionFunctionIntervalWidth{\delta}\left(\closedMetricBall{x}{r_0}\right),\regressionFunctionIntervalWidth{\delta}\left(\closedMetricBall{x}{r_1}\right)\right\rbrace \\
& < 2 \left(\empiricalRegressionFunctionConfidenceBound{-1}{\delta}\bigl\{\closedMetricBall{x}{r_1}\bigr\}-\empiricalRegressionFunctionConfidenceBound{1}{\delta}\bigl\{\closedMetricBall{x}{r_0}\bigr\}\right) \leq 4\smoothnessConstantRegressionFunction \oracleProbability^\smoothnessExponentRegressionFunction.
\end{align*}
Since $\lepskiRadius{x}{\delta} \leq \oracleRadius$ we deduce from Lemma \ref{lemma:regressionFunctionConfidenceIntervalEvent} that with $\hat{B}:=\closedMetricBall{x}{\lepskiRadius{x}{\delta}}$ and $\estimatedRegressionFunction(x)=\empiricalRegressionFunctionConfidenceBoundMid{\delta}(\hat{B})$ we have
\begin{align*}
\left| \estimatedRegressionFunction(x)-\regressionFunction(x)\right|&\leq \left|\empiricalRegressionFunctionConfidenceBoundMid{\delta}(\hat{B})-\averageRegressionFunction(\hat{B})\right| +\left| \averageRegressionFunction(\hat{B})-\regressionFunction(x)\right| \leq \regressionFunctionIntervalWidth{\delta}(\hat{B}) + \smoothnessConstantRegressionFunction \oracleProbability^\smoothnessExponentRegressionFunction \leq 5\smoothnessConstantRegressionFunction \oracleProbability^\smoothnessExponentRegressionFunction.
\end{align*}
On the other hand, suppose \eqref{eq:lemmaProofIntervalDisjointCondition} does not hold. Then, with $\tilde{r}:=\max\bigl(\setOfDistances{x}\cap [0,\oracleRadius]\bigr)$ and $\tilde{B}:=\closedMetricBall{x}{\tilde{r}}$ we have $\tilde{r} \leq \lepskiRadius{x}{\delta}$. Note also that $\regressionFunctionInterval{\delta}\bigl\{\closedMetricBall{x}{\tilde{r}}\bigr\}=\regressionFunctionInterval{\delta}\bigl\{\closedMetricBall{x}{\oracleRadius}\bigr\}$ since $r\mapsto \regressionFunctionInterval{\delta}\bigl\{\closedMetricBall{x}{r}\bigr\}$ is piece wise constant with discontinuities restricted to $\setOfDistances{x}$. By applying Lemma \ref{lemma:regressionFunctionConfidenceIntervalEvent} once again, combined with Assumption \ref{assumption:smoothRegressionFunctions} we have 
\begin{align*}
\bigl|\regressionFunction(x)-\averageRegressionFunction\bigl\{\closedMetricBall{x}{\tilde{r}}\bigr\} \bigr| & \leq \smoothnessConstantRegressionFunction \classConditionalDistribution{1/2}\bigl(\openMetricBall{x}{r}\bigr)^\smoothnessExponentRegressionFunction\leq \smoothnessConstantRegressionFunction \oracleProbability^\smoothnessExponentRegressionFunction.
\end{align*}
From the definition of $\oracleRadius$ we have the lower bound $\classConditionalDistribution{1/2}\bigl(\closedMetricBall{x}{r}\bigr) \geq \oracleProbability$, for all $r \geq \oracleRadius$. Hence, by Lemma  \ref{lemma:boundOnWidthOfRegressionFunctionConfidenceIntervals} with $\hat{r}:=\lepskiRadius{x}{\delta}$ we have, 
\begin{align*}
\max_{r \in \{\tilde{r},\hat{r}\} } \regressionFunctionIntervalWidth{\delta}\bigl\{\closedMetricBall{x}{r}\bigr\} & =
\max_{r \in \{\oracleRadius, \hat{r}\} } \regressionFunctionIntervalWidth{\delta}\bigl\{\closedMetricBall{x}{r}\bigr\}   \leq  \max_{r \in \{\oracleRadius, \hat{r}\} } 
 10\sqrt{\frac{\epsilonByNDeltaIteratedLogarithm{\nMin}{\delta}}{\classConditionalDistribution{1/2}\bigl\{\closedMetricBall{x}{r}\bigr\}}}
  \leq 2 \sqrt{\frac{5\epsilonByNDeltaIteratedLogarithm{\nMin}{\delta}}{\oracleProbability}}.
\end{align*}
Thus, applying Lemma \ref{lemma:regressionFunctionConfidenceIntervalEvent} once again, combined with \eqref{eq:applySmoothnessOfRegFunc} and $\lepskiRadius{x}{\delta}\geq \tilde{r}$ so that $\regressionFunctionInterval{\delta}(\tilde{B})\cap \regressionFunctionInterval{\delta}(\hat{B}) \neq \emptyset$ we have
\begin{align*}
 \left| \estimatedRegressionFunction(x)-\regressionFunction(x) \right|& = \left| \empiricalRegressionFunctionConfidenceBoundMid{\delta}(\hat{B})-\regressionFunction(x)\right|\\ &
 \leq \inf_{q \in  \regressionFunctionInterval{\delta}(\hat{B}) }\left| q-\regressionFunction(x)\right|+\regressionFunctionIntervalWidth{\delta}(\hat{B})\\
& \leq \sup_{q \in  \regressionFunctionInterval{\delta}(\tilde{B}) }\left| q-\regressionFunction(x)\right|+\regressionFunctionIntervalWidth{\delta}(\hat{B})\\
& \leq \max_{ \confidenceSign \in \{-1,1\}}\left| \empiricalRegressionFunctionConfidenceBound{y}{\delta}(\tilde{B})-\regressionFunction(x)\right|+\frac{1}{2} \regressionFunctionIntervalWidth{\delta}(\tilde{B})+\regressionFunctionIntervalWidth{\delta}(\hat{B})\\
& \leq \left| \averageRegressionFunction(\tilde{B})-\regressionFunction(x)\right|+\frac{3}{2} \regressionFunctionIntervalWidth{\delta}(\tilde{B})+\regressionFunctionIntervalWidth{\delta}(\hat{B}) \leq  \smoothnessConstantRegressionFunction \oracleProbability^\smoothnessExponentRegressionFunction+5\sqrt{\frac{5\epsilonByNDeltaIteratedLogarithm{\nMin}{\delta}}{\oracleProbability}}.
\end{align*}
Combining these two cases and noting that $\oracleProbability<1$ was arbitrary we see that the event $\eventCI{0}(x,\delta)\cap \eventCI{1}(x,\delta)$,
\begin{align*}
\left| \estimatedRegressionFunction(x)-\regressionFunction(x) \right|
&\leq 6 \inf_{\oracleProbability<1}\max\left\lbrace \smoothnessConstantRegressionFunction \oracleProbability^\smoothnessExponentRegressionFunction, \sqrt{{5\epsilonByNDeltaIteratedLogarithm{\nMin}{\delta}}/{\oracleProbability}}\right\rbrace = 6\max\left\lbrace \smoothnessConstantRegressionFunction^{\frac{1}{2\smoothnessExponentRegressionFunction+1}}\left\lbrace 25\epsilonByNDeltaIteratedLogarithm{\nMin}{\delta}\right\rbrace^{\frac{\smoothnessExponentRegressionFunction}{2\smoothnessExponentRegressionFunction+1}} ,\sqrt{5\epsilonByNDeltaIteratedLogarithm{\nMin}{\delta}}\right\rbrace,
\end{align*}
so $x \in \goodSetRegressionFunctionEstimatedWell$. Hence, taking  $\modDeltaClassConditional :=  4 \lceil \log_{4/3}(\numClassConditional{y}) \rceil \{\delta/\log(\numClassConditional{y})\}^2$ for $y\in \{0,1\}$ and applying Lemma \ref{lemma:uniformConcentrationMetricBallsSinglePoint} we have
\begin{align*}
\Prob\left\lbrace x \notin \goodSetRegressionFunctionEstimatedWell \right\rbrace &\leq \Prob\left\lbrace \eventCI{0}(x,\delta)\right\rbrace + \Prob\left\lbrace \eventCI{1}(x,\delta)\right\rbrace \leq \modDeltaClassConditional[0]+\modDeltaClassConditional[1] \leq \frac{21 \delta^2}{2},
\end{align*}
provided $\nMin\geq 2^4$. 
\end{proof}

\begin{proof}[Proof of Proposition \ref{prop:regressionFunctionHighProbBound}] Proposition \ref{prop:regressionFunctionHighProbBound} now follows from Lemma \ref{lemma:pointwiseRegressionFunctionHighProbBound} by Markov's lemma and Fubini's theorem as follows,
\begin{align*}
150\delta\,\Prob\left( \classConditionalDistribution{\labelProbability[\testTime]}\left\lbrace \metricSpace \setminus \goodSetRegressionFunctionEstimatedWell\right\rbrace > 150\delta \right)&\leq \E\left( \classConditionalDistribution{\labelProbability[\testTime]}\left\lbrace \metricSpace \setminus \goodSetRegressionFunctionEstimatedWell\right\rbrace\right) \\ &= \int_{\metricSpace} \Prob\left( x \notin \goodSetRegressionFunctionEstimatedWell\right) d\classConditionalDistribution{\labelProbability[\testTime]}(x)\leq 50 \delta^2.
\end{align*}
Dividing both sides by $150\delta$ yields the bound.
\end{proof}


\section{Estimating the label probabilities} \label{sec:appendixWithLemmasOnEstimatingLabelProbs}

In order to prove Proposition \ref{prop:marginalDistributionEstimatorBound} we first leverage Assumption \ref{assumption:smoothlyVaryingLabelProbabilities} to bound the bias of the estimators $\estimatorMarginalDistribution[q]{\testTime}$ for $q \in [\testTime]$. Given $\estimatorMarginalDistributionExtraParams=(q,\windowSize,\holderExponentWeightFunction,\holderConstantWeightFunction)$ we define
\begin{align*}
\estimatorMarginalDistributionBiasTerm[f,\estimatorMarginalDistributionExtraParams]{\testTime}:=\frac{\holderConstantWeightFunction}{\pOfAlpha!}\big| \classConditionalDistribution{1}(f)-\classConditionalDistribution{0}(f)\big| \hspace{1mm}\sum_{i=1}^q |\estimatorMarginalDistributionWeights|\bigg(\frac{i}{m}\bigg)^{\holderExponentWeightFunction}.
\end{align*}

\begin{lemma}\label{lemma:weightFunctionLowBias} Suppose that Assumptions \ref{assumption:labelShift} and \ref{assumption:smoothlyVaryingLabelProbabilities} hold with  $\windowSize \in [\numUnlabelled]$, $\holderExponentWeightFunction \in [0,\maximalHolderExponentWeightFunction]$, $\holderConstantWeightFunction \in [0,\infty)$ and $\weightFunction \in \holderFunctionClass(\holderExponentWeightFunction,\holderConstantWeightFunction)$. Then, given any function $f:\X^{\numNull+\numPositive}\times \X \rightarrow [0,1]$, and any $q \in [\windowSize]$ with $\pOfQ > \lfloor \holderExponentWeightFunction \rfloor$ we have 
\begin{align*}
\left|\sum_{i=1}^q \estimatorMarginalDistributionWeights \marginalDistribution[t-i](f)-\marginalDistribution[t](f)\right| \leq \estimatorMarginalDistributionBiasTerm[f,\estimatorMarginalDistributionExtraParams]{\testTime}.
\end{align*}

\end{lemma}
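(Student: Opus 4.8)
The plan is to reduce the claim to a one‑dimensional bias estimate for the scalar sequence $(\labelProbability[\ell])_\ell$, and then to exploit the fact that, by construction, the weights $\estimatorMarginalDistributionWeights$ are exactly the least‑squares extrapolation weights that recover the value at $0$ of any polynomial of degree at most $\pOfQ$ from its values on the grid $\{1/q,\dots,q/q\}$. First I would linearise: by \eqref{eq:marginalDistributionAsMixture}, which holds with an arbitrary index $\ell$ in place of $\testTime$ as a consequence of Assumption \ref{assumption:labelShift} (recall $\classConditionalDistribution{0}=\nullDistribution$, $\classConditionalDistribution{1}=\positiveDistribution$), we have $\marginalDistribution[\ell](f)=\nullDistribution(f)+\labelProbability[\ell]\bigl(\classConditionalDistribution{1}(f)-\classConditionalDistribution{0}(f)\bigr)$ for every $\ell$. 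Since $\sum_{i=1}^{q}\estimatorMarginalDistributionWeights=1$ (the reproduction property below applied to the constant polynomial), subtracting gives
\begin{align*}
\sum_{i=1}^{q}\estimatorMarginalDistributionWeights\,\marginalDistribution[t-i](f)-\marginalDistribution[t](f)=\bigl(\classConditionalDistribution{1}(f)-\classConditionalDistribution{0}(f)\bigr)\Bigl(\sum_{i=1}^{q}\estimatorMarginalDistributionWeights\,\labelProbability[t-i]-\labelProbability[t]\Bigr),
\end{align*}
so, comparing with the definition of $\estimatorMarginalDistributionBiasTerm[f,\estimatorMarginalDistributionExtraParams]{\testTime}$, it suffices to prove $\bigl|\sum_{i=1}^{q}\estimatorMarginalDistributionWeights\,\labelProbability[t-i]-\labelProbability[t]\bigr|\le(\holderConstantWeightFunction/\pOfAlpha!)\sum_{i=1}^{q}|\estimatorMarginalDistributionWeights|\,(i/\windowSize)^{\holderExponentWeightFunction}$.

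The second ingredient is the polynomial reproduction property: for every univariate polynomial $P$ of degree at most $\pOfQ$ one has $\sum_{i=1}^{q}\estimatorMarginalDistributionWeights\,P(i/q)=P(0)$. To see this, write $P=\sum_{j=1}^{\pOfQ+1}c_j\,\legendrePolynomialShiftedOrthonormal[j-1]$ in the shifted Legendre basis (possible since $\deg\legendrePolynomialShiftedOrthonormal[k]=k$, so $\{\legendrePolynomialShiftedOrthonormal[0],\dots,\legendrePolynomialShiftedOrthonormal[\pOfQ]\}$ spans all polynomials of degree at most $\pOfQ$); then $(P(i/q))_{i\in[q]}=\weightMatrixByQNoArg c$ and $P(0)=\weightMatrixByQNoArg_{0,:}c$. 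Because $\pOfQ$ was defined as the largest $p$ for which $\weightMatrixByPAndQNoArg(q,p)$ has full column rank $p+1$, the Gram matrix $\weightMatrixByQNoArg^\top\weightMatrixByQNoArg$ is invertible and $\bigl(\weightMatrixByQNoArg^\top\weightMatrixByQNoArg\bigr)^{+}\weightMatrixByQNoArg^\top\weightMatrixByQNoArg=I$, whence
\begin{align*}
\sum_{i=1}^{q}\estimatorMarginalDistributionWeights\,P(i/q)=\weightMatrixByQNoArg_{0,:}\bigl(\weightMatrixByQNoArg^\top\weightMatrixByQNoArg\bigr)^{+}\sum_{i=1}^{q}{\weightMatrixByQNoArg_{i,:}}^\top(\weightMatrixByQNoArg c)_i=\weightMatrixByQNoArg_{0,:}\bigl(\weightMatrixByQNoArg^\top\weightMatrixByQNoArg\bigr)^{+}\weightMatrixByQNoArg^\top\weightMatrixByQNoArg\,c=\weightMatrixByQNoArg_{0,:}c=P(0).
\end{align*}

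Finally I would Taylor‑expand $\weightFunction$. Set $v_i:=(\numUnlabelled-t+i)/\windowSize$; since $q\le\windowSize$, Assumption \ref{assumption:smoothlyVaryingLabelProbabilities} gives $\labelProbability[t-i]=\weightFunction(v_i)$ for $0\le i\le q$, with $v_i\in[0,1]$ and $v_i-v_0=i/\windowSize$. Let $T(u):=\sum_{k=0}^{\pOfAlpha}\tfrac{\weightFunction^{(k)}(v_0)}{k!}(u-v_0)^k$ be the degree‑$\pOfAlpha$ Taylor polynomial of $\weightFunction$ at $v_0$. The integral form of the Taylor remainder together with the H\"older bound $|\weightFunction^{(\pOfAlpha)}(s)-\weightFunction^{(\pOfAlpha)}(v_0)|\le\holderConstantWeightFunction|s-v_0|^{\holderExponentWeightFunction-\pOfAlpha}$ yields $|\weightFunction(u)-T(u)|\le\holderConstantWeightFunction\tfrac{\Gamma(\holderExponentWeightFunction-\pOfAlpha+1)}{\Gamma(\holderExponentWeightFunction+1)}|u-v_0|^{\holderExponentWeightFunction}\le(\holderConstantWeightFunction/\pOfAlpha!)|u-v_0|^{\holderExponentWeightFunction}$ for $u\ge v_0$ (via $\Gamma(\holderExponentWeightFunction-\pOfAlpha+1)\le1$ and $\Gamma(\holderExponentWeightFunction+1)\ge\pOfAlpha!$; the case $\pOfAlpha=0$ follows directly from the definition of $\holderFunctionClass(\holderExponentWeightFunction,\holderConstantWeightFunction)$). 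Hence $|\weightFunction(v_i)-T(v_i)|\le(\holderConstantWeightFunction/\pOfAlpha!)(i/\windowSize)^{\holderExponentWeightFunction}$. Moreover, since the hypothesis $\pOfQ>\lfloor\holderExponentWeightFunction\rfloor\ge\pOfAlpha$ forces $\deg T\le\pOfQ$, the map $i/q\mapsto T\bigl(v_0+(q/\windowSize)(i/q)\bigr)=T(v_i)$ is a polynomial of degree at most $\pOfQ$ in $i/q$ with value $T(v_0)=\weightFunction(v_0)=\labelProbability[t]$ at $0$; the reproduction property therefore gives $\sum_{i=1}^{q}\estimatorMarginalDistributionWeights\,T(v_i)=\labelProbability[t]$. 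Subtracting this from $\sum_{i=1}^{q}\estimatorMarginalDistributionWeights\,\labelProbability[t-i]$ and bounding term by term via the displayed remainder estimate yields the required one‑dimensional bound, hence the lemma.

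I expect the only real care needed to be bookkeeping: verifying that the pseudo‑inverse in the definition of $\estimatorMarginalDistributionWeights$ collapses to an ordinary inverse — which is precisely why $\pOfQ$ is chosen as the largest $p$ for which $\weightMatrixByPAndQNoArg(q,p)$ has full rank — and tracking the two distinct normalisations $i/\windowSize$ and $i/q$, so that $T$, a polynomial of degree $\pOfAlpha$ in $u$, is correctly re‑identified with a polynomial of degree at most $\pOfQ$ in the grid variable $i/q$. The analytic content (the Beta‑function estimate for the Taylor remainder) is elementary and there is no genuinely hard step.
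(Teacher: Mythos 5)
Your proof is correct and follows essentially the same route as the paper's: the weights $\estimatorMarginalDistributionWeights$ reproduce degree-$\pOfQ$ polynomials exactly (via invertibility of $\weightMatrixByQNoArg^\top\weightMatrixByQNoArg$, guaranteed by the choice of $\pOfQ$), and a Taylor expansion with the H\"older remainder bound $\holderConstantWeightFunction(i/\windowSize)^{\holderExponentWeightFunction}/\pOfAlpha!$ controls the residual. The only differences are cosmetic: you first reduce to the scalar sequence $(\labelProbability[\ell])$ using $\sum_i\estimatorMarginalDistributionWeights=1$ and keep the Taylor polynomial in monomial form, whereas the paper works with $h(u)=\classConditionalDistribution{0}(f)+\{\classConditionalDistribution{1}(f)-\classConditionalDistribution{0}(f)\}\weightFunction(u)$ and re-expresses its Taylor polynomial in the rescaled Legendre basis; your Beta-function remainder estimate is a slightly sharper version of the paper's cruder bound, and both relax to the same constant.
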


\begin{proof}[Proof of Lemma \ref{lemma:weightFunctionLowBias}] By Assumption \ref{assumption:smoothlyVaryingLabelProbabilities} there exists $\weightFunction \in \holderFunctionClass(\holderExponentWeightFunction,\holderConstantWeightFunction)$ such that $
\labelProbability=\weightFunction( {\numUnlabelled-\ell+1}/{\windowSize})$ for all $\ell \in \{\testTime-\windowSize,\ldots,\testTime\}$. Let's write $h:[0,1] \rightarrow [0,1]$ for the function defined by
\begin{align*}
h(u):= \classConditionalDistribution{0}(f)+\big\{   \classConditionalDistribution{1}(f)-\classConditionalDistribution{0}(f)\big\}\weightFunction(u), 
\end{align*}
for $u \in [0,1]$. Recall that
$\holderConstantSpecifiedFunction:=\holderConstantWeightFunction\big| \classConditionalDistribution{1}(f)-\classConditionalDistribution{0}(f)\big|/\{\pOfAlpha!\,\sqrt{2\holderExponentWeightFunction+1}\}$. It follows that $h \in \holderFunctionClass(\holderExponentWeightFunction,\holderConstantSpecifiedFunction 
\{\pOfAlpha!\,\sqrt{2\holderExponentWeightFunction+1}\} )$ and so $h$ is $\pOfAlpha$-times differentiable. Now suppose $\pOfAlpha>0$. Then, by Taylor's theorem for each $u \in [0,1]$,
\begin{align*}
h\left(u\right)&=\sum_{k=0}^{\pOfAlpha-1}\frac{h^{(k)}(0)}{k!}\cdot u^k + \int_0^u \frac{h^{(\pOfAlpha)}(\xi)}{(\pOfAlpha-1)!}\cdot \xi^{\pOfAlpha-1} d\xi.
\end{align*}
Thus, for each $u \in [0,1]$,
\begin{align*}
\bigg| h\left(u\right)-\sum_{k=0}^{\pOfAlpha}\frac{h^{(k)}(0)}{k!}\cdot u^k\bigg|
&=\bigg| \int_0^u \frac{h^{(\pOfAlpha)}(\xi)}{(\pOfAlpha-1)!}\cdot \xi^{\pOfAlpha-1} d\xi-\frac{h^{(\pOfAlpha)}(0)}{\pOfAlpha!}\cdot u^{\pOfAlpha}\bigg|\\
&=\frac{1}{({\pOfAlpha}-1)!}\int_0^u\big| h^{({\pOfAlpha})}(\xi) - h^{({\pOfAlpha})}(0)\big|\xi^{{\pOfAlpha}-1} d\xi\\
&\leq \frac{\overline{\holderConstantWeightFunction}\, u^{\holderExponentWeightFunction-{\pOfAlpha}}}{({\pOfAlpha}-1)!}\int_0^u \xi^{{\pOfAlpha}-1} d\xi=\holderConstantSpecifiedFunction \sqrt{2\holderExponentWeightFunction+1}\ \,u^{\holderExponentWeightFunction}.
\end{align*}
On the other hand, if $\pOfAlpha=0$, then for $u \in [0,1]$, the bound 
\begin{align*}
\bigg| h\left(u\right)-\sum_{k=0}^{\pOfAlpha}\frac{h^{(k)}(0)}{k!}\cdot u^k\bigg|\leq \holderConstantSpecifiedFunction \sqrt{2\holderExponentWeightFunction+1}\ \,u^{\holderExponentWeightFunction},
\end{align*} 
follows immediately from $h \in \holderFunctionClass(\holderExponentWeightFunction,\holderConstantSpecifiedFunction 
\{\pOfAlpha!\,\sqrt{2\holderExponentWeightFunction+1}\} )$.

The polynomials $(\legendrePolynomialShiftedOrthonormal)_{k=0}^{\pOfAlpha}$ are orthonormal with respect to the inner product $\left\langle f,g \right\rangle_1 := \int_0^1 f(z)g(z)dz$ (Lemma \ref{lemma:shiftedLegendrePolysOrthonormal}). Hence, the collection of polynomials of the form $z\mapsto \legendrePolynomialShiftedOrthonormal(\windowSize z/q)$ for $k \in [{\pOfAlpha}]$ are orthonormal with respect to the inner product $\left\langle a,b \right\rangle_{q/\windowSize} := (\windowSize/q)\int_0^{q/\windowSize} a(z)b(z)dz$, and as such form a basis of the $({\pOfAlpha}+1)$-dimensional vector space consisting of polynomials of degree at most $p$. Thus, we may choose $a=(a_j)_{j=1}^{\pOfQ+1} \in \R^{p+1}$ such that $a_j =0$ for all $j>\pOfAlpha+1$ and
\begin{align*}
\sum_{k=0}^{\pOfAlpha}\frac{h^{(k)}(0)}{k!}\cdot u^k&=\sum_{j=1}^{\pOfAlpha+1}a_j \legendrePolynomialShiftedOrthonormal[j-1]\left(\frac{\windowSize u}{q}\right)=\sum_{j=1}^{\pOfQ+1}a_j \legendrePolynomialShiftedOrthonormal[j-1]\left(\frac{\windowSize u}{q}\right),
\end{align*}
for all $u \in [0,1]$. Note also that for $i \in \{0\}\cup [m]$ we have
\begin{align*}
h\bigg(\frac{i}{m}\bigg)&= \classConditionalDistribution{0}(f)+\big\{   \classConditionalDistribution{1}(f)-\classConditionalDistribution{0}(f)\big\}\weightFunction\bigg(\frac{i}{m}\bigg)= \classConditionalDistribution{0}(f)+\big\{   \classConditionalDistribution{1}(f)-\classConditionalDistribution{0}(f)\big\}\labelProbability[t-i]=\marginalDistribution[t-i](f).
\end{align*}
Thus, with  $\weightMatrixByQNoArg \equiv \weightMatrixByQ$ we have 
\begin{align*}
\big| \marginalDistribution[t-i](f)- \left(\weightMatrixByQNoArg_{i,:}\right) a\big|&=\bigg| h\bigg(\frac{i}{m}\bigg)-\sum_{k=0}^{\pOfQ}a_j \legendrePolynomialShiftedOrthonormal[j-1]\bigg(\frac{i}{q}\bigg)\bigg|\\
& = \bigg| h\bigg(\frac{i}{m}\bigg)-\sum_{k=0}^{\pOfQ}\frac{h^{(k)}(0)}{k!}\cdot\bigg(\frac{i}{m}\bigg)^k\bigg|\leq \holderConstantSpecifiedFunction \sqrt{2\holderExponentWeightFunction+1}\ \,u^{\holderExponentWeightFunction} \,\bigg(\frac{i}{m}\bigg)^{\holderExponentWeightFunction},
\end{align*}
for all $i \in  [m]$ and  $\marginalDistribution[t](f)= \left(\weightMatrixByQNoArg_{0,:}\right) a$. Note also that the choice of $\pOfQ$ ensures that $\weightMatrixByQNoArg^\top \weightMatrixByQNoArg$ is invertible, so 
\begin{align*}
\sum_{i=1}^q  \estimatorMarginalDistributionWeights \weightMatrixByQNoArg_{i,:} &= \sum_{i=1}^q \weightMatrixByQNoArg_{0,:}\bigl( {\weightMatrixByQNoArg}^\top {\weightMatrixByQNoArg}\bigr)^+{{\weightMatrixByQNoArg_{i,:}}}^\top \weightMatrixByQNoArg_{i,:}\\
&= \weightMatrixByQNoArg_{0,:}\bigl( {\weightMatrixByQNoArg}^\top {\weightMatrixByQNoArg}\bigr)^{-1} \left(\sum_{i=1}^q {{\weightMatrixByQNoArg_{i,:}}}^\top \weightMatrixByQNoArg_{i,:}\right)\\
& = \weightMatrixByQNoArg_{0,:}\bigl( {\weightMatrixByQNoArg}^\top {\weightMatrixByQNoArg}\bigr)^{-1}\bigl( {\weightMatrixByQNoArg}^\top {\weightMatrixByQNoArg}\bigr)= \weightMatrixByQNoArg_{0,:}.
\end{align*}
Consequently, we have
\begin{align*}
\bigg| &\sum_{i=1}^q \estimatorMarginalDistributionWeights \marginalDistribution[t-i](f)-\marginalDistribution[t](f)\bigg| \\ & \leq \sum_{i=1}^q |\estimatorMarginalDistributionWeights| \left| \marginalDistribution[t-i](f)- \left(\weightMatrixByQNoArg_{i,:}\right) a\right|  +\left|\sum_{i=1}^q  \estimatorMarginalDistributionWeights \weightMatrixByQNoArg_{i,:} a- \weightMatrixByQNoArg_{0,:}a\right|+\left| \left(\weightMatrixByQNoArg_{0,:}\right) a-\marginalDistribution[t](f)\right| \\ &\leq \holderConstantSpecifiedFunction \sqrt{2\holderExponentWeightFunction+1} \, \hspace{1mm}\sum_{i=1}^q |\estimatorMarginalDistributionWeights|\bigg(\frac{i}{m}\bigg)^{\holderExponentWeightFunction}\\
& =\frac{\holderConstantWeightFunction}{\pOfAlpha!}\big| \classConditionalDistribution{1}(f)-\classConditionalDistribution{0}(f)\big| \hspace{1mm}\sum_{i=1}^q |\estimatorMarginalDistributionWeights|\bigg(\frac{i}{m}\bigg)^{\holderExponentWeightFunction} = \estimatorMarginalDistributionBiasTerm[f,\estimatorMarginalDistributionExtraParams]{\testTime}.
\end{align*}
\end{proof}

\begin{lemma}\label{lemma:biasBoundViaWeightBound}  Given $\estimatorMarginalDistributionExtraParams=(q,\windowSize,\holderExponentWeightFunction,\holderConstantWeightFunction)$ we have
\begin{align*}
\estimatorMarginalDistributionBiasTerm[f,\estimatorMarginalDistributionExtraParams]{\testTime}\leq \frac{\holderConstantWeightFunction\, \big| \classConditionalDistribution{1}(f)-\classConditionalDistribution{0}(f)\big| \hspace{1mm}\|\estimatorMarginalDistributionWeightsNoArg\|_2 (q+1)^{\holderExponentWeightFunction+1/2}}{\pOfAlpha!\sqrt{{2\holderExponentWeightFunction+1}}\,\windowSize^\holderExponentWeightFunction}.
\end{align*}
\end{lemma}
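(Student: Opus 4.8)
The plan is to reduce the claimed inequality to a purely deterministic estimate on the weighted power sum $\sum_{i=1}^q |\estimatorMarginalDistributionWeights|\,(i/\windowSize)^{\holderExponentWeightFunction}$ that appears in the definition of $\estimatorMarginalDistributionBiasTerm[f,\estimatorMarginalDistributionExtraParams]{\testTime}$; once that sum is bounded by $\|\estimatorMarginalDistributionWeightsNoArg\|_2\,(q+1)^{\holderExponentWeightFunction+1/2}/(\sqrt{2\holderExponentWeightFunction+1}\,\windowSize^{\holderExponentWeightFunction})$, the stated bound follows immediately by substituting into the definition and multiplying by the prefactor $\holderConstantWeightFunction|\classConditionalDistribution{1}(f)-\classConditionalDistribution{0}(f)|/\pOfAlpha!$.

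First I would apply the Cauchy--Schwarz inequality to the two vectors $(\estimatorMarginalDistributionWeights)_{i=1}^q$ and $((i/\windowSize)^{\holderExponentWeightFunction})_{i=1}^q$ in $\R^q$, giving $\sum_{i=1}^q |\estimatorMarginalDistributionWeights|\,(i/\windowSize)^{\holderExponentWeightFunction} \le \|\estimatorMarginalDistributionWeightsNoArg\|_2\bigl(\sum_{i=1}^q (i/\windowSize)^{2\holderExponentWeightFunction}\bigr)^{1/2}$, using the identity $\|\estimatorMarginalDistributionWeightsNoArg\|_2^2 = \sum_{i=1}^q \estimatorMarginalDistributionWeights^2$ from Section \ref{sec:estLabelProbability}. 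Next I would bound the remaining sum by an integral comparison: since $\holderExponentWeightFunction \ge 0$, the map $x\mapsto x^{2\holderExponentWeightFunction}$ is non-decreasing on $[0,\infty)$, so $i^{2\holderExponentWeightFunction}\le \int_i^{i+1} x^{2\holderExponentWeightFunction}\,dx$ for each $i \in [q]$, whence $\sum_{i=1}^q i^{2\holderExponentWeightFunction} \le \int_0^{q+1} x^{2\holderExponentWeightFunction}\,dx = (q+1)^{2\holderExponentWeightFunction+1}/(2\holderExponentWeightFunction+1)$. Dividing through by $\windowSize^{2\holderExponentWeightFunction}$ and taking square roots yields $\bigl(\sum_{i=1}^q (i/\windowSize)^{2\holderExponentWeightFunction}\bigr)^{1/2} \le (q+1)^{\holderExponentWeightFunction+1/2}/(\windowSize^{\holderExponentWeightFunction}\sqrt{2\holderExponentWeightFunction+1})$, and assembling the two bounds completes the argument.

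There is no genuine obstacle here; the proof is two lines of elementary inequalities. The only points meriting a moment's attention are the boundary case $\holderExponentWeightFunction = 0$ — where the power sum degenerates to $\sum_{i=1}^q 1 = q \le q+1$ and $\sqrt{2\holderExponentWeightFunction+1}=1$, so the bound remains valid — and checking that the integral comparison is oriented so as to furnish an \emph{upper} bound on the sum, which it is because each summand $i^{2\holderExponentWeightFunction}$ is compared with the integral over the interval $[i,i+1]$ immediately to its right.
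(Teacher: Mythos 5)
Your proof is correct and follows essentially the same route as the paper's: the paper likewise bounds $\sum_{i=1}^q i^{2\holderExponentWeightFunction}\leq \int_1^{q+1} z^{2\holderExponentWeightFunction}dz \leq (q+1)^{2\holderExponentWeightFunction+1}/(2\holderExponentWeightFunction+1)$ and then invokes Cauchy--Schwarz on the weights, exactly as you do. Your extra remarks on the case $\holderExponentWeightFunction=0$ and the orientation of the integral comparison are fine but not needed beyond what the paper states.
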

\begin{proof} Note that for any $\holderExponentWeightFunction \geq 0$ we have
\begin{align*}
\sum_{i=1}^qi^{2\holderExponentWeightFunction}&\leq \int_1^{q+1}z^{2\holderExponentWeightFunction}dz=\frac{(q+1)^{2\holderExponentWeightFunction+1}-1}{2\holderExponentWeightFunction+1}.
\end{align*}
Hence, the bound follows from the definition of $\estimatorMarginalDistributionBiasTerm[f,\estimatorMarginalDistributionExtraParams]{\testTime}$ combined with the Cauchy-Schwartz inequality.
\end{proof}

\begin{lemma}\label{lemma:weightFunctionLowVariance} Suppose $f:\X^{\numNull+\numPositive+1} \rightarrow [0,1]$ and take $q \in [\numUnlabelled]$. Then,
\begin{align*}
\Prob\left(\frac{\left|\estimatorMarginalDistribution[q]{\testTime}(f)-\sum_{i=1}^q \estimatorMarginalDistributionWeights \marginalDistribution[t-i](f)\right|}{\estimatorMarginalDistributionRootVarTerm[q]{\testTime}{\delta}} > 1 \bigg| \nullSample,\positiveSample \right) \leq \frac{2\delta}{\pi^2q^2}.
\end{align*}
\end{lemma}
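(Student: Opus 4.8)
The plan is to condition on the labelled samples $\nullSample$, $\positiveSample$, recognise $\estimatorMarginalDistribution[q]{\testTime}(f)$ as a sum of independent bounded random variables, and apply Hoeffding's inequality. First I would note that the coefficients $\estimatorMarginalDistributionWeights$ are non-random: both the matrix $\weightMatrixByQNoArg$ and the integer $\pOfQ$ are determined by $q$ and $\maximalHolderExponentWeightFunction$ alone. Moreover, conditioning on $(\nullSample,\positiveSample)$ turns $f$ into a fixed Borel function $\metricSpace\to[0,1]$, since by construction $f$ depends on the data only through $\nullCovariate_0,\dots,\nullCovariate_{\numNull-1}$ and $\positiveCovariate_0,\dots,\positiveCovariate_{\numPositive-1}$.

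Next I would use that the covariates $X_{t-1},\dots,X_{t-q}$ appearing in $\estimatorMarginalDistribution[q]{\testTime}(f)=\sum_{i=1}^q\estimatorMarginalDistributionWeights f(X_{t-i})$ are mutually independent and independent of $(\nullSample,\positiveSample)$, with $X_{t-i}$ having law $\marginalDistribution[t-i]$ by \eqref{eq:marginalDistributionAsMixture}. Thus, conditionally on $(\nullSample,\positiveSample)$, the summands $\estimatorMarginalDistributionWeights f(X_{t-i})$, $i\in[q]$, are independent, each supported in an interval of length $|\estimatorMarginalDistributionWeights|$, with conditional mean $\estimatorMarginalDistributionWeights\,\marginalDistribution[t-i](f)$. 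Applying Hoeffding's inequality conditionally on $(\nullSample,\positiveSample)$ to $\estimatorMarginalDistribution[q]{\testTime}(f)-\sum_{i=1}^q\estimatorMarginalDistributionWeights\marginalDistribution[t-i](f)=\sum_{i=1}^q\estimatorMarginalDistributionWeights\bigl(f(X_{t-i})-\marginalDistribution[t-i](f)\bigr)$ then yields, for every $s>0$,
\begin{align*}
\Prob\left(\left|\estimatorMarginalDistribution[q]{\testTime}(f)-\sum_{i=1}^q\estimatorMarginalDistributionWeights\marginalDistribution[t-i](f)\right|>s\ \Big|\ \nullSample,\positiveSample\right)\leq 2\exp\left(-\frac{2s^2}{\|\estimatorMarginalDistributionWeightsNoArg\|_2^2}\right),
\end{align*}
since $\sum_{i=1}^q\estimatorMarginalDistributionWeights^2=\|\estimatorMarginalDistributionWeightsNoArg\|_2^2$. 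Taking $s=\estimatorMarginalDistributionRootVarTerm[q]{\testTime}{\delta}=\|\estimatorMarginalDistributionWeightsNoArg\|_2\sqrt{2\log(\pi^2q^2/\delta)}$ makes the right-hand side equal to $2\exp\bigl(-4\log(\pi^2q^2/\delta)\bigr)=2\bigl(\delta/(\pi^2q^2)\bigr)^4$, and since $\delta\in(0,1)$ and $q\geq1$ imply $\delta/(\pi^2q^2)<1$, this is at most $2\delta/(\pi^2q^2)$, exactly the asserted bound.

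I do not anticipate a serious obstacle: the estimate is a routine Hoeffding bound once the conditioning is arranged. The points deserving care are the bookkeeping that makes $f$ and the weights deterministic after conditioning on $(\nullSample,\positiveSample)$, the independence of $X_{t-1},\dots,X_{t-q}$ from the labelled data so that their conditional means are the integrals $\marginalDistribution[t-i](f)$, and---so that the statement is not vacuous---the observation that $\|\estimatorMarginalDistributionWeightsNoArg\|_2>0$. The latter follows from the identity $\sum_{i=1}^q\estimatorMarginalDistributionWeights\weightMatrixByQNoArg_{i,:}=\weightMatrixByQNoArg_{0,:}$ established in the proof of Lemma \ref{lemma:weightFunctionLowBias}, because $\weightMatrixByQNoArg_{0,:}$ has first entry $\legendrePolynomialShiftedOrthonormal[0](0)=1\neq0$ and so cannot vanish.
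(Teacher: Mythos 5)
Your proof is correct and follows essentially the same route as the paper, which conditions on $\nullSample,\positiveSample$ and applies Azuma's inequality to the weighted sum $\sum_{i=1}^q\estimatorMarginalDistributionWeights\bigl(f(X_{t-i})-\marginalDistribution[t-i](f)\bigr)$; for independent bounded summands your conditional Hoeffding bound is the same argument (and your range-based exponent in fact gives a slightly stronger tail than needed). The additional bookkeeping you record — determinism of the weights, measurability of $f$ given the labelled data, and $\|\estimatorMarginalDistributionWeightsNoArg\|_2>0$ — is accurate and consistent with the paper's construction.
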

\begin{proof} Note that $\estimatorMarginalDistribution[q]{\testTime}(f):=\sum_{i=1}^q \estimatorMarginalDistributionWeights f(X_{t-i})$. Hence, the result follows from Azuma's inequality \cite{azuma1967weighted} applied conditionally on the values of $\nullSample$, $\positiveSample$.
\end{proof}

To complete the proof of Proposition \ref{prop:marginalDistributionEstimatorBound} we require Lemma \ref{lemma:boundingEstimatorMarginalDistributionWeights}.

\begin{lemma}\label{lemma:boundingEstimatorMarginalDistributionWeights}\hspace{-4mm} Suppose $q \in \{ 8\maximalHolderExponentWeightFunction^2(\maximalHolderExponentWeightFunction+1)^2,\ldots,\testTime\}$. Then $\pOfQ =\maximalHolderExponentWeightFunction - 1$ 
 and $\|\estimatorMarginalDistributionWeightsNoArg\|_2 \leq  \maximalHolderExponentWeightFunction \sqrt{2/q}$.
\end{lemma}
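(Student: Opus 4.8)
The plan is to reduce the desired bound to an operator-norm estimate for the Gram matrix $M:=\weightMatrixByQNoArg^{\top}\weightMatrixByQNoArg$, exploiting the fact that, once $q$ is large, $q^{-1}M$ is a Riemann-sum approximation of the identity by the $L^{2}([0,1])$-orthonormality of the shifted Legendre polynomials (Lemma \ref{lemma:shiftedLegendrePolysOrthonormal}).

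\textbf{Step 1 (the rank claim).} First I would verify $\pOfQ=\maximalHolderExponentWeightFunction-1$. Since $q\ge 8\maximalHolderExponentWeightFunction^{2}(\maximalHolderExponentWeightFunction+1)^{2}\ge\maximalHolderExponentWeightFunction$, it suffices to show the $q\times\maximalHolderExponentWeightFunction$ matrix with $(i,j)$-entry $\legendrePolynomialShiftedOrthonormal[j-1](i/q)$ has full column rank. Any vector $(c_{j})_{j=1}^{\maximalHolderExponentWeightFunction}$ in its kernel yields a polynomial $\sum_{j}c_{j}\legendrePolynomialShiftedOrthonormal[j-1]$ of degree at most $\maximalHolderExponentWeightFunction-1$ that vanishes at the $q\ge\maximalHolderExponentWeightFunction$ distinct points $i/q$, $i\in[q]$, hence is identically zero; as $\legendrePolynomialShiftedOrthonormal[0],\dots,\legendrePolynomialShiftedOrthonormal[\maximalHolderExponentWeightFunction-1]$ have pairwise distinct degrees they are linearly independent, so $c=0$. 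Thus $\pOfQ=\maximalHolderExponentWeightFunction-1$ and $\weightMatrixByQNoArg$ is a $q\times\maximalHolderExponentWeightFunction$ matrix of full column rank.

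\textbf{Step 2 (reduction to a quadratic form).} Writing $v_{i}:=\weightMatrixByQNoArg_{i,:}^{\top}\in\R^{\maximalHolderExponentWeightFunction}$ for $i\in\{0,\dots,q\}$, I would note that $M=\sum_{i=1}^{q}v_{i}v_{i}^{\top}$ is invertible by Step 1, so $(\weightMatrixByQNoArg^{\top}\weightMatrixByQNoArg)^{+}=M^{-1}$, $\estimatorMarginalDistributionWeights=v_{0}^{\top}M^{-1}v_{i}$, and therefore
\begin{align*}
\|\estimatorMarginalDistributionWeightsNoArg\|_{2}^{2}=\sum_{i=1}^{q}\bigl(v_{0}^{\top}M^{-1}v_{i}\bigr)^{2}=v_{0}^{\top}M^{-1}\Bigl(\sum_{i=1}^{q}v_{i}v_{i}^{\top}\Bigr)M^{-1}v_{0}=v_{0}^{\top}M^{-1}v_{0}.
\end{align*}
It remains to bound $\|v_{0}\|_{2}^{2}$ and the smallest eigenvalue of $M$.

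\textbf{Step 3 (the main estimate).} Using the identity $\legendrePolynomialShiftedOrthonormal[k](z)=\sqrt{2k+1}\,P_{k}(2z-1)$ with $P_{k}$ the Legendre polynomial, together with the classical facts $\sup_{[-1,1]}|P_{k}|=P_{k}(1)=1$, $P_{k}(-1)=(-1)^{k}$ and $\sup_{[-1,1]}|P_{k}'|=P_{k}'(1)=k(k+1)/2$, one obtains for $0\le k\le\maximalHolderExponentWeightFunction-1$ that $\sup_{[0,1]}|\legendrePolynomialShiftedOrthonormal[k]|=\sqrt{2k+1}$, $\legendrePolynomialShiftedOrthonormal[k](0)^{2}=2k+1$, and $\sup_{[0,1]}|\legendrePolynomialShiftedOrthonormal[k]'|=\sqrt{2k+1}\,k(k+1)$. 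The first of these gives $\|v_{0}\|_{2}^{2}=\sum_{k=0}^{\maximalHolderExponentWeightFunction-1}(2k+1)=\maximalHolderExponentWeightFunction^{2}$; and for $g_{jk}:=\legendrePolynomialShiftedOrthonormal[j-1]\legendrePolynomialShiftedOrthonormal[k-1]$ the product rule gives $\sup_{[0,1]}|g_{jk}'|\le 2(2\maximalHolderExponentWeightFunction-1)(\maximalHolderExponentWeightFunction-1)\maximalHolderExponentWeightFunction$. Since $M_{jk}=\sum_{i=1}^{q}g_{jk}(i/q)$ equals $q$ times a right-endpoint Riemann sum of $g_{jk}$ on $[0,1]$ and $\int_{0}^{1}g_{jk}=\one\{j=k\}$ by orthonormality, the elementary Riemann-sum error bound gives $\bigl|M_{jk}-q\,\one\{j=k\}\bigr|\le\tfrac12\sup_{[0,1]}|g_{jk}'|\le(2\maximalHolderExponentWeightFunction-1)(\maximalHolderExponentWeightFunction-1)\maximalHolderExponentWeightFunction$, hence $\|M-qI\|_{\mathrm{op}}\le\|M-qI\|_{F}\le\maximalHolderExponentWeightFunction^{2}(\maximalHolderExponentWeightFunction-1)(2\maximalHolderExponentWeightFunction-1)$. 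Because $4(\maximalHolderExponentWeightFunction+1)^{2}-(\maximalHolderExponentWeightFunction-1)(2\maximalHolderExponentWeightFunction-1)=2\maximalHolderExponentWeightFunction^{2}+11\maximalHolderExponentWeightFunction+3>0$, the hypothesis $q\ge 8\maximalHolderExponentWeightFunction^{2}(\maximalHolderExponentWeightFunction+1)^{2}$ forces $\|M-qI\|_{\mathrm{op}}\le q/2$, so the smallest eigenvalue of $M$ is at least $q/2$ and $\|M^{-1}\|_{\mathrm{op}}\le 2/q$. Combining with Step 2 yields $\|\estimatorMarginalDistributionWeightsNoArg\|_{2}^{2}=v_{0}^{\top}M^{-1}v_{0}\le(2/q)\maximalHolderExponentWeightFunction^{2}$, i.e.\ $\|\estimatorMarginalDistributionWeightsNoArg\|_{2}\le\maximalHolderExponentWeightFunction\sqrt{2/q}$.

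I expect Step 3 to be the main obstacle: it requires nailing down explicit sup-norm bounds for the shifted Legendre polynomials and their derivatives, and then tracking constants carefully enough that even the crude Frobenius bound on $\|M-qI\|$ stays below $q/2$ uniformly over the admissible range of $q$. Steps 1, 2 and the final substitution are routine linear algebra.
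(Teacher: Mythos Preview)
Your proposal is correct and follows essentially the same route as the paper: both reduce $\|\estimatorMarginalDistributionWeightsNoArg\|_2^2$ to the quadratic form $v_0^\top M^{-1}v_0$ and control the smallest eigenvalue of $M=\weightMatrixByQNoArg^\top\weightMatrixByQNoArg$ by showing that $M$ is a Riemann-sum perturbation of $qI$ via the $L^2$-orthonormality of the shifted Legendre polynomials (this is exactly the content of the paper's Lemma~\ref{lemma:UCovarianceMatrixMinEigenValue}). The only substantive differences are cosmetic: you establish the rank claim in Step~1 by the elementary ``polynomial with too many zeros'' argument rather than deducing it from the eigenvalue bound, and you use the sharp endpoint value $\sup_{[-1,1]}|P_k'|=k(k+1)/2$ in place of the cruder Markov bound $k^2$ that the paper invokes, which gives you a slightly smaller constant in $\|M-qI\|$ but is not needed for the conclusion.
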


In order to prove Lemma \ref{lemma:boundingEstimatorMarginalDistributionWeights} we leverage the following result on Legendre polynomials, a proof of which is given in Section \ref{sec:appendixLegendrePolys} for the convenience of the reader.

\begin{lemma}\label{lemma:UCovarianceMatrixMinEigenValue} Suppose $p\in \N_0$ and $q \in \N$ and let
$ \weightMatrixByPAndQNoArg$ denote the $q\times (p+1)$ matrix with entries $ \weightMatrixByPAndQNoArg_{i,j}:= \legendrePolynomialShiftedOrthonormal[j-1](i/q)$ for $(i,j) \in [q]\times [p+1]$. Then, the
eigenvalues of $\weightMatrixByPAndQNoArg^\top \weightMatrixByPAndQNoArg$ all differ from $q$ by at most $2p^2(p+1)(2p+1)$.
\end{lemma}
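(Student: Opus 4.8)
The plan is to bound the Rayleigh quotient of the symmetric positive semidefinite $(p+1)\times(p+1)$ matrix $M := \weightMatrixByPAndQNoArg^\top\weightMatrixByPAndQNoArg$. Its $(j,k)$ entry is $\sum_{i=1}^q \legendrePolynomialShiftedOrthonormal[j-1](i/q)\,\legendrePolynomialShiftedOrthonormal[k-1](i/q)$, so for any eigenvalue $\mu$ of $M$ with unit eigenvector $a=(a_1,\dots,a_{p+1})$ we have $\mu = a^\top M a = \sum_{i=1}^q P_a(i/q)^2$, where $P_a := \sum_{j=1}^{p+1} a_j \legendrePolynomialShiftedOrthonormal[j-1]$ is a (real) polynomial of degree at most $p$. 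By the orthonormality of the shifted Legendre polynomials (Lemma \ref{lemma:shiftedLegendrePolysOrthonormal}), $\int_0^1 P_a^2 = \|a\|_2^2 = 1$, hence $q = q\int_0^1 P_a(z)^2\,dz$ and $|\mu-q|$ equals $q$ times the error of the right-endpoint quadrature rule applied to the $C^1$ function $g := P_a^2$. So it suffices to bound that quadrature error, uniformly over unit $a$, by $2p^2(p+1)(2p+1)$.

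First I would record the elementary quadrature estimate valid for every $q\in\N$ and every $C^1$ function $g$ on $[0,1]$: writing $\int_0^1 g = \sum_{i=1}^q \int_{(i-1)/q}^{i/q} g$ and expressing each local error as $\int_{(i-1)/q}^{i/q}\bigl(g(i/q)-g(z)\bigr)\,dz = \int_{(i-1)/q}^{i/q} g'(u)\,\bigl(u-\tfrac{i-1}{q}\bigr)\,du$ (by Fubini), and then using $0\le u-\tfrac{i-1}{q}\le \tfrac1q$ on the $i$-th subinterval, one obtains $\bigl|\tfrac1q\sum_{i=1}^q g(i/q) - \int_0^1 g\bigr| \le \tfrac1q\,\|g'\|_{L^1[0,1]}$. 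Applying this with $g=P_a^2$ and multiplying by $q$ gives $|\mu-q| \le \|(P_a^2)'\|_{L^1[0,1]} = \|2P_a P_a'\|_{L^1[0,1]} \le 2\|P_a\|_{L^2}\|P_a'\|_{L^2} = 2\|P_a'\|_{L^2[0,1]}$ by the Cauchy--Schwarz inequality together with $\|P_a\|_{L^2}=1$.

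It then remains to bound $\|P_a'\|_{L^2[0,1]}$ over unit $a$. Let $D$ be the Gram matrix with entries $D_{j,k} = \int_0^1 \legendrePolynomialShiftedOrthonormal[j-1]'\,\legendrePolynomialShiftedOrthonormal[k-1]'$; then $\|P_a'\|_{L^2}^2 = a^\top D a \le \lambda_{\max}(D)\le \operatorname{tr}(D) = \sum_{m=0}^p \|\legendrePolynomialShiftedOrthonormal[m]'\|_{L^2[0,1]}^2$, using that $D$ is positive semidefinite. The change of variables $z\mapsto 2z-1$ identifies $\legendrePolynomialShiftedOrthonormal[m]$ with $z\mapsto \sqrt{2m+1}\,P_m(2z-1)$ for the classical Legendre polynomial $P_m$ on $[-1,1]$ (this matches the Rodrigues formula in the paper's definition), and combined with the standard identity $\int_{-1}^1 (P_m')^2 = m(m+1)$ it yields $\|\legendrePolynomialShiftedOrthonormal[m]'\|_{L^2[0,1]}^2 = 2m(m+1)(2m+1)$. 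Summing, $\operatorname{tr}(D) = \sum_{m=0}^p 2m(m+1)(2m+1) = p(p+1)^2(p+2)$, so $|\mu - q| \le 2\sqrt{p(p+1)^2(p+2)} = 2(p+1)\sqrt{p(p+2)} \le 2(p+1)^2 \le 2p^2(p+1)(2p+1)$, where the last two steps are elementary ($\sqrt{p(p+2)}\le p+1$, and $p+1\le p^2(2p+1)$ for $p\ge1$, the final bound being trivially $0=0$ when $p=0$). Since $\mu$ was an arbitrary eigenvalue, this is exactly the claim.

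The main obstacle is obtaining the clean closed form $\|\legendrePolynomialShiftedOrthonormal[m]'\|_{L^2[0,1]}^2 = 2m(m+1)(2m+1)$ (equivalently $\int_{-1}^1 (P_m')^2 = m(m+1)$, which one can derive via the expansion $P_m' = \sum_{k<m,\,m-k\text{ odd}}(2k+1)P_k$ and Parseval), and then checking that the resulting constant closes against the stated $2p^2(p+1)(2p+1)$; the quadrature estimate and the reduction to a trace bound are routine. A sharper bound on $\lambda_{\max}(D)$ would improve the constant, but this is not required here.
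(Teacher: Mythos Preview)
Your proof is correct and genuinely different from the paper's. The paper bounds each entry of $M-qI_{p+1}$ separately: it compares $\tfrac1q\legendrePolynomialShiftedOrthonormal[k_0](i/q)\legendrePolynomialShiftedOrthonormal[k_1](i/q)$ with $\int_{(i-1)/q}^{i/q}\legendrePolynomialShiftedOrthonormal[k_0]\legendrePolynomialShiftedOrthonormal[k_1]$ via the sup-norm bounds $|\legendrePolynomialShiftedOrthonormal[k]|\le\sqrt{2k+1}$ and $|\legendrePolynomialShiftedOrthonormal[k]'|\le 2k^2\sqrt{2k+1}$ from Lemma~\ref{lemma:shiftedLegendrePolysOrthonormal} (the latter coming from Markov brothers'), obtaining the entrywise bound $2p^2(2p+1)$, and then converts this to an operator-norm bound by $\|A\|_2\le(p+1)\|\mathrm{vec}(A)\|_\infty$. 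You instead work directly with the Rayleigh quotient, reduce $|\mu-q|$ to the right-endpoint quadrature error for $P_a^2$, and control that in $L^1$ via Cauchy--Schwarz and the exact identity $\int_{-1}^1(P_m')^2=m(m+1)$, yielding $\mathrm{tr}(D)=p(p+1)^2(p+2)$. Your route avoids the Markov brothers' inequality entirely, and the $L^2$ control is sharper: you actually obtain $|\mu-q|\le 2(p+1)^2$, substantially smaller than the paper's $2p^2(p+1)(2p+1)$. The paper's approach has the virtue of being entirely self-contained within its Lemma~\ref{lemma:shiftedLegendrePolysOrthonormal}, whereas yours imports the Legendre-derivative norm identity (which you correctly indicate how to derive).
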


\begin{proof}[Proof of Lemma \ref{lemma:boundingEstimatorMarginalDistributionWeights}] First note that since $q \geq 4 \left(\maximalHolderExponentWeightFunction - 1\right)^{2} \left(2 \maximalHolderExponentWeightFunction - 1\right) \maximalHolderExponentWeightFunction$,
it follows from Lemma \ref{lemma:UCovarianceMatrixMinEigenValue} that the matrix $\weightMatrixByPAndQNoArg^\top \weightMatrixByPAndQNoArg$ has minimal eigenvalue $q/2$, where $\weightMatrixByPAndQNoArg \equiv \weightMatrixByPAndQNoArg(\left\lceil\maximalHolderExponentWeightFunction\right\rceil - 1,q)$. Thus, $\weightMatrixByPAndQNoArg$ is of rank $\left\lceil\maximalHolderExponentWeightFunction\right\rceil $ and $\pOfQ =\maximalHolderExponentWeightFunction - 1$. As such, $\weightMatrixByQNoArg \equiv \weightMatrixByQ = \weightMatrixByPAndQNoArg(\left\lceil\maximalHolderExponentWeightFunction\right\rceil - 1,q)$ and $\weightMatrixByQNoArg^\top \weightMatrixByQNoArg$ is invertible with minimal eigenvalue $q/2$. Thus, we have
\begin{align*}
\|\estimatorMarginalDistributionWeightsNoArg\|_2^2 &=\sum_{i=1}^q\estimatorMarginalDistributionWeights^2=\sum_{i=1}^q \weightMatrixByQNoArg_{0,:}\bigl( \weightMatrixByQNoArg^\top \weightMatrixByQNoArg\bigr)^{-1}\weightMatrixByQNoArg_{i,:}^\top \weightMatrixByQNoArg_{i,:} \bigl( \weightMatrixByQNoArg^\top \weightMatrixByQNoArg\bigr)^{-1} \weightMatrixByQNoArg_{0,:}^\top \\ & =  \weightMatrixByQNoArg_{0,:}\bigl( \weightMatrixByQNoArg^\top \weightMatrixByQNoArg\bigr)^{-1}\weightMatrixByQNoArg_{0,:}^\top \leq \frac{2}{q} \weightMatrixByQNoArg_{0,:} \weightMatrixByQNoArg_{0,:}^\top  = \frac{2}{q} \sum_{j=1}^{\pOfQ+1} \legendrePolynomialShiftedOrthonormal[j-1](0)^2  \leq \frac{2}{q} \sum_{j=1}^{\maximalHolderExponentWeightFunction } (2j-1) \leq \frac{2\maximalHolderExponentWeightFunction^2}{q},
\end{align*}
where we applied Lemma \ref{lemma:shiftedLegendrePolysOrthonormal}  for the penultimate inequality.
\end{proof}

\newcommand{\logTermForProofPropMarginalDistributionEstimatorBound}{\ell_{f,\estimatorMarginalDistributionExtraParams}(\windowSize)}

\newcommand{\qForProofPropMarginalDistributionEstimatorBound}{q_f}

We are now ready to complete the proof of Proof of Proposition \ref{prop:marginalDistributionEstimatorBound}.

\begin{proof}[Proof of Proposition \ref{prop:marginalDistributionEstimatorBound}] We shall assume throughout the proof that $ \estimatorMarginalDistributionErrorBound<1$ since otherwise the bound in Proposition \ref{prop:marginalDistributionEstimatorBound} holds almost surely. Recall that we let $\holderConstantSpecifiedFunction:=\holderConstantWeightFunction\big| \classConditionalDistribution{1}(f)-\classConditionalDistribution{0}(f)\big|/\{\pOfAlpha!\,\sqrt{2\holderExponentWeightFunction+1}\}$. Let's also define 
\begin{align*}
\estimatorMarginalDistributionErrorBoundForProof&:=\left\lbrace \holderConstantSpecifiedFunction \left( \frac{2\log\{(\pi\windowSize)^2/\delta\}}{\windowSize}\right)^{\holderExponentWeightFunction}\right\rbrace^{\frac{1}{2\holderExponentWeightFunction+1}}.
\end{align*}
Next, we let $\qSetByWindowSize$ denote the set conisting of integers $q \in \{8\maximalHolderExponentWeightFunction^2(\maximalHolderExponentWeightFunction+1)^2,\ldots,\windowSize\}$ such that 
\begin{align*}
q \leq \frac{2\log\{(\pi q)^2/\delta\}}{\estimatorMarginalDistributionErrorBoundForProof^2} -1.
\end{align*}
Note that since $ \estimatorMarginalDistributionErrorBound<1$ we have $8\maximalHolderExponentWeightFunction^2(\maximalHolderExponentWeightFunction+1)^2 \in \qSetByWindowSize\neq \emptyset$. We let $\qForProofPropMarginalDistributionEstimatorBound:=\max \qSetByWindowSize$. Note that since $q\mapsto (q+1)/\log\{(\pi q)^2/\delta\}$ is increasing on $\N$, we have $\qSetByWindowSize=\{8\maximalHolderExponentWeightFunction^2(\maximalHolderExponentWeightFunction+1)^2,\ldots,\qForProofPropMarginalDistributionEstimatorBound\}$. Now by Lemma \ref{lemma:biasBoundViaWeightBound}
for $q \in \qSetByWindowSize$ we have
\begin{align*}
\estimatorMarginalDistributionBiasTerm[f,\estimatorMarginalDistributionExtraParams]{\testTime}&\leq \frac{\holderConstantSpecifiedFunction  \hspace{1mm}\|\estimatorMarginalDistributionWeightsNoArg\|_2 (q+1)^{\holderExponentWeightFunction+1/2}}{\windowSize^\holderExponentWeightFunction}\\
&\leq \frac{\holderConstantSpecifiedFunction  \hspace{1mm}\|\estimatorMarginalDistributionWeightsNoArg\|_2 }{\windowSize^\holderExponentWeightFunction} \left(\frac{2\log\{(\pi q)^2/\delta\}}{\estimatorMarginalDistributionErrorBoundForProof^2}\right)^{\holderExponentWeightFunction+1/2}\\
&\leq \frac{\holderConstantSpecifiedFunction   \left({2\log\{(\pi\windowSize)^2/\delta\}}\right)^{\holderExponentWeightFunction} \estimatorMarginalDistributionRootVarTerm[q]{\testTime}{\delta}}{\windowSize^\holderExponentWeightFunction \estimatorMarginalDistributionErrorBoundForProof ^{2\holderExponentWeightFunction+1}} = \estimatorMarginalDistributionRootVarTerm[q]{\testTime}{\delta}.
\end{align*}
Let's introduce the event
\begin{align*}
\highProbEventEstimatorMarginalDistribution:=\bigcap_{q \in [\windowSize]}\left\lbrace \left|\estimatorMarginalDistribution[q]{\testTime}(f)-\sum_{i=1}^q \estimatorMarginalDistributionWeights \marginalDistribution[t-i](f)\right| \leq \estimatorMarginalDistributionRootVarTerm[q]{\testTime}{\delta}\right\rbrace.
\end{align*}
On the event $\highProbEventEstimatorMarginalDistribution$, it follows from Lemma \ref{lemma:weightFunctionLowBias} that
\begin{align}\label{eq:boundOnTheLowBiasRegion}
\left|\estimatorMarginalDistribution[q]{\testTime}(f)-\marginalDistribution[\testTime](f)\right| \leq \estimatorMarginalDistributionBiasTerm[f,\estimatorMarginalDistributionExtraParams]{\testTime}+\estimatorMarginalDistributionRootVarTerm[q]{\testTime}{\delta}\leq 2\estimatorMarginalDistributionRootVarTerm[q]{\testTime}{\delta},
\end{align}
for all $q \in \qSetByWindowSize$, and consequently, we have
\begin{align*}
\left|\estimatorMarginalDistribution[\qForProofPropMarginalDistributionEstimatorBound]{\testTime}(f)-\estimatorMarginalDistribution[q]{\testTime}(f)\right|\leq 2 \left\lbrace \estimatorMarginalDistributionRootVarTerm[\qForProofPropMarginalDistributionEstimatorBound]{\testTime}{\delta}+\estimatorMarginalDistributionRootVarTerm[q]{\testTime}{\delta}\right\rbrace,
\end{align*}
for all $q \in \qSetByWindowSize= \{8\maximalHolderExponentWeightFunction^2(\maximalHolderExponentWeightFunction+1)^2,\ldots,\qForProofPropMarginalDistributionEstimatorBound-1\}$. Hence, on $\highProbEventEstimatorMarginalDistribution$ we have $\lepskiQ \geq \qForProofPropMarginalDistributionEstimatorBound$, which in turn implies
\begin{align*}
\left|\estimatorMarginalDistribution[\lepskiQ]{\testTime}(f)-\estimatorMarginalDistribution[\qForProofPropMarginalDistributionEstimatorBound]{\testTime}(f)\right|\leq 2 \left\lbrace \estimatorMarginalDistributionRootVarTerm[\lepskiQ]{\testTime}{\delta}+\estimatorMarginalDistributionRootVarTerm[\qForProofPropMarginalDistributionEstimatorBound]{\testTime}{\delta}\right\rbrace.
\end{align*}
Thus, by combining with \eqref{eq:boundOnTheLowBiasRegion} and Lemma \ref{lemma:boundingEstimatorMarginalDistributionWeights} we have
\begin{align*}
\big|\estimatorMarginalDistributionLepski(f)-\marginalDistribution[\testTime](f)\big| &= \left|\estimatorMarginalDistribution[\lepskiQ]{\testTime}(f)-\marginalDistribution[\testTime](f)\right| \\
 &\leq  \left|\estimatorMarginalDistribution[\lepskiQ]{\testTime}(f)-\estimatorMarginalDistribution[\qForProofPropMarginalDistributionEstimatorBound]{\testTime}(f)\right| + \left|\estimatorMarginalDistribution[\qForProofPropMarginalDistributionEstimatorBound]{\testTime}(f)-\marginalDistribution[\testTime](f)\right|\\
 & \leq 2 \estimatorMarginalDistributionRootVarTerm[\lepskiQ]{\testTime}{\delta}+4\estimatorMarginalDistributionRootVarTerm[\qForProofPropMarginalDistributionEstimatorBound]{\testTime}{\delta}\\
 & \leq 4\maximalHolderExponentWeightFunction \sqrt{\frac{{\log\{(\pi \lepskiQ)^2/\delta\}}}{\lepskiQ}}+8\maximalHolderExponentWeightFunction \sqrt{\frac{\log\{(\pi \qForProofPropMarginalDistributionEstimatorBound)^2/\delta\}}{\qForProofPropMarginalDistributionEstimatorBound}} \\
 &\leq  12\maximalHolderExponentWeightFunction \sqrt{\frac{\log\{(\pi \qForProofPropMarginalDistributionEstimatorBound)^2/\delta\}}{\qForProofPropMarginalDistributionEstimatorBound}}\\
 & \leq  (12\maximalHolderExponentWeightFunction+1)  \max\left\lbrace \sqrt{\frac{\log\{(\pi \windowSize)^2/\delta\}}{\windowSize}} , \estimatorMarginalDistributionErrorBoundForProof  \right\rbrace <\estimatorMarginalDistributionErrorBound,
\end{align*}
provided that the event $\highProbEventEstimatorMarginalDistribution$ holds. Moreover, by Lemma \ref{lemma:weightFunctionLowVariance} we have $\Prob\{\highProbEventEstimatorMarginalDistribution| \nullSample,\positiveSample\}\geq 1-\delta/3$.
\end{proof}

The following elementary lemma will be useful in completing the proof of Corollary \ref{corr:labelProbabilityEstimatorBound}. Given $t \in \R$ we shall let $[t]_0^1:= (t \vee 0) \wedge 1$, so $[t]_0^1$ denotes the projection of $t$ onto $[0,1]$.

\begin{lemma}\label{lemma:ratioDifference} Given $a,\, \hat{a} \in \R$ and $b,\, \hat{b} \in \R \setminus\{0\}$,
\begin{align}\label{eq:firstBoundFromRatioDifference}
\left| \frac{\hat{a}}{\hat{b}}-\frac{a}{b}\right| \leq   \frac{|\hat{a}-a|+| {\hat{a}}/{\hat{b}}||\hat{b}-b|}{|b|}.
\end{align}
Moreover, if $a/b \in [0,1]$ then 
\begin{align}\label{eq:secondBoundFromRatioDifference}
\left| \left[ \frac{\hat{a}}{\hat{b}}\right]_0^1-\frac{a}{b}\right| \leq   \frac{|\hat{a}-a|+|\hat{b}-b|}{|b|}.
\end{align}
\end{lemma}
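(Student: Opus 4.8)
The plan is to treat the two inequalities separately, the first being a direct algebraic identity and the second requiring a short case analysis.

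For \eqref{eq:firstBoundFromRatioDifference} I would write $\frac{\hat a}{\hat b}-\frac{a}{b}=\frac{\hat a b-a\hat b}{\hat b\,b}$, split the numerator as $\hat a b-a\hat b=\hat a(b-\hat b)+\hat b(\hat a-a)$, and divide through to get the identity $\frac{\hat a}{\hat b}-\frac{a}{b}=\frac{\hat a}{\hat b}\cdot\frac{b-\hat b}{b}+\frac{\hat a-a}{b}$. Taking absolute values and applying the triangle inequality gives \eqref{eq:firstBoundFromRatioDifference} at once; there is no obstacle here.

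For \eqref{eq:secondBoundFromRatioDifference} I would first reduce to the case $b>0$: replacing the quadruple $(a,b,\hat a,\hat b)$ by $(-a,-b,-\hat a,-\hat b)$ leaves the ratios $a/b$ and $\hat a/\hat b$, the clamped value $[\hat a/\hat b]_0^1$, and the quantities $|\hat a-a|$ and $|\hat b-b|$ all unchanged, so we may assume $b>0$, and then $a/b\in[0,1]$ forces $a\in[0,b]$. Writing $c:=[\hat a/\hat b]_0^1\in[0,1]$ and observing $\bigl|[\hat a/\hat b]_0^1-a/b\bigr|=|cb-a|/b$, it suffices to prove $|cb-a|\le|\hat a-a|+|\hat b-b|$. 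I would then split on the position of $\hat a/\hat b$ relative to $[0,1]$. If $\hat a/\hat b\in[0,1]$ then $c\hat b=\hat a$, so $cb-a=c(b-\hat b)+(\hat a-a)$ and $|c|\le1$ settles it. If $\hat a/\hat b>1$ then $c=1$ and $|cb-a|=b-a\ge 0$; here $\hat a$ and $\hat b$ have the same sign, and for $\hat b>0$ (so $\hat a>\hat b>0$) I would decompose $b-a=(b-\hat b)+(\hat b-a)$ and bound $(\hat b-a)^{+}\le(\hat a-a)^{+}\le|\hat a-a|$ using $\hat b\le\hat a$, while for $\hat b<0$ one has $|\hat b-b|=b-\hat b>b\ge b-a$ directly. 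If $\hat a/\hat b<0$ then $c=0$ and $|cb-a|=a\ge 0$; for $\hat b>0$ then $\hat a<0$ so $|\hat a-a|=a-\hat a\ge a$, and for $\hat b<0$ then $|\hat b-b|=b-\hat b>b\ge a$. Each case yields $|cb-a|\le|\hat a-a|+|\hat b-b|$, which is the claim.

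The step I expect to be the crux is the case $\hat a/\hat b>1$ with $\hat b>0$. The tempting shortcut — use that projection onto $[0,1]$ is $1$-Lipschitz, so $\bigl|[\hat a/\hat b]_0^1-a/b\bigr|=\bigl|[\hat a/\hat b]_0^1-[a/b]_0^1\bigr|\le|\hat a/\hat b-a/b|$, then apply \eqref{eq:firstBoundFromRatioDifference} — fails, because the factor $|\hat a/\hat b|$ produced by \eqref{eq:firstBoundFromRatioDifference} is exactly large precisely when the clamping is active. One must instead exploit that after clamping $c\in\{0,1\}$, so $cb\in\{0,b\}$, together with the ordering information carried by $\hat a/\hat b\notin[0,1]$ (same sign of $\hat a,\hat b$ and $|\hat a|>|\hat b|$, respectively opposite signs), to compare $cb-a$ against $|\hat a-a|+|\hat b-b|$ directly, as above. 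Everything else is routine.
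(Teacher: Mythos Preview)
Your proof is correct and follows essentially the same route as the paper: the first inequality via the identity $\frac{\hat a}{\hat b}-\frac{a}{b}=\frac{(\hat a-a)+(\hat a/\hat b)(b-\hat b)}{b}$, and the second via a case split on the position of $\hat a/\hat b$ relative to $[0,1]$ combined with sign information. The only organisational difference is that you first reduce to $b>0$ by the symmetry $(a,b,\hat a,\hat b)\mapsto(-a,-b,-\hat a,-\hat b)$ and then split on the sign of $\hat b$, whereas the paper keeps $b$ arbitrary and splits instead on the sign of $\hat b/b$; this lets the paper handle the two ``$\hat b<0$'' sub-cases you treat separately in a single stroke (if $\hat b/b<0$ then $|\hat b-b|/|b|>1$ already dominates the left side), at the cost of slightly less transparent sign bookkeeping elsewhere. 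Your remark that the $1$-Lipschitz shortcut via \eqref{eq:firstBoundFromRatioDifference} fails is exactly right and is why both proofs need the case analysis.
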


A proof of Lemma \ref{lemma:ratioDifference} is given in Section \ref{sec:appendixLegendrePolys}.

\begin{proof}[Proof of Corollary \ref{corr:labelProbabilityEstimatorBound}] Let's assume, without loss of generality, that  $\positiveDistribution(f)\neq \nullDistribution(f)$ and
\begin{align*}
\max_{y \in \{0,1\}} \left| \estimatorClassConditionalSecondSample{y}(f)-\classConditionalDistribution{y}(f) \right| \leq \frac{1}{3}|\positiveDistribution(f)-\nullDistribution(f)|,
\end{align*}
since otherwise the bound holds almost surely. As such, we have $\estimatorClassConditionalSecondSample{0}(f) \neq \estimatorClassConditionalSecondSample{1}(f)$. We shall introduce an event
\begin{align*}
\eventLabelProbEstimation:=&\left\lbrace \left| \estimatorMarginalDistributionLepski(f)-\marginalDistribution[\testTime](f)\right|\leq  \estimatorMarginalDistributionErrorBound \right\rbrace.
\end{align*}
By Proposition \ref{prop:marginalDistributionEstimatorBound} we have $\Prob\{\eventLabelProbEstimation|\nullSample,\positiveSample\} \leq \delta/3$.

Since $\labelProbability[\testTime] \in [0,1]$, it follows from Lemma \ref{lemma:ratioDifference} that on the event $\eventLabelProbEstimation$ we have
\begin{align*}
|\estimatorLabelProbabilityLepski-\labelProbability[\testTime]|&\leq \left| \frac{\estimatorMarginalDistributionLepski(f)-  \estimatorClassConditionalSecondSample{0}(f)}{\estimatorClassConditionalSecondSample{1}(f)-\estimatorClassConditionalSecondSample{0}(f)}-\frac{\marginalDistribution[\testTime](f)-  \nullDistribution(f)}{\positiveDistribution(f)-\nullDistribution(f)}\right|\\
& \leq \frac{\left|\estimatorMarginalDistributionLepski(f)-\marginalDistribution[\testTime](f)\right|+3\max_{y \in \{0,1\}} \left| \estimatorClassConditionalSecondSample{y}(f)-\classConditionalDistribution{y}(f) \right| }{|\positiveDistribution(f)-\nullDistribution(f)|}.
\end{align*}
Since $\Prob\{\eventLabelProbEstimation|\nullSample,\positiveSample\} \leq \delta/3$ this completes the proof.
\end{proof}

\begin{lemma}\label{lemma:boundingFunctionExpetationsUsingSecondHalvesOfLabelledData} Given any function $f:\X^{\numNull+\numPositive+1} \rightarrow [0,1]$ we have
\begin{align*}
\Prob\left\lbrace \max_{y \in \{0,1\}} 2\left( \estimatorClassConditionalSecondSample{y}(f)-\classConditionalDistribution{y}(f) \right)^2 \leq \epsilonByNDeltaBase{\nMin}{\delta/12} \right\rbrace \leq \frac{\delta}{3}.
\end{align*}
\end{lemma}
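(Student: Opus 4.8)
The plan is to condition on the first halves of the two labelled samples and then apply a Hoeffding-type concentration bound to the second-half averages. Recall from Section~\ref{sec:estLabelProbability} that, once we fix the observations $\nullCovariate_0,\ldots,\nullCovariate_{\numNull-1}$ and $\positiveCovariate_0,\ldots,\positiveCovariate_{\numPositive-1}$, the function $f$ becomes a fixed measurable map from $\metricSpace$ to $[0,1]$, and for each $y\in\{0,1\}$ the estimator $\estimatorClassConditionalSecondSample{y}(f)=\numClassConditional{y}^{-1}\sum_{i=\numClassConditional{y}}^{2\numClassConditional{y}-1}f(X^y_i)$ is an average of the $\numClassConditional{y}$ independent $[0,1]$-valued random variables $f(X^y_{\numClassConditional{y}}),\ldots,f(X^y_{2\numClassConditional{y}-1})$, whose common conditional mean is exactly $\classConditionalDistribution{y}(f)=\int f\,d\classConditionalDistribution{y}$. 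Here we use Assumption~\ref{assumption:labelShift}: the second-half observations $X^y_{\numClassConditional{y}},\ldots,X^y_{2\numClassConditional{y}-1}$ are i.i.d.\ from $\classConditionalDistribution{y}$ and are independent of the first halves on which $f$ depends.

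I would then invoke Hoeffding's inequality (equivalently Azuma's inequality \cite{azuma1967weighted} applied conditionally, exactly as in the proof of Lemma~\ref{lemma:weightFunctionLowVariance}): for each $y\in\{0,1\}$ and $t>0$,
\[
\Prob\Bigl\{\bigl|\estimatorClassConditionalSecondSample{y}(f)-\classConditionalDistribution{y}(f)\bigr|>t\ \Big|\ \nullSample,\positiveSample\Bigr\}\le 2\exp\bigl(-2\,\numClassConditional{y}\,t^{2}\bigr),
\]
and take $t=\bigl(\epsilonByNDeltaBase{\nMin}{\delta/12}/2\bigr)^{1/2}$. Since $\numClassConditional{y}\ge\nMin$ and, for $\delta\in(0,1)$, one has $\epsilonByNDeltaBase{\nMin}{\delta/12}=\logBar(12/\delta)/\nMin=\log(12/\delta)/\nMin$ (because $12/\delta>e$), the right-hand side is at most $2\exp\bigl(-\nMin\,\epsilonByNDeltaBase{\nMin}{\delta/12}\bigr)=2\exp(-\log(12/\delta))=\delta/6$. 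Equivalently, conditionally on $\nullSample,\positiveSample$, the event $\bigl\{2(\estimatorClassConditionalSecondSample{y}(f)-\classConditionalDistribution{y}(f))^{2}>\epsilonByNDeltaBase{\nMin}{\delta/12}\bigr\}$ has conditional probability at most $\delta/6$ for each fixed $y$.

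A union bound over $y\in\{0,1\}$ then bounds the conditional probability of $\bigl\{\max_{y\in\{0,1\}}2(\estimatorClassConditionalSecondSample{y}(f)-\classConditionalDistribution{y}(f))^{2}>\epsilonByNDeltaBase{\nMin}{\delta/12}\bigr\}$ by $\delta/3$; since this bound is uniform over the values of $\nullSample,\positiveSample$, the tower property upgrades it to the unconditional bound, which is the assertion of the lemma, with the inequality inside the braces read as a strict inequality ``$>$'' (consistent with the sole use of the lemma, namely to certify that $\estimatorClassConditionalSecondSample{0}(f)$ and $\estimatorClassConditionalSecondSample{1}(f)$ lie close to $\classConditionalDistribution{0}(f)$ and $\classConditionalDistribution{1}(f)$ with high probability). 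No step is delicate; the only things to watch are that conditioning on the first halves is precisely what turns $\estimatorClassConditionalSecondSample{y}(f)$ into a genuine i.i.d.\ average, and that the constant $12$ is calibrated so that the factor-$2$ losses from Hoeffding's two-sided bound and from the union bound are both absorbed into $\delta/3$.
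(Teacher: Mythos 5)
Your proposal is correct and matches the paper's argument, which likewise applies Hoeffding's inequality conditionally on the first-half samples (four one-sided applications, i.e.\ two-sided for each $y\in\{0,1\}$) together with a union bound to get the $\delta/3$ level. You also rightly read the inequality inside the event as ``$>$'' rather than the ``$\leq$'' appearing in the statement, which is evidently a typo given how the lemma is used.
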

\begin{proof} The bound follows from four applications of Hoeffding's inequality \cite[Theorem 2.8]{boucheron2013concentration}, applied conditionally on the the values of $\nullCovariate_0,\ldots,\nullCovariate_{\numNull-1}$, $\positiveCovariate_0,\ldots,\positiveCovariate_{\numPositive-1}$, combined with a union bound.
\end{proof}


\section{Proof of Theorem \ref{thm:mainClassificationSingleTimeStep}}\label{sec:proofOfErrorBoundsOneStep}

Before completing the proof of Theorem \ref{thm:mainClassificationSingleTimeStep} we first prove Lemma \ref{lemma:classificationErrorSimpleFunction}.

\begin{proof}[Proof of Lemma \ref{lemma:classificationErrorSimpleFunction}] Conditioning on the data $\nullSample$, $\positiveSample$, $\unlabelledSample$ we have
\begin{align*}
\testError_{\testTime}(\dataDependentClassifier) &= \Prob\bigl\{\dataDependentClassifier(\unlabelledCovariate_{\testTime}) \neq \response_{\testTime}~|~\nullSample,\positiveSample,\unlabelledSample \bigr\}\\
&= \labelProbability[\testTime]\int (1-\dataDependentClassifier) d\positiveDistribution+(1-\labelProbability[\testTime])\int \dataDependentClassifier d\nullDistribution\\
& = \labelProbability[\testTime]+\int \dataDependentClassifier\,d\left\lbrace (1-\labelProbability[\testTime]) \nullDistribution-\labelProbability[\testTime] \positiveDistribution\right\rbrace\\
& = \labelProbability[\testTime]+2\int \dataDependentClassifier\,\left\lbrace (1-\labelProbability[\testTime]) (1-\regressionFunction)-\labelProbability[\testTime] \regressionFunction\right\rbrace\,d\classConditionalDistribution{1/2}\\
& = \labelProbability[\testTime]+2\int \dataDependentClassifier\, (1-\labelProbability[\testTime]-\regressionFunction)\,d\classConditionalDistribution{1/2}.
\end{align*}
Thus, with $\bayesClassifier[\testTime](x) = \one\left\lbrace \regressionFunction(x) +\labelProbability[\testTime]>1\right\rbrace$ we have
\begin{align*}
\testError_{\testTime}(\dataDependentClassifier)-\testError_{\testTime}(\bayesClassifier[\testTime])&=2 \int ( \dataDependentClassifier-\bayesClassifier[\testTime])(1-\labelProbability[\testTime]-\regressionFunction)d\classConditionalDistribution{1/2} =2 \int_{\{ \dataDependentClassifier\neq \bayesClassifier[\testTime]\}}|1-\labelProbability[\testTime]-\regressionFunction|\,d\classConditionalDistribution{1/2}.
\end{align*}

\end{proof}

We first give a sufficient condition for a lower bound on $(\positiveDistribution-\nullDistribution)(\hat{f}_\delta)$.

\begin{lemma}\label{lemma:keyTechnicalLemmaForProofOfMainClassificationSingleTimeStepFirstPartTotalVariation}
Suppose that Assumption \ref{assumption:labelShift} holds and take $\Delta_a,\, \Delta_b \in (0,\infty)$. Let $\eventForCentralTechnicalLemma$ denote the event
\begin{align*}
\eventForCentralTechnicalLemma^{\flat}:=&\left\lbrace\classConditionalDistribution{1/2}\left(\left\lbrace x \in \X : |\hat{\regressionFunction}(x)-\regressionFunction(x)|>\Delta_a \right\rbrace\right) \leq \Delta_b \right\rbrace.
\end{align*}
Then, on the event $\eventForCentralTechnicalLemma^\flat$ we have 
\begin{align*}
(\positiveDistribution-\nullDistribution)(\hat{f})\geq \totalVariation(\nullDistribution,\positiveDistribution)-4\Delta_a-2\Delta_b.
\end{align*}
\end{lemma}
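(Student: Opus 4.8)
The plan is to compare $(\positiveDistribution-\nullDistribution)(\hat{f})$ with its maximal possible value $\totalVariation(\nullDistribution,\positiveDistribution)=(\positiveDistribution-\nullDistribution)(f_{\regressionFunction})$, where $f_{\regressionFunction}(x):=\one\{2\regressionFunction(x)\geq 1\}$ and $\hat{f}$ is the plug-in analogue obtained from $\hat{\regressionFunction}$, i.e.\ $\hat{f}(x)=\one\{2\hat{\regressionFunction}(x)\geq 1\}$. Two facts recorded in Section~\ref{sec:estLabelProbability} are all that is needed: the identity $(\positiveDistribution-\nullDistribution)(f)=2\int f\,(2\regressionFunction-1)\,d\classConditionalDistribution{1/2}$, valid for every bounded measurable $f$ directly from the Radon--Nikodym definition of $\regressionFunction$ (so no absolute continuity between $\nullDistribution$ and $\positiveDistribution$ is required), and the identity $(\positiveDistribution-\nullDistribution)(f_{\regressionFunction})=\totalVariation(\nullDistribution,\positiveDistribution)$. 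The argument is entirely deterministic: fix any realisation of the relevant labelled data lying in $\eventForCentralTechnicalLemma^{\flat}$ and proceed pointwise.

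Subtracting the two identities gives
\[
(\positiveDistribution-\nullDistribution)(\hat{f})=\totalVariation(\nullDistribution,\positiveDistribution)+2\int (\hat{f}-f_{\regressionFunction})\,(2\regressionFunction-1)\,d\classConditionalDistribution{1/2},
\]
so it suffices to bound the correction term below by $-(4\Delta_a+2\Delta_b)$. Since $|\hat{f}-f_{\regressionFunction}|=\one\{\hat{f}\neq f_{\regressionFunction}\}$ and $|2\regressionFunction-1|\leq 1$, its absolute value is at most $2\int_{\{\hat{f}\neq f_{\regressionFunction}\}}|2\regressionFunction-1|\,d\classConditionalDistribution{1/2}$. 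I then split the disagreement region along the bad set $N:=\{x\in\X:|\hat{\regressionFunction}(x)-\regressionFunction(x)|>\Delta_a\}$, which satisfies $\classConditionalDistribution{1/2}(N)\leq\Delta_b$ on $\eventForCentralTechnicalLemma^{\flat}$: over $N$ the crude bound $|2\regressionFunction-1|\leq 1$ contributes at most $2\Delta_b$, while over $N^{c}$ the standard plug-in observation applies --- if $\hat{f}(x)\neq f_{\regressionFunction}(x)$ then $\hat{\regressionFunction}(x)$ and $\regressionFunction(x)$ lie on opposite sides of the threshold $1/2$, whence $|\regressionFunction(x)-1/2|\leq|\hat{\regressionFunction}(x)-\regressionFunction(x)|\leq\Delta_a$, i.e.\ $|2\regressionFunction(x)-1|\leq 2\Delta_a$, and since $\classConditionalDistribution{1/2}$ is a probability measure this part contributes at most $4\Delta_a$. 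Summing the two pieces bounds the correction term by $4\Delta_a+2\Delta_b$ in modulus, which is the assertion (note the resulting bound may be vacuous if $\totalVariation(\nullDistribution,\positiveDistribution)<4\Delta_a+2\Delta_b$, but it remains valid).

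The only genuine content is the plug-in observation controlling $|2\regressionFunction-1|$ on the part of the disagreement set inside $N^{c}$; everything else is bookkeeping with the two identities above and the measure bound defining $\eventForCentralTechnicalLemma^{\flat}$. I do not expect any real obstacle, since the statement is a short deterministic estimate; the only point demanding a little care is tracking the factor $2$ in $2\regressionFunction-1$ so that the final constants come out as $4\Delta_a$ and $2\Delta_b$ rather than half of those.
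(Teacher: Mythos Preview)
Your proposal is correct and follows essentially the same route as the paper: both write $(\positiveDistribution-\nullDistribution)(\hat{f})-\totalVariation(\nullDistribution,\positiveDistribution)$ as $2\int(\hat{f}-f_{\regressionFunction})(2\regressionFunction-1)\,d\classConditionalDistribution{1/2}$, split the disagreement region $\{\hat{f}\neq f_{\regressionFunction}\}$ along the good set $\{|\hat{\regressionFunction}-\regressionFunction|\leq\Delta_a\}$, use the plug-in observation $|\regressionFunction-1/2|\leq\Delta_a$ on the good part, and use $|2\regressionFunction-1|\leq 1$ together with $\classConditionalDistribution{1/2}(N)\leq\Delta_b$ on the bad part. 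The constants you track match the paper's exactly.
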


\begin{proof}[Proof of Lemma \ref{lemma:keyTechnicalLemmaForProofOfMainClassificationSingleTimeStepFirstPartTotalVariation}] First define $f_\regressionFunction:\metricSpace\rightarrow \{0,1\}$ by $f_\regressionFunction(x):=\one\{\regressionFunction(x)\geq 1/2\}$ for $x \in \metricSpace$. Observe that 
\begin{align*}
\totalVariation(\nullDistribution,\positiveDistribution)&= \sup_{A \subseteq \metricSpace} \left| \positiveDistribution(A)-\nullDistribution(A)\right|\\
& = \sup_{A \subseteq \metricSpace} \left| 2\int_A \regressionFunction d\classConditionalDistribution{1/2}-2\int_A(1-\regressionFunction)d\classConditionalDistribution{1/2}\right|\\
& = 2\sup_{A \subseteq \metricSpace} \left| \int_A (2\regressionFunction -1)d\classConditionalDistribution{1/2}\right|\\
&= 2  \int_{\regressionFunction\geq \frac{1}{2}} (2\regressionFunction -1)d\classConditionalDistribution{1/2} = 2\int f_{\regressionFunction} (2\regressionFunction -1)d\classConditionalDistribution{1/2}.
\end{align*}
 Now let $\mathcal{G}_{1/2}:=\left\lbrace x \in \X : |\hat{\regressionFunction}(x)-\regressionFunction(x)|\leq \Delta_a \right\rbrace$. On the event $\eventForCentralTechnicalLemma^{\flat}$ we have
\begin{align*}
\frac{1}{2}\left\lbrace\totalVariation(\nullDistribution,\positiveDistribution)-(\positiveDistribution-\nullDistribution)(\hat{f})\right\rbrace
& = \frac{(\positiveDistribution-\nullDistribution)(f_{\regressionFunction}-\hat{f})}{2} \\
& =  \int (f_{\regressionFunction}-\hat{f}) (2\regressionFunction-1)d\classConditionalDistribution{1/2}\\
& =  \int_{\left\lbrace \left(\hat{\regressionFunction} - \frac{1}{2}\right) \left(\regressionFunction - \frac{1}{2}\right)<0\right\rbrace}  |2\regressionFunction-1|d\classConditionalDistribution{1/2}\\
& \leq  \int_{\mathcal{G}_{1/2} \cap \left\lbrace \left(\hat{\regressionFunction} - \frac{1}{2}\right) \left(\regressionFunction - \frac{1}{2}\right)<0\right\rbrace}  |2\regressionFunction-1|d\classConditionalDistribution{1/2}+\classConditionalDistribution{1/2}\left(\metricSpace\setminus \mathcal{G}_{1/2}\right)\\
& \leq  \int_{\left\lbrace \left|\regressionFunction - \frac{1}{2}\right|<\Delta_a \right\rbrace}  |2\regressionFunction-1|d\classConditionalDistribution{1/2}+\Delta_b\\
& \leq  2\Delta_a \classConditionalDistribution{1/2}\left(\left\lbrace x \in \metricSpace \,:\, \left|\regressionFunction(x) - \frac{1}{2}\right|<\Delta_a \right\rbrace\right)+\Delta_b\leq 2\Delta_a+\Delta_b.
\end{align*}
Doubling both sides completes the proof of the lemma.    
\end{proof}

Now we return to the proof of Proposition \ref{prop:keyTechnicalLemmaForProofOfMainClassificationSingleTimeStep}.

\begin{proof}[Proof of Proposition \ref{prop:keyTechnicalLemmaForProofOfMainClassificationSingleTimeStep}] Note that $\eventForCentralTechnicalLemma$ is measurable with respect to $\nullSample$, $\positiveSample$. Hence, by Corollary \ref{corr:labelProbabilityEstimatorBound} it suffices to show that whenever both
\begin{align}\label{eq:condtionFromCorrLabelProbabilityEstimatorBound}
\frac{|\estimatorLabelProbabilityLepskiUnspecified{\testTime}{\tilde{\delta}}{\hat{f}}-\labelProbability[\testTime]| |(\positiveDistribution-\nullDistribution)(\hat{f})|}{\estimatorMarginalDistributionErrorBound[\hat{f},\tilde{\delta}]+3\underset{y \in \{0,1\}}{\max} | (\estimatorClassConditionalSecondSample{y}-\classConditionalDistribution{y})(\hat{f}) | }\leq 1.
\end{align}
and $\eventForCentralTechnicalLemma$ hold we have
\begin{align}\label{eq:conclusionForLemmaKeyTechnicalLemmaForProofOfMainClassificationSingleTimeStep}
\testError_{\testTime}(\tilde{\dataDependentClassifier}_{\testTime,\tilde{\delta}}^{\hat{\regressionFunction}})-\testError_\testTime(\bayesClassifier[\testTime]) \leq 2\left\lbrace \tsybakovMarginFunction\left(\constantForMainClassificationSingleTimeStep \Delta_{\testTime}(\tilde{\delta})\right) +\Delta_b \right\rbrace.
\end{align}
Let's now assume both $\eventForCentralTechnicalLemma$ and \eqref{eq:condtionFromCorrLabelProbabilityEstimatorBound}. Note that since $\eventForCentralTechnicalLemma\subseteq \eventForCentralTechnicalLemma^{\flat}$ it follows from Lemma \ref{lemma:keyTechnicalLemmaForProofOfMainClassificationSingleTimeStepFirstPartTotalVariation} that $(\positiveDistribution-\nullDistribution)(\hat{f})\geq \totalVariation(\nullDistribution,\positiveDistribution)/2$. Note also that $\estimatorMarginalDistributionErrorBound[\hat{f},\tilde{\delta}] \leq 114 \maximalHolderExponentWeightFunction^2(\maximalHolderExponentWeightFunction+1)^2 \highProbBoundSingleTimeStepDeltaUnlabelledData[\tilde{\delta}] \totalVariation(\nullDistribution,\positiveDistribution)$. Hence, it follows from \eqref{eq:condtionFromCorrLabelProbabilityEstimatorBound} and $\eventForCentralTechnicalLemma$ that
\begin{align*}
|\estimatorLabelProbabilityLepskiUnspecified{\testTime}{\tilde{\delta}}{\hat{f}}-\labelProbability[\testTime]|& \leq \frac{\estimatorMarginalDistributionErrorBound[\hat{f},\tilde{\delta}]+3\underset{y \in \{0,1\}}{\max} | (\estimatorClassConditionalSecondSample{y}-\classConditionalDistribution{y})(\hat{f}) | }{ |(\positiveDistribution-\nullDistribution)(\hat{f})|}\\
& \leq \frac{114 \maximalHolderExponentWeightFunction^2(\maximalHolderExponentWeightFunction+1)^2 \highProbBoundSingleTimeStepDeltaUnlabelledData[\tilde{\delta}] \totalVariation(\nullDistribution,\positiveDistribution)+3\Delta_c
}{ |(\positiveDistribution-\nullDistribution)(\hat{f})|}\\
& \leq \frac{228 \maximalHolderExponentWeightFunction^2(\maximalHolderExponentWeightFunction+1)^2 \highProbBoundSingleTimeStepDeltaUnlabelledData[\tilde{\delta}] \totalVariation(\nullDistribution,\positiveDistribution)+6\Delta_c
}{\totalVariation(\nullDistribution,\positiveDistribution)}\\
& \leq 4 \left(57\maximalHolderExponentWeightFunction^2(\maximalHolderExponentWeightFunction+1)^2+3\right) \Delta_{\testTime}(\tilde{\delta}) =  \left( \constantForMainClassificationSingleTimeStep -16\right) \Delta_{\testTime}(\tilde{\delta}).
\end{align*}
Moreover, since $\Delta_a \leq 16 \Delta_{\testTime}(\tilde{\delta})$, this implies that for all $x \in \mathcal{G}:=\left\lbrace x \in \X : |\hat{\regressionFunction}(x)-\regressionFunction(x)|>\Delta_a \right\rbrace$ we have
\begin{align*}
\left|\left(\hat{\regressionFunction}(x)+\estimatorLabelProbabilityLepskiUnspecified{\testTime}{\tilde{\delta}}{\hat{f}}\right) - \left( \regressionFunction(x)+\labelProbability[\testTime]\right) \right| \leq \constantForMainClassificationSingleTimeStep \Delta_{\testTime}(\tilde{\delta}).
\end{align*}
In particular, for $x \in \mathcal{G}$ with $|1-\labelProbability[\testTime]-\regressionFunction| > \constantForMainClassificationSingleTimeStep \Delta_{\testTime}(\tilde{\delta})$ we have $\tilde{\dataDependentClassifier}_{\testTime,\tilde{\delta}}^{\hat{\regressionFunction}}(x)=\bayesClassifier[\testTime](x)$. The event $\eventForCentralTechnicalLemma$ also entails that $\classConditionalDistribution{1/2}(\metricSpace\setminus \mathcal{G}) \leq \Delta_b$. Thus, by Lemma \ref{lemma:classificationErrorSimpleFunction} and Assumption \ref{assumption:tsybakovMarginAssumption}  we have
\begin{align*}
\testError_{\testTime}(\tilde{\dataDependentClassifier}_{\testTime,\tilde{\delta}}^{\hat{\regressionFunction}})-\testError_\testTime(\bayesClassifier[\testTime])& =2 \int_{\{ \tilde{\dataDependentClassifier}_{\testTime,\tilde{\delta}}^{\hat{\regressionFunction}}\neq \bayesClassifier[\testTime]\}}|1-\labelProbability[\testTime]-\regressionFunction|\,d\classConditionalDistribution{1/2}\\
&\leq 2\classConditionalDistribution{1/2}\left(\metricSpace\setminus \mathcal{G}\right)+2\tsybakovMarginFunction\left( \constantForMainClassificationSingleTimeStep \Delta_{\testTime}(\tilde{\delta})\right)\leq 2 \left\lbrace \Delta_b+\tsybakovMarginFunction\left( \constantForMainClassificationSingleTimeStep \Delta_{\testTime}(\tilde{\delta})\right)\right\rbrace,
\end{align*}
provided both \eqref{eq:condtionFromCorrLabelProbabilityEstimatorBound} and $\eventForCentralTechnicalLemma$ hold. This establishes \eqref{eq:conclusionForLemmaKeyTechnicalLemmaForProofOfMainClassificationSingleTimeStep} and completes the proof of the lemma.
\end{proof}

Given $\delta \in (0,1)$, we let $\estimatedRegressionFunction$ denote the estimate for $\regressionFunction$ presented in Section \ref{sec:estRegressionFunction} and let $\hat{f}_\delta \in \setOfDataDependentClassifiers$ be the data-dependent classifier defined by $\hat{f}_\delta(x):=\one\{2\estimatedRegressionFunction(x)\geq 1\}$ for $x \in \metricSpace$.

\begin{lemma}\label{lemma:LabelledDataLemmaForApplyingKeyTechnicalLemmaForProofOfMainClassificationSingleTimeStep} Suppose that Assumption \ref{assumption:labelShift} holds and  Assumption \ref{assumption:smoothRegressionFunctions} holds with $\smoothnessExponentRegressionFunction \in (0,1]$, $\smoothnessConstantRegressionFunction \in [0,\infty)$. Suppose also that $\delta \in (0,1)$ and $\nMin$ satisfy $ \smoothnessConstantRegressionFunction\epsilonByNDeltaIteratedLogarithm{\nMin}{\delta}^\smoothnessExponentRegressionFunction \leq \left\lbrace  \totalVariation(\nullDistribution,\positiveDistribution)/168\right\rbrace^{2\smoothnessExponentRegressionFunction+1}$ and  $\sqrt{\epsilonByNDeltaIteratedLogarithm{\nMin}{\delta}} \leq  \totalVariation(\nullDistribution,\positiveDistribution)/168$ and $\delta \in (0,\totalVariation(\nullDistribution,\positiveDistribution)/(3\cdot 2^7)]$. Let $\Delta_{\testTime}(\tilde{\delta})$ and  $\eventForCentralTechnicalLemma$ be as in Proposition \ref{prop:keyTechnicalLemmaForProofOfMainClassificationSingleTimeStep}, with
\begin{align*}
\Delta_a& =14\left(\left\lbrace 
\smoothnessConstantRegressionFunction^{\frac{1}{\smoothnessExponentRegressionFunction}}\epsilonByNDeltaIteratedLogarithm{\nMin}{\delta} \right\rbrace^{\frac{\smoothnessExponentRegressionFunction}{2\smoothnessExponentRegressionFunction+1}}\vee\sqrt{\epsilonByNDeltaIteratedLogarithm{\nMin}{\delta}}\right)\\
\Delta_b&=2^5\delta\\
\Delta_c &= \sqrt{\epsilonByNDeltaIteratedLogarithm{\nMin}{\delta}}.
\end{align*}
Then, we have
\begin{align*}
&2{\Delta}_a+{\Delta}_b \leq \totalVariation(\nullDistribution,\positiveDistribution)/4\\
& \left(\frac{\Delta_a}{16}\right) \vee \left(\frac{\Delta_c}{2\totalVariation(\nullDistribution,\positiveDistribution)}\right) \leq \highProbBoundDeltaLabelledData\\
&\Prob\left\lbrace \eventForCentralTechnicalLemma({\Delta}_a,{\Delta}_b,{\Delta}_c)\right\rbrace \geq 1- \frac{2\delta}{3}.
\end{align*}
\end{lemma}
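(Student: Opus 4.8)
The plan is to treat the three conclusions in the order stated, since the probabilistic bound recycles the elementary estimates used for the two deterministic inequalities. Throughout I would lean on the identity
\[
\left\lbrace \smoothnessConstantRegressionFunction^{\frac{1}{\smoothnessExponentRegressionFunction}}\,\epsilonByNDeltaIteratedLogarithm{\nMin}{\delta} \right\rbrace^{\frac{\smoothnessExponentRegressionFunction}{2\smoothnessExponentRegressionFunction+1}}=\left( \smoothnessConstantRegressionFunction\,\epsilonByNDeltaIteratedLogarithm{\nMin}{\delta}^{\smoothnessExponentRegressionFunction} \right)^{\frac{1}{2\smoothnessExponentRegressionFunction+1}},
\]
the trivial bound $\totalVariation(\nullDistribution,\positiveDistribution)\leq 1$, and the inequality $\epsilonByNDeltaIteratedLogarithm{\nMin}{\delta}\leq \epsilonByNDeltaLogarithm{\nMin}{\delta}$, which follows from the monotonicity of $\logBar$ and $\logBar(\nMin)\leq \nMin$.

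For the first conclusion, the displayed identity shows that the hypothesis $\smoothnessConstantRegressionFunction\epsilonByNDeltaIteratedLogarithm{\nMin}{\delta}^{\smoothnessExponentRegressionFunction}\leq\{\totalVariation(\nullDistribution,\positiveDistribution)/168\}^{2\smoothnessExponentRegressionFunction+1}$ forces the first branch of the maximum defining $\Delta_a$ to be at most $\totalVariation(\nullDistribution,\positiveDistribution)/168$, while $\sqrt{\epsilonByNDeltaIteratedLogarithm{\nMin}{\delta}}\leq \totalVariation(\nullDistribution,\positiveDistribution)/168$ bounds the second; hence $\Delta_a\leq 14\cdot \totalVariation(\nullDistribution,\positiveDistribution)/168=\totalVariation(\nullDistribution,\positiveDistribution)/12$. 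Since $\Delta_b=2^5\delta\leq 2^5\cdot \totalVariation(\nullDistribution,\positiveDistribution)/(3\cdot 2^7)=\totalVariation(\nullDistribution,\positiveDistribution)/12$, adding gives $2\Delta_a+\Delta_b\leq \totalVariation(\nullDistribution,\positiveDistribution)/6+\totalVariation(\nullDistribution,\positiveDistribution)/12=\totalVariation(\nullDistribution,\positiveDistribution)/4$. For the second conclusion I would compare branch by branch with $\highProbBoundDeltaLabelledData$: the term $(\smoothnessConstantRegressionFunction^{1/\smoothnessExponentRegressionFunction}\epsilonByNDeltaIteratedLogarithm{\nMin}{\delta})^{\smoothnessExponentRegressionFunction/(2\smoothnessExponentRegressionFunction+1)}$ is literally one of the two terms of $\highProbBoundDeltaLabelledData$, and $\sqrt{\epsilonByNDeltaIteratedLogarithm{\nMin}{\delta}}\leq \sqrt{\epsilonByNDeltaLogarithm{\nMin}{\delta}}\leq \sqrt{\epsilonByNDeltaLogarithm{\nMin}{\delta}}/\totalVariation(\nullDistribution,\positiveDistribution)$, the last step using $\totalVariation(\nullDistribution,\positiveDistribution)\leq 1$; as $14/16<1$ this yields $\Delta_a/16\leq \highProbBoundDeltaLabelledData$, and the same chain applied to $\Delta_c=\sqrt{\epsilonByNDeltaIteratedLogarithm{\nMin}{\delta}}$ gives $\Delta_c/(2\totalVariation(\nullDistribution,\positiveDistribution))\leq \sqrt{\epsilonByNDeltaLogarithm{\nMin}{\delta}}/\totalVariation(\nullDistribution,\positiveDistribution)\leq \highProbBoundDeltaLabelledData$.

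For the third conclusion, recall that with the generic estimate of Proposition \ref{prop:keyTechnicalLemmaForProofOfMainClassificationSingleTimeStep} instantiated as $\estimatedRegressionFunction$ and the associated $\hat{f}_\delta$, the event $\eventForCentralTechnicalLemma(\Delta_a,\Delta_b,\Delta_c)$ is the intersection of $\{\classConditionalDistribution{1/2}(\{x\in\metricSpace:|\estimatedRegressionFunction(x)-\regressionFunction(x)|>\Delta_a\})\leq \Delta_b\}$ with $\{\max_{y\in\{0,1\}}|(\estimatorClassConditionalSecondSample{y}-\classConditionalDistribution{y})(\hat{f}_\delta)|\leq \Delta_c\}$. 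By the definition of $\Delta_a$ the set in the first event is exactly $\metricSpace\setminus\goodSetRegressionFunctionEstimatedWell$; moreover $1/\nMin\leq \epsilonByNDeltaIteratedLogarithm{\nMin}{\delta}\leq\{\totalVariation(\nullDistribution,\positiveDistribution)/168\}^2\leq 168^{-2}$ forces $\nMin\geq 2^4$, so Proposition \ref{prop:regressionFunctionHighProbBound} applies with $\Delta_b=2^5\delta$ and shows that the first event fails with probability at most $\delta/3$. For the second event I would invoke Lemma \ref{lemma:boundingFunctionExpetationsUsingSecondHalvesOfLabelledData} with $f=\hat{f}_\delta$, legitimate because $\hat{f}_\delta$ is measurable with respect to $\nullCovariate_0,\ldots,\nullCovariate_{\numNull-1},\positiveCovariate_0,\ldots,\positiveCovariate_{\numPositive-1}$, so that $\estimatorClassConditionalSecondSample{y}(\hat{f}_\delta)$ is still an average of independent $[0,1]$-valued variables given that data; this gives, with probability at least $1-\delta/3$, the bound $2\max_{y\in\{0,1\}}\bigl((\estimatorClassConditionalSecondSample{y}-\classConditionalDistribution{y})(\hat{f}_\delta)\bigr)^2\leq \epsilonByNDeltaBase{\nMin}{\delta/12}$. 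It then remains to verify $\epsilonByNDeltaBase{\nMin}{\delta/12}/2\leq \epsilonByNDeltaIteratedLogarithm{\nMin}{\delta}=\Delta_c^2$, after which a union bound yields $\Prob\{\eventForCentralTechnicalLemma(\Delta_a,\Delta_b,\Delta_c)\}\geq 1-2\delta/3$.

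The only step that is not pure bookkeeping is this last comparison of logarithmic factors, namely $\logBar(12/\delta)\leq 2\logBar(\logBar(\nMin)/\delta)$. Here I would use that $\delta\leq \totalVariation(\nullDistribution,\positiveDistribution)/(3\cdot 2^7)\leq 1/384$, so $\log(1/\delta)\geq \log 384\geq \log 12$, together with the lower bound on $\nMin$ (which forces $\logBar(\nMin)\geq 1$ and $\logBar(\nMin)/\delta>e$), so that $\logBar(\logBar(\nMin)/\delta)=\log(\logBar(\nMin))+\log(1/\delta)\geq \log(1/\delta)$; combining, $2\logBar(\logBar(\nMin)/\delta)\geq 2\log(1/\delta)\geq \log 12+\log(1/\delta)=\logBar(12/\delta)$, as needed. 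Everything else reduces to substituting the definitions of $\Delta_a,\Delta_b,\Delta_c$ and of $\epsilonByNDeltaBase{\cdot}{\cdot},\epsilonByNDeltaLogarithm{\cdot}{\cdot},\epsilonByNDeltaIteratedLogarithm{\cdot}{\cdot}$ together with $\totalVariation(\nullDistribution,\positiveDistribution)\leq 1$, so I anticipate no further obstacles.
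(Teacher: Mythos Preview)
Your argument is correct and follows essentially the same route as the paper's proof: the bounds $\Delta_a\leq \totalVariation(\nullDistribution,\positiveDistribution)/12$ and $\Delta_b\leq \totalVariation(\nullDistribution,\positiveDistribution)/12$, the branch-by-branch comparison with $\highProbBoundDeltaLabelledData$, the appeal to Proposition~\ref{prop:regressionFunctionHighProbBound} for the first half of $\eventForCentralTechnicalLemma$, and Hoeffding (via Lemma~\ref{lemma:boundingFunctionExpetationsUsingSecondHalvesOfLabelledData}) for the second half. In fact you are slightly more careful than the paper, which writes only ``by Hoeffding's inequality applied four times'' and does not spell out the logarithmic comparison $\logBar(12/\delta)\leq 2\logBar(\logBar(\nMin)/\delta)$ that is needed to match the Hoeffding deviation with $\Delta_c=\sqrt{\epsilonByNDeltaIteratedLogarithm{\nMin}{\delta}}$.
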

\begin{proof} Observe that the two upper bounds on $\epsilonByNDeltaIteratedLogarithm{\nMin}{\delta}$ entail
\begin{align*}
\Delta_a& =14\left(\left\lbrace 
\smoothnessConstantRegressionFunction^{\frac{1}{\smoothnessExponentRegressionFunction}}\epsilonByNDeltaIteratedLogarithm{\nMin}{\delta} \right\rbrace^{\frac{\smoothnessExponentRegressionFunction}{2\smoothnessExponentRegressionFunction+1}}\vee\sqrt{\epsilonByNDeltaIteratedLogarithm{\nMin}{\delta}}\right)\leq  \frac{\totalVariation(\nullDistribution,\positiveDistribution)}{12}.
\end{align*}
Since $\Delta_b =2^5 \delta \leq  \totalVariation(\nullDistribution,\positiveDistribution)/12$ we deduce that $2{\Delta}_a+{\Delta}_b \leq \totalVariation(\nullDistribution,\positiveDistribution)/4$. Our definitions also entail the bound
\begin{align*}
\left(\frac{\Delta_a}{16}\right) \vee \left(\frac{\Delta_c}{2\totalVariation(\nullDistribution,\positiveDistribution)}\right)< \highProbBoundDeltaLabelledData.
\end{align*}
By Proposition \ref{prop:regressionFunctionHighProbBound} we have
\begin{align*}
\Prob\left\lbrace\classConditionalDistribution{1/2}\left(\left\lbrace x \in \X : |\hat{\regressionFunction}(x)-\regressionFunction(x)|>\Delta_a \right\rbrace\right) \leq \Delta_b \right\rbrace \geq 1- \frac{\delta}{3}.
\end{align*}
By Hoeffding's inequality applied four times we also have
 \begin{align*}
 \Prob\left\lbrace \max_{y \in \{0,1\}} | (\estimatorClassConditionalSecondSample{y}-\classConditionalDistribution{y})(\hat{f}) | \leq \Delta_c \right\rbrace\geq 1- \frac{\delta}{3}.
 \end{align*}
Hence the conclusion of Lemma \ref{lemma:LabelledDataLemmaForApplyingKeyTechnicalLemmaForProofOfMainClassificationSingleTimeStep}  follows by a union bound.
\end{proof}


Next, we complete the proof of Theorem \ref{thm:mainClassificationSingleTimeStep}.

\begin{proof}[Proof of Theorem \ref{thm:mainClassificationSingleTimeStep}] The conclusion of Theorem \ref{thm:mainClassificationSingleTimeStep} now follows from Proposition \ref{prop:keyTechnicalLemmaForProofOfMainClassificationSingleTimeStep} combined with Lemma \ref{lemma:LabelledDataLemmaForApplyingKeyTechnicalLemmaForProofOfMainClassificationSingleTimeStep} since $\Delta_{\testTime}(\tilde{\delta})\leq  \highProbBoundSingleTimeStepDeltaUnlabelledData[\tilde{\delta}] \vee \highProbBoundDeltaLabelledData$.

\end{proof}

\section{Proof of Theorems \ref{thm:highProbBoundForTemporalSmoothnessWithJumps} and Corollary \ref{corr:totalVariationLabelProbBound}}\label{sec:proofOfAverageRegretBounds}

The key result in establishing Theorem \ref{thm:highProbBoundForTemporalSmoothnessWithJumps} is Proposition \ref{prop:technicalResultForProofOfAvgRegretBounds} which requires some further notation. For each $r \in [1,\infty)$ we define 
\begin{align*}
\convexFunctionForAverageRegretTechnicalProp\left(r\right):=\left\lbrace\frac{\averagePowerTransform{r}{\frac{\tsybakovMarginExponent}{2}}\,\epsilonByNDeltaBase{r}{\frac{\delta}{\maxTimeInterval}}}{ \totalVariation(\nullDistribution,\positiveDistribution)^2}\right\rbrace^{\frac{\tsybakovMarginExponent}{2}}+\left\lbrace\frac{\holderConstantWeightFunction^{\frac{1}{\holderExponentWeightFunction}}\epsilonByNDeltaBase{r}{\frac{\delta}{\maxTimeInterval}}}{ \totalVariation(\nullDistribution,\positiveDistribution)^2}\right\rbrace^{\frac{\holderExponentWeightFunction\tsybakovMarginExponent}{2\holderExponentWeightFunction+1}}\hspace{-1mm},
\end{align*}
for $\holderExponentWeightFunction>0$ and extending continuously to $\holderExponentWeightFunction=0$ by \begin{align*}
\convexFunctionForAverageRegretTechnicalProp(r):=\biggl\{\sqrt{{\averagePowerTransform{r}{{\tsybakovMarginExponent}/{2}}\epsilonByNDeltaBase{r}{{\delta}/{\maxTimeInterval}}}}/ \totalVariation(\nullDistribution,\positiveDistribution)\biggr\}^{\tsybakovMarginExponent}+ \holderConstantWeightFunction^{\tsybakovMarginExponent}.
\end{align*}

\begin{prop}\label{prop:technicalResultForProofOfAvgRegretBounds} Suppose we are in the setting of Theorem \ref{thm:highProbBoundForTemporalSmoothnessWithJumps}. Let  $(r_j)_{j=1}^\numJumps \in \N^{\numJumps}$ be a sequence and suppose Assumption \ref{assumption:smoothlyVaryingWithJumpsLabelProbabilities} holds with  respect to a strictly increasing sequence $(s_j)_{j=0}^{\numJumps} \in \N^{\numJumps+1}$ such that $s_j-s_{j-1}=r_j$ for all $j \in [\numJumps]$. Then, with probability at least $1-\delta$ we have
\begin{align*}
\regretOverTimeInterval_{\timeInterval}(\policy) \leq\, & 2^6\delta+2\tsybakovMarginConstant\{\constantForMainClassificationSingleTimeStep\highProbBoundDeltaLabelledData \}^{\tsybakovMarginExponent}+ \frac{\numJumps}{\timeHorizon}+\frac{2  \tsybakovMarginConstant(\sqrt{3}\constantForMainClassificationSingleTimeStep)^{\tsybakovMarginExponent}}{\timeHorizon}\sum_{j \in [\numJumps]} r_j\,   \convexFunctionForAverageRegretTechnicalProp(r_j).
\end{align*}
\end{prop}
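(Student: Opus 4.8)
The plan is to rewrite the average dynamic regret as an average of \emph{excess classification errors}, control each term by the single-test-time machinery of Sections~\ref{sec:regretBoundSingleTestTime}--\ref{sec:proofOfErrorBoundsOneStep}, and then carry out a block-by-block summation whose arithmetic is exactly the defining identity of the average-power transform $\averagePowerTransform{\cdot}{\tsybakovMarginExponent/2}$. Since $\bayesClassifier[\testTimeWithinSequence]$ is Bayes-optimal at time $\testTimeWithinSequence$ (Lemma~\ref{lemma:classificationErrorSimpleFunction}), conditionally on the visible data every comparator $\comparisonClassifierForRegretDef\in\setOfDataDependentClassifiers[\testTimeWithinSequence]$ satisfies $\testError_{\testTimeWithinSequence}(\comparisonClassifierForRegretDef)\ge\testError_{\testTimeWithinSequence}(\bayesClassifier[\testTimeWithinSequence])$, so $\regretOverTimeInterval_{\timeInterval}(\policy)=\tfrac1{\timeHorizon}\sum_{\testTimeWithinSequence\in\timeInterval}\{\testError_{\testTimeWithinSequence}(\empiricalPolicySingleTimeStep)-\testError_{\testTimeWithinSequence}(\bayesClassifier[\testTimeWithinSequence])\}$. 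I would then condition on the labelled-data event $\eventForCentralTechnicalLemma=\eventForCentralTechnicalLemma(\Delta_a,\Delta_b,\Delta_c)$ produced, as in Lemma~\ref{lemma:LabelledDataLemmaForApplyingKeyTechnicalLemmaForProofOfMainClassificationSingleTimeStep}, from Proposition~\ref{prop:regressionFunctionHighProbBound} together with Hoeffding's inequality applied to $\estimatorClassConditionalSecondSample{y}(\hat{f}_\delta)$; it is measurable with respect to $\nullSample,\positiveSample$, holds with probability close to $1$, and on it $(\Delta_a/16)\vee(\Delta_c/(2\totalVariation(\nullDistribution,\positiveDistribution)))\le\highProbBoundDeltaLabelledData$ while $\Delta_b=2^5\delta$ (establishing this event at a small absolute multiple of $\delta$, which inflates $\Delta_a,\Delta_b,\Delta_c$ by only a constant, leaves room in the final union bound).

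Next comes the per-step bound. Fix $\testTimeWithinSequence\in\timeInterval$ and let $j$ be its block, $\testTimeWithinSequence\in\{s_{j-1}+1,\dots,s_j\}$, with $m:=\testTimeWithinSequence-s_{j-1}\in\{1,\dots,r_j\}$. For $m=1$ I bound the excess error trivially by $1$; there are at most $\numJumps$ such steps, and these produce the $\numJumps/\timeHorizon$ term. For $m\ge2$, restricting the block's Hölder representer $\labelProbabilityFunctionWithJumps$ to the window $\{\testTimeWithinSequence-(m-1),\dots,\testTimeWithinSequence\}$ and rescaling by an affine change of variable of slope at most $1$ in modulus places it in $\holderFunctionClass(\holderExponentWeightFunction,\holderConstantWeightFunction)$, so Assumption~\ref{assumption:smoothlyVaryingLabelProbabilities} holds at $\testTimeWithinSequence$ with window $\windowSize=m-1$ and constant $\holderConstantWeightFunction$. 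Applying Proposition~\ref{prop:keyTechnicalLemmaForProofOfMainClassificationSingleTimeStep} (with $\unlabelledDataWithinSequence$ in place of $\unlabelledSample$, $\hat{\regressionFunction}=\estimatedRegressionFunction$, $\hat f=\hat{f}_\delta$, $\tilde\delta=\delta_{\testTimeWithinSequence}=\pi^2\delta/(6\testTimeWithinSequence^2)$), on $\eventForCentralTechnicalLemma$ one obtains, with conditional probability at least $1-\delta_{\testTimeWithinSequence}/3$, that $\testError_{\testTimeWithinSequence}(\empiricalPolicySingleTimeStep)-\testError_{\testTimeWithinSequence}(\bayesClassifier[\testTimeWithinSequence])\le 2\tsybakovMarginFunction(\constantForMainClassificationSingleTimeStep\Delta_{\testTimeWithinSequence}(\delta_{\testTimeWithinSequence}))+2\Delta_b$, where $\Delta_{\testTimeWithinSequence}(\delta_{\testTimeWithinSequence})\le\highProbBoundSingleTimeStepDeltaUnlabelledData[\delta_{\testTimeWithinSequence}]\vee\highProbBoundDeltaLabelledData$ and the window of $\highProbBoundSingleTimeStepDeltaUnlabelledData[\delta_{\testTimeWithinSequence}]$ is $m-1$. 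Since $\sum_{\testTimeWithinSequence\ge1}\delta_{\testTimeWithinSequence}/3<\delta$, a union bound over $\testTimeWithinSequence\in\timeInterval$ intersected with $\eventForCentralTechnicalLemma$ gives an event of probability at least $1-\delta$ on which all these per-step bounds hold simultaneously.

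On that event I would bound $\regretOverTimeInterval_{\timeInterval}(\policy)$. Using Assumption~\ref{assumption:tsybakovMarginAssumptionSequential} (so $\tsybakovMarginFunction(\zeta)\le\tsybakovMarginConstant\zeta^{\tsybakovMarginExponent}$, and also $\tsybakovMarginFunction\le1$) together with $(a\vee b)^{\tsybakovMarginExponent}=a^{\tsybakovMarginExponent}\vee b^{\tsybakovMarginExponent}\le a^{\tsybakovMarginExponent}+b^{\tsybakovMarginExponent}$, the $\highProbBoundDeltaLabelledData$ part contributes $2\tsybakovMarginConstant\{\constantForMainClassificationSingleTimeStep\highProbBoundDeltaLabelledData\}^{\tsybakovMarginExponent}$, the $\Delta_b$ part contributes $2^6\delta$, and it remains to control $\tfrac{2\tsybakovMarginConstant\constantForMainClassificationSingleTimeStep^{\tsybakovMarginExponent}}{\timeHorizon}\sum_{\testTimeWithinSequence}\highProbBoundSingleTimeStepDeltaUnlabelledData[\delta_{\testTimeWithinSequence}]^{\tsybakovMarginExponent}$. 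Because $\testTimeWithinSequence\le\maxTimeInterval$, one has $\epsilonByNDeltaLogarithm{m-1}{\delta_{\testTimeWithinSequence}}\le3\,\epsilonByNDeltaBase{m-1}{\delta/\maxTimeInterval}$, so $\highProbBoundSingleTimeStepDeltaUnlabelledData[\delta_{\testTimeWithinSequence}]$ is at most $\sqrt3$ times the analogous quantity built from $\epsilonByNDeltaBase{m-1}{\delta/\maxTimeInterval}$; grouping the sum by blocks and letting $\windowSize=m-1$ run through $\{1,\dots,r_j-1\}$ inside block $j$, everything reduces to the elementary comparison $\sum_{\windowSize=1}^{r}\epsilonByNDeltaBase{\windowSize}{\delta/\maxTimeInterval}^{q}=\logBar(\maxTimeInterval/\delta)^{q}\sum_{\windowSize=1}^{r}\windowSize^{-q}\le r\bigl(\averagePowerTransform{r}{q}\epsilonByNDeltaBase{r}{\delta/\maxTimeInterval}\bigr)^{q}$, which is just a restatement of $\averagePowerTransform{r}{q}^{q}=r^{q-1}(1+\int_1^{r}z^{-q}\,dz)$, applied with $q=\tsybakovMarginExponent/2$ to the statistical term and $q=\holderExponentWeightFunction\tsybakovMarginExponent/(2\holderExponentWeightFunction+1)$ to the temporal-bias term. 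This yields $\sum_{\testTimeWithinSequence\in j}\highProbBoundSingleTimeStepDeltaUnlabelledData[\delta_{\testTimeWithinSequence}]^{\tsybakovMarginExponent}\le(\sqrt3)^{\tsybakovMarginExponent}r_j\,\convexFunctionForAverageRegretTechnicalProp(r_j)$, hence the third term of the proposition; the constraint $\tsybakovMarginExponent\le2+1/\holderExponentWeightFunction$ keeps the bias exponent at most $1$ (the case $\holderExponentWeightFunction=0$ being immediate, the bias term then being $\holderConstantWeightFunction$), and the residual constants depend only on $\maximalHolderExponentWeightFunction$ and $\tsybakovMarginExponent$ and are absorbed.

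The step I expect to be the main obstacle is this last one: matching the sum $\sum_{\windowSize}(A_\windowSize\vee B_\windowSize)^{\tsybakovMarginExponent}$ of the two-regime per-step bound — with $A_\windowSize$ the statistical/variance term and $B_\windowSize$ the temporal-bias term — against $r_j\,\convexFunctionForAverageRegretTechnicalProp(r_j)$ uniformly in the Hölder and margin parameters. The statistical part is handled cleanly by the $\averagePowerTransform$ identity above, but the bias part is awkward as the effective exponent $\holderExponentWeightFunction\tsybakovMarginExponent/(2\holderExponentWeightFunction+1)$ approaches $1$: there one must exploit that once $B_\windowSize$ overtakes $A_\windowSize$ it is non-increasing in $\windowSize$ and at most $\holderConstantWeightFunction$, combined with $\int_m^{r}z^{-q}\,dz\le r^{1-q}\log(r/m)$ (or a dyadic splitting of the block), to keep the constant finite. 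Finally, deducing Theorem~\ref{thm:highProbBoundForTemporalSmoothnessWithJumps} from this proposition is a concavity (Jensen) argument for $r\mapsto r\,\convexFunctionForAverageRegretTechnicalProp(r)$ under $\sum_j r_j=\timeHorizon$ with $\numJumps$ terms, which is not needed at this stage.
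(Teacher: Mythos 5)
Your overall architecture is the same as the paper's: condition on the labelled-data event from Lemma \ref{lemma:LabelledDataLemmaForApplyingKeyTechnicalLemmaForProofOfMainClassificationSingleTimeStep}, apply Proposition \ref{prop:keyTechnicalLemmaForProofOfMainClassificationSingleTimeStep} at each $\testTimeWithinSequence$ with confidence level $\delta_{\testTimeWithinSequence}$, take a union bound, pay $1$ for the first step of each block (the $\numJumps/\timeHorizon$ term), and sum block by block using the identity behind $\averagePowerTransform{r}{q}$. However, there is a genuine gap in how you instantiate Assumption \ref{assumption:smoothlyVaryingLabelProbabilities} inside a block. You rescale the block's representer $\labelProbabilityFunctionWithJumps$ to the window of length $\windowSize=m-1$ ending at $\testTimeWithinSequence$ but then only record that the rescaled function lies in $\holderFunctionClass(\holderExponentWeightFunction,\holderConstantWeightFunction)$, i.e.\ you keep the original constant $\holderConstantWeightFunction$. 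With that choice the per-step temporal-bias term is $\bigl\{\holderConstantWeightFunction^{1/\holderExponentWeightFunction}\mathrm{L}/\windowSize\bigr\}^{\holderExponentWeightFunction/(2\holderExponentWeightFunction+1)}$, and summing its $\tsybakovMarginExponent$-th power over $\windowSize=1,\dots,r_j$ produces $\sum_{\windowSize}\windowSize^{-q'}$ with $q'=\holderExponentWeightFunction\tsybakovMarginExponent/(2\holderExponentWeightFunction+1)\le 1$, whereas the target $r_j\,\convexFunctionForAverageRegretTechnicalProp(r_j)$ only allows $r_j^{1-q'}$ for the bias part (the definition of $\convexFunctionForAverageRegretTechnicalProp$ carries no $\averagePowerTransform$ factor on the bias term). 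The ratio is of order $1/(1-q')$, and at the admissible boundary $\tsybakovMarginExponent=2+1/\holderExponentWeightFunction$ (so $q'=1$) it is of order $\log r_j$; neither can be absorbed into a constant depending only on $\maximalHolderExponentWeightFunction$ and $\tsybakovMarginExponent$. You correctly flag this as "the main obstacle", but the remedies you sketch (monotonicity of the bias term, a $\log(r/m)$ integral bound, dyadic splitting) would all leave an extra logarithmic or $(1-q')^{-1}$ factor that the stated bound does not have.

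The paper closes exactly this gap by keeping the improvement you discarded: since Assumption \ref{assumption:smoothlyVaryingWithJumpsLabelProbabilities} normalises $\labelProbabilityFunctionWithJumps$ over the whole block, restricting to the window of length $\windowSize(\testTimeWithinSequence)$ and rescaling by the affine map of slope $\windowSize(\testTimeWithinSequence)/r_j\le 1$ yields a function in the H\"{o}lder class with constant $\holderConstantWeightFunction\,(\windowSize(\testTimeWithinSequence)/r_j)^{\holderExponentWeightFunction}$, not merely $\holderConstantWeightFunction$. With this window-dependent constant the bias part of $\highProbBoundSingleTimeStepDeltaWithinSequenceUnlabelledData$ becomes $\bigl\{\holderConstantWeightFunction^{1/\holderExponentWeightFunction}\mathrm{L}/r_j\bigr\}^{\holderExponentWeightFunction/(2\holderExponentWeightFunction+1)}$, which is the \emph{same} for every $\testTimeWithinSequence$ in the block, so its contribution to the block sum is trivially $r_j$ times one term, and only the variance part $\{\mathrm{L}/\windowSize\}^{1/2}$ needs the $\averagePowerTransform{r_j}{\tsybakovMarginExponent/2}$ identity. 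That is the one missing idea; the rest of your argument (the $2^6\delta$ term from $\Delta_b=2^5\delta$, the $\sqrt{3}$ inflation from replacing $\delta_{\testTimeWithinSequence}$ by $\delta/\maxTimeInterval$, and the union bound giving overall probability $1-\delta$) matches the paper's proof.
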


\begin{proof}[Proof of Proposition \ref{prop:technicalResultForProofOfAvgRegretBounds}] Let $\constantForMainClassificationSingleTimeStep$ be and $\eventForCentralTechnicalLemma$ be as in Proposition \ref{prop:keyTechnicalLemmaForProofOfMainClassificationSingleTimeStep} with $\Delta_a$, $\Delta_b$, $\Delta_c$ as in Lemma \ref{lemma:LabelledDataLemmaForApplyingKeyTechnicalLemmaForProofOfMainClassificationSingleTimeStep}. By applying Lemma \ref{lemma:LabelledDataLemmaForApplyingKeyTechnicalLemmaForProofOfMainClassificationSingleTimeStep} we have 
\begin{align*}
&\Prob\{ \eventForCentralTechnicalLemma\}\geq 1- \frac{2\delta}{3},\hspace{1cm}
2{\Delta}_a+{\Delta}_b \leq \frac{\totalVariation(\nullDistribution,\positiveDistribution)}{4},\hspace{1cm}\left(\frac{\Delta_a}{16}\right) \vee \left(\frac{\Delta_c}{2\totalVariation(\nullDistribution,\positiveDistribution)}\right)< \highProbBoundDeltaLabelledData.
\end{align*}
Let  $(s_j)_{j=0}^{\numJumps} \in \N^{\numJumps+1}$ and $(\labelProbabilityFunctionWithJumps[j])_{ j =1}^\numJumps \in \holderFunctionClass(\holderExponentWeightFunction,\holderConstantWeightFunction)^{\numJumps}$ denote the sequences introduced in Assumption \ref{assumption:smoothlyVaryingWithJumpsLabelProbabilities}. For each $\testTimeWithinSequence \in \timeInterval$, let's choose $j(\testTimeWithinSequence):= \min\{ j \in [\numJumps]\,:\,s_j \geq \testTimeWithinSequence\}$, which is well-defined since $\testTimeWithinSequence \leq \maxTimeInterval = s_{\numJumps}$, let $\windowSize(\testTimeWithinSequence):=\testTimeWithinSequence-s_{j(\testTimeWithinSequence)-1}-1$ and $r_{j(\testTimeWithinSequence)}:=s_{j(\testTimeWithinSequence)}-s_{j(\testTimeWithinSequence)-1}-1$. Let's also define a function $g_{\testTimeWithinSequence}:[0,1] \rightarrow \R$ by 
\[g_{\testTimeWithinSequence}(z):=\labelProbabilityFunctionWithJumps[j(\testTimeWithinSequence)]\left\lbrace \left(\frac{\windowSize(\testTimeWithinSequence)}{r_{j(\testTimeWithinSequence)}}\right)(1-z)\right\rbrace,\] for all $z \in [0,1]$. Note that since $\labelProbabilityFunctionWithJumps[j(\testTimeWithinSequence)] \in \holderFunctionClass(\holderExponentWeightFunction,\holderConstantWeightFunction)$ it follows that $g_{\testTimeWithinSequence} \in \holderFunctionClass\left\lbrace \holderExponentWeightFunction,\holderConstantWeightFunction(\testTimeWithinSequence)\right\rbrace$ with $\holderConstantWeightFunction(\testTimeWithinSequence):=( \windowSize(\testTimeWithinSequence)/r_{j(\testTimeWithinSequence)})^{\holderExponentWeightFunction}$. Moreover, for each $\ell \in \{\testTimeWithinSequence-\windowSize(\testTimeWithinSequence),\ldots,\testTimeWithinSequence\}\subseteq \{s_{j(\testTimeWithinSequence)-1}+1,\ldots,s_{j(\testTimeWithinSequence)}\} $ we have
\begin{align*}
\labelProbability&=\labelProbabilityFunctionWithJumps[j(\testTimeWithinSequence)]\bigg(\frac{\ell-s_{j(\testTimeWithinSequence)-1}-1}{s_{j(\testTimeWithinSequence)}-s_{j(\testTimeWithinSequence)-1}-1}\bigg) =g_{\testTimeWithinSequence}\left(\frac{\testTimeWithinSequence-\ell}{\windowSize(\testTimeWithinSequence)}\right).
\end{align*}
Consequently, Assumption \ref{assumption:smoothlyVaryingLabelProbabilities} holds with $\testTimeWithinSequence$ in place of $\testTime$ and $\holderConstantWeightFunction(\testTimeWithinSequence)$ in place of $\holderConstantWeightFunction$. Assumption \ref{assumption:tsybakovMarginAssumptionSequential} ensures that Assumption \ref{assumption:tsybakovMarginAssumption}  holds with $\tsybakovMarginFunction=\tsybakovPolynomialMarginFunction$. Next, let
\begin{align*}
\highProbBoundSingleTimeStepDeltaWithinSequenceUnlabelledData&:= \frac{\epsilonByNDeltaLogarithm{\windowSize(\testTimeWithinSequence) }{\delta_t}^{\frac{1}{2}} }{\totalVariation(\nullDistribution,\positiveDistribution)}\vee \left\lbrace \frac{\holderConstantWeightFunction(\testTimeWithinSequence)^{\frac{1}{\holderExponentWeightFunction}}\epsilonByNDeltaLogarithm{\windowSize(\testTimeWithinSequence)}{\delta_t}}{\totalVariation(\nullDistribution,\positiveDistribution)^2}\right\rbrace^{\frac{\holderExponentWeightFunction}{2\holderExponentWeightFunction+1}},\\
\event^{\mathrm{seq}}_\testTimeWithinSequence&:=\left\lbrace\frac{ \testError_{\testTime}(\empiricalPolicySingleTimeStep)-\testError_{\testTimeWithinSequence}(\bayesClassifier[\testTimeWithinSequence])}{2\tsybakovPolynomialMarginFunction\left(\constantForMainClassificationSingleTimeStep\left\lbrace \highProbBoundDeltaLabelledData+\highProbBoundSingleTimeStepDeltaWithinSequenceUnlabelledData\right\rbrace\right) +2^6\delta }\leq 1 \right\rbrace.
\end{align*}
By Proposition \ref{prop:keyTechnicalLemmaForProofOfMainClassificationSingleTimeStep} we have $\Prob\{\event_{\testTimeWithinSequence}^{\mathrm{seq}}|\eventForCentralTechnicalLemma\}\geq 1-\delta_t/3$ for each $t \in \N$. Since  $\Prob\{ \eventForCentralTechnicalLemma\}\geq 1- {2\delta}/{3}$ we deduce that $\Prob\{ \bigcap_{\testTimeWithinSequence \in \timeInterval} \event_{\testTimeWithinSequence}^{\mathrm{seq}}\} \geq 1-\delta$. Note also that since $\delta < \sqrt{6/\pi^2}$ we have
\begin{align*}
\mathrm{L}(t):&=\logBar\{\windowSize(t)/\delta_t\}/\totalVariation(\nullDistribution,\positiveDistribution)^2 \leq \frac{\logBar\{\pi^2\maxTimeInterval^3/(6\delta)\}}{\totalVariation(\nullDistribution,\positiveDistribution)^2}\leq \frac{3\, \logBar(\maxTimeInterval/\delta)}{\totalVariation(\nullDistribution,\positiveDistribution)^2}=:\mathrm{L}_{\timeInterval}.
\end{align*}
Hence, with for each $j \in [\numJumps]$, if $\holderExponentWeightFunction>0$ we have
\begin{align*}
\sum_{t=s_{j-1}+2}^{s_j}\highProbBoundSingleTimeStepDeltaWithinSequenceUnlabelledData^{\tsybakovMarginExponent} &=\sum_{t=s_{j-1}+2}^{s_j}\Biggl(\left\lbrace\frac{ \mathrm{L}_t}{\windowSize(t)}\right\rbrace^{\frac{1}{2}}\vee \left\lbrace \frac{\holderConstantWeightFunction^{\frac{1}{\holderExponentWeightFunction}}\mathrm{L}_t}{ r_{j(\testTimeWithinSequence)}}\right\rbrace^{\frac{\holderExponentWeightFunction}{2\holderExponentWeightFunction+1}} \Biggr)^{\tsybakovMarginExponent}\\
 &\leq  \sum_{\windowSize=1}^{r_j}\Biggl(\left\lbrace\frac{\mathrm{L}_{\timeInterval}}{\windowSize}\right\rbrace^{\frac{1}{2}}\vee \left\lbrace \frac{\holderConstantWeightFunction^{\frac{1}{\holderExponentWeightFunction}}\mathrm{L}_{\timeInterval}}{ r_{j}}\right\rbrace^{\frac{\holderExponentWeightFunction}{2\holderExponentWeightFunction+1}} \Biggr)^{\tsybakovMarginExponent}\\
 &\leq r_j\left( \left\lbrace\frac{\mathrm{L}_{\timeInterval}\, \averagePowerTransform{r_j}{\frac{\tsybakovMarginExponent}{2}}}{r_j}\right\rbrace^{\frac{\tsybakovMarginExponent}{2}}+\left\lbrace\frac{\holderConstantWeightFunction^{\frac{1}{\holderExponentWeightFunction}}\mathrm{L}_{\timeInterval}}{ r_{j}}\right\rbrace^{\frac{\holderExponentWeightFunction\tsybakovMarginExponent}{2\holderExponentWeightFunction+1}} \right) \leq \sqrt{3}^{\tsybakovMarginExponent} r_j\, \convexFunctionForAverageRegretTechnicalProp(r_j),
\end{align*}
and similarly for $\holderExponentWeightFunction=0$ by replacing each occurance of $\bigl\{\holderConstantWeightFunction^{\frac{1}{\holderExponentWeightFunction}}\mathrm{L}_{\timeInterval}/ r_{j}\bigr\}^{\frac{\holderExponentWeightFunction\tsybakovMarginExponent}{2\holderExponentWeightFunction+1}}$ with $\holderConstantWeightFunction$. 

Now, suppose the event $ \bigcap_{\testTimeWithinSequence \in \timeInterval} \event_{\testTimeWithinSequence}^{\mathrm{seq}}$ holds, so that for each $j \in [\numJumps]$ we have
\begin{align*}
\sum_{t=s_{j-1}+1}^{s_j} \bigl\{  \testError_{\testTimeWithinSequence}(\policySingleTimeStep_{\testTimeWithinSequence})-  \testError_{\testTimeWithinSequence}(\bayesClassifier[\testTimeWithinSequence])\bigr\}&\leq \sum_{t=s_{j-1}+2}^{s_j} \bigl\{  \testError_{\testTimeWithinSequence}(\policySingleTimeStep_{\testTimeWithinSequence})-  \testError_{\testTimeWithinSequence}(\bayesClassifier[\testTimeWithinSequence])\bigr\}+1\\
& \leq \sum_{t=s_{j-1}+2}^{s_j} \left\lbrace 2\,\tsybakovMarginConstant\left(\constantForMainClassificationSingleTimeStep\left\lbrace \highProbBoundDeltaLabelledData \vee\highProbBoundSingleTimeStepDeltaWithinSequenceUnlabelledData\right\rbrace\right)^{\tsybakovMarginExponent} +2^6\delta  \right\rbrace+1\\
& \leq  2r_j\tsybakovMarginConstant\constantForMainClassificationSingleTimeStep^{\tsybakovMarginExponent}\left\lbrace\highProbBoundDeltaLabelledData +3^{\frac{\tsybakovMarginExponent}{2}} \convexFunctionForAverageRegretTechnicalProp(r_j) \right\rbrace+2^6r_j\delta  +1.
\end{align*}
Consequently, on the event $ \bigcap_{\testTimeWithinSequence \in \timeInterval} \event_{\testTimeWithinSequence}^{\mathrm{seq}}$ we have
\begin{align*}
\regretOverTimeInterval_{\timeInterval}(\policy)&=\frac{1}{\timeHorizon}\sum_{\testTimeWithinSequence \in \timeInterval} \bigl\{  \testError_{\testTimeWithinSequence}(\policySingleTimeStep_{\testTimeWithinSequence})-  \testError_{\testTimeWithinSequence}(\bayesClassifier[\testTimeWithinSequence])\bigr\}\\
&=\frac{1}{\timeHorizon} \sum_{j \in [\numJumps]}\sum_{t=s_{j-1}+1}^{s_j} \bigl\{  \testError_{\testTimeWithinSequence}(\policySingleTimeStep_{\testTimeWithinSequence})-  \testError_{\testTimeWithinSequence}(\bayesClassifier[\testTimeWithinSequence])\bigr\}\\
 & \leq  2^6\delta+2\tsybakovMarginConstant\{\constantForMainClassificationSingleTimeStep\highProbBoundDeltaLabelledData \}^{\tsybakovMarginExponent}+ \frac{\numJumps}{\timeHorizon}+\frac{2  \tsybakovMarginConstant(\sqrt{3}\constantForMainClassificationSingleTimeStep)^{\tsybakovMarginExponent}}{\timeHorizon}\sum_{j \in [\numJumps]} r_j\,   \convexFunctionForAverageRegretTechnicalProp(r_j).
\end{align*}
Since $\Prob\{ \bigcap_{\testTimeWithinSequence \in \timeInterval} \event_{\testTimeWithinSequence}^{\mathrm{seq}}\} \geq 1-\delta$ this completes the proof of the proposition.
\end{proof}

\begin{proof}[Proof of Theorem \ref{thm:highProbBoundForTemporalSmoothnessWithJumps}] Let  $(s_j)_{j=0}^{\numJumps} \in \N^{\numJumps+1}$  denote the sequence introduced in Assumption \ref{assumption:smoothlyVaryingWithJumpsLabelProbabilities} and define a sequence $(r_j)_{j=1}^\numJumps \in \N^{\numJumps}$ by $r_j:=s_j-s_{j-1}$ for $j \in [\numJumps]$. Hence, by Proposition \ref{prop:technicalResultForProofOfAvgRegretBounds}, 
with probability at least $1-\delta$, we have
\begin{align}\label{eq:defForProofThmHighProbBoundForTemporalSmoothnessWithJumps}
\regretOverTimeInterval_{\timeInterval}(\policy) &\leq  2^6\delta+2\tsybakovMarginConstant\{\constantForMainClassificationSingleTimeStep\highProbBoundDeltaLabelledData \}^{\tsybakovMarginExponent}+ \frac{\numJumps}{\timeHorizon}+\frac{2  \tsybakovMarginConstant(\sqrt{3}\constantForMainClassificationSingleTimeStep)^{\tsybakovMarginExponent}}{\timeHorizon}\sum_{j \in [\numJumps]} r_j\,   \convexFunctionForAverageRegretTechnicalProp(r_j). 
\end{align}
Moreover, the function $r \mapsto r\,\convexFunctionForAverageRegretTechnicalProp(r)$ may be expressed as the integral of a non-increasing function, so is concave. Hence, by Jensen's inequality,
\begin{align*}
\sum_{j \in [\numJumps]} r_j\,   \convexFunctionForAverageRegretTechnicalProp(r_j)&\leq \left( \sum_{j \in [\numJumps]}{r_j} \right) \,   \convexFunctionForAverageRegretTechnicalProp\left(\frac{1}{\numJumps}\sum_{j=1}^{\numJumps}r_j\right) = \timeHorizon \,\convexFunctionForAverageRegretTechnicalProp\left(\frac{\timeHorizon}{\numJumps}\right) \leq \timeHorizon\,\highProbabilityUpperBoundUnlabelledForAverageRegretBound\left(\frac{\timeHorizon}{\numJumps},\holderConstantWeightFunction\right)^{\tsybakovMarginExponent},
\end{align*}
since $\timeHorizon =|\timeInterval|=s_{\numJumps}-s_0=\sum_{j \in [\numJumps]} r_j$. Note also that $\left(\averagePowerTransform{r}{q}/r\right)^q\geq 1/r$ for all $q \in (0,\infty)$ and $r \in [1,\infty)$ so that $\numJumps/\timeHorizon \leq \highProbabilityUpperBoundUnlabelledForAverageRegretBound\left({\timeHorizon}/{\numJumps},\holderConstantWeightFunction\right)^{\tsybakovMarginExponent}$. Thus, with $\constantForSmoothnessWithJumpsAverageRegretBound = 2  (\sqrt{3}\constantForMainClassificationSingleTimeStep)^{\tsybakovMarginExponent}+1$ the bound in \eqref{eq:defForProofThmHighProbBoundForTemporalSmoothnessWithJumps} implies
\begin{align*}
\regretOverTimeInterval_{\timeInterval}(\policy)\leq \tsybakovMarginConstant\,\constantForSmoothnessWithJumpsAverageRegretBound \bigl\{\highProbBoundDeltaLabelledData\vee \highProbabilityUpperBoundUnlabelledForAverageRegretBound(\timeHorizon/\numJumps,\holderConstantWeightFunction)\bigr\}^{\tsybakovMarginExponent} +\constantForSmoothnessWithJumpsAverageRegretBound\delta.
\end{align*}
\end{proof}

Next we shall show that Corollary \ref{corr:totalVariationLabelProbBound} follows from Theorem \ref{thm:highProbBoundForTemporalSmoothnessWithJumps}.

\begin{proof}[Proof of Corollary \ref{corr:totalVariationLabelProbBound}] Let $\constantForTotalVariationLabelProbabilitiesBound:=2^{\frac{\tsybakovMarginExponent}{2}}\,\constantForSmoothnessWithJumpsAverageRegretBound$, where $\constantForSmoothnessWithJumpsAverageRegretBound$ is chosen as in Theorem \ref{thm:highProbBoundForTemporalSmoothnessWithJumps} with $\maximalHolderExponentWeightFunction=0$. In addition, we define $\maxTimeInterval=\max\timeInterval$,
\begin{align*}
\ratioProofOfcorrTotalVariationLabelProbBound&:=\left\lbrace \left(\frac{\averagePowerTransform{\timeHorizon}{{\tsybakovMarginExponent}/{2}} \logBar(\maxTimeInterval/\delta)}{\{ 2^{\holderExponentWeightFunction}\totalVariationLabelProbabilities\totalVariation(\nullDistribution,\positiveDistribution)\}^2 }\right)^{\frac{1}{2\holderExponentWeightFunction+1}}\timeHorizon^{\frac{2\holderExponentWeightFunction}{2\holderExponentWeightFunction+1}}\right\rbrace \wedge \left(\frac{\timeHorizon}{2}\right),
\end{align*}
and $\holderConstantProofOfcorrTotalVariationLabelProbBound:= \totalVariationLabelProbabilities \left(2 \ratioProofOfcorrTotalVariationLabelProbBound/\timeHorizon\right)^{\holderExponentWeightFunction}$. Let's also construct a sequence $(s_j)_{j=0}^{\infty} \in \N_0^{\N_0}$ recursively as follows. We let $s_0=\min \timeInterval-1$. Then, for $j\in \N$, if $s_{j-1}<\maxTimeInterval-1$ and $\sum_{\ell=s_{j-1}+1}^{\maxTimeInterval-1}\left|\labelProbability[\ell]-\labelProbability[\ell+1]\right|^{\frac{1}{\holderExponentWeightFunction}} >\holderConstantProofOfcorrTotalVariationLabelProbBound^{\frac{1}{\holderExponentWeightFunction}}$ we choose $s_j$ to be the minimal value of $t \in \{s_{j-1}+1,\ldots,\maxTimeInterval-1\}$ for which 
\begin{align*}
\sum_{\ell=s_{j-1}+1}^{t}\left|\labelProbability[\ell]-\labelProbability[\ell+1]\right|^{\frac{1}{\holderExponentWeightFunction}} > \holderConstantProofOfcorrTotalVariationLabelProbBound^{\frac{1}{\holderExponentWeightFunction}}.
\end{align*}
Otherwise we choose $s_j:=\maxTimeInterval$. We then let $\numJumps:=\min\left\lbrace j \in \N_0 : s_j=\timeHorizon\right\rbrace$. As such, the sequence $(s_j)_{j=0}^{\numJumps}$ is strictly increasing with $s_0=\min\timeInterval-1$, $s_{\numJumps}=\max \timeInterval$.  Given $j \in [\numJumps]$, we choose $\labelProbabilityFunctionWithJumps:[0,1]\rightarrow [0,1]$ to be the continuous piece-wise linear function such that 
\begin{align*}
\labelProbabilityFunctionWithJumps\bigg(\frac{\ell-s_{j-1}-1}{s_{j}-s_{j-1}-1}\bigg)=\labelProbability,
\end{align*}
for all $\ell \in \{s_{j-1}+1,\ldots,s_{j}\}$, and $\labelProbabilityFunctionWithJumps$ is linear on each interval of the form
\begin{align*}
\left[ \frac{\ell-s_{j-1}-1}{s_{j}-s_{j-1}-1},\frac{\ell-s_{j-1}}{s_{j}-s_{j-1}-1}\right],
\end{align*}
for $\ell \in \{s_{j-1}+1,\ldots,s_j-1\}$.  The construction of  $(s_j)_{j=0}^{\numJumps}$ also ensures that for any $j \in [\numJumps]$ and $\ell_0,\ell_1 \in \{s_{j-1}+1,\ldots,s_{j}\}$ we have
\begin{align*}
\left|\labelProbability[\ell_0]-\labelProbability[\ell_1]\right|^{\frac{1}{\holderExponentWeightFunction}}& \leq \left(\sum_{\ell=s_{j-1}+1}^{s_j-1}\left|\labelProbability[\ell]-\labelProbability[\ell+1]\right|\right)^{\frac{1}{\holderExponentWeightFunction}}\leq \sum_{\ell=s_{j-1}+1}^{s_j-1}\left|\labelProbability[\ell]-\labelProbability[\ell+1]\right|^{\frac{1}{\holderExponentWeightFunction}}\leq \holderConstantProofOfcorrTotalVariationLabelProbBound^{\frac{1}{\holderExponentWeightFunction}},
\end{align*}
where we applied Minkowski's inequality in the second line. Hence, we have $\labelProbabilityFunctionWithJumps\in \holderFunctionClass\left(0,\holderConstantProofOfcorrTotalVariationLabelProbBound\right)$. Thus, Assumption \ref{assumption:smoothlyVaryingWithJumpsLabelProbabilities} holds with H\"{o}lder exponent $0$ and H\"{o}lder constant $\holderConstantProofOfcorrTotalVariationLabelProbBound$. Thus, by Theorem \ref{thm:highProbBoundForTemporalSmoothnessWithJumps}, the following bound holds with probability at least $1-\delta$,
\begin{align*}
\regretOverTimeInterval_{\timeInterval}(\policy)&\leq \tsybakovMarginConstant\,\constantForSmoothnessWithJumpsAverageRegretBound \bigl\{\highProbBoundDeltaLabelledData\vee \highProbabilityUpperBoundUnlabelledForAverageRegretBoundZeroHolder(\timeHorizon/\numJumps,\holderConstantProofOfcorrTotalVariationLabelProbBound)\bigr\}^{\tsybakovMarginExponent} +\constantForSmoothnessWithJumpsAverageRegretBound\delta\\
&\leq \tsybakovMarginConstant\,\constantForTotalVariationLabelProbabilitiesBound \bigl\{\highProbBoundDeltaLabelledData\vee  \highProbabilityUpperBoundUnlabelledForAverageRegretBoundZeroHolder(\timeHorizon/\numJumps,\holderConstantProofOfcorrTotalVariationLabelProbBound)/\sqrt{2}\bigr\}^{\tsybakovMarginExponent} +\constantForTotalVariationLabelProbabilitiesBound\delta.
\end{align*}
Hence, to complete the proof of Corollary \ref{corr:totalVariationLabelProbBound} it suffices to show that 
\begin{align}\label{eq:toProveCorrTotalVarLabelProb}
\highProbabilityUpperBoundUnlabelledForAverageRegretBoundZeroHolder(\timeHorizon/\numJumps,\holderConstantProofOfcorrTotalVariationLabelProbBound) \leq \sqrt{2} \highProbabilityUpperBoundUnlabelledForAverageRegretBound(\timeHorizon,\totalVariationLabelProbabilities).
\end{align}

Next, we note that the construction of  $(s_j)_{j=0}^{\numJumps}$ ensures
\begin{align*}
\sum_{\ell=s_{j-1}+1}^{s_j}\left|\labelProbability[\ell]-\labelProbability[\ell+1]\right|^{\frac{1}{\holderExponentWeightFunction}}> \holderConstantProofOfcorrTotalVariationLabelProbBound^{\frac{1}{\holderExponentWeightFunction}}.
\end{align*}
for each $j \in [\numJumps-1]$. As such,
\begin{align*}
(\numJumps-1)\holderConstantProofOfcorrTotalVariationLabelProbBound^{\frac{1}{\holderExponentWeightFunction}} &\leq \sum_{j\in[\numJumps-1]}
\sum_{\ell=s_{j-1}+1}^{s_j}\left|\labelProbability[\ell]-\labelProbability[\ell+1]\right|^{\frac{1}{\holderExponentWeightFunction}}\leq \totalVariationLabelProbabilities^{\frac{1}{\holderExponentWeightFunction}},
\end{align*}
so that $\numJumps\leq \{\totalVariationLabelProbabilities/\holderConstantProofOfcorrTotalVariationLabelProbBound\}^{\frac{1}{\holderExponentWeightFunction}}+1 \leq 2\{\totalVariationLabelProbabilities/\holderConstantProofOfcorrTotalVariationLabelProbBound\}^{\frac{1}{\holderExponentWeightFunction}}$, since $\ratioProofOfcorrTotalVariationLabelProbBound \leq \timeHorizon/2$. Moreover, the function $r\mapsto \highProbabilityUpperBoundUnlabelledForAverageRegretBound(r)$ is non-increasing, so this implies
\begin{align*}
\highProbabilityUpperBoundUnlabelledForAverageRegretBoundZeroHolder\left(\frac{\timeHorizon}{\numJumps}\right) & \leq \highProbabilityUpperBoundUnlabelledForAverageRegretBoundZeroHolder\left( \frac{\holderConstantProofOfcorrTotalVariationLabelProbBound^{\frac{1}{\holderExponentWeightFunction}}\timeHorizon}{2\totalVariationLabelProbabilities^{\frac{1}{\holderExponentWeightFunction}}}\right)=\highProbabilityUpperBoundUnlabelledForAverageRegretBoundZeroHolder\left( \ratioProofOfcorrTotalVariationLabelProbBound\right)\\
& = \holderConstantProofOfcorrTotalVariationLabelProbBound \vee \frac{\sqrt{\averagePowerTransform{\ratioProofOfcorrTotalVariationLabelProbBound}{{\tsybakovMarginExponent}/{2}} \epsilonByNDeltaBase{\ratioProofOfcorrTotalVariationLabelProbBound}{\delta/\maxTimeInterval} }}{ \totalVariation(\nullDistribution,\positiveDistribution)}\\
& \leq \totalVariationLabelProbabilities\,\left(\frac{2 \ratioProofOfcorrTotalVariationLabelProbBound}{\timeHorizon}\right)^{\holderExponentWeightFunction} \vee \sqrt{\frac{ \timeHorizon \averagePowerTransform{\timeHorizon}{{\tsybakovMarginExponent}/{2}} \epsilonByNDeltaBase{\timeHorizon}{\delta/\maxTimeInterval}}{\ratioProofOfcorrTotalVariationLabelProbBound \totalVariation(\nullDistribution,\positiveDistribution)^2 }} \leq \sqrt{2} \highProbabilityUpperBoundUnlabelledForAverageRegretBound\left(\timeHorizon,\totalVariationLabelProbabilities\right),
\end{align*}
which establishes the claim \eqref{eq:toProveCorrTotalVarLabelProb} and completes the proof of Corollary \ref{corr:totalVariationLabelProbBound}. 
\end{proof}
\newpage

\bibliography{bibliography}       
\bibliographystyle{apalike}

\appendix

\section{Technical results}\label{sec:appendixLegendrePolys}

This section contains a proof of  Lemma \ref{lemma:UCovarianceMatrixMinEigenValue}, which was used in the proof of Lemma \ref{lemma:boundingEstimatorMarginalDistributionWeights}, and a proof of Lemma \ref{lemma:ratioDifference}, which was applied in the proof of Corollary \ref{corr:labelProbabilityEstimatorBound}.

\begin{lemma}\label{lemma:shiftedLegendrePolysOrthonormal} The polynomials $(\legendrePolynomialShiftedOrthonormal[k])_{k \in \N_0}$ are orthonormal with respect to the inner product $\left\langle f,g \right\rangle := \int_0^1 f(z)g(z)dz$. Moreover, for all $k \in \N_0$ and $z \in [0,1]$,
\begin{align*}
|\legendrePolynomialShiftedOrthonormal[k](z)| \leq \sqrt{2k+1} \hspace{5mm}\text{  and  }\hspace{5mm}
|\legendrePolynomialShiftedOrthonormal[k]'(z)| \leq 2k^2 \sqrt{2k+1}. 
\end{align*}
\end{lemma}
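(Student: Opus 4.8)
The plan is to reduce every assertion to classical facts about the Legendre polynomials $P_n$ on $[-1,1]$ via the affine change of variable $u=2z-1$. First I would check that the stated definition agrees with the shifted Rodrigues formula: substituting $x=2z-1$ into $P_n(x)=\frac{1}{2^{n}n!}\frac{d^{n}}{dx^{n}}(x^{2}-1)^{n}$ and using $x^{2}-1=4z(z-1)$ together with $\frac{d}{dx}=\frac{1}{2}\frac{d}{dz}$ gives $P_n(2z-1)=\frac{1}{n!}\frac{d^{n}}{dz^{n}}\{z(z-1)\}^{n}$, so that $\legendrePolynomialShiftedOrthonormal[k](z)=\sqrt{2k+1}\,P_k(2z-1)$ for all $k\in\N_0$. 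Hence $|\legendrePolynomialShiftedOrthonormal[k](z)|=\sqrt{2k+1}\,|P_k(2z-1)|$ and $\legendrePolynomialShiftedOrthonormal[k]'(z)=2\sqrt{2k+1}\,P_k'(2z-1)$, and the substitution $u=2z-1$ turns $\int_0^1\legendrePolynomialShiftedOrthonormal[m](z)\legendrePolynomialShiftedOrthonormal[n](z)\,dz$ into $\frac{1}{2}\sqrt{(2m+1)(2n+1)}\int_{-1}^{1}P_m(u)P_n(u)\,du$. It therefore suffices to prove: (i) $\int_{-1}^{1}P_m(u)P_n(u)\,du=\frac{2}{2n+1}\delta_{mn}$; (ii) $\sup_{u\in[-1,1]}|P_n(u)|\le1$; (iii) $\sup_{u\in[-1,1]}|P_n'(u)|\le\frac{n(n+1)}{2}$.

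For (i) I would insert Rodrigues' formula for one factor and integrate by parts $n$ times; since $(u^{2}-1)^{n}$ has a zero of order $n$ at $u=\pm1$, its first $n-1$ derivatives vanish at the endpoints, so every boundary term disappears and $\int_{-1}^{1}P_m P_n\,du=\frac{(-1)^{n}}{2^{n}n!}\int_{-1}^{1}P_m^{(n)}(u)(u^{2}-1)^{n}\,du$. For $m<n$ this vanishes since $\deg P_m<n$; for $m=n$ the factor $P_n^{(n)}$ is the constant $\frac{(2n)!}{2^{n}n!}$ and $\int_{-1}^{1}(1-u^{2})^{n}\,du=\frac{2^{2n+1}(n!)^{2}}{(2n+1)!}$ (a Beta integral), which yields the value $\frac{2}{2n+1}$. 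Together with the reduction above, this proves the orthonormality of $(\legendrePolynomialShiftedOrthonormal[k])_{k\in\N_0}$.

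For (ii), the bound $|P_n|\le1$ on $[-1,1]$ is classical and may be cited; for a self-contained argument I would use the generating function $\sum_{n\ge0}P_n(\cos\theta)t^{n}=(1-2t\cos\theta+t^{2})^{-1/2}=(1-te^{i\theta})^{-1/2}(1-te^{-i\theta})^{-1/2}$ and multiply the two binomial series, whose coefficients $a_j=\binom{2j}{j}4^{-j}$ are nonnegative; this gives $P_n(\cos\theta)=\sum_{j+k=n}a_j a_k e^{i(j-k)\theta}$, so $|P_n(\cos\theta)|\le\sum_{j+k=n}a_j a_k=P_n(1)=1$. Transporting, $|\legendrePolynomialShiftedOrthonormal[k](z)|\le\sqrt{2k+1}$. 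For (iii), expand $P_n'$ in the orthogonal basis, $P_n'=\sum_{j=0}^{n-1}c_j P_j$, with $c_j=\frac{2j+1}{2}\int_{-1}^{1}P_n' P_j\,du=\frac{2j+1}{2}\bigl([P_n P_j]_{-1}^{1}-\int_{-1}^{1}P_n P_j'\,du\bigr)$; the integral vanishes since $\deg P_j'<n$, and the boundary term equals $1-(-1)^{n-j}$, so $c_j\in\{0,2j+1\}$. Evaluating the expansion at $u=1$ gives $\sum_j c_j=P_n'(1)=\frac{n(n+1)}{2}$, hence $|P_n'(u)|\le\sum_j c_j|P_j(u)|\le\frac{n(n+1)}{2}$ by (ii). Consequently $|\legendrePolynomialShiftedOrthonormal[k]'(z)|=2\sqrt{2k+1}\,|P_k'(2z-1)|\le\sqrt{2k+1}\,k(k+1)\le2k^{2}\sqrt{2k+1}$ for $k\ge1$ (using $k+1\le2k$), and both sides vanish for $k=0$.

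The step I expect to be the main obstacle is (ii): the uniform bound $|P_n|\le1$ genuinely requires a nontrivial input — the generating function, the Laplace integral representation $P_n(\cos\theta)=\frac{1}{\pi}\int_0^{\pi}(\cos\theta+i\sin\theta\cos\phi)^{n}\,d\phi$, or a citation — whereas (i) is a routine integration-by-parts computation and (iii) follows cheaply once (ii) is in hand. It would also be prudent to record the endpoint identities $P_n(1)=1$ and $P_n'(1)=\frac{n(n+1)}{2}$ used above, both of which drop out of the generating function or Rodrigues' formula.
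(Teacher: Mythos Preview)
Your proof is correct and complete. It follows the same overall reduction as the paper --- write $\legendrePolynomialShiftedOrthonormal[k](z)=\sqrt{2k+1}\,P_k(2z-1)$ and transport the three claims back to standard Legendre polynomials on $[-1,1]$ --- but differs in how those three classical facts are handled. The paper simply cites a textbook for orthogonality and for $\sup_{[-1,1]}|P_k|\le1$, and then invokes the Markov brothers' inequality to obtain $\sup_{[-1,1]}|P_k'|\le k^2$ from the uniform bound. You instead prove all three from scratch: orthogonality via Rodrigues and integration by parts, the uniform bound via the generating-function factorisation, and the derivative bound via the explicit expansion $P_n'=\sum_{j}c_jP_j$, yielding the sharper $|P_n'|\le n(n+1)/2$ before relaxing to $2k^2$. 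Your route is more self-contained and actually gives a tighter intermediate constant; the paper's route is shorter because it outsources the analytic work to standard references, and the use of the Markov brothers' inequality is a slightly more general hammer than your bespoke expansion argument.
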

\begin{proof}[Proof of Lemma \ref{lemma:shiftedLegendrePolysOrthonormal}] For each $k \in \N\cup\{0\}$, lets write $\legendrePolynomialKaplan$ for the standard Legendre polynomial of degree $k$, defined by 
\begin{align*}
\legendrePolynomialKaplan(z):=\frac{1}{2^kk!}\frac{d^{k}}{dz^{k}}\{z^2-1\}^k,
\end{align*}
for $k \in \N$ and $\legendrePolynomialKaplan[0]\equiv 1$. Note that for all $k \in \N_0$ we have
\begin{align}\label{eq:kaplanNormalisedLegendreDef}
\legendrePolynomialShiftedOrthonormal(z) = \sqrt{2k+1} \legendrePolynomialKaplan(2z-1), 
\end{align}
for all $z \in [0,1]$. Hence, by \cite[Theorem 17(i)]{kaplan2003AdvancedCalculus} we have $\sup_{z \in [-1,1]}|\legendrePolynomialKaplan[k](z)|\leq 1$.  Thus, by the Markov brother's inequality \cite{shadrin2004twelve} we have $\sup_{z \in [-1,1]}|\legendrePolynomialKaplan[k]'(z)|\leq k^2$.  Consequently, we have $\sup_{z \in [0,1]}|\legendrePolynomialShiftedOrthonormal[k](z)| \leq \sqrt{2k+1}$ and $\sup_{z \in [0,1]}|\legendrePolynomialShiftedOrthonormal[k]'(z)| \leq 2k^2\sqrt{2k+1}$. Moreover, by \cite[Theorem 18]{kaplan2003AdvancedCalculus} we have
\begin{align*}
\int_{-1}^1 \legendrePolynomialKaplan[k_0](z)\legendrePolynomialKaplan[k_1](z)dz = \begin{cases} \frac{2}{2k_0+1} & \text{ if }k_0=k_1\\
0 & \text{ if }k_0 \neq k_1.\end{cases}
\end{align*}
for any $k_0,~k_1 \in \N_0$. Hence, by \eqref{eq:kaplanNormalisedLegendreDef} the polynomials $(\legendrePolynomialShiftedOrthonormal[k])_{k \in \N_0}$ are orthonormal with respect to the inner product $\left\langle f,g \right\rangle := \int_0^1 f(z)g(z)dz$.
\end{proof}

Next, we apply Lemma \ref{lemma:shiftedLegendrePolysOrthonormal} to prove Lemma \ref{lemma:UCovarianceMatrixMinEigenValue}.

\begin{proof}[Proof of Lemma \ref{lemma:UCovarianceMatrixMinEigenValue}] First note that given $k_0$, $k_1 \in \{0\}\cup [p]$ and $i \in [q]$ we have
\begin{align*}
&\left| \frac{1}{q} \cdot \legendrePolynomialShiftedOrthonormal[k_0]\left( \frac{i}{q}\right)\legendrePolynomialShiftedOrthonormal[k_1]\left( \frac{i}{q}\right)-\int_{\frac{i-1}{q}}^{\frac{i}{q}}\legendrePolynomialShiftedOrthonormal[k_0](z)\legendrePolynomialShiftedOrthonormal[k_1](z)dz\right|\\
& =  \left| \int_{\frac{i-1}{q}}^{\frac{i}{q}} \int_z^{\frac{i}{q}} \frac{d}{dt}\left\lbrace \legendrePolynomialShiftedOrthonormal[k_0](t)\legendrePolynomialShiftedOrthonormal[k_1](t)\right\rbrace dt dz\right|\\
& \leq  \int_{\frac{i-1}{q}}^{\frac{i}{q}} \int_z^{\frac{i}{q}} \left|\frac{d\legendrePolynomialShiftedOrthonormal[k_0](t)}{dt}\legendrePolynomialShiftedOrthonormal[k_1](t)+ \legendrePolynomialShiftedOrthonormal[k_0](t)\frac{d\legendrePolynomialShiftedOrthonormal[k_1](t)}{dt} \right| dt dz\\
& \leq  4p^2(2p+1)\int_{\frac{i-1}{q}}^{\frac{i}{q}} \int_z^{\frac{i}{q}} dtdz=\frac{2p^2(2p+1)}{q^2},
\end{align*}
where we applied the bounds on $\legendrePolynomialShiftedOrthonormal$ and $\legendrePolynomialShiftedOrthonormal'$ from Lemma \ref{lemma:shiftedLegendrePolysOrthonormal} to obtain the final inequality. Hence,  $k_0$, $k_1 \in \{0\}\cup [p]$ and $i \in [q]$ the entry in the $(k_0+1)$-th row and $(k_1+1)$-th column of the matrix matrix $({\tilde{U}^{(p,q)}})^\top ({\tilde{U}^{(p,q)}})$ satisfies
\begin{align*}
\left|(\weightMatrixByPAndQNoArg_{:,k_0+1})^\top (\weightMatrixByPAndQNoArg_{:,k_1+1})-q\one\{k_0=k_1\}\right|
&=\left|\sum_{i=1}^q\legendrePolynomialShiftedOrthonormal[k_0]\left( \frac{i}{q}\right)\legendrePolynomialShiftedOrthonormal[k_1]\left( \frac{i}{q}\right) - q \int_0^1 \legendrePolynomialShiftedOrthonormal[k_0](z) \legendrePolynomialShiftedOrthonormal[k_1](z)dz\right| \\ &\leq 2p^2(2p+1).
\end{align*}
Hence, letting $\mathrm{I}_{p+1}$ denote the $(p+1)\times (p+1)$ identity matrix, we have
\begin{align*}
\left\|\weightMatrixByPAndQNoArg^\top \weightMatrixByPAndQNoArg -qI_{p+1}\right\|_2
& \leq (p+1)\left\|\mathrm{vec}\left\lbrace \weightMatrixByPAndQNoArg^\top \weightMatrixByPAndQNoArg -q\mathrm{I}_{p+1}\right\rbrace \right\|_{\infty}\leq 2p^2(p+1)(2p+1).
\end{align*}
The conclusion of the lemma follows.
\end{proof}

We close this section with a proof of Lemma \ref{lemma:ratioDifference}.

\begin{proof}[Proof of Lemma \ref{lemma:ratioDifference}] To prove the first bound we note that
\begin{align*}
 \frac{\hat{a}}{\hat{b}}-\frac{a}{b}=\frac{\hat{a}-a+(\hat{a}/ \hat{b})(b-\hat{b})}{b},
\end{align*}
and apply the triangle inequality.

To prove \eqref{eq:secondBoundFromRatioDifference} we consider four cases. First, if $\hat{a}/\hat{b} \in [0,1]$ then $[\hat{a}/\hat{b}]_0^1=\hat{a}/\hat{b}$, so  \eqref{eq:firstBoundFromRatioDifference} implies \eqref{eq:secondBoundFromRatioDifference}. Second, if $\hat{a}/\hat{b}> 1$ and $\hat{b}/b \geq 0$ we have
\begin{align*}
\left| \left[ \frac{\hat{a}}{\hat{b}}\right]_0^1-\frac{a}{b}\right|&=1-\frac{a}{b}\leq \frac{\hat{a}-a+b-\hat{b}}{b},
\end{align*}
so \eqref{eq:firstBoundFromRatioDifference} holds. Thirdly, if $\hat{b}/b<0$ then
\begin{align*}
\left| \left[ \frac{\hat{a}}{\hat{b}}\right]_0^1-\frac{a}{b}\right|&\leq 1 <\frac{b-\hat{b}}{b}   \leq \frac{|\hat{a}-a|+|\hat{b}-b|}{|b|}. 
\end{align*}
Finally, if $\hat{a}/\hat{b}<0$ and $\hat{b}/b\geq 0$ then $\hat{a}/b<0$ so
\begin{align*}
\left| \left[ \frac{\hat{a}}{\hat{b}}\right]_0^1-\frac{a}{b}\right|&=\frac{a}{b}\leq \frac{a-\hat{a}}{b}  \leq \frac{|\hat{a}-a|+|\hat{b}-b|}{|b|}.
\end{align*}

\end{proof}

\end{document}